\newtheorem{lemma}{Lemma}[section]
\newtheorem{theorem}[lemma]{Theorem}
\newtheorem{conjecture}[lemma]{Conjecture}
\newtheorem{corollary}[lemma]{Corollary}
\newtheorem{proposition}[lemma]{Proposition}
\theoremstyle{definition}
\newtheorem{example}[lemma]{Example}
\newtheorem{remark}[lemma]{Remark}
\newtheorem{definition}[lemma]{Definition}
\newcommand{\CA}{\mathcal{A}}
\newcommand{\CB}{\mathcal{B}}
\newcommand{\CCA}{\mathcal{CA}}
\newcommand{\CK}{\mathcal{K}}
\newcommand{\C}{\mathbb{C}}
\newcommand{\Z}{\mathbb{Z}}
\newcommand{\xx}{\underline{x}}
\newcommand{\inv}{^{-1}}
\newcommand{\e}{\varepsilon}
\newcommand{\Br}{\mathrm{Br}}
\newcommand{\fg}{\mathfrak{g}}
\newcommand{\Tr}{\mathrm{Tr}}
\newcommand{\Ker}{\mathrm{Ker}}
\newcommand{\Imm}{\mathrm{Im}}
\newcommand{\diag}{\mathrm{diag}}
\newcommand{\SBim}{\mathrm{SBim}}
\newcommand{\Hom}{\mathrm{Hom}}
\newcommand{\Ext}{\mathrm{Ext}}
\newcommand{\strict}{\mathrm{str}}
\newcommand{\Vol}{\mathrm{Vol}}
\newcommand{\eNG}{\overline{\mathrm{NG}}}
\newcommand{\bNG}{\mathrm{NG}}
\newcommand{\Id}{\mathrm{Id}}
\newcommand{\id}{\mathrm{Id}}
\newcommand{\op}{\mathrm{op}}
\newcommand{\HH}{\mathrm{HH}}
\newcommand{\HHH}{\mathrm{HHH}}
\newcommand{\CY}{\mathcal{Y}}
\newcommand{\HY}{\mathrm{HY}}
\newcommand{\oHHH}{\overline{\HHH}}
\newcommand{\one}{\mathbbm{1}}
\newcommand{\FT}{\mathrm{FT}}
\newcommand{\Mod}{\mathrm{mod}}
\newcommand{\End}{\mathrm{End}}
\newcommand{\quasiis}{\cong_{\text{q.is.}}}
\newcommand{\dbyd}[1]{\frac{\partial}{\partial {#1}}}
\newcommand{\wt}{\widetilde}
\newcommand{\wh}{\widehat}
\definecolor{references}{rgb}{0,0,1}
\title[Tautological classes and symmetry]{Tautological classes and symmetry in Khovanov-Rozansky homology}
\author{Eugene Gorsky}
\address{Department of Mathematics\\ University of California Davis\\ One Shields Avenue, Davis CA 95616 USA}
\email{egorskiy@math.ucdavis.edu}
\author{Matthew Hogancamp}
\address{Department of Mathematics\\Northeastern University\\ 360 Huntington Ave, Boston, MA 02115}
\email{m.hogancamp@northeastern.edu}
\author{Anton Mellit}
\address{Faculty of Mathematics, University of Vienna\\ Oskar-Morgenstern-Platz 1, Vienna 1090, Austria}
\email{anton.mellit@univie.ac.at}
\begin{document}

\begin{abstract}
    We define a new family of commuting operators $F_k$ in Khovanov-Rozansky link homology, similar to the action of tautological classes in cohomology of character varieties. We prove that $F_2$ satisfies ``hard Lefshetz property" and hence exhibits the symmetry in Khovanov-Rozansky homology conjectured by Dunfield, Gukov and Rasmussen.
\end{abstract}

\maketitle


\section{Introduction}

In 2005 Dunfield, Gukov and Rasmussen \cite{DGR} proposed a remarkable conjecture about the structure of triply graded Khovanov-Rozansky link homology \cite{KR2,Kh} categorifying HOMFLY-PT polynomial.

\begin{conjecture}
\label{conj: DGR}
Let $K$ be a knot and $\oHHH(K)=\bigoplus \oHHH_{i,j,k}(K)$ be its {\em reduced} triply graded homology, where $i$ is the $a$-grading, $j$ is the quantum grading and $k$ is the homological grading. Then $\dim\oHHH_{i,-2j,k}(K)=\dim \oHHH_{i,2j,k+2j}(K)$.
\end{conjecture}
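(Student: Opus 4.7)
The plan is to use the commuting operators $F_k$ promised in the abstract and extract Conjecture~\ref{conj: DGR} as a hard Lefschetz isomorphism for $F_2$. First I would construct $F_k$ at the chain level on a Soergel-bimodule (or Rouquier) complex $C(\beta)$ attached to a braid representative $\beta$ of $K$, mimicking the way tautological classes on a character variety arise from the universal family: a natural candidate is to take traces of powers of a distinguished endomorphism of $C(\beta)$ acting on the version of Hochschild homology that computes $\oHHH(K)$. One then has to verify that these operators are chain maps, pairwise commute, are invariant under braid Markov moves, and descend to reduced homology.

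The second step is a grading bookkeeping: I would verify that $F_2$ raises the quantum grading $j$ by $4$ and the homological grading $k$ by $2$, while preserving the $a$-grading $i$. With this bidegree in hand, the iterate $F_2^{j}$ carries $\oHHH_{i,-2j,k}(K)$ to $\oHHH_{i,2j,k+2j}(K)$, so the conjecture reduces to the statement that each such iterate is an isomorphism, i.e.\ the hard Lefschetz property for $F_2$.

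The central and most delicate step is to establish hard Lefschetz itself. The natural route is to produce a companion operator $E$ and a grading operator $H$ satisfying $\mathfrak{sl}_2$ relations together with $F_2$, since hard Lefschetz is then automatic from finite-dimensional $\mathfrak{sl}_2$ representation theory. Producing $E$ is the main obstacle: unlike the geometric setting there is no Kähler form to dualize, so $E$ has to be built by hand. A plausible strategy is to combine the self-duality of reduced Khovanov-Rozansky homology of a knot (to transpose $F_2$ into a candidate $E$) with a direct verification of the commutation relation modeled on the analogous $\mathfrak{sl}_2$-action on the cohomology of character varieties. An alternative, should an explicit $E$ prove elusive, would be to argue semisimplicity or purity of $F_2$ via a spectral sequence or by deformation to a model case (unknots, small torus knots) where $F_2$ can be diagonalized and the Lefschetz conclusion lifted by upper-semicontinuity. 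Once hard Lefschetz is in place, the DGR symmetry of Conjecture~\ref{conj: DGR} follows immediately by comparing dimensions of the two graded summands.
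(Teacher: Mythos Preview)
Your plan captures the overall shape --- build operators $F_k$, check bidegrees, and reduce the conjecture to a hard Lefschetz statement for $F_2$ --- but two of the three steps have genuine gaps.

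First, the operators $F_k$ do not act on $\oHHH(K)$ directly. The paper works in the $y$-ified theory $\HY(\beta)$ of \cite{GH}, where the Rouquier complex is tensored with $\C[y_1,\ldots,y_n]$ and the differential is deformed to $d_y = d + \sum \xi_i y_i$ using the dot-sliding homotopies $\xi_i$. The operator $F_2 = \sum (x_i + x'_{w^{-1}(i)})\,\partial/\partial y_i + u_2$ is a chain map on this $y$-ified complex precisely because the $u_k$ term cancels the commutator $[d_y, \partial/\partial y_i]$. Without the $y$-variables there is nothing for $F_2$ to act on. The reduction to $\oHHH(K)$ is the last step: for a knot $\HY(K) \cong \oHHH(K)\otimes \C[x,y]$, and $F_2$ decomposes as $\overline{F_2} + 2x\,\partial/\partial y$, so Lefschetz for $F_2$ on the tensor product implies Lefschetz for $\overline{F_2}$ on $\oHHH(K)$.

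Second, your strategy for hard Lefschetz itself is where the real difficulty lies, and neither of your routes works as stated. Route (a) --- transposing $F_2$ via self-duality to get $E$ --- is circular: the self-duality of $\oHHH(K)$ under $q\mapsto q^{-1}$ \emph{is} the conjecture you are trying to prove, so you cannot assume a pairing that swaps the relevant graded pieces. Route (b) is too vague to execute. What the paper does instead is a skein-theoretic induction: the relation $T_i \simeq [T_i^{-1}\to K_{i,i+1}]$ (Lemma~\ref{lem:skein}) lets one reduce any $\HH^k(T_\beta)$ to an iterated cone of explicit ``basic objects'' built from Koszul complexes $K_{ij}$ (Proposition~\ref{prop:reduction 1}). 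On each basic object the $y$-ified homology is a tensor product of copies of $\C[x,y]$ (with $F_2$ acting as $x\,\partial/\partial y$) and $\C[\eta_1,\eta_2]$ (with $F_2$ acting as $\eta_1\eta_2$), where Lefschetz is a one-line check. Then a five-lemma argument (Lemma~\ref{lem:5 lemma}) propagates Lefschetz through the cones. The $\mathfrak{sl}_2$-action you were after is obtained \emph{afterwards} from Jacobson--Morozov, not constructed in advance.
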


The conjecture was motivated by the well-known symmetry of the HOMFLY-PT polynomial $\overline{P}_{K}(a,q)=\overline{P}_{K}(a,q^{-1})$, where
$$
\overline{P}_{K}(a,q)=\sum_{i,j,k}a^{i}q^{j}(-1)^{k}\dim \oHHH_{i,j,k}(K).
$$

This conjecture was verified in \cite{DGR}  in numerous examples, and was later related to deep results about algebraic geometry of compactified Jacobians \cite{OS,ORS}, Hilbert schemes of points on the plane \cite{GH,GN,GNR, OR,OR20}, representation theory of rational Cherednik algebras \cite{GORS}  and combinatorics of $q,t$-Catalan numbers \cite{G,GM1,EH,H,HM}.  In particular, the third author \cite{Mknots} computed Khovanov-Rozansky homology for all torus knots, but the resulting combinatorial expression is not manifestly symmetric. Nevertheless, the work of the third author on rational shuffle conjecture \cite{Mshuffle} implies that it is indeed symmetric, and Conjecture \ref{conj: DGR} holds for torus knots.

For general knots, Conjecture \ref{conj: DGR} remained open until recent series of papers of Oblomkov-Rozansky \cite{OR,OR2,OR3,OR4,OR20} who resolved it in general, and Galashin-Lam \cite{GL} who proved it for a  class of knots related to positroid varieties. Both papers used very heavy machinery from geometric representation theory: matrix factorizations on Hilbert schemes of points and graded Koszul duality for category $\mathcal{O}$.

In this paper we give a more direct algebraic proof of this conjecture.

\begin{theorem}
\label{thm:intro symmetry}
Conjecture \ref{conj: DGR} is true for all knots.
\end{theorem}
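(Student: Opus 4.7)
The roadmap is exactly the one signalled by the abstract: introduce a commuting family of operators $F_k$ on the reduced triply-graded homology $\oHHH(K)$, and extract the DGR symmetry from a hard Lefschetz statement for $F_2$.

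The first step is to construct the $F_k$ at the level of Soergel bimodules or Rouquier complexes, as natural endomorphisms analogous to tautological classes on the cohomology of character varieties (think of symmetric-function-type operations in the polynomial variables acting on Hochschild cochains). Each $F_k$ should have a prescribed tridegree, and one wants $F_2$ to carry tridegree $(0,4,2)$ in the $(i,j,k)$-grading so that the $j$-th power $F_2^j$ sends the summand $\oHHH_{i,-2j,k}(K)$ into $\oHHH_{i,2j,k+2j}(K)$. The key consistency checks here are that the $F_k$ descend to well-defined endomorphisms of the link invariant (invariance under Markov moves), that they commute pairwise, and that they act compatibly after passing from the unreduced to the reduced theory.

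The second step is to establish hard Lefschetz for $F_2$, that is, that $F_2^j$ is an isomorphism between the two summands above. The natural mechanism is to produce a lowering operator $E_2$ of opposite tridegree together with a grading operator $H$ built from a $\Z$-linear combination of the three grading operators, and to show that $(E_2,H,F_2)$ spans an $\mathfrak{sl}_2$-triple on $\oHHH(K)$. The elementary representation theory of $\mathfrak{sl}_2$ on a finite-dimensional module then forces each isotypic component to be weight-symmetric and realises $F_2^j$ as an isomorphism between the extremal weight spaces of every irreducible summand. Counting tridegrees of this $\mathfrak{sl}_2$-action immediately reproduces Conjecture \ref{conj: DGR}.

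I expect the main obstacle to be the construction and verification of $E_2$: while defining the raising operators $F_k$ as tautological symmetric-function endomorphisms is natural and geometrically motivated, producing a companion $E_2$ which commutes with every Rouquier differential and satisfies $[E_2,F_2]=H$ exactly (rather than merely up to homotopy or on some associated graded) seems to require delicate chain-level bookkeeping, and may force one to pass to a convenient chain model where such relations can be imposed strictly. A secondary difficulty is that the $F_k$ may be most natural on the unreduced homology $\HHH(K)$, whereas Conjecture \ref{conj: DGR} is stated on $\oHHH(K)$; one therefore needs to identify the reduced theory with a canonical submodule or quotient on which the entire $\mathfrak{sl}_2$-action descends, and check that no symmetry is lost in the reduction.
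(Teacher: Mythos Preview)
Your overall roadmap is right in spirit, but there are two genuine gaps relative to the paper's argument.

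First, you miss the role of $y$-ification entirely. The operators $F_k$ are not endomorphisms of $\oHHH(K)$ or even of $\HHH(K)$: they are defined on the $y$-ified complex $\CY(T_\beta)$ and its Hochschild homology $\HY(\beta)$, and they have the shape
\[
F_k=\sum_i h_{k-1}(x_i,x'_{w^{-1}(i)})\frac{\partial}{\partial y_i}+u_k,
\]
where the $\partial/\partial y_i$ piece is essential. Hard Lefschetz is proved for $F_2$ acting on $\HY(\beta)$, and only at the very end does one pass to $\oHHH(K)$ for a knot by using $\HY(K)\cong \oHHH(K)\otimes\C[x,y]$ and writing $F_2=\overline{F_2}+2x\,\partial/\partial y$. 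Your worry about descending from unreduced to reduced homology is resolved precisely by this factorisation, but you need the $y$-variables to even formulate it.

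Second, your proposed mechanism for hard Lefschetz is backwards. You suggest constructing a lowering operator $E_2$ explicitly and verifying the $\mathfrak{sl}_2$ relations, then deducing Lefschetz from representation theory; you correctly anticipate that building $E_2$ at chain level would be very delicate. The paper avoids this entirely: it proves hard Lefschetz directly and then obtains the $\mathfrak{sl}_2$-action \emph{a posteriori} via Jacobson--Morozov. The direct proof proceeds by a skein-type reduction: using the distinguished triangle $[T_i^{-1}\to K_{i,i+1}\to T_i[-1]]$ one shows that $\HH^k(L)$ is an iterated cone of explicit ``basic objects'' built from Koszul complexes $K_{\bar n,\bar g}$, whose $y$-ifications are computed in closed form. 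One checks Lefschetz by hand on each basic object (they are tensor products of $\C[x,y]$ with $F=x\,\partial/\partial y$ and of exterior algebras with $F$ a symplectic form), and then a 5-lemma argument propagates Lefschetz through the cones. This inductive decomposition is the real engine of the proof, and it does not appear in your proposal.
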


For the unreduced homology, or for links with several components, the conjecture cannot be extended verbatim. Indeed, the unreduced Khovanov-Rozansky homology $\HHH(L)$ is a finitely generated graded module over the polynomial ring $\C[x_1,\ldots,x_c]$ where $c$ is the number of  components of $L$, and the symmetry in Conjecture \ref{conj: DGR} must change the degrees of these variables.

In \cite{GH} the first and second authors proposed a solution to this problem by introducing $y$-ified link homology $\HY(L)$ which is naturally a module over $\C[x_1,\ldots,x_c,y_1,\ldots,y_c]$. In Theorem \ref{thm:intro y symmetry} we prove that the $y$-ified homology is indeed symmetric in the sense of Conjecture \ref{conj: DGR}, and this symmetry exchanges the action of $x_i$ and $y_i$.
In the case of knots $c=1$, this implies Theorem \ref{thm:intro symmetry}.

The key idea of the proof comes from the recent proof of the ``curious hard Lefshetz" property for character varieties by the third author \cite{Mchar}. Following \cite{Mchar}, to a positive braid $\beta$ on $n$ strands one can associate a character variety  (also known as braid variety) $X_{\beta}$, and by \cite{WW} the homology of $X_{\beta}$ is closely related to the Khovanov-Rozansky homology of the closure of $\beta$. Given a symmetric function $Q(x_1,\ldots,x_n)$ of degree $r$, one can define a closed algebraic $(2r-2)$-form $u_Q$ on the character variety $X_{\beta}$, which represents a certain tautological cohomology class. In particular, for $Q=\sum x_i^2$ we get a class $u_2\in H^2(X_{\beta})$. The main result of \cite{Mchar} then states that cup product with certain powers of $u_2$ satisfies the ``curious hard Lefshetz" property, that is,
yields an isomorphism between certain associated graded components of the weight filtration in the cohomology of  $X_{\beta}$. The proof goes by using a geometric analogue of the skein relation to decompose the varieties into strata, and verifying the Lefschetz property on each stratum by a direct computation.

In this paper, we define analogues of the forms $u_Q$ acting in Khovanov-Rozansky homology. The construction outlined below is completely formal and uses the properties of Soergel bimodules. We have not been able to find a direct geometric connection between the two stories, but the geometry of character varieties and related constructions in group cohomology \cite{Jeffrey}, nevertheless, motivates a lot of the work in this paper, see Appendix \ref{sec: groups} for more details.

\subsection{The dg algebra $\CA$}

The key role in our proof of Conjecture \ref{conj: DGR} is played by a remarkable dg algebra $\CA$ which was first constructed by Abel and the second author in \cite{AH}, although they used a slightly different presentation. It has generators $x_1,\ldots,x_n,x'_1,\ldots,x'_n$ of homological degree 0, $\xi_1,\ldots,\xi_n$ of homological degree $1$ and $u_1,\ldots,u_n$ of homological degree $2$ with the following differential:
$$
d(x_i)=d(x'_i)=0,\ d(\xi_i)=x_i-x'_i,\ d(u_k)=\sum_{i=1}^{n} h_{k-1}(x_i,x'_i)\xi_i,
$$
where $h_k$ is the complete symmetric function of degree $k$. In addition, we impose the relations
$$f(x_1,\ldots,x_n)=f(x'_1,\ldots,x'_n)$$ for arbitrary symmetric functions $f$.

The algebra $\CA$ is naturally an $R$-$R$ bimodule where $R=\C[x_1,\ldots,x_n]$   acts on the left by $x_i$ and on the right  by $x'_i$. One can prove that $H_0(\CA)\simeq R$ and the higher homologies vanish (see Proposition \ref{prop: A resolution}), so $\CA$ is a free resolution of $R$ over $R\otimes_{R^{S_n}} R$.

The homotopy category of dg modules over $\CA$ is localized by morphisms which become homotopy equivalences when restricted to the subalgebra generated by $x_i$ and $x_i'$. Modules which become isomorphic in this category are called \emph{weakly equivalent}.

Any homomorphism $\Delta:\CA\to \CA\otimes_{R}\CA$ defines an $\CA$-module structure on the tensor product $M\otimes_{R}N$ of arbitrary $\CA$-modules $M, N$.
\begin{theorem}
\label{thm:intro coproduct}
There is a coproduct $\Delta:\CA\to \CA\otimes_{R}\CA$ inducing a tensor product of $\CA$-modules which is associative up to weak equivalence.
\end{theorem}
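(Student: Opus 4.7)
The plan is to exploit the fact, established in Proposition \ref{prop: A resolution}, that $\CA$ is a cofibrant resolution of $R$ as a bimodule over $R \otimes_{R^{S_n}} R$, and to obtain $\Delta$ together with its homotopy-associativity by lifting the obvious multiplication $R \otimes_R R \to R$ through this resolution. A preliminary step is to check that $\CA \otimes_R \CA$ is again quasi-isomorphic to $R$ concentrated in degree zero; a K\"unneth-type argument reduces this to Proposition \ref{prop: A resolution} combined with freeness of $\CA$ as a one-sided $R$-module. The analogous statement for the triple tensor product $\CA \otimes_R \CA \otimes_R \CA$ follows by the same reasoning.

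Next I would define $\Delta$ inductively on the generators of $\CA$. On the polynomial generators the bimodule structure forces $\Delta(x_i) = x_i \otimes 1$ and $\Delta(x'_i) = 1 \otimes x'_i$. For the odd generator I take $\Delta(\xi_i) = \xi_i \otimes 1 + 1 \otimes \xi_i$ and check that its differential equals $\Delta(d\xi_i) = x_i \otimes 1 - 1 \otimes x'_i$ after the internal variables $\pm x''_i$ (the identified middle copy of $R$) cancel. For $\Delta(u_k)$ I begin with the naive guess $u_k \otimes 1 + 1 \otimes u_k$, compute the discrepancy between its differential and $\Delta(du_k) = \sum_i h_{k-1}(x_i, x'_i)(\xi_i \otimes 1 + 1 \otimes \xi_i)$, and invoke the telescoping identity $h_{k-1}(x,z) - h_{k-1}(x,y) = (z-y)\sum_{a+b+c=k-2} x^a y^b z^c$ together with $x''_i - x'_i = d(\xi_i^{(2)})$ and $x_i - x''_i = d(\xi_i^{(1)})$ to realize this discrepancy as an explicit boundary of a quadratic expression in $\xi^{(1)}\xi^{(2)}$ and polynomials in the $x$'s. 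Subtracting this boundary from the naive guess yields $\Delta(u_k)$. The existence of \emph{some} valid correction is in any case forced by the acyclicity established above. The symmetric-function relation $f(x) = f(x')$ is automatically preserved since it already holds in each factor separately, with $f(x''_i)$ playing the common intermediate role.

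Finally, associativity up to weak equivalence follows from a purely formal lifting argument. The two compositions $(\Delta \otimes \id)\Delta$ and $(\id \otimes \Delta)\Delta$, as dg-maps $\CA \to \CA \otimes_R \CA \otimes_R \CA$, both cover the identity $R \to R$ under the augmentation quasi-isomorphism, so the standard lifting lemma for cofibrant resolutions produces a chain homotopy $h$ between them. For arbitrary $\CA$-modules $M, N, P$, contracting $h$ against the tensor product yields an $\CA$-module map between the two induced $\CA$-structures on $M \otimes_R N \otimes_R P$ whose underlying $R \otimes R$-module morphism is the identity plus a null-homotopic perturbation, and hence a homotopy equivalence --- that is, a weak equivalence in the sense introduced in the excerpt.

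The main obstacle I anticipate is the explicit bookkeeping of the correction terms in $\Delta(u_k)$, which involves organizing the boundaries arising from the telescoping identity for $h_{k-1}$ across both tensor factors, possibly including quadratic $u$-type corrections when $k$ is large. Conceptually, however, everything is controlled by the acyclicity of $\CA^{\otimes m}_R$ and the lifting calculus for cofibrant resolutions, so no genuinely new ingredient beyond Proposition \ref{prop: A resolution} is needed.
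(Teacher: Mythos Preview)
Your construction of $\Delta$ and your argument for homotopy-coassociativity are essentially the paper's: the paper gives the explicit correction $\Delta(u_k)=u_k\otimes 1+1\otimes u_k+\sum_i h_{k-2}(x_i,x_i',x_i'')\,\xi_i\otimes\xi_i$ (so your worry about ``quadratic $u$-type corrections'' is unfounded---only $\xi\otimes\xi$ terms appear), and proves coassociativity up to homotopy by exactly the cofibrant-resolution lifting you describe (Proposition~\ref{prop:coassociativity}).

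The gap is in your final paragraph. A $B$-linear homotopy $h$ between the two algebra maps $(\Delta\otimes\Id)\Delta$ and $(\Id\otimes\Delta)\Delta$ does \emph{not} by itself manufacture an $\CA$-linear map between the two pullback module structures on $M\otimes_R N\otimes_R P$. Your sentence ``contracting $h$ against the tensor product yields an $\CA$-module map \ldots\ whose underlying $R\otimes R$-module morphism is the identity plus a null-homotopic perturbation'' does not parse: if the underlying $B$-map were the identity, $\CA$-linearity would force the two actions to coincide, which they do not; and there is no evident way to perturb the identity $B$-linearly into something $\CA$-linear using only a module-level homotopy $h$. To make this step work you would need $h$ to be a homotopy \emph{of dg algebra maps} (e.g.\ through a path object), or to pass to $A_\infty$-morphisms.

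The paper avoids this issue entirely. Rather than using the homotopy $h$, it observes that the two iterated coproducts become \emph{strictly equal} after postcomposing with the counit weak equivalence $\varepsilon\otimes\varepsilon\otimes\Id:\CA^{\otimes 3}\to\CA$, and then invokes a general lemma (Lemma~\ref{lem:algebra homomorphisms}): if two algebra maps $\psi_1,\psi_2:\CA\to\CA'$ over $\CB$ agree after composing with a weak equivalence $\gamma:\CA'\to\CA''$, then for any $\CA'$-module $M$ one has a canonical weak equivalence $\psi_1^*M\simeq\psi_2^*M$, obtained by replacing $M$ with $\gamma^*\gamma_! R_{\CA'}M$. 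This is the missing mechanism in your argument; see Proposition~\ref{prop:associativity on modules}.
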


Recall that to a braid $\beta$ one can associate a complex $T_{\beta}$ of Soergel bimodules (called {\em Rouquier complex}) which is a tensor product of Rouquier complexes  $T_i,T_i^{-1}$ for crossings (see Section \ref{sec:rouquier} for more details). By defining the action of $\CA$ on $T_i,T_i^{-1}$ and using the coproduct $\Delta$ we can extend it to Rouquier complexes of arbitrary braids, and arrive at the following result:

\begin{theorem}
\label{thm:intro action}
For an arbitrary braid $\beta$ on $n$ strands there is an action of $\CA$ on the Rouquier complex $T_{\beta}$. Here the action of $x_i$ is standard and the action of $x'_i$ is twisted by the action of the permutation $w({\beta})$ corresponding to $\beta$.
The action of $\CA$ is well defined and is invariant under Reidemeister moves up to weak equivalence.
\end{theorem}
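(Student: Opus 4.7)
The plan is to build the $\CA$-action on $T_\beta$ inductively in the length of $\beta$, using the coproduct $\Delta$ to glue, and then to verify Reidemeister invariance. The base case is to construct the action on the elementary Rouquier complexes $T_i$ and $T_i^{-1}$ by hand. On these 2-term complexes of Soergel bimodules the left action of $x_j$ is multiplication on the left factor; the right action of $x'_j$ is multiplication on the right factor, which coincides with the left action for $j\notin\{i,i+1\}$ and is twisted by $s_i$ for $j\in\{i,i+1\}$. With these actions fixed, $d(\xi_j)=x_j-x'_j$ forces $\xi_j=0$ for $j\notin\{i,i+1\}$, while for $j\in\{i,i+1\}$ one writes down an explicit homotopy using the Koszul-like structure of $T_i$. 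The generators $u_k$ are then found by filling in the secondary relations; since $\CA$ is a free resolution (Proposition \ref{prop: A resolution}), the obstructions to extending the action vanish.

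Given a factorization $\beta=\beta_1\beta_2$ one has $T_\beta=T_{\beta_1}\otimes_R T_{\beta_2}$ as complexes of $R$-bimodules. By Theorem \ref{thm:intro coproduct} the coproduct $\Delta:\CA\to\CA\otimes_R\CA$ promotes this tensor product to an $\CA$-module, and iterating this procedure along any factorization of $\beta$ into generators $\sigma_i^{\pm 1}$ yields an $\CA$-action on $T_\beta$. Associativity of $\Delta$ up to weak equivalence guarantees that different bracketings give weakly equivalent $\CA$-module structures, so the action is well defined. Tracking the right action of $x'_j$ through the chain of coproducts: each elementary factor $\sigma_i$ permutes the indices by $s_i$, and after composing the full word, $x'_j$ acts as $x_{w(\beta)(j)}$, which is the claimed twisting.

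For Reidemeister invariance, since $T_\beta$ for a braid is unaffected by R1, it suffices to treat R2, that is $T_iT_i^{-1}\simeq \id$, and R3, that is $T_iT_{i+1}T_i\simeq T_{i+1}T_iT_{i+1}$. In each case Rouquier's canonical homotopy equivalence of the underlying complexes of $R$-bimodules is explicit. One verifies that this equivalence intertwines the actions of $x_j$ and $x'_j$ on the nose (true by construction, since both sides have the same underlying bimodule), while the actions of $\xi_j$ and $u_k$ induced from the iterated coproducts on the two sides agree up to chain homotopies that vanish on the subalgebra generated by $x_j,x'_j$. These homotopies are constructed by obstruction theory, using once again that $\CA$ is a resolution so higher cohomology vanishes, and in particular the spaces of obstructions to extending the bimodule equivalence to an $\CA$-equivariant map are acyclic in the sense that controls weak equivalence.

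The main obstacle is the R3 compatibility. Both $T_iT_{i+1}T_i$ and $T_{i+1}T_iT_{i+1}$ are eight-term complexes of Soergel bimodules, and the $\CA$-action on each comes from the doubly iterated coproduct $(\Delta\otimes\id)\circ\Delta$ (or the other bracketing), which distributes every generator $\xi_j,u_k$ across all three tensor factors in a combinatorially intricate way. Showing that the explicit R3 isomorphism can be modified to an $\CA$-equivariant map up to weak equivalence is where the flexibility built into the notion of weak equivalence, namely that one is allowed to adjust by chain homotopies that die after restricting to the $x,x'$ subalgebra, is essential: it converts a potentially rigid equivariance requirement into a purely homological lifting problem, which is solvable because $\CA$ resolves $R$ over $R\otimes_{R^{S_n}}R$.
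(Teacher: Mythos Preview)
Your construction of the $\CA$-action (base case on $T_i^{\pm 1}$, then glue via the coproduct) matches the paper's approach, though the paper is more explicit: on $T_i^{\pm 1}$ one simply sets $u_k=0$ and checks directly that $d(u_k)$ acts by zero (Lemma~\ref{lem:action ti}), rather than appealing to obstruction theory.

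The real divergence, and the gap in your argument, is Reidemeister invariance.  You propose to take Rouquier's explicit homotopy equivalences for R2 and R3 and lift them to $\CA$-equivariant maps by obstruction theory, claiming the obstructions vanish ``because $\CA$ is a resolution so higher cohomology vanishes.''  This is not a valid argument as stated.  The fact that $\CA$ resolves $R$ over $B$ controls the homology of $\CA$ itself, but the obstructions to lifting a $B$-linear map $\phi:X\to Y$ to an $\CA_w$-morphism in the localized category live in a different place: by Proposition~\ref{prop:localization hochschild} they are governed by the Hochschild cohomology $\HH^\bullet_{\CA_w}(\Hom_B(X,Y))$, and this depends on $\Hom_B(X,Y)$, not just on $\CA$.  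Without further input there is no reason the relevant groups should vanish.

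The paper's argument (Lemma~\ref{lem:unique}) supplies exactly the missing ingredient: Rouquier complexes are \emph{invertible}, so for homotopy equivalent invertible $X,Y$ one has $\Hom^B_\bullet(X,Y)\simeq R$ as a complex, and then $\HH^0_{\CA_w}(R)=R$ because $\CA_w$ is concentrated in non-negative homological degrees.  This shows the forgetful map $\Hom_{\CA_w/B}(X,Y)\to\Hom_{B\text{-}\Mod}(X,Y)$ is an isomorphism, so \emph{any} $B$-linear homotopy equivalence lifts uniquely to an isomorphism in $\CA_w/B$.  No case-by-case R2/R3 analysis is needed.  Your outline would become correct if you inserted this invertibility argument in place of the vague obstruction claim; as written, the proof of invariance is incomplete.

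(A minor side remark: the sentence ``both sides have the same underlying bimodule'' is not right---$T_iT_i^{-1}$ and $R$ are homotopy equivalent, not isomorphic, as $B$-modules.)
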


\begin{example}
The minimal Rouquier complex for the full twist on two strands has the form $B\to B\to R$. The action of the dg algebra $\CA$ on it is shown on the following diagram:
\begin{equation}
\label{eq: intro full twist}
\begin{tikzcd}
B\arrow{r} & B \arrow{r} \arrow[bend left]{l}{\xi_{1}=-\xi_2}& R \arrow[bend right,swap]{ll}{u_2}
\end{tikzcd}
 \end{equation}
\end{example}

More generally, we have the following explicit description of $u_2$ (see Lemma \ref{lem: u2}):

\begin{theorem}
\label{thm: intro u2}
One can  present the action of  $u_2$ on a Rouquier complex $T_{\beta}$ explicitly as follows:
\begin{equation}
\label{eq: intro u2}
u_2=\sum_{i=1}^{n}\sum_{j<k} \xi_i^{(j)}\otimes \xi_{w_{jk}(i)}^{(k)}
\end{equation}
Here $\xi_i^{(j)}$ is the action of $\xi_i$ at the $j$-th crossing, and $w_{jk}$ is the permutation corresponding to the piece of the braid $\beta$ between $j$-th and $k$-th crossings.
\end{theorem}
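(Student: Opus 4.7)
The plan is to combine two ingredients: the coproduct $\Delta\colon \CA \to \CA \otimes_R \CA$ from Theorem~\ref{thm:intro coproduct} applied to $u_2$, and a base-case analysis of the $u_2$-action on a single crossing. My first step will be to compute $\Delta(u_2)$ explicitly. Assuming the natural extensions $\Delta(x_i)=x_i\otimes 1$, $\Delta(x'_i)=1\otimes x'_i$, and $\Delta(\xi_i)=\xi_i\otimes 1+1\otimes \xi_i$, the chain-map identity $d\circ\Delta=\Delta\circ d$ applied to $u_2$ forces
\[
\Delta(u_2)=u_2\otimes 1+1\otimes u_2+\sum_i \xi_i\otimes \xi_i
\]
up to a weakly nullhomotopic term. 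The verification is a direct Leibniz computation using $d(u_2)=\sum_i (x_i+x'_i)\xi_i$ together with the tensor-product identification $a x'_i\otimes b=a\otimes x_i b$.

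Next I will check that on a single-crossing Rouquier complex $T_i^{\pm 1}$, the operator $u_2$ acts as zero. This is forced by homological degree: $T_i^{\pm 1}$ is supported in only two consecutive homological degrees, while $u_2$ has homological degree $2$, so no nonzero component survives. Writing $\beta=\sigma_{i_1}^{\varepsilon_1}\cdots\sigma_{i_N}^{\varepsilon_N}$, I iterate $\Delta$ along the tensor decomposition $T_\beta=T_{i_1}^{\varepsilon_1}\otimes_R\cdots\otimes_R T_{i_N}^{\varepsilon_N}$. The $u_2\otimes 1$ and $1\otimes u_2$ branches of the iterated coproduct ultimately deposit a single $u_2$ onto some $T_{i_j}^{\varepsilon_j}$, which vanishes by the base case; what survives are precisely the binary cross-terms $\xi_\ell^{(j)}\otimes \xi_\ell^{(k)}$ for all pairs $1\le j<k\le N$.

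The final step is to reinterpret the indexing. The generator ``$\xi_\ell$'' inserted at the $k$-th tensor slot is a priori a generator of the $k$-th copy of $\CA$; its label is reckoned in terms of the left $R$-action on $T_{i_k}^{\varepsilon_k}$, which is identified with the right $R$-action on the preceding $k-1$ tensor factors, and the latter is twisted by the accumulated permutation. Consequently, the label $\ell$ on the $k$-th slot corresponds, when translated into the reference frame of the $j$-th slot, to $w_{jk}(\ell)$, where $w_{jk}$ is the permutation of the braid piece between the $j$-th and $k$-th crossings. Substituting this identification into the cross-terms produces the formula \eqref{eq: intro u2}. The main technical obstacle, I expect, lies in this last step: tracking carefully how the right-action twists compose through the iterated tensor product, and verifying that the permutation that emerges on the index is exactly $w_{jk}$ (not, say, its inverse or a conjugate thereof). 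Once the coproduct computation in the first paragraph is pinned down, the remaining steps are essentially formal.
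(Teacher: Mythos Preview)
Your proposal is correct and follows essentially the same route as the paper: compute $\Delta(u_2)=u_2\otimes 1+1\otimes u_2+\sum_i\xi_i\otimes\xi_i$, iterate, discard the $u_2$-summands on each tensor factor since $u_k$ acts by zero on a single crossing, and track the index twist through the tensor product. The paper resolves your stated technical obstacle by using the explicit isomorphism $\CA R_w\cong R_w\CA$ sending $\xi_i\mapsto\xi_{w^{-1}(i)}$, so that migrating the accumulated standard bimodule $R_{w_1\cdots w_{k-1}}$ past the $k$-th factor relabels $\xi_i$ as $\xi_{(w_1\cdots w_{k-1})^{-1}(i)}$; this is exactly the bookkeeping you anticipated.
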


\subsection{Action in link homology}

Next, we consider the impact of the dg algebra $\CA$ on Khovanov-Rozansky homology. Recall that the Khovanov-Rozansky homology $\HHH(L)$  of the link $L$ obtained as the closure of $\beta$ is defined as the homology of the complex $\HH(T_{\beta})$, where $\HH$ denotes Hochschild homology of Soergel bimodules. Since $\HH$ identifies the actions of $x_i$ and $x'_i$, we arrive at the following result.

\begin{theorem}
Consider the dg algebra:
\[
    \CCA_{c,n} = \C\left[(x_i)_{i=1}^c, (\xi_i)_{i=1}^c, (u_k)_{k=1}^n \;|\; d x_i=d\xi_i=0,\; d u_k = k \sum_{i=1}^c x_i^{k-1} \xi_i\right].
    \]
If $\beta$ is a braid on $n$ strands whose closure has $c$ connected components, then $\HH(T_{\beta})$ is a dg module over $\CCA_{c,n}$.
\end{theorem}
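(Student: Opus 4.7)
The plan is to obtain the $\CCA_{c,n}$-module structure on $\HH(T_{\beta})$ as an induced action: first use Theorem \ref{thm:intro action} to get $\CA$ acting on $T_{\beta}$, then push through the Hochschild homology functor, and finally repackage the generators according to the connected components of the closure of $\beta$.

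By Theorem \ref{thm:intro action}, $\CA$ acts on the Rouquier complex $T_{\beta}$ with $x_i$ acting by standard left multiplication and $x'_i$ by right multiplication twisted by the permutation $w(\beta)$. This is an action by dg bimodule maps, so applying the Hochschild functor $\HH$ produces an induced action of $\CA$ on the chain complex $\HH(T_{\beta})$. The defining feature of Hochschild homology is that it equates left and right $R$-actions, hence on $\HH(T_{\beta})$ we have $x_i = x'_i$, which after untwisting reads $x_i = x_{w(\beta)(i)}$. Consequently the even action factors through $c$ variables indexed by the orbits $O_1,\ldots,O_c$ of $w(\beta)$ on $\{1,\ldots,n\}$, which are in bijection with the connected components of the closure of $\beta$; the symmetric-function relations $f(\xx) = f(\xx')$ imposed in $\CA$ then hold automatically.

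To finish, I would define a dg algebra map $\CCA_{c,n} \to \End(\HH(T_{\beta}))$ by sending $x_\ell$ to $x_j$ for any $j \in O_\ell$ (well defined by the above), $\xi_\ell$ to $\sum_{j \in O_\ell} \xi_j$, and $u_k$ to $u_k$. The only nontrivial compatibility is the differential on $u_k$. Starting from $d u_k = \sum_{j=1}^n h_{k-1}(x_j, x'_j)\xi_j$ in $\CA$, the equality $x_j = x'_j$ on $\HH(T_{\beta})$ together with the elementary identity $h_{k-1}(x,x) = k\, x^{k-1}$ gives
\[
d u_k \;=\; k \sum_{j=1}^n x_j^{k-1} \xi_j \;=\; k \sum_{\ell=1}^c x_\ell^{k-1}\!\sum_{j \in O_\ell} \xi_j \;=\; k\sum_{\ell=1}^c x_\ell^{k-1}\xi_\ell,
\]
matching the defining differential of $\CCA_{c,n}$ and yielding the desired dg module structure.

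The substantive work has already been done in Theorem \ref{thm:intro action}; the remaining conceptual point, which I expect to be the only real hurdle, is the observation that Hochschild homology collapses the strand-indexed $x$'s and $\xi$'s of $\CA$ into component-indexed ones, with the $\xi$'s combining additively along each orbit. After that, the verification reduces to the symmetric-polynomial identity $h_{k-1}(x,x) = k x^{k-1}$.
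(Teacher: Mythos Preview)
There is a genuine gap in your computation of $du_k$. The $\CA$-action on $T_\beta$ from Theorem~\ref{thm:intro action} is $w$-twisted: in the body this is formalized by saying $T_\beta$ is a module over $\CA_w$ (Definition~\ref{def:Aw}), where $d\xi_j = x_j - x'_{w^{-1}(j)}$ and $du_k = \sum_j h_{k-1}(x_j, x'_{w^{-1}(j)})\xi_j$. Hochschild homology identifies the left and right $R$-actions, so on $\HH(T_\beta)$ the generators $x_j$ and $x'_j$ of $\CA_w$ coincide, but this yields
\[
du_k \;=\; \sum_{j=1}^n h_{k-1}\!\bigl(x_j,\, x_{w^{-1}(j)}\bigr)\,\xi_j,
\]
\emph{not} $k\sum_j x_j^{k-1}\xi_j$. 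Equivalently, in the untwisted-$\CA$ picture you use, the twisted action sends the generator $x'_j\in\CA$ to right multiplication by $x_{w^{-1}(j)}$, so after $\HH$ one has $x'_j = x_{w^{-1}(j)}$, not $x'_j = x_j$. Your assignment $u_k\mapsto u_k$ is therefore not compatible with the differential. A related issue: the operators $x_j$ and $x_{w^{-1}(j)}$ are only homotopic on $\HH(T_\beta)$ (via $\xi_j$), not equal, so the even action does not literally factor through the orbit variables; picking a representative $j_i$ in each orbit is a choice, not a consequence of an identification.

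The paper fixes exactly this by adding a correction term. In Proposition~\ref{prop: twisted projector} one defines a dg algebra map $\beta:\CCA_{c,n}\to \CCA_w$ by $\beta(x_i)=x_{j_i}$, $\beta(\xi_i)=\sum_{j\in C_i}\xi_j$, and $\beta(u_k)=u_k+(\text{correction})$, where the correction is an explicit expression in the $\xi_j$'s (built from $h_{k-2}$) that absorbs the discrepancy between $\sum_j h_{k-1}(x_j, x_{w^{-1}(j)})\xi_j$ and $k\sum_i x_{j_i}^{k-1}\sum_{j\in C_i}\xi_j$. Pulling back the $\CCA_w$-module $\HH(T_\beta)$ along $\beta$ then gives the $\CCA_{c,n}$-module structure. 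So your outline is right in spirit, but the correction on $u_k$ is precisely the content you are missing.
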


In fact, we prove in Proposition \ref{prop: twisted projector}  that $\CCA_{c,n}$ is quasi-isomorphic to the quotient of $\CA_n$ by the relations $x_i=x'_{w(i)}$. Next, we consider the dg algebra $\CCA_{c,\infty}$ which is the inverse limit of $\CCA_{c,n}$ as $n\to \infty$ and the homotopy category of dg modules over it localized by quasi-isomorphisms.\footnote{Note that this category is equivalent to the homotopy category of $A_{\infty}$-modules over $\CCA_{c,\infty}$ viewed as an $A_{\infty}$ algebra.} Again, we call isomorphic objects in the localized category weakly equivalent.

\begin{theorem}
\label{thm: intro markov}
Let $L$ be a link with $c$ components, presented as the closure of a braid $\beta$. Then its Khovanov-Rozansky complex $\HH(T_{\beta})$ is a dg module over $\CCA_{c,\infty}$, and this module structure does not depend on the presentation of $L$ as a braid closure up to weak equivalence.
\end{theorem}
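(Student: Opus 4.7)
The plan is to combine Theorem \ref{thm:intro action}, Proposition \ref{prop: twisted projector}, and functoriality of Hochschild homology with Markov's theorem on braid presentations of links. For the module structure itself: by Theorem \ref{thm:intro action}, $T_\beta$ carries an action of $\CA_n$ in which $x_i$ acts standardly and $x'_i$ acts twisted by $w(\beta)$. Applying $\HH$ identifies left and right $R$-actions on Soergel bimodules, so $x'_i$ becomes $x_{w(i)}$ and the resulting action factors through the quotient $\CA_n/(x_i-x'_{w(i)})$. By Proposition \ref{prop: twisted projector} this quotient is quasi-isomorphic to $\CCA_{c,n}$, giving $\HH(T_\beta)$ a $\CCA_{c,n}$-module structure; composing with the structure map $\CCA_{c,\infty}\to\CCA_{c,n}$ of the inverse system then yields the $\CCA_{c,\infty}$-module structure.

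For invariance, Markov's theorem reduces us to two types of moves. Conjugation invariance is essentially formal: for $\alpha,\beta \in \Br_n$, Theorem \ref{thm:intro action} produces a weak equivalence $T_\beta \simeq T_{\alpha\beta\alpha^{-1}}$ that is $\CA_n$-equivariant up to weak equivalence, so applying $\HH$ yields a weak equivalence of $\CCA_{c,n}$-modules, hence of $\CCA_{c,\infty}$-modules.

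Stabilization $\beta \mapsto \beta \sigma_n^{\pm 1} \in \Br_{n+1}$ is the main content. Here one must compare a $\CCA_{c,n}$-module with a $\CCA_{c,n+1}$-module, which only makes sense once both are pulled back along the structure maps $\CCA_{c,\infty} \to \CCA_{c,n}$ and $\CCA_{c,\infty} \to \CCA_{c,n+1}$. The plan is to use the standard chain-level Markov formula expressing $\HH(T_{\beta\sigma_n^{\pm 1}})$ in terms of $\HH(T_\beta)$ up to grading shift, then to verify that this chain-level equivalence intertwines every generator of $\CCA_{c,\infty}$, possibly only up to a coherent system of higher homotopies in the sense of an $A_\infty$-morphism (compare the footnote after Theorem \ref{thm: intro markov}). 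The generators $x_i$ and $\xi_i$ for $i \le c$ match obviously; the non-trivial check is for the $u_k$, for which one uses the explicit formula \eqref{eq: intro u2} and its higher-degree analogues.

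The main obstacle is therefore the $u_k$-equivariance of the Markov stabilization equivalence. The extra crossing $\sigma_n^{\pm 1}$ and the extra strand introduce new summands $\xi_i^{(j)} \otimes \xi_{w_{jk}(i)}^{(k)}$ into \eqref{eq: intro u2} on the $(n+1)$-strand side, and one must show that after restricting to the stabilization-invariant part these assemble into the image of the same abstract $u_k \in \CCA_{c,\infty}$ under the two structure maps. Passing to the inverse limit is precisely what places the $\CCA_{c,n}$- and $\CCA_{c,n+1}$-actions in a common category in which weak equivalence can serve as a meaningful equivariance statement. We anticipate that the requisite higher homotopies witnessing this equivariance can be constructed via a homological perturbation argument applied to the explicit Markov-move chain equivalence, transferring the strict $\CCA_{c,n+1}$-action on the large side to an $A_\infty$-action on the stabilization-invariant side that is strictly equivalent to the original $\CCA_{c,n}$-action after pullback to $\CCA_{c,\infty}$.
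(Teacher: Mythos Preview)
Your treatment of conjugation contains a genuine error: $T_\beta$ and $T_{\alpha\beta\alpha^{-1}}$ are \emph{not} weakly equivalent as complexes of bimodules (their underlying permutations are conjugate, not equal, and Rouquier complexes for distinct braids are not homotopy equivalent). Conjugation invariance only appears after taking Hochschild homology, via the trace property $\HH(X\otimes_R Y)\cong\HH(Y\otimes_R X)$. The paper handles this in Proposition~\ref{prop:cyclicity}, showing that this cyclic isomorphism intertwines the $\CCA$-actions up to an explicit algebra isomorphism $\tau_{v,w}$; the argument is not ``formal'' but relies on Lemma~\ref{lem:algebra homomorphisms}, comparing two algebra maps $\CCA_{vw}\to\HH_0(\CA_w\otimes_R\CA_v)$ by checking they agree after composing with a quasi-isomorphism.

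For stabilization, your plan is too vague to constitute a proof, and it misses the main structural ingredient. The ``structure map $\CCA_{c,\infty}\to\CCA_{c,n}$'' is not canonical: it depends on the composition $\mu(w)$ recording cycle lengths, and the paper has to construct these maps $\Phi_\mu^N$ explicitly (Proposition~\ref{prop:limit}) and prove they are compatible as $n$ varies. The paper does \emph{not} verify $u_k$-equivariance of the Markov equivalence by comparing terms in formula~\eqref{eq: intro u2} and building higher homotopies by perturbation. Instead it factors the Markov move through the Koszul object $K_{n,n+1}$ (Proposition~\ref{prop:markov 1}), using that the skein triangle of Lemma~\ref{lem:skein} is already $\CA$-linear, and then shows (Proposition~\ref{prop:markov 2}) that tensoring with $K_{n,n+1}$ and pulling back along $\Phi_{\mu(w')}^{N}$ matches pulling back along $\Phi_{\mu(w)}^{N}$ by exhibiting a commutative diagram of dg algebra maps and again invoking Lemma~\ref{lem:algebra homomorphisms}. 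All equivariance checks are thus reduced to equalities of algebra homomorphisms after composing with a weak equivalence, rather than to homotopies between module maps.
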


\subsection{Relation to $y$-ification}

A part of Theorem \ref{thm:intro action} implies that there is an action of operators $\xi_i$ of $(A,Q,T)$ degree $(0,2,1)$ on an arbitrary Rouquier complex $T_{\beta}$
such that $d(\xi_i)=x_i-x'_{w^{-1}(i)}$, so that $\xi_i$ is a homotopy between $x_i$ and $x'_{w^{-1}(i)}$. Here $w$ denotes the permutation associated to the braid $\beta$. Such homotopies were considered before as ``dot-sliding homotopies" \cite{BS,CK,S} and played an important role in the construction of the $y$-ified Khovanov-Rozansky homology by the first and second authors in \cite{GH}. Namely, we tensor the Rouquier complex with the polynomial ring $\C[y_1,\ldots,y_n]$ and deform the differential $d$ to
$$
d_y=d+\sum_{i=1}^{n} \xi_i y_i 
$$
Here the formal variables $y_i$ have  $(A,Q,T)$-degrees $(0,-2,-2)$, so that the differential $d_y$ is homogeneous of degree $(0,0,-1)$.
The paper \cite{GH} also defined the $y$-ified link homology $\HY(\beta)$ by taking the Hochschild homology of the $y$-ified Rouquier complex for a braid $\beta$. Here we prove the following:

\begin{theorem}
\label{thm:intro Dk}
For any braid $\beta$ with the associated permutation $w$ there exist operators
$$
F_k:=\sum_{i=1}^{n} h_{k-1}\left(x_i,x'_{w^{-1}(i)}\right)\frac{\partial}{\partial y_i}+u_k
$$
 of $(A,Q,T)$-degree $(0,2k,2)$ on the $y$-ified Rouquier complex
such that
$$
[d_y,F_k]=0,\ [F_k,F_{m}]=0,\ [F_k,x_i]=0,\ [F_k,y_i]=h_{k-1}\left(x_i,x'_{w^{-1}(i)}\right).
$$
\end{theorem}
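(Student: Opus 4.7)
The plan is to verify the four identities by direct super-commutator computation on the $y$-ified complex $T_{\beta} \otimes \C[y_1,\ldots,y_n]$, using the $\CA$-action on $T_{\beta}$ from Theorem \ref{thm:intro action} (with its $w$-twist of the action of the $x'_i$) and the standard Weyl relations $[\partial/\partial y_i,\, y_j]=\delta_{ij}$. Extending the $\CA$-action $\C[y]$-linearly gives in particular the operator identity $d(u_k) = \sum_i h_{k-1}(x_i,x'_{w^{-1}(i)})\xi_i$, together with the super-commutation relations $[u_k,u_m]=0$ and $[\xi_j,u_k]=0$ inherited from the super-commutativity of the generators of $\CA$.

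Two of the four relations are essentially free. The identity $[F_k,x_i]=0$ holds because $x_i$ commutes with every polynomial in $x,x'$ and with $u_k$ inside $\CA$, and does not interact with any $\partial/\partial y_j$. The identity $[F_k,y_i]=h_{k-1}(x_i,x'_{w^{-1}(i)})$ comes from the single nonzero contribution $[h_{k-1}(x_i,x'_{w^{-1}(i)})\,\partial/\partial y_i,\, y_i]$. For $[F_k,F_m]=0$, the two first-order pieces commute because all the $h$-coefficients commute with one another and with every $\partial/\partial y_j$; the relation $[u_k,u_m]=0$ holds in $\CA$; and the cross terms $[u_k, h_{m-1}\partial/\partial y_j]$ vanish since $u_m$ is $y$-independent and commutes with $x,x'$.

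The main step is $[d_y,F_k]=0$, and the form of $F_k$ is engineered precisely to make this work. Write $d_y = d + \sum_j \xi_j y_j$. The commutator $[d,F_k]$ reduces to $[d,u_k] = d(u_k) = \sum_i h_{k-1}(x_i,x'_{w^{-1}(i)})\xi_i$, since $d$ annihilates $x,x',y$, and $\partial/\partial y$. The commutator $[\sum_j \xi_j y_j,\, F_k]$ vanishes on the $u_k$ summand (using $[\xi_j,u_k]=0$ and the centrality of $y_j$); on the first-order summand, the Weyl relation yields $[\partial/\partial y_i, \xi_j y_j]=\delta_{ij}\xi_j$, producing a contribution $-\sum_i h_{k-1}(x_i,x'_{w^{-1}(i)})\xi_i$, which cancels against $[d,F_k]$.

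The main conceptual content of the theorem lies not in this short verification, but in the prior construction of $\CA$ and its action on the Rouquier complex (Theorem \ref{thm:intro action}): the generator $u_k$ with differential $\sum_i h_{k-1}(x_i,x'_i)\xi_i$ in $\CA$ was introduced precisely so that including it as a correction term in $F_k$ absorbs the naive failure of $\sum_i h_{k-1}(x_i,x'_{w^{-1}(i)})\,\partial/\partial y_i$ to be a chain map for $d_y$. Once $\CA$ and its action are in hand, the four identities are effectively forced upon us.
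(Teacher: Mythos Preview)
Your proposal is correct and follows essentially the same approach as the paper: a direct super-commutator computation on $\CY(T_\beta)$ using the $\CA_w$-module structure from Theorem~\ref{thm:intro action}. The paper's proof (Theorem~\ref{thm:y braid action}) is in fact briefer than yours---it writes out only the verification of $[d_y,F_k]=0$ via the one-line computation $[d,u_k]-\sum_i h_{k-1}(x_i,x'_{w^{-1}(i)})\xi_i=0$ and leaves the remaining three identities implicit---so your more thorough treatment of all four relations is, if anything, a slight expansion of the same argument.
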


In particular, $F_k$ is a chain map and hence defines an interesting endomorphism of $\HY(\beta)$ . Following Theorem \ref{thm: intro markov} we prove the following result:

\begin{theorem}
\label{thm: intro yified markov}
Suppose that $L$ is a link with $c$ components. There is an action of  operators $x_1,\ldots,x_c,y_1,\ldots,y_c,F_k\ (k\ge 1)$ on the $y$-ified homology $\HY(L)$ satisfying the equations
$$
[F_k,F_{m}]=0,\ [F_k,x_i]=0,\ [F_k,y_i]=kx_i^{k-1}.
$$
This action is a link invariant and does not depend on a presentation of $L$ as a braid closure.
\end{theorem}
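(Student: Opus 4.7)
The plan is to derive the action on $\HY(L)$ from Theorem \ref{thm:intro Dk} via Hochschild homology, then to establish invariance under braid presentation along the lines of Theorem \ref{thm: intro markov}. Throughout, let $\beta$ be a braid on $n$ strands whose closure is $L$, and let $w=w(\beta)$ be the associated permutation, whose cycles correspond to the $c$ components of $L$.

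First, I would apply Theorem \ref{thm:intro Dk} to the $y$-ified Rouquier complex to obtain operators $F_k$ commuting with $d_y$. Taking Hochschild homology with respect to the underlying $R$-$R$-bimodule structure produces endomorphisms of $\HY(\beta)=\HH(T_\beta\otimes\C[y_1,\ldots,y_n])$. On $\HH$ the left and right $R$-actions are identified, so $x_i=x'_{w^{-1}(i)}$ on $\HY(\beta)$; iterating along $w$-orbits yields $x_i=x_{w(i)}$, so the polynomial action factors through $\C[x_1,\ldots,x_c]$, one variable per component. The $y$-variables similarly collapse to $y_1,\ldots,y_c$; this is built into the $y$-ification framework of \cite{GH} at the level of link homology. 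Substituting $x_i=x'_{w^{-1}(i)}$ in the coefficient from Theorem \ref{thm:intro Dk} and using the identity $h_{k-1}(x,x)=kx^{k-1}$ yields $[F_k,y_i]=kx_i^{k-1}$; the remaining relations $[F_k,F_m]=0$ and $[F_k,x_i]=0$ descend directly from the complex-level identities.

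Next, to show that the action is a link invariant, I would extend the strategy of Theorem \ref{thm: intro markov} to the $y$-ified setting. Markov I (conjugation $\beta\mapsto\alpha\beta\alpha^{-1}$) cyclically permutes strands within their $w$-orbits, so after the identifications above it has no effect on the reduced action. For Markov II (stabilization $\beta\mapsto\beta\sigma_n^{\pm 1}$), the added strand joins an existing $w$-cycle upon closure, so the new variables $x_n$ and $y_n$ get absorbed into those of the merged component. The content is that the operator $F_k$ is preserved under stabilization up to weak equivalence; this should follow by combining naturality of the $\CA$-action (Theorem \ref{thm:intro action}) under composition of braids with a local Markov II computation on the single-crossing stabilization, analogous in spirit to the diagram (\ref{eq: intro full twist}).

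The main obstacle is the Markov II invariance for $F_k$. Since, by the analog of formula (\ref{eq: intro u2}), $F_k$ is defined as a sum over all crossings of $\beta$ plus the derivation $\sum_i h_{k-1}(x_i,x'_{w^{-1}(i)})\partial/\partial y_i$, stabilization introduces new summands both from the added crossing and from the new $y_n$-derivation, which must cancel or match the stabilized expression up to weak equivalence. Controlling these extra contributions likely requires a careful chain-level analysis on a single positive/negative crossing stabilization, together with the $A_\infty$-module perspective on $\CCA_{c,\infty}$-modules alluded to in the footnote to Theorem \ref{thm: intro markov}, in order to precisely track the higher-homotopy corrections that implement the weak equivalences on $\HY(L)$.
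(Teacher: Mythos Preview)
Your construction of the operators and verification of the commutation relations (first paragraph) is correct and matches the paper. The gap is in the invariance argument.

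You propose to verify Markov invariance for the $F_k$ directly, and you correctly flag this as the ``main obstacle'': controlling the new summands in $u_k$ coming from the added crossing, together with the new $\partial/\partial y_n$ term, would require tracking higher homotopies by hand. The paper avoids this entirely. The point is that invariance has \emph{already} been proved at the level of the $\CCA_{c,\infty}$-module structure on $\HH(T_\beta)$ (Theorem~\ref{thm: intro markov}, i.e.\ Theorem~\ref{thm:link invariant}); what remains is to show that this module structure determines the $F_k$ action on $\HY$. This is done by Koszul duality (Theorem~\ref{thm:koszul}): the category $\CA/B$ embeds fully faithfully into a category of curved modules over a dual algebra in variables $y_i,\nu_k$, and the image of an $\CA$-module $X$ is the \emph{extended $y$-ification}
\[
X_{y,\nu}=\Big(X[y_1,\ldots,y_n,\nu_1,\ldots,\nu_n],\; d+\sum_i y_i\xi_i+\sum_k \nu_k F_k\Big).
\]
Collecting the terms linear in $\nu_k$, any morphism in $\CA/B$ (in particular any weak equivalence) induces a morphism of $y$-ifications commuting with $F_k$ up to homotopy (Corollary~\ref{cor:extra structures}). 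Since the weak equivalences realizing Markov invariance live in $\CCA_{c,\infty}/\C$ (equivalently $\CCA_{c,\infty}/R_c$ by Proposition~\ref{prop:two versions of the category}), invariance of the $F_k$ action follows automatically---no separate chain-level Markov computation for $F_k$ is needed.

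In short: rather than proving Markov invariance twice (once for the $\CCA$-structure, once for $F_k$), the paper packages the $F_k$ into the $\CCA$-module structure via Koszul duality and inherits invariance from Theorem~\ref{thm: intro markov}. Your proposal is missing this reduction, and without it the direct Markov~II verification you outline remains an open-ended computation.
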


\subsection{Basic objects}
In Lemma \ref{lem:skein} we interpret the skein relation
$$
[T_i\to T_i^{-1}]\simeq [R\to R]
$$
as a distinguished triangle of $\CA$-modules involving the so-called Koszul objects, which already appeared in \cite{GH}. Applying the skein relations and Markov moves we can simplify links until the invariant of any link is represented as an iterated cone of products of Koszul objects, see Proposition \ref{prop:reduction 1}. We call such products \emph{basic objects}. They are completely classified in Section \ref{sec:basic objects classification}, we give a formula for the action of $u_k$ on them in Proposition \ref{prop:basic object}, and their $y$-ification is given explicitly in Section \ref{sec:yified building blocks}. Combinatorially, basic objects are classified by the same data as Riemann surfaces with boundary on the link, and we think of the decomposition into basic objects as an algebraic counterpart of the stratification in \cite{Mchar}.

\subsection{Hard Lefshetz and symmetry}

Finally, we apply all of  the above results to prove Conjecture \ref{conj: DGR}. By Theorem \ref{thm:intro Dk} there is an operator
$$
F_2=\sum_{i=1}^{n}\left(x_i+x'_{w^{-1}(i)}\right)\frac{\partial}{\partial y_i}+u_2
$$
 of degree $(A,Q,T)=(0,4,2)$ acting on the $y$-ified Khovanov-Rozansky homology $\HY(\beta)$.

\begin{theorem}
\label{thm:intro y symmetry}
Suppose that $j\ge 0$. Then the operator $(F_2)^{j}$ satisfies a ``hard Lefshetz" property, that is, yields isomorphisms:
$$(F_2)^{j}:\HY_{i,-2j,k}(\beta)\xrightarrow{\sim} \HY_{i,2j,k+2j}(\beta)$$
for all $i,j,k$ and an arbitrary braid $\beta$.
\end{theorem}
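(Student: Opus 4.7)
My plan is to transport the strategy of the third author's proof of curious hard Lefschetz for character varieties \cite{Mchar} into this algebraic setting, with the decomposition into basic objects playing the role of the geometric stratification. The proof will have three main stages: reduction to basic objects, a direct verification of hard Lefschetz on basic objects, and propagation through iterated cones built from skein triangles.

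For the first stage, I invoke Proposition \ref{prop:reduction 1}: any link invariant $\HY(\beta)$ is weakly equivalent to an iterated cone whose leaves are basic objects, i.e.\ products of Koszul objects. The reduction is built from the skein distinguished triangle of Lemma \ref{lem:skein} together with Markov moves, and both of these operations respect the $\CA$-module structure. Since $F_2$ is constructed from the $\CA$-action (and its $y$-ification), the reduction is compatible with the operator $F_2$ whose Lefschetz property we wish to establish.

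For the second stage, I use the explicit description of the $y$-ification of basic objects from Section \ref{sec:yified building blocks} together with the formula for $u_k$ on a basic object from Proposition \ref{prop:basic object} to write $F_2$ down explicitly. A basic object has the underlying structure of a tensor product of Koszul complexes coupled to the polynomial ring in the $y$-variables, so one expects $F_2$ to act as an explicit first-order differential operator on a bigraded (super)polynomial algebra. On such an algebra hard Lefschetz is a combinatorial statement: I would exhibit an $\mathfrak{sl}_2$-triple $(E,H,F_2)$ where $H$ is (twice) the $q$-grading operator and $E$ is a "lowering" operator assembled from $y_i\partial/\partial x_i$ and the Koszul contractions dual to those used in $F_2$. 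Once the $\mathfrak{sl}_2$-relations $[E,F_2]=H$, $[H,F_2]=2F_2$, $[H,E]=-2E$ are verified on the generators, finite-dimensionality of the $q$-graded pieces and the representation theory of $\mathfrak{sl}_2$ force the required isomorphism $(F_2)^j:\HY_{i,-2j,k}\xrightarrow{\sim}\HY_{i,2j,k+2j}$ on each basic object.

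The third stage is the main obstacle, since hard Lefschetz is not in general preserved under extensions. Here one has to exploit the specific structure of the skein triangle $[T_i\to T_i^{-1}]\simeq[R\to R]$: both the cone and its two pieces satisfy hard Lefschetz with respect to the same $F_2$, and the connecting maps in the associated long exact sequence in $\HY$ shift the $q$-grading by a prescribed amount. The plan is to show, by induction on the number of crossings, that the $\mathfrak{sl}_2$-triple constructed on the basic objects extends through the skein cones, so that what one really proves is the stronger statement that $\HY(\beta)$ carries an $\mathfrak{sl}_2$-action with $F_2$ as the raising operator and $H$ given by the $q$-grading. Compatibility of this $\mathfrak{sl}_2$-action with the skein distinguished triangle is then the crucial technical lemma, and I expect it to follow from the fact that the connecting morphism in the skein triangle is itself built from the generators $\xi_i$ of $\CA$, whose brackets with $E$ and $F_2$ are controlled by the dg-algebra relations of $\CA$. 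Specializing the resulting $\mathfrak{sl}_2$-action to Theorem \ref{thm:intro yified markov} then yields Theorem \ref{thm:intro y symmetry}, and passing to the reduced unyified homology for knots recovers Theorem \ref{thm:intro symmetry}.
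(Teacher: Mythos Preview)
Your first two stages match the paper's argument closely: reduction to basic objects via Proposition \ref{prop:reduction 1}, then explicit verification of Lefschetz on the $y$-ified basic objects using the formulas of Section \ref{sec:yified building blocks} together with the tensor-product compatibility of Lefschetz operators (Lemmas \ref{lem:Lefshetz product}, \ref{lem:lef poly}, \ref{lem:lef wedge}).

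Your third stage, however, takes a harder and ultimately unnecessary route. You correctly flag that hard Lefschetz is not preserved under arbitrary extensions, and then propose to fix this by propagating the full $\mathfrak{sl}_2$-triple $(E,H,F_2)$ through the skein cones. This is problematic: the operator $E$ you would build on each basic object (say via Jacobson--Morozov, or by an explicit formula) is not canonical, and there is no a priori reason the connecting maps in the iterated cone should intertwine these various $E$'s. Your ``crucial technical lemma'' about $\mathfrak{sl}_2$-compatibility of the skein triangle is exactly the point where this plan would stall.

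The paper sidesteps this entirely. The key observation is that $F_2$ is defined \emph{globally} on the whole complex from the outset, via the $\CA$-module structure on $T_\beta$ (Theorem \ref{thm:braid action}) and its $y$-ification, and all the maps in the twisted-complex decomposition of Proposition \ref{prop:reduction 1} are $\CA$-linear (hence $F_2$-equivariant) up to homotopy by Corollary \ref{cor:extra structures}. Given this, Lemma \ref{lem:5 lemma} shows directly via the five-lemma that the Lefschetz property of a single compatible operator $F$ propagates through short exact sequences: if $F_A$ and $F_C$ are Lefschetz and commute with the maps, then $F_B$ is Lefschetz. No global $E$ is needed; the $\mathfrak{sl}_2$-action is recovered only at the very end, on homology, via Jacobson--Morozov (Lemma \ref{lem:Jacobson Morozov}). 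So the order of operations is reversed from what you propose: Lefschetz first, $\mathfrak{sl}_2$ afterward, not the other way around.
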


Analogously to \cite{Mchar}, the Lefschetz property is proved for basic objects directly, and then the theorem is deduced from Lemma \ref{lem:5 lemma}.

\begin{corollary}
\label{cor:sl2 intro}
There is an action of the Lie algebra $\mathfrak{sl}_2=\langle E,F,H\rangle$ on $\HY(\beta)$ where $F=F_2$ and $H$ acts on $\HY_{i,2j,k}(\beta)$ by a scalar $j$. This action intertwines $x_i$ and $y_i$ such that $\C[x_1,\ldots,x_c,y_1,\ldots,y_c]\simeq (S^{\ast}V)^{\otimes c}$ where $V$ is the defining representation of  $\mathfrak{sl}_2$.
\end{corollary}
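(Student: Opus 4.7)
The strategy is to construct the lowering operator $E$ via the classical Lefschetz decomposition coming from Theorem~\ref{thm:intro y symmetry}, then pin down its commutators with $x_i$ and $y_i$ using only the relations recorded in Theorem~\ref{thm: intro yified markov}. Define the primitive subspace
\[
\mathcal{P} = \bigoplus_{m \geq 0}\bigl(\ker(F_2^{m+1}) \cap \HY_{*,-2m,*}(\beta)\bigr).
\]
Hard Lefschetz produces the decomposition $\HY(\beta) = \bigoplus_{k \geq 0} F_2^k\, \mathcal{P}$; set $E|_{\mathcal{P}}=0$ and extend by the standard $\mathfrak{sl}_2$-formula $E(F_2^k p) := -k(m-k+1)\, F_2^{k-1}\, p$ for $p \in \mathcal{P} \cap \HY_{*,-2m,*}$. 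A direct verification shows that $(E, H, F_2)$ is an $\mathfrak{sl}_2$-triple with $E$ of tridegree $(0,-4,-2)$.

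To derive the commutators of $E$ with $x_i$ and $y_i$, one first invokes complete reducibility: since $\HY(\beta)$ is finitely generated over $\C[x_1,\ldots,x_c,y_1,\ldots,y_c]$ by \cite{GH}, each tridegree is finite-dimensional, and combined with hard Lefschetz this makes $\HY(\beta)$ a direct sum of finite-dimensional irreducible $\mathfrak{sl}_2$-representations $V_\alpha$. Consequently $\End(\HY(\beta)) = \prod_{\alpha,\beta}\Hom(V_\alpha, V_\beta)$, where each factor is a finite-dimensional $\mathfrak{sl}_2$-representation under the adjoint action, and by Clebsch--Gordan its $\mathrm{ad}(F_2)$-highest-weight vectors have non-negative $H$-weight. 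Now set $\phi := [E, x_i] + \tfrac{1}{2} y_i$. A short Jacobi computation using $[F_2, x_i] = 0$ and $[F_2, y_i] = 2 x_i$ yields $[F_2, \phi] = 0$ and $[H, \phi] = -\phi$, so each component of $\phi$ in $\Hom(V_\alpha, V_\beta)$ would be an $\mathrm{ad}(F_2)$-highest-weight vector of $H$-weight $-1$, forcing $\phi = 0$ and hence $[E, x_i] = -\tfrac{1}{2} y_i$. Feeding this into the analogous computation for $\psi := [E, y_i]$ gives $[F_2, \psi] = y_i + 2[E, x_i] = 0$ and $[H, \psi] = -3\psi$, whence $\psi = 0$ by the same principle.

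The resulting relations $[H, x_i] = x_i$, $[H, y_i] = -y_i$, $[F_2, x_i] = 0$, $[F_2, y_i] = 2x_i$, $[E, x_i] = -\tfrac{1}{2} y_i$, $[E, y_i] = 0$ identify $\mathrm{span}(x_i, y_i)$ as an $\mathfrak{sl}_2$-subrepresentation of $\End(\HY(\beta))$ isomorphic to the defining representation $V$ (with $x_i$ of weight $+1$ and $y_i$ of weight $-1$). Since the $x_i$ and $y_i$ mutually commute, the polynomial subalgebra they generate is the symmetric algebra $S^*(V^{\oplus c}) \simeq (S^*V)^{\otimes c}$. The main obstacle is the complete reducibility step, which ultimately reduces to the finite generation of $y$-ified homology over the polynomial ring in $x_i, y_i$ proved in \cite{GH}; once this is in place, the rest of the argument is a formal Jacobi-plus-weights manipulation.
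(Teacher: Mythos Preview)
Your argument is correct and follows the paper's intended route. The construction of $E$ from the primitive decomposition is exactly the Jacobson--Morozov argument the paper records as Lemma~\ref{lem:Jacobson Morozov}; the paper states the corollary immediately after Theorem~\ref{thm:intro y symmetry} and does not write out a separate proof, so your write-up is essentially the missing details. The one part you supply that the paper leaves implicit is the computation of $[E,x_i]$ and $[E,y_i]$ via the highest-weight trick in $\End(\HY(\beta))$; this is the natural way to deduce those commutators from $[F_2,x_i]=0$, $[F_2,y_i]=2x_i$, and it is sound because the primitive decomposition already exhibits $\HY(\beta)$ as a direct sum of finite-dimensional irreducibles (so no extra appeal to complete reducibility is needed beyond what hard Lefschetz gives). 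The finite-dimensionality input you cite from \cite{GH} matches the hypothesis of Lemma~\ref{lem:Jacobson Morozov}: both $x_i$ and $y_i$ increase $j-2k$ by $2$, so finite generation over $\C[x_i,y_i]$ forces each slice $\bigoplus_{j-2k=N}\HY_{j,k}$ to be finite-dimensional.
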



We deduce Theorem \ref{thm:intro symmetry} from Theorem \ref{thm:intro y symmetry} as follows. Recall \cite{GH} that for a knot $K$ one can write
$\HY(K)=\oHHH(K)\otimes \C[x,y]$, where $x=\sum x_i$ and $y=\sum y_i$. Since $[F_2,y]=2x$ and $[F_2,x]=0$, it is easy to see that one can write  $F_2=\overline{F_2}+2x\frac{\partial}{\partial y}$ for some operator $\overline{F_2}$ on $\oHHH(K)$, and $\overline{F_2}^{j}$ induces an isomorphism between $\oHHH_{i,-2j,k}(K)\simeq \oHHH_{i,2j,k+2j}(K)$.

\begin{remark}
After this paper appeared on the arXiv, Chandler and the first author proved in \cite{CG} that the operators $F_k$ commute with the differentials in the Rasmussen spectral sequence \cite{RasDiff} from triply graded Khovanov-Rozansky homology to $\mathfrak{sl}_N$ homology, and hence yield well-defined operations in $\mathfrak{sl}_N$ homology. 

The action of $E$, however, does not commute with Rasmussen differentials, and both the action of $\mathfrak{sl}_2$ from Corollary \ref{cor:sl2 intro} and the symmetry from Theorem \ref{thm:intro symmetry} are specific to the triply graded homology.
See \cite{CG} for more details and examples.
\end{remark}

\subsection{Structure of the paper}

In Section \ref{sec:homotopy theory} we recall several important constructions in homological algebra which are probably well-known in the case of a dg algebra over a field, but for which we were not able to find a reference for the relative setting. In particular, we study the localization of the category of dg modules over a dg algebra $\CA$ by the class of morphisms which admit $\CB$-linear homotopy inverses (but not necessary $\CA$-linear homotopy inverses), where $\CB$ is a subalgebra of $\CA$. We give a concise description of the corresponding dg category as a full subcategory of the homotopy category of $\CA$-modules in Theorem \ref{thm:localization}. Homomorphisms in this category are related to the Hochschild homology in Proposition \ref{prop:localization hochschild}. We also study the pullbacks of dg modules along dg algebra homomorphisms $\CA\to \CA'$ which preserve $\CB$ and give a sufficient condition when  pullbacks induce equivalences of categories (Lemma \ref{lem:algebra weak equivalence}) and under which pullbacks over different homomorphisms produce weakly equivalent modules (Lemma \ref{lem:algebra homomorphisms}). Finally, we formulate the Koszul duality for such categories in the special case of a linear dg algebra in Theorem \ref{thm:koszul}, which is later used to understand the connection between $\CA$-modules and $y$-ifications.

In Section \ref{sec:algebra} we define the dg algebra $\CA$ and study its properties. We construct an explicit coproduct on $\CA$ in Section \ref{sec:coproduct}, and prove Theorem \ref{thm:intro coproduct}.
The results of Section \ref{sec:homotopy theory} are applied to the algebra $\CA$ and its subalgebra $\CB=R\otimes_{R^{S_n}}R$. 



In Section \ref{sec:modules} we define Rouquier complexes and construct the action of $\CA$ on them, proving Theorem \ref{thm:intro action}.  We compute the action of $u_2$ explicitly and prove equation \eqref{eq: intro u2} in Lemma \ref{lem: u2}.

In Section \ref{sec:markov} we study invariance under the Markov moves and prove Theorem \ref{thm: intro markov} (Theorem \ref{thm:link invariant}).

 In Section \ref{sec:yified} we extend these results to $y$-ified Rouquier complexes and prove Theorem \ref{thm:intro Dk} (Corollary \ref{cor: full Markov}).

Finally, in Section \ref{sec:Lefshetz} we study various useful properties of complexes with Lefshetz endomorphisms and conclude the proof of Theorem \ref{thm:intro y symmetry} (Theorem \ref{thm:full lefschetz}).

The two appendices discuss ``higher $A_{\infty}$ coproducts'' on $\CA$, and their relation to group cohomology. We believe that the $\CA$-algebra structure we construct here is a manifestation of the existence of tautological classes on character varieties constructed via transgression and group homology. Although there is a visible similarity between formulas in the appendices, the problem of finding a direct connection remains open.

\section*{Acknowledgments}

The authors are thankful to Michael Abel, Roger Casals, Alex Chandler, Mikhail Gorsky, Bernhard Keller, Nitu Kitchloo, Alexei Oblomkov, Jacob Rasmussen, Lev Rozansky, Vivek Shende, Jose Simental Rodriguez and Paul Wedrich for many useful discussions.   E. G. was partially supported by the NSF grants DMS-1700814, DMS-1760329 and DMS-2302305.  M. H. was supported by NSF grant DMS-2034516.    A. M. was supported by the projects Y963-N35 and
P-31705 of the Austrian Science Fund, and by the consolidator grant No. 101001159 of the European Research Council.

\section{Homotopy theory}\label{sec:homotopy theory}

\subsection{Basic definitions}
Discussing modules and algebras endowed with several different gradings, there will always be one special grading called the homological grading and that grading will be responsible for the sign rules\footnote{Of course, only the parity of that grading matters for the sign rules.}. The homological degree of a homogeneous element $x$ is denoted by $|x|=\deg_h x$. In expressions involving degrees we assume, but do not always mention, that the respective elements are homogeneous. All dg algebras are assumed to be unital. The differential has degree $-1$.

Let $\CA$ be a dg algebra over $\C$. We consider the category of dg modules over $\CA$, which we simply call $\CA$-modules. Thus an $\CA$-module $X$ is a sequence of vector spaces $(X_i)_{i\in\Z}$ endowed with a differential $d:X_i\to X_{i-1}$ and an $\CA$-action $\CA \times X \to X$ satisfying $|a x| = |a|+ |x|$ and the property that for any $a\in \CA$ the action on $X$ of the \emph{super-commutator} $[d,a] = d a - (-1)^{|a|} a d$ coincides with the action of $d(a)$:
\[
[d,a] = d(a)\qquad (a\in\CA).
\]
The \emph{homological shift} is defined by $(X[k])_i = X_{k+i}$. For $x\in X_j$ we have $x[k]\in (X[k])_{j-k}$, the differential and the action change signs:
\[
d (x[k]) = (-1)^k (dx)[k],\quad a (x[k]) = (-1)^{k |a|}(ax)[k].
\]
The morphisms of $\CA$-modules form a complex
\[
\Hom_k^\CA(X, Y) = \left\{ f\in\Hom_\C(X, Y) \;|\;\deg_h f = k,\; [a, f] = 0 \; (a\in A) \right\},
\]
with the differential
\[
d: \Hom_k^\CA(X, Y) \to  \Hom_{k-1}^\CA (X, Y),\ d(f) = [d,f].
\]
With our sign conventions we have
\[
\Hom_k^\CA(X, Y) = \Hom_0^\CA(X, Y[k]).
\]
Note that the elements of $\Hom_0(X,Y)$ which lie in the kernel of $d$ are precisely the dg module homomorphisms, and two such homomorphisms are homotopic precisely when their images in $H_0(\Hom_\bullet(X,Y))$ coincide.

Let $\CA-\Mod$ denote the homotopy category of dg modules over $\CA$. Its objects are dg modules over $\CA$ and morphisms are morphisms of dg modules viewed up to homotopy, i.e.
\[
\Hom_{\CA-\Mod}(X, Y) = H_0(\Hom_\bullet(X,Y)) = \frac{\Hom_{\CA-\Mod_\strict}(X, Y)}{[d,\Hom_1(X,Y)]}.
\]
Actual morphisms of dg modules will be called \emph{strict} morphisms. The corresponding category is denoted $\CA-\Mod_{\strict}$.
Two objects $X,Y$ are \emph{homotopy equivalent} if they are isomorphic in $\CA-\Mod$. Unwrapping this definition, a homotopy equivalence is specified by two strict morphisms $f:X\to Y$, $g:Y\to X$ and two homotopies $f\circ g \cong \Id_{Y}$, $g\circ f \cong \Id_{X}$. A strict morphism $f:X\to Y$ is a \emph{homotopy equivalence} if there exists a morphism $g$ and homotopies as above.

An object is \emph{contractible} if it is homotopy equivalent to the zero object, equivalently if the identity map is homotopic to the zero map, equivalently if some invertible endomorphism is homotopic to the zero map.
A very useful lemma is
\begin{lemma}\label{lem:contractible cone}
	A strict morphism $f:X\to Y$ is a homotopy equivalence if and only if the cone $[X \to \underline{Y}]$ is contractible.
\end{lemma}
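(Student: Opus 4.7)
My plan is to work directly with the cone $C = [X \xrightarrow{f} \underline{Y}]$, whose underlying graded $\CA$-module is $Y \oplus X[1]$ with a differential $D$ given in matrix form (with respect to column vectors) by
\[
D = \begin{pmatrix} d_Y & f \\ 0 & -d_X \end{pmatrix}.
\]
A degree-$1$ $\CA$-linear endomorphism of $C$ decomposes as a matrix $s = \begin{pmatrix} h_Y & g \\ k & h_X \end{pmatrix}$ whose four entries lie in $\Hom^\CA_1(Y,Y)$, $\Hom^\CA_0(X,Y)$ (after reindexing because of the shift), $\Hom^\CA_2(X,Y)$, and $\Hom^\CA_1(X,X)$ respectively. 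The contractibility condition $[D, s] = \id_C$ then unpacks into four component equations that tie together the entries of $s$ with $f$ and the differentials. I would set up this matrix calculus explicitly at the start and then read off both directions from it.

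For the reverse direction, suppose $s$ is a contracting homotopy as above. The diagonal equations give $[d_Y, h_Y] + fg = \id_Y$ and $-[d_X, h_X] + gf = \id_X$ (up to signs from the shift on $X[1]$), while one of the off-diagonal equations forces $g$ to be a strict chain map $Y \to X$. Thus $g$ is a strict morphism which is a two-sided homotopy inverse to $f$, so $f$ is a homotopy equivalence. For the forward direction, given strict $g$ and homotopies $h_X, h_Y$ witnessing $gf \simeq \id_X$ and $fg \simeq \id_Y$, I would first adjust the data so that the two homotopies are compatible in the sense $f h_X = h_Y f$ (a standard strictification: replace $h_Y$ by $h_Y' = h_Y + [D,\,\cdot\,]$-correction without changing its homotopy class, or equivalently replace $(g, h_X, h_Y)$ by a deformation retract datum). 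With compatible homotopies in hand, the matrix $s = \begin{pmatrix} h_Y & g \\ 0 & -h_X \end{pmatrix}$ satisfies $[D, s] = \id_C$ by direct calculation.

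The main obstacle is the forward direction: a naively chosen $s$ satisfies $[D, s] = \id_C$ only modulo the discrepancy $f h_X - h_Y f$, which in general is a nonzero chain map $X \to Y$ of degree $2$. Strictifying the homotopies to kill this discrepancy is the central technical step, and it must be done carefully to ensure $\CA$-linearity is preserved and no extra assumptions on $\CA$ are used. With that adjustment in place, the rest is bookkeeping with signs. I would organize the write-up so that the matrix dictionary is established once, then the two directions become parallel short computations reading the same equations in opposite orders.
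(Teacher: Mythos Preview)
The paper does not prove this lemma; it is stated as a well-known fact. The only relevant hint is the sentence immediately before it: an object is contractible ``equivalently if some invertible endomorphism is homotopic to the zero map''. So there is no paper-proof to compare against.

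Your reverse direction is correct. A contracting homotopy for the cone unpacks into a candidate inverse $g$ and two homotopies, and one off-diagonal equation forces $g$ to be a strict chain map. (There is a shift slip: with the paper's conventions the underlying module of $[X\to\underline{Y}]$ is $X[-1]\oplus Y$, not $Y\oplus X[1]$, so the homotopy inverse $g:Y\to X$ sits in the opposite off-diagonal slot from where you put it. Your verbal description gets this right even if the matrix does not.)

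The forward direction has a real gap. Setting $k=0$ forces $f h_X = h_Y f$ on the nose, and neither of your proposed fixes delivers this. Replacing $h_Y$ by $h_Y+[d,\psi]$ changes $h_Y f$ by the exact element $[d,\psi f]$, so it can kill the discrepancy $\alpha:=fh_X-h_Y f$ only if $\alpha$ is exact in $\Hom_1^\CA(X,Y)$; a short computation shows $\alpha$ is closed, but exactness does not follow. And ``replace by a deformation retract datum'' presupposes that one composite is the identity on the nose, which is not given here.

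The clean repair is the observation the paper states just before the lemma. With your $s_0$ (i.e.\ $k=0$) one computes $[D,s_0]=\id_C+N$, where $N$ is the strictly triangular block carrying $\alpha$. Then $N^2=0$, so $\id_C+N$ is an invertible $\CA$-linear chain endomorphism of $C$ which is null-homotopic (it is literally $[D,s_0]$). That is exactly the paper's criterion for contractibility. If you want an explicit contracting homotopy, take $s:=(\id_C-N)\,s_0$: since $N$ is a closed degree-$0$ map, $[D,s]=(\id_C-N)(\id_C+N)=\id_C$. This bypasses the compatibility problem entirely and is manifestly $\CA$-linear.
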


Here and below we underline the term in homological degree zero. The category $\CA-\Mod_\strict$ is pre-triangulated (see \cite{BK}), which means that twisted complexes of $\CA$-modules are again $\CA$-modules. Twisted complexes of $\CA$-modules are defined as follows. Given a bounded below sequence of dg $\CA$-modules $(X_i)_{i\in \Z}$ and a collection of maps $q_{i,j}\in\Hom_{i-1-j}(X_i, X_j)$ $(i>j)$  satisfying
$$
d(q_{i,j}) + \sum_{k} q_{i,k} q_{k,j} = 0
$$
we form a new object $X$ by setting
\begin{equation}\label{eq:twisted complex}
X=\left(\bigoplus_{i\in \Z} X_i [-i], \widetilde d\right),\quad \widetilde d = d  + \sum_{j<i} q_{i,j}.
\end{equation}
This makes sense because for each $i$ the map $q_{i,j}$ vanishes for all but finitely many $j$. When the maps are clear from the context, we visualize twisted complexes as follows:
\[
\left[\cdots \to X_{1} \to \underline{X_0} \to X_{-1} \to \cdots \right].
\]
The following is standard:
\begin{lemma}\label{lem:twisted complex}
	Suppose a twisted complex $X=\left(\bigoplus_{i\in \Z} X_i [-i], \widetilde d\right)$ is bounded from below
and each $X_i$ is contractible. Then $X$ is contractible.
\end{lemma}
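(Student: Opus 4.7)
The plan is to construct explicitly a degree $+1$ endomorphism $H:X\to X$ satisfying $[\widetilde d, H]=\Id_X$, which by definition exhibits $X$ as contractible. Since each summand $X_i[-i]$ is itself contractible, I first choose individual contracting homotopies $h_i$ with $[d, h_i]=\Id_{X_i[-i]}$, where $d$ denotes the untwisted (diagonal) part of $\widetilde d$, and assemble them into a single endomorphism $H_0:X\to X$ that is diagonal with respect to the decomposition $X=\bigoplus_i X_i[-i]$. Writing $\widetilde d = d + q$ with $q=\sum_{j<i} q_{i,j}$ strictly lowering the filtration
\[
F_n X := \bigoplus_{i \le n} X_i[-i],
\]
a short computation gives $[\widetilde d, H_0] = \Id_X - E$, where $E := -[q, H_0]$ also strictly lowers the filtration (because $q$ does and $H_0$ preserves it). Using $\widetilde d^{\,2}=0$ one checks further that $[\widetilde d, E] = 0$.

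The error $E$ is then removed by an iterative correction. Define
\[
H := H_0 \sum_{n \ge 0} E^n.
\]
Since $[\widetilde d, E^n]=0$ for every $n$, the series telescopes to
\[
[\widetilde d, H] = (\Id_X - E)\sum_{n \ge 0} E^n = \Id_X,
\]
which produces the desired null-homotopy of the identity of $X$.

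The main, and essentially only, obstacle is to make sense of the formal sum $\sum_n E^n$ as a genuine operator on $X$; this is where the bounded-below hypothesis enters crucially. If $X_i = 0$ for $i < N$, then for any $x \in F_m X$ one has $E^k x \in F_{m-k}X = 0$ once $k > m-N$, so the sum has only finitely many nonzero contributions on each element and defines an honest endomorphism. Without the boundedness assumption, the iterative correction need not terminate on any given element, and the construction breaks down; thus the hypothesis is used in exactly one place but used essentially.
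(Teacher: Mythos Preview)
The paper does not actually prove this lemma: it simply declares it ``standard'' and states it without argument. Your proof is correct and is precisely the standard argument one has in mind here: build a diagonal homotopy $H_0$ from the individual contracting homotopies, observe that $[\widetilde d,H_0]=\Id_X-E$ with $E$ strictly filtration-decreasing and $\widetilde d$-closed, and then correct by the locally finite geometric series $\sum_{n\ge 0}E^n$, which makes sense exactly because of the bounded-below hypothesis. All maps involved are $\CA$-linear (the $h_i$, the $q_{i,j}$, and hence $H_0$, $E$, and $H$), so the null-homotopy lives in the right category. There is nothing to add.
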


\begin{remark}
    There is a dual definition where we require sequence $(X_i)_{i\in \Z}$ to be bounded above and use the direct product instead of the direct sum in \eqref{eq:twisted complex}. The corresponding analogue of Lemma \ref{lem:twisted complex} is also true. In the case the sequence is bounded both above and below, the two constructions coincide.
\end{remark}

\begin{remark}
Sometimes we will need to work with {\em curved} dg modules over $\CA$ introduced in \cite{pos1993}. A curved dg module $X$ with curvature $W$ has a differential satisfying $d^2=W$ where $W$ is a closed central element of $\CA$. All other definitions are as above, and one can check that for fixed $W$ curved dg modules form a pre-triangulated dg category.
\end{remark}

\subsection{Restriction/induction functors}
A homomorphism of dg algebras $\varphi:\CB\to\CA$ is a linear map which preserves the grading and the unit, and commutes with the product and the differential. Given such a homomorphism, any $\CA$-module $N$ is naturally a $\CB$-module, which we denote by $\varphi^* N$ and call the \emph{restriction} of $N$. Given a $\CB$-module $M$, the \emph{induction} resp. \emph{coinduction} of $M$ is defined by
\[
\varphi_! M := \CA\otimes_\CB M\quad\text{resp.}\quad \varphi_* M := \Hom^\CB_\bullet(\CA, M).
\]

Clearly, all three operations are functorial in the sense that for any $N,N'$ resp. $M, M'$ we have morphisms of $\Hom$-complexes
\[
\Hom^\CA_\bullet(N, N')\to \Hom^\CB_\bullet(\varphi^* N, \varphi^* N'),\quad \Hom^\CB_\bullet(M, M')\to \Hom^\CA_\bullet(\varphi_! M, \varphi_! M'),
\]
and similarly for the coinduction $\varphi_*$. Note that in the definitions of $\varphi_!, \varphi_*$ we do not take any resolutions, so the functors are not exact in any sense. Nevertheless, all three functors commute with cones and induce functors on the corresponding homotopy categories.

We have natural adjunction isomorphisms on the level of complexes
\[
\Hom^\CA_\bullet(\varphi_! M, N) = \Hom^\CB_\bullet(M, \varphi^* N),
\quad \Hom^\CA_\bullet(N, \varphi_* M) = \Hom^\CB_\bullet(\varphi^* N, M).
\]

\begin{definition}
    Given a homomorphism of dg algebras $\varphi:\CB\to\CA$, an $\CA$-module $N$ is called \emph{induced} if it is isomorphic to a bounded below twisted complex of modules $X_i$ where each $X_i$ is of the form $\varphi_!(M_i)$ for some $\CB$-module $M_i$.
\end{definition}
\begin{remark}
    We can similarly define coinduced modules if we consider bounded above twisted complexes, but then we need to use direct product instead of the direct sum in \eqref{eq:twisted complex}.
\end{remark}
\begin{remark}
    In the case when $\CB=\C$, the notion of an induced module coincides with the more familiar notion of a semi-free module.
\end{remark}

\subsection{Localization}

\begin{definition}
    Given a homomorphism of dg algebras $\varphi:\CB\to\CA$, a morphism of $\CA$-modules $f:M\to M'$ is a \emph{weak equivalence} and the modules $M$ and $M'$ are called \emph{weakly equivalent} relative to $\CB$ if $\varphi^*f$ is a homotopy equivalence of $\CB$-modules. If $f$ itself is a homotopy equivalence of $\CA$-modules, we say that $f$ is a \emph{strong equivalence} and the modules are homotopy (or strongly) equivalent.
\end{definition}
So we have four notions of equivalence related as follows
\[
\text{isomorphism}\Rightarrow\text{strong equivalence}\Rightarrow \text{weak equivalence} \Rightarrow \text{quasi-isomorphism}.
\]

\begin{lemma}\label{lem:lifting}
    Given a homomorphism of dg algebras $\varphi:\CB\to\CA$, $\CA$-modules $M$, $N$, $N'$ and morphisms $f$, $g$ below, suppose that $M$ is induced and $g$ is a weak equivalence.
    \[
    \begin{tikzcd}
        & N' \arrow{d}{g}\\
        M \arrow{r}{f} \arrow[dashed]{ru}{f'} & N
    \end{tikzcd}
    \]
    Then there exists a unique up to homotopy morphism $f'$ making the diagram commutative up to homotopy.
\end{lemma}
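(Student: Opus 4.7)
The plan is to translate the lifting problem into a statement about the acyclicity of the Hom complex into the cone $C := \Cone(g : N' \to N)$. Applying $\Hom^\CA_\bullet(M, -)$ to the distinguished triangle $N' \xrightarrow{g} N \to C \to N'[-1]$ produces a long exact sequence
\[
\cdots \to H_1(\Hom^\CA_\bullet(M,C)) \to H_0(\Hom^\CA_\bullet(M,N')) \xrightarrow{g_*} H_0(\Hom^\CA_\bullet(M,N)) \to H_0(\Hom^\CA_\bullet(M,C)) \to \cdots,
\]
so vanishing of $H_0$ and $H_1$ of $\Hom^\CA_\bullet(M, C)$ respectively yield the existence and the uniqueness up to homotopy of $f'$. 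It therefore suffices to prove that $\Hom^\CA_\bullet(M, C)$ is contractible. A first key observation is that $\varphi^* C$ is contractible as a $\CB$-module: since $g$ is a weak equivalence, $\varphi^* g$ is a $\CB$-linear homotopy equivalence, and hence its cone $\varphi^* C = \Cone(\varphi^* g)$ is contractible by Lemma \ref{lem:contractible cone}.

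For the base case $M = \varphi_!(M_0)$ with $M_0$ a single $\CB$-module, the adjunction gives an isomorphism of complexes
\[
\Hom^\CA_\bullet(\varphi_! M_0, C) \cong \Hom^\CB_\bullet(M_0, \varphi^* C),
\]
and any contracting homotopy $h$ of $\varphi^* C$ induces a contracting homotopy $h \circ (-)$ on the right-hand side, establishing contractibility in this case.

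For the general case, I would let $M = \bigoplus_i X_i[-i]$ be the bounded-below twisted complex with each $X_i$ of the form $\varphi_!(M_i)$, and use the exhaustive increasing filtration $F_n M := \bigoplus_{i \le n} X_i[-i]$, which vanishes below some index $n_0$. Each $F_n M$ sits in a distinguished triangle $F_{n-1} M \to F_n M \to X_n[-n]$, so by induction on $n$ combined with the base case and the long exact sequence above, every $\Hom^\CA_\bullet(F_n M, C)$ is contractible. Finally, $\Hom^\CA_\bullet(M, C)$ identifies with a dual twisted complex whose underlying graded summands are the individually contractible pieces $\Hom^\CA_\bullet(X_i[-i], C)$, and a global contracting homotopy can be assembled by invoking the direct-product analogue of Lemma \ref{lem:twisted complex}. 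The main subtlety lies in this last step: inverse limits of acyclic complexes are not acyclic in general, but the bounded-below hypothesis on the twisted complex is precisely what rescues the argument, since it guarantees that in any fixed homological degree only finitely many filtration steps contribute to a given graded summand of the Hom complex.
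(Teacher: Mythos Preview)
Your proof is correct and follows essentially the same route as the paper: reduce to showing $\Hom^\CA_\bullet(M,\Cone(g))$ is contractible, verify this for $M=\varphi_!(M_0)$ via adjunction, and extend to general induced $M$ using the direct-product dual of Lemma~\ref{lem:twisted complex}. Your intermediate filtration/induction step is harmless but unnecessary, and your closing justification is slightly off---the bounded-below hypothesis is what makes the dual of Lemma~\ref{lem:twisted complex} apply (to the bounded-above product-type twisted complex $\prod_i \Hom^\CA_\bullet(X_i,C)[i]$), not any degree-by-degree finiteness, since the individual $X_i$ need not be bounded in homological degree.
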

\begin{proof}
    Composition with $g$ induces a morphism of complexes
    \[
    \Hom^\CA_\bullet(M, N') \to \Hom^\CA_\bullet(M, N).
    \]
    It is enough to show that this morphism is a quasi-isomorphism. By Lemma \ref{lem:contractible cone} it is sufficient to show that
    \[
    \left[\Hom^\CA_\bullet(M, N') \to \Hom^\CA_\bullet(M, N)\right] = \Hom^\CA_\bullet\left(M, [N'\to N]\right)
    \]
    is contractible. Since $M$ is a twisted complex of modules of the form $\varphi_!(M_i)$ and $\Hom$ commutes with the formation of twisted complexes\footnote{The $\Hom$ complex is strictly functorial and sends direct sums to direct products}, by Lemma \ref{lem:twisted complex} it is sufficient to show that each $\Hom^\CA_\bullet\left(\varphi_!(M_i), [N'\to N]\right)$ is contractible. Using the adjunction, this complex is equivalent to the complex
    \[
    \Hom^\CB_\bullet\left(M_i, [\varphi^* N'\to \varphi^* N]\right) = \left[\Hom^\CB_\bullet(M_i, \varphi^* N') \to \Hom^\CB_\bullet(M_i, \varphi^* N)\right],
    \]
     which is contractible since $\varphi^*(g)$ is a homotopy equivalence.
\end{proof}

\begin{definition}
    Given a homomorphism of dg algebras $\varphi:\CB\to\CA$ and an $\CA$-module $M$, a \emph{resolution} of $M$ relative to $\CB$ is an induced $\CA$-module $\widetilde{M}$ together with a weak equivalence $\widetilde{M}\to M$ called the \emph{counit}.
\end{definition}

Now Lemma \ref{lem:lifting} easily implies
\begin{corollary}
    For any $\CA$-module $M$, if a resolution exists it is unique up to a strong homotopy equivalence.
\end{corollary}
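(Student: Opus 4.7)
The plan is to apply Lemma \ref{lem:lifting} twice to build candidate maps in both directions between the two resolutions, and then use the uniqueness clause of the same lemma a further two times to verify that the compositions are homotopic to the respective identities.

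Concretely, let $\varepsilon_1:\widetilde{M}_1\to M$ and $\varepsilon_2:\widetilde{M}_2\to M$ be two resolutions of $M$ relative to $\CB$, so each $\widetilde{M}_i$ is induced and each $\varepsilon_i$ is a weak equivalence. Applying Lemma \ref{lem:lifting} with induced source $\widetilde{M}_1$, weak equivalence $\varepsilon_2$, and reference map $\varepsilon_1$, I obtain a morphism $f:\widetilde{M}_1\to\widetilde{M}_2$, unique up to homotopy, such that $\varepsilon_2\circ f\simeq \varepsilon_1$. Swapping roles, I obtain $g:\widetilde{M}_2\to\widetilde{M}_1$ with $\varepsilon_1\circ g\simeq \varepsilon_2$.

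It remains to check that $f$ and $g$ are homotopy inverse. Consider the endomorphism $g\circ f$ of $\widetilde{M}_1$. It satisfies
\[
\varepsilon_1\circ(g\circ f) \;=\; (\varepsilon_1\circ g)\circ f \;\simeq\; \varepsilon_2\circ f \;\simeq\; \varepsilon_1 \;=\; \varepsilon_1\circ \Id_{\widetilde{M}_1}.
\]
Now apply Lemma \ref{lem:lifting} to the diagram whose induced source is $\widetilde{M}_1$, whose weak equivalence is $\varepsilon_1:\widetilde{M}_1\to M$, and whose reference map is $\varepsilon_1$ itself; both $g\circ f$ and $\Id_{\widetilde{M}_1}$ are lifts making the triangle commute up to homotopy, so the uniqueness clause gives $g\circ f\simeq \Id_{\widetilde{M}_1}$. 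An identical argument with the roles of $\widetilde{M}_1$ and $\widetilde{M}_2$ exchanged shows $f\circ g\simeq \Id_{\widetilde{M}_2}$. Thus $f$ is a homotopy equivalence of $\CA$-modules, i.e.\ a strong equivalence, as required.

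There is no real obstacle here: the statement is a formal corollary, and the only subtlety is to remember to invoke the uniqueness part of Lemma \ref{lem:lifting} (rather than trying to construct an explicit homotopy) when passing from the compatibility $\varepsilon_1\circ(g\circ f)\simeq \varepsilon_1$ to the conclusion $g\circ f\simeq \Id$. The argument is in fact the standard ``two-out-of-three'' style uniqueness of cofibrant replacements in a model-categorical setting, adapted to this relative homotopy framework.
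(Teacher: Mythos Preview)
Your argument is correct and is exactly the standard unpacking of what the paper means by ``Lemma~\ref{lem:lifting} easily implies'': use existence of lifts to get $f,g$ and uniqueness of lifts to identify the compositions with the identities. There is nothing to add.
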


Finally, we have
\begin{lemma}\label{lem:resolution}
    Given a homomorphism of dg algebras $\varphi:\CB\to\CA$, any $\CA$-module $M$ has a resolution. Moreover, this resolution can be chosen functorially on $M$.
\end{lemma}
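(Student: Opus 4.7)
The plan is to build the resolution using the two-sided bar construction $\mathrm{Bar}(\CA,\CB,M)$. For $n\ge 0$, set
\[
B_n \;=\; \varphi_!\bigl(\CA^{\otimes_\CB n}\otimes_\CB M\bigr) \;=\; \CA\otimes_\CB \underbrace{\CA\otimes_\CB\cdots\otimes_\CB \CA}_{n}\otimes_\CB M,
\]
regarded as an induced $\CA$-module via the leftmost factor, and assemble these into the twisted complex
\[
\widetilde M \;=\; \bigl[\,\cdots \to B_2 \to B_1 \to \underline{B_0}\,\bigr],
\]
whose connecting map $b:B_n\to B_{n-1}$ is the standard bar differential (sum over contractions of adjacent tensor factors, ending with the action on $M$) and whose internal differential on each $B_n$ is induced from the differentials on $\CA$, $\CB$ and $M$. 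Since $(B_n)_{n\ge 0}$ is bounded below and each $B_n$ is of the form $\varphi_!(-)$, $\widetilde M$ is induced in the sense of the paper, and the construction is manifestly functorial in $M$ since $f:M\to M'$ induces $\widetilde f = \bigoplus_n(\Id\otimes f)$ commuting with all structure.

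Three items then need to be verified: (i) $d_{\widetilde M}^2=0$; (ii) the augmentation $\varepsilon:\widetilde M\to M$ sending $a_0\otimes m\mapsto a_0 m$ on $B_0$ and zero on $B_{\ge 1}$ is a strict $\CA$-linear map; and (iii) $\varphi^*\varepsilon$ is a homotopy equivalence of $\CB$-modules. Item (i) reduces to the classical identities $b^2=0$ and $[d_{\mathrm{int}},b]=0$, which follow from associativity in $\CA$ and the Leibniz rule for its differential, with signs dictated by the Koszul rule for the homological grading. Item (ii) is an immediate unravelling of definitions.

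Item (iii), the key step, is handled by the standard extra-degeneracy argument. Define
\[
s(a_0\otimes a_1\otimes\cdots\otimes a_n\otimes m) \;=\; 1\otimes a_0\otimes a_1\otimes\cdots\otimes a_n\otimes m,\qquad \sigma(m) \;=\; 1\otimes m\in B_0.
\]
The map $s$ is $\CB$-linear because the $\CB$-action on $\varphi^*\widetilde M$ is through the leftmost tensor factor and $\varphi(b)\cdot 1 \otimes a_0 = 1\otimes \varphi(b)a_0$ in $\CA\otimes_\CB\CA$; likewise for $\sigma$. A direct simplicial computation yields $[d_{\widetilde M}, s] = \Id_{\varphi^*\widetilde M} - \sigma\varepsilon$ and $\varepsilon\sigma = \Id_M$, exhibiting $\varphi^*\varepsilon$ as a homotopy equivalence with inverse $\sigma$.

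The only real obstacle is the bookkeeping of Koszul signs in the dg setting and a careful check that $\otimes_\CB$ is well defined at the chain level over the dg algebra $\CB$. Both are fine: $\CB$ acts strictly on $\CA$ and on $M$, so no derived functors are required, and indeed the bar complex $\widetilde M$ is precisely the object that plays the role of the derived tensor product in this relative setting. Compared with the classical ungraded bar resolution, nothing conceptually new occurs; every identity follows term by term from the associativity of $\CA$ and the graded Leibniz rule.
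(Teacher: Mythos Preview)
Your proof is correct and is essentially the same as the paper's: both build the resolution as the bar construction $\widetilde M = [\cdots \to \CA\otimes_\CB\CA\otimes_\CB M \to \CA\otimes_\CB M]$ and exhibit the weak equivalence by the insertion-of-$1$ homotopy $a_0\otimes\cdots\otimes a_n\otimes m \mapsto 1\otimes a_0\otimes\cdots\otimes a_n\otimes m$, which is $\CB$-linear but not $\CA$-linear. The only cosmetic differences are indexing ($B_n$ versus $\widetilde M_{n+1}$) and that the paper packages the conclusion via Lemma~\ref{lem:contractible cone} (contractibility of the cone) rather than writing out the explicit inverse $\sigma$.
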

\begin{proof}
For any $k\geq 1$ let $\widetilde{M}_k = \CA^{\otimes_{\CB} k} \otimes_{\CB} M$. These $\widetilde{M}_k$ form a twisted complex
\[
\widetilde M := \left[\cdots \to \CA \otimes_{\CB} \CA \otimes_{\CB} \CA \otimes_{\CB} M \to \CA \otimes_{\CB} \CA \otimes_{\CB} M \to \underline{\CA \otimes_{\CB} M}\right],
\]
with the bar differential
\[
d_b(a_1 \otimes \cdots \otimes a_k \otimes x)= \sum_{i=1}^{k} (-1)^{i-1} a_1 \otimes \cdots \otimes a_{i} a_{i+1} \otimes \cdots \otimes a_k \otimes x.
\]
The higher differentials vanish.
For instance, the first  few maps are
\[
d_b(a_1 \otimes a_2 \otimes x) = a_1 a_2 \otimes x - a_1\otimes a_2 x,
\]
\[
d_b(a_1 \otimes a_2 \otimes a_3\otimes x) = a_1 a_2 \otimes a_3 \otimes x - a_1 \otimes a_2 a_3 \otimes x + a_1\otimes a_2 \otimes a_3 x.
\]
It is a standard fact that $d_b^2=0$.
Let $\varepsilon: \widetilde{M} \to M$ be the counit map induced by the product map
\[
\widetilde{M}_1=\CA \otimes_{\CB} M \to M.
\]
Let $\widetilde{M}_0=M$. The cone of the counit map is the complex
\[
\left[\cdots \to \CA \otimes_{\CB} \CA \otimes_{\CB} M \to \CA \otimes_{\CB} M \to \underline{M}\right].
\]
Define a homotopy $h: \widetilde{M}_i\to \widetilde{M}_{i+1}$ by
\[
h(a_1\otimes\cdots\otimes a_i\otimes x) = 1\otimes a_1\otimes\cdots\otimes a_i\otimes x.
\]
This map is not $\CA$-linear, but it is $\CB$-linear, it commutes with the differential in each $\widetilde{M}_i$ and satisfies $h d + d h = \Id$. Thus the cone of the complex is contractible as an object of $\CB-\Mod$ and therefore the counit map is a homotopy equivalence in $\CB-\Mod$ by Lemma \ref{lem:contractible cone}.

Thus $\widetilde M$ together with the counit map $\varepsilon$ is a resolution of $M$. Its construction is clearly functorial in $M$.
\end{proof}

\begin{definition}
    The category $\CA/\CB-\Mod$ (abbreviated as $\CA/\CB$) is the localization of the homotopy category $\CA-\Mod$ with respect to weak equivalences relative to $\CB$. Note that $\CA-\Mod$ itself can be viewed as $\CA/\CA$.
\end{definition}

Explicitly, this is the category whose objects are $\CA$-modules and morphisms are represented by zig-zags
\begin{equation}\label{eq:sequence of hats}
    X \leftarrow X_1 \to X_2 \leftarrow X_3\rightarrow \cdots \to Y.
\end{equation}
in which all the arrows pointing to the left are weak equivalences. Two morphisms are considered equivalent if they can be related by a sequence of transformations where we are allowed to replace any arrow by a homotopic arrow; identity arrow can be inserted or removed; two consecutive arrows pointing in one direction can be replaced by their composition; finally the composition of a weak equivalence and its formal inverse is equivalent to the identity.

\begin{theorem}\label{thm:localization}
    Given a homomorphism of dg algebras $\varphi:\CB\to\CA$, the localization category $\CA/\CB$ is equivalent to the full subcategory of $\CA-\Mod$ whose objects are induced from $\CB$.
\end{theorem}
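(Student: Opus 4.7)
My approach is to establish the equivalence by showing that the composite of the inclusion of the full subcategory of induced modules into $\CA-\Mod$ with the localization functor $\CA-\Mod \to \CA/\CB$ is both essentially surjective and fully faithful.

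Essential surjectivity is immediate from Lemma \ref{lem:resolution}: any $\CA$-module $X$ admits an induced resolution $\widetilde X \to X$, and this weak equivalence becomes an isomorphism in $\CA/\CB$ by the very definition of the localization. Hence $X$ is isomorphic in $\CA/\CB$ to the image of $\widetilde X$.

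Fully faithfulness is the substantive point, and the key input is Lemma \ref{lem:lifting}. For induced $M$ and any weak equivalence $g:N'\to N$ of $\CA$-modules, that lemma asserts both the existence of a lift of any $f:M\to N$ through $g$ and the uniqueness of such a lift up to homotopy. Equivalently, post-composition with $g$ induces a bijection
$$
g_* : \Hom_{\CA-\Mod}(M, N') \xrightarrow{\sim} \Hom_{\CA-\Mod}(M, N).
$$
Thus every leftward-pointing weak equivalence in a zig-zag as in \eqref{eq:sequence of hats} can be inverted when the source is induced, which is how one collapses any such zig-zag into a single homotopy class of strict morphisms $M \to N$: proceed from left to right, and at each leftward arrow $X_i \xleftarrow{g} X_{i+1}$ with a strict morphism $h:M \to X_i$ already constructed, define the next strict morphism $M\to X_{i+1}$ as the unique-up-to-homotopy lift $h'$ satisfying $g h' \simeq h$. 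Composing with the subsequent rightward arrow produces a shorter zig-zag, and iteration reduces to a single homotopy class. This shows that the functor is full.

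For faithfulness, suppose two strict morphisms $f_0,f_1:M\to N$ become equal in $\CA/\CB$. Their equality is witnessed by a sequence of the generating moves of the localization (insertion and removal of identities, composition of consecutive arrows pointing the same way, and cancellation of a weak equivalence with its formal inverse); applying the bijectivity of $g_*$ to each move in turn shows that $f_0$ and $f_1$ are already homotopic as $\CA$-module maps. The main technical obstacle is precisely this bookkeeping: one must check that each local move respects the collapsing procedure, so that the homotopy class of the collapsed morphism is well-defined and compatible with composition. No individual check is deep, but care is needed for overall coherence, which is the ``tedious but straightforward'' verification referred to in the statement of the theorem.
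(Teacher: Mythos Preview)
Your proposal is correct and follows precisely the approach the paper indicates: the paper itself does not give a detailed proof but only says the theorem follows from the existence of resolutions (Lemma \ref{lem:resolution}) and the lifting lemma (Lemma \ref{lem:lifting}), and your argument is exactly the standard unfolding of this. Your identification of essential surjectivity via resolutions and fully faithfulness via the bijectivity of $g_*$ for induced sources is the intended content of the ``tedious but straightforward'' remark.
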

\begin{proof}
	Existence of resolutions implies that any object in $\CA/\CB$ is isomorphic to an object induced from $\CB$. Let $M$ be such an object. By Lemma \ref{lem:lifting} the functor $\Hom_{\CA-\Mod}(M, -)$ sees all weak equivalences as isomorphisms. Hence morphisms from $M$ to any object $N$ in $\CA-\Mod$ are in a natural bijection with morphisms in $\CA/\CB$.
\end{proof}

\begin{remark}
    Explicitly, morphisms in $\CA/\CB-\Mod$ from $M$ to $M'$ can be described as $\Hom_{\CA-\Mod}(\widetilde M, M')$, where $\widetilde M$ is the resolution from Lemma \ref{lem:resolution}. Suppose $\CB=\C$. Then, unwrapping the construction of $\widetilde M$ leads to the fact that morphisms in $\CA/\C$ are nothing else but $A_\infty$ module homomorphisms up to homotopy. For the case when $\CB$ is an arbitrary dg algebra, we have not seen a relative version of the notion of $A_\infty$ module homomorphisms in the literature, but our definition looks like a natural generalization.
\end{remark}

\begin{remark}
    We sketch how the construction of the resolution in Lemma \ref{lem:resolution} can be naturally obtained from attempting to solve the localization problem directly. First, we note that instead of localizing with respect to all weak equivalences, it is sufficient to localize with respect to those weak equivalences $f:M\to N$ for which $\varphi^* f$ is a retraction, i.e. there exists $g:\varphi^* N \to \varphi^* M$ such that $\varphi^*f\circ g=\Id_N$. In this case the homotopy $h\in\Hom^\CB_1(M, M)$ connecting $g\circ \varphi^*f$ and $\Id_M$ can be chosen in such a way that $f\circ h = 0$. Now fix $N$ and consider all possible such $M,f,g,h$. One can produce elements of $M$ by applying $g$ to the elements of $N$ and then acting by $h$ and by elements of $\CA$. Acting by $f$ does not give anything new because $f \circ h=0$ and $\varphi^*f\circ g=\Id_N$. So we can attempt to construct the universal such $M$ by taking formal combinations of these operations. It turns out that we arrive precisely at the construction of $\widetilde N$. The chain map from $\widetilde N$ to $M$ is given by the collection of morphisms $f_k:\widetilde{N}_k\to M$ defined by
    \[
    \widetilde f_{k} (a_1 \otimes a_2 \otimes \cdots\otimes a_k \otimes x) = (-1)^{s} a_1 h a_2 h \cdots h a_k g(x),
    \]
    where $s=|a_{k-1}|+|a_{k-3}|+\cdots$.
\end{remark}

\subsection{Relationship to the Hochschild cohomology}

\begin{proposition}\label{prop:localization hochschild}
	Suppose $\varphi(\CB)$ is in the center of $\CA$ and $\CA$ is free over $\CB$. Then for any $k$ and any $\CA$-modules $M$, $N$ we have
	\[
	H_k\left(\Hom_\bullet^\CA\left(\widetilde M, N\right)\right) = \Ext^{-k}_{\CA\otimes_{\CB} \CA^{\op}}\left(\CA, \Hom_\bullet^{\CB}(M, N)\right) = \HH^{-k}_\CA\left(\Hom_\bullet^{\CB}(M, N)\right),
	\]
	where $\Hom_\bullet^{\CB}(M, N)$ is viewed as a bimodule over $\CA$ in the obvious way\footnote{Here we are using the assumption that $\varphi(\CB)$ is in the center of $\CA$.}, and $\HH$ denotes the Hochschild cohomology. The forgetful functor corresponds to the natural map $\HH^{-k}_\CA\left(\Hom_\bullet^{\CB}(M, N)\right) \to \Hom^{\CB}_k(M, N)$ induced by the homomorphism $\CA\otimes_{\CB} \CA^\op\to \CA$.
\end{proposition}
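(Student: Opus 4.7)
The plan is to identify the complex $\Hom_\bullet^\CA(\widetilde M, N)$ term-by-term with the standard bar complex computing Hochschild cohomology of $\CA$ with coefficients in the bimodule $\Hom_\CB(M, N)$.

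First, I would recall that the resolution $\widetilde M$ constructed in Lemma \ref{lem:resolution} is the bar resolution with terms $\widetilde M_k = \CA^{\otimes_\CB k}\otimes_\CB M$ (for $k\ge 1$), where the $\CA$-action is on the leftmost factor. Applying $\Hom^\CA_\bullet(-, N)$ to each term and using the hom-tensor adjunction (which is available as a $\CB$-module identification because $\CB$ maps into the center of $\CA$), I obtain
\[
\Hom^\CA_\bullet(\widetilde M_k, N) \;=\; \Hom^\CB_\bullet(\CA^{\otimes_\CB(k-1)}\otimes_\CB M,\, N) \;=\; \Hom^\CB_\bullet(\CA^{\otimes_\CB(k-1)},\, \Hom_\CB(M, N)),
\]
where in the last step the centrality of $\CB$ is used again to move $M$ across.

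Second, I would recognize the right-hand side as the standard bar complex computing Hochschild cohomology. Consider the two-sided bar resolution of $\CA$ as an $(\CA\otimes_\CB \CA^{\op})$-module,
\[
\cdots \to \CA\otimes_\CB \CA\otimes_\CB \CA\otimes_\CB \CA \to \CA\otimes_\CB \CA\otimes_\CB \CA \to \underline{\CA\otimes_\CB \CA}.
\]
Because $\CA$ is free (hence flat) over the central subalgebra $\CB$, each term is projective over $\CA\otimes_\CB \CA^{\op}$, and the standard insertion-of-unit gives a $\CB$-linear contracting homotopy, so this is an honest projective resolution. Applying $\Hom_{\CA\otimes_\CB \CA^{\op}}(-,\Hom_\CB(M, N))$ and using adjunction yields the very same complex $\Hom^\CB_\bullet(\CA^{\otimes_\CB n},\Hom_\CB(M, N))$ that appeared from $\widetilde M$. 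Hence its cohomology in degree $-k$ is $\Ext^{-k}_{\CA\otimes_\CB \CA^{\op}}(\CA,\Hom_\CB(M, N)) = \HH^{-k}_\CA(\Hom_\CB(M, N))$.

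Third, I would verify that the bar differential on $\widetilde M$ translates under these adjunctions into the Hochschild cobar differential, and that the internal differential on the $\Hom$-complex matches the internal differential arising from the bimodule structure on $\Hom_\CB(M, N)$. This is a routine diagram chase; the main thing to check is that the signs coming from the convention $\deg d=-1$ and from moving $M$ past tensor factors agree with the signs in the Hochschild differential. Finally, the claim about the forgetful functor follows from observing that the augmentation $\widetilde M \to M$ is recovered, after dualizing, as the inclusion of the degree-zero part $\Hom_\CB(M, N)$ into the bar complex, which is precisely the map induced by multiplication $\CA\otimes_\CB \CA^{\op}\to \CA$.

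The main obstacle I anticipate is not conceptual but notational: carefully bookkeeping the signs and the shift of index (the $k-1$ versus $k$) so that the bar differential in $\widetilde M$ matches the Hochschild differential on the nose; once the two complexes are identified term-by-term with matching differentials, the rest of the proof is entirely formal.
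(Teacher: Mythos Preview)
Your proposal is correct and follows essentially the same approach as the paper. The paper packages the adjunctions slightly differently---it uses the single identity $\Hom^\CA_\bullet(Y \otimes_{\CB} M, N) = \Hom^{\CA\otimes_{\CB}\CA^{\op}}_\bullet(Y, \Hom^{\CB}_\bullet(M, N))$ for any $\CA\otimes_\CB\CA^{\op}$-module $Y$, applied to $Y=\CA^{\otimes_\CB k}$, rather than your two-step adjunction through the reduced bar complex---but this is the same argument in a different order, and both rely on the bar resolution of $\CA$ being free over $\CA\otimes_\CB\CA^{\op}$ with differentials matching those of $\widetilde M$.
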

\begin{proof}
	For any $\CA\otimes_{\CB}\CA^{\op}$-module $Y$ we have
	\[
	\Hom^\CA_\bullet\left(Y \otimes_{\CA} M, N\right) = \Hom^{\CA\otimes_{\CB}\CA^{\op}}_\bullet\left(Y, \Hom^{\CB}_\bullet(M, N)\right).
	\]
	Applying it to $Y_k=\CA^{\otimes_{\CB} k+1}$ ($k\geq 1$) we obtain
	\[
	\Hom_\bullet^\CA\left(\widetilde{M}_{k+1}, N\right) = \Hom_\bullet^{\CA\otimes_{\CB}\CA^{\op}}\left(\CA^{\otimes_{\CB} k+1}, \Hom^{\CB}_\bullet(M, N)\right),
	\]
    where $\widetilde{M}_{k+1}=\CA^{\otimes_{\CB} k+1}\otimes_\CB M$ from Lemma \ref{lem:resolution}.
	It remains to notice that in the case when $\CA$ is free over $\CB$ the modules $Y_k$ form an explicit free resolution of $\CA$ over $\CA\otimes_{\CB}\CA^{\op}$ whose maps are compatible with the maps defining $\widetilde M$.
\end{proof}

\subsection{Relative induction/restriction}
Suppose we have two dg algebras $\CA$, $\CA'$ over $\CB$. A homomorphism of algebras over $\CB$ is a homomorphism $\psi:\CA\to\CA'$ making the following diagram commutative:
\[
\begin{tikzcd}
    \CA \arrow{r}{\psi}  & \CA' \\
    \CB \arrow{u}{\varphi}\arrow{ru}[swap]{\varphi'} &
\end{tikzcd}
\]
Clearly, the restriction $\psi^*$ sends weak equivalences to weak equivalences, and therefore defines a functor $\CA'/\CB\to \CA/\CB$. In the case $\CA=\CB$ we will call $\psi^*$ the \emph{forgetful functor}.

The induction functor $\psi_!$ does not preserve weak equivalences\footnote{Suppose $\CB=\C$. Then weak equivalences are quasi-isomorphisms. If $\psi_!$ preserved quasi-isomorphisms, it would be an exact functor, but we know that it is in general not exact.}. Denote by $R_{\CA}$ resp. $R_{\CA'}$ any functorial resolution on $\CA-\Mod$ and $\CA'-\Mod$ (for example, the one from Lemma \ref{lem:resolution}). Then we have a functor $\psi_!\circ R_{\CA}$.
\begin{lemma}
We have the adjunction
\begin{equation}
\label{eq: relative adjunction}
\Hom_{\CA'/\CB}\left(\psi_! R_{\CA} M, N\right) = \Hom_{\CA/\CB}\left(M, \psi^* N\right),
\end{equation}
where $M$ resp. $N$ is an $\CA$ resp. $\CA'$ module.
\end{lemma}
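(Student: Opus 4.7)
The plan is to reduce the desired adjunction in the localized categories to the strict adjunction already established between $\psi_!$ and $\psi^*$, using Theorem \ref{thm:localization} on both sides.

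First I would check the key compatibility: applying $\psi_!$ to a resolution over $\CB$ yields a module that is still induced over $\CB$. By Lemma \ref{lem:resolution}, $R_{\CA} M$ is a bounded-below twisted complex whose pieces are of the form $\varphi_!(M_i)=\CA\otimes_{\CB} M_i$. Since $\psi_!$ commutes with the formation of twisted complexes (it is a left adjoint on the strict level, so it commutes with direct sums, and it is strictly functorial), and since
\[
\psi_!\bigl(\CA\otimes_{\CB} M_i\bigr)=\CA'\otimes_{\CA}(\CA\otimes_{\CB} M_i)=\CA'\otimes_{\CB} M_i=\varphi'_!(M_i),
\]
the $\CA'$-module $\psi_! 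R_{\CA} M$ is a twisted complex of $\varphi'_!$-induced pieces, hence is induced over $\CB$.

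Next, I would invoke Theorem \ref{thm:localization} on both sides. On the left, $\psi_! R_{\CA} M$ is induced over $\CB$ as an $\CA'$-module, so morphisms in $\CA'/\CB$ out of it coincide with morphisms in the homotopy category $\CA'-\Mod$:
\[
\Hom_{\CA'/\CB}(\psi_! R_{\CA} M, N)=H_0\,\Hom^{\CA'}_\bullet(\psi_! R_{\CA} M, N).
\]
On the right, the counit $R_{\CA} M\to M$ is a weak equivalence, hence an isomorphism in $\CA/\CB$, and $R_{\CA} M$ is induced over $\CB$, so
\[
\Hom_{\CA/\CB}(M, \psi^* N)=\Hom_{\CA/\CB}(R_{\CA} M, \psi^* N)=H_0\,\Hom^{\CA}_\bullet(R_{\CA} M, \psi^* N).
\]

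Finally, I would apply the strict adjunction $\Hom^{\CA'}_\bullet(\psi_! X, N)=\Hom^{\CA}_\bullet(X, \psi^* N)$ stated earlier (valid on the complex level for arbitrary $X$) with $X=R_{\CA} M$ and take $H_0$ to conclude. Functoriality in $M$ and $N$ follows from the functoriality of $R_{\CA}$ provided by Lemma \ref{lem:resolution}. The main obstacle, which is really only a bookkeeping point, is the verification that $\psi_!$ of the bar-type twisted complex defining $R_{\CA} M$ is again a well-defined twisted complex of the same shape over $\CA'$ with $\varphi'_!$-induced pieces; once this is in place, the argument is purely formal.
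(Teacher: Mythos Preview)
Your proposal is correct and follows essentially the same approach as the paper's proof: show that $\psi_! R_{\CA} M$ is induced, use Theorem \ref{thm:localization} to identify localized Hom with homotopy-category Hom for induced sources, and then apply the strict adjunction $\Hom^{\CA'}_\bullet(\psi_! X, N)=\Hom^{\CA}_\bullet(X,\psi^* N)$ with $X=R_{\CA}M$. The paper simply asserts that $\psi_! R_{\CA} M$ is induced, whereas you spell out the verification via $\psi_!\varphi_!=\varphi'_!$; otherwise the arguments are identical.
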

\begin{proof}
Indeed, the module $\psi_! R_{\CA} M$ is induced, and therefore we can replace the left hand side by
$$
\Hom_{\CA'/\CB}\left(\psi_! R_{\CA} M, N\right)=\Hom_{\CA'}\left(\psi_! R_{\CA} M, N\right).
$$
On the right hand side we can replace $M$ by its resolution $R_{\CA}M$, so the adjunction \eqref{eq: relative adjunction} comes from the usual adjunction
\[
\Hom_{\CA'}\left(\psi_! R_{\CA} M, N\right) = \Hom_{\CA}\left(R_{\CA} M, \psi^* N\right)=\Hom_{\CA/\CB}\left(M, \psi^* N\right).
\]
\end{proof}
The adjunction \eqref{eq: relative adjunction} implies existence of natural homomorphisms
\begin{equation}
\label{eq: relative adjunction 2}
M \to \psi^* \psi_! R_{\CA} M,\qquad \psi_! R_{\CA} \psi^* N \to N.
\end{equation}
We have

\begin{lemma}\label{lem:algebra weak equivalence}
    Suppose $\psi$ is  homotopy equivalence when viewed as a homomorphism of $\CB$-bimodules. Then the adjunction homomorphisms \eqref{eq: relative adjunction 2} are weak equivalences for any $\CA$-module $M$ and $\CA'$-module $N$ and therefore $\psi^*,\psi_! R_{\CA}$ are mutually inverse equivalences of categories $\CA/\CB$ and $\CA'/\CB$.
\end{lemma}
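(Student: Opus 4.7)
My plan is to show directly that the two adjunction morphisms of \eqref{eq: relative adjunction 2} are weak equivalences; once this is done, the equivalence of categories follows formally from the adjunction \eqref{eq: relative adjunction}. Since morphisms in $\CA/\CB$ out of $M$ are represented by morphisms out of the resolution $R_\CA M$, this reduces to verifying that the strict unit $\widetilde\eta: R_\CA M \to \psi^* \psi_! R_\CA M$ is a weak equivalence of $\CA$-modules and the strict counit $\varepsilon: \psi_! R_\CA \psi^* N \to N$ is a weak equivalence of $\CA'$-modules. In both cases I will reduce to the induced case, where the hypothesis on $\psi$ applies directly, and then propagate the result along the twisted complex structure of the bar resolution via Lemma \ref{lem:twisted complex}.

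For the unit, I would first observe that on an induced module $\varphi_!(M') = \CA \otimes_\CB M'$ one has $\psi_! \varphi_!(M') = \CA' \otimes_\CB M'$, and that $\widetilde\eta$ is identified with $\psi \otimes_\CB \id_{M'}$. Given a $\CB$-bimodule homotopy inverse $\phi: \CA' \to \CA$ of $\psi$ with bimodule homotopies $h, h'$, tensoring over $\CB$ on the right with $M'$ yields the maps $\phi \otimes_\CB \id_{M'}$, $h \otimes_\CB \id_{M'}$, $h' \otimes_\CB \id_{M'}$, which exhibit $\psi \otimes_\CB \id_{M'}$ as a homotopy equivalence of left $\CB$-modules and hence as a weak equivalence of $\CA$-modules. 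Each term $\widetilde M_k = \CA \otimes_\CB \CA^{\otimes_\CB(k-1)} \otimes_\CB M$ of the bar resolution is induced in this sense, and both functors $\psi^*$ and $\psi_! = \CA' \otimes_\CA (-)$ commute with the formation of twisted complexes. Thus $\widetilde\eta$ is a morphism of twisted complexes that is termwise a weak equivalence; its cone is a bounded below twisted complex each of whose terms is contractible as a $\CB$-module, and Lemma \ref{lem:twisted complex} renders this cone contractible, as required.

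For the counit, I would factor $\varepsilon$ through the $\CA'$-bar resolution $R_{\CA'} N$. The $k$-th term of $\psi_! R_\CA \psi^* N$ is $\CA' \otimes_\CB \CA^{\otimes_\CB(k-1)} \otimes_\CB N$, while the $k$-th term of $R_{\CA'} N$ is $(\CA')^{\otimes_\CB k} \otimes_\CB N$; applying $\psi$ to each interior $\CA$-factor defines a natural strict map $\psi_! R_\CA \psi^* N \to R_{\CA'} N$, compatible with the bar differentials because those differentials use only the multiplication and the action and $\psi$ respects both. Termwise this map is a tensor product of copies of $\psi$ and is therefore a homotopy equivalence of $\CB$-bimodules, so the same twisted complex/cone argument promotes it to a weak equivalence. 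Composing with the resolution counit $R_{\CA'} N \to N$, which is a weak equivalence by Lemma \ref{lem:resolution}, recovers $\varepsilon$ via a direct inspection of the $k = 1$ summand. The main obstacle I foresee is pure bookkeeping: verifying that $\psi$ translates the $\CA$-bar differential into the $\CA'$-bar differential, and carefully tracking which $\CB$-structure is used each time we tensor over $\CB$ with the bimodule homotopy data for $\psi$.
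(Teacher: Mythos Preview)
Your argument is correct. For the unit you do exactly what the paper does: reduce to the induced terms of the bar resolution, where the map becomes $\psi\otimes_\CB\id$, and then invoke Lemma~\ref{lem:twisted complex} on the cone.

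For the counit your route is genuinely different from the paper's. The paper writes the counit as $\psi_! R_\CA \psi^* N \to \psi_! \psi^* N \to N$, then \emph{reduces to the case} $N=\varphi'_! K=\CA'\otimes_\CB K$; for such $N$ it observes that $\CA\otimes_\CB K$ already resolves $\psi^* N$ (this uses the unit computation), replaces $R_\CA\psi^* N$ by $\CA\otimes_\CB K$ up to strong equivalence, and then the composite $\CA'\otimes_\CA\CA\otimes_\CB K\to\CA'\otimes_\CB K$ is visibly an isomorphism. Your approach instead factors the counit through the $\CA'$-bar resolution via the strict $\CA'$-linear map
\[
\Psi:\ \CA'\otimes_\CB \CA^{\otimes_\CB(k-1)}\otimes_\CB N\ \longrightarrow\ (\CA')^{\otimes_\CB k}\otimes_\CB N,\qquad a'\otimes a_2\otimes\cdots\otimes a_k\otimes n\longmapsto a'\otimes\psi(a_2)\otimes\cdots\otimes\psi(a_k)\otimes n,
\]
checks directly that this intertwines the two bar differentials (this is where $\psi$ being an algebra map over $\CB$ is used), and observes that each $\Psi_k$ is a left $\CB$-module homotopy equivalence because it is built from $\id$'s and copies of the $\CB$-bimodule homotopy equivalence $\psi$. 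Lemma~\ref{lem:twisted complex} then gives that $\Psi$ is a weak equivalence, and composing with the $\CA'$-resolution counit recovers the adjunction counit on the nose (both are the action map on $k=1$ and zero elsewhere). This avoids the paper's reduction step entirely and works uniformly for arbitrary $N$; the paper's reduction, as written (``the functors commute with direct sums, so it is enough to prove the claim for $N=\varphi'_! K$''), tacitly uses that one may first pass to $R_{\CA'}N$ and then argue termwise, which is precisely what you make explicit. Two small wording points: the termwise $\Psi_k$ is a homotopy equivalence of \emph{left} $\CB$-modules, not $\CB$-bimodules (the right $\CB$-structure is absorbed by $N$); and the ``bookkeeping'' you flag is exactly the check that $\psi(a_i a_{i+1})=\psi(a_i)\psi(a_{i+1})$ and that the last bar term uses the $\CA$-action on $\psi^* N$, i.e.\ $a_k\cdot n=\psi(a_k)\cdot_N n$, which matches the $\CA'$-bar differential after $\Psi$.
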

\begin{proof}
    The first homomorphism corresponds to the natural homomorphism of $\CA$-modules
    \[
    R_{\CA}M \to \psi^* \psi_! R_{\CA} M.
    \]
    The module $R_{\CA}M$ is induced, so it is a twisted complex whose components are modules of the form $\varphi_! K$ for $\CB$-modules $K$. By the usual argument (using Lemmas \ref{lem:contractible cone}, \ref{lem:twisted complex}, and the fact that $\psi_!, \psi^*$ preserve direct sums) it is enough to show that the map  
    \[
    \varphi_! K \to \psi^* \psi_! \varphi_! K,
    \]
 is a homotopy equivalence
for each term of the twisted complex. In other words, that for each $\CB$-module $K$ the natural map
    \begin{equation}\label{eq:map for K}
    \CA\otimes_\CB K \to \CA'\otimes_\CB K
    \end{equation}
    is a weak equivalence. The operation $\otimes_\CB K$ is a functor from $\CB$-bimodules to $\CB$-modules, so it sends homotopy equivalences to homotopy equivalences. So the first adjunction homomorphism is an equivalence.

    The second homomorphism is the composition of homomorphisms
    \[
    \psi_! R_{\CA} \psi^* N \to \psi_! \psi^* N \to N,
    \]
    where the first arrow is induced by the counit and the second arrow is the usual adjunction. The functors $\psi_!, \psi^*$ are strict and commute with direct sums and we may assume that $R_{\CA}$ is a strict functor commuting with direct sums. So it is enough to prove the claim for $N=\varphi'_! K = \CA'\otimes_\CB K$. The maps become
    \[
    \CA'\otimes_\CA R_{\CA}(\CA'\otimes_\CB K) \to \CA'\otimes_\CA \CA'\otimes_\CB K \to \CA'\otimes_\CB K.
    \]
     Since \eqref{eq:map for K} is a weak equivalence and $\CA\otimes_\CB K$ is induced, the module $\CA\otimes_\CB K$ is a resolution for $\CA'\otimes_\CB K$. So there exists a strong homotopy equivalence $R_{\CA}(\CA'\otimes_\CB K) \to \CA\otimes_\CB K$ and the counit map factors through it. So it is sufficient to prove that the composition
     \[
    \CA'\otimes_\CA \CA\otimes_\CB K \to \CA'\otimes_\CA \CA'\otimes_\CB K \to \CA'\otimes_\CB K
     \]
     is a weak equivalence. Clearly, this map is an isomorphism.
\end{proof}
\begin{remark}
     If $\CA, \CA'$ are super-commutative then morphisms of $\CB$-bimodules are simply morphisms of $\CB$-modules, and so the assumptions of the Lemma are also necessary.
\end{remark}

\begin{definition}
    A homomorphism of algebras $\CA\to \CA'$ over $\CB$ which is a weak equivalence of $\CB$-bimodules (modules in the case when $\CA,\CA'$ are supercommutative) is called a weak equivalence of algebras.
\end{definition}

The following Lemma will be used to compare the results of pull-backs of modules via different homomorphisms:
\begin{lemma}\label{lem:algebra homomorphisms}
    Suppose $\CA, \CA'$ are dg algebras over a dg algebra $\CB$.
	Suppose $\psi_1,\psi_2:\CA\to \CA'$ are homomorphisms of algebras over $\CB$. Suppose there exists a weak equivalence $\gamma:\CA'\to \CA''$ to a dg algebra $\CA''$ such that $\gamma\circ \psi_1=\gamma\circ \psi_2$. Then for any $\CA'$-module $M$ the pullbacks $\psi_1^* M$ and $\psi_2^* M$ are canonically weakly equivalent.
\end{lemma}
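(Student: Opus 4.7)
The plan is to reduce the two-map problem to a one-map problem by transporting $M$ across the weak equivalence $\gamma$. Since $\gamma\colon\CA'\to\CA''$ is a weak equivalence of algebras over $\CB$, Lemma \ref{lem:algebra weak equivalence} applies and tells us that the adjunction unit
\[
\eta_M\colon M \longrightarrow \gamma^*\gamma_! R_{\CA'} M
\]
is a weak equivalence of $\CA'$-modules, where $R_{\CA'}$ is the functorial resolution from Lemma \ref{lem:resolution}. Set $N := \gamma_! R_{\CA'} M$; this is an $\CA''$-module, and $\eta_M$ realizes $M$ as weakly equivalent to $\gamma^* N$.

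Next I would apply the restriction functors $\psi_1^*,\psi_2^*\colon \CA'\text{-}\Mod\to\CA\text{-}\Mod$ to $\eta_M$. Because each $\psi_i$ is a homomorphism of algebras over $\CB$, the underlying $\CB$-module structure of an $\CA'$-module coincides with that of its $\psi_i^*$-pullback; hence $\psi_i^*$ sends weak equivalences to weak equivalences. Therefore
\[
\psi_i^*(\eta_M)\colon \psi_i^* M \xrightarrow{\ \sim\ } \psi_i^*\gamma^* N = (\gamma\circ\psi_i)^* N
\]
is a weak equivalence of $\CA$-modules for each $i=1,2$. The hypothesis $\gamma\circ\psi_1=\gamma\circ\psi_2$ now kicks in: the two $\CA$-modules $(\gamma\circ\psi_1)^* N$ and $(\gamma\circ\psi_2)^* N$ are literally equal, so concatenating gives the canonical zig-zag
\[
\psi_1^* M \xrightarrow{\psi_1^*(\eta_M)} (\gamma\psi_1)^* N = (\gamma\psi_2)^* N \xleftarrow{\psi_2^*(\eta_M)} \psi_2^* M,
\]
which exhibits a canonical weak equivalence $\psi_1^* M \simeq \psi_2^* M$ in $\CA/\CB$.

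The construction is manifestly functorial in $M$ because $R_{\CA'}$ and $\gamma_!$ are strict functors, so the resulting isomorphism in $\CA/\CB$ is natural. The main potential obstacle is verifying that this zig-zag is independent of the auxiliary choice of $R_{\CA'}$; this is handled by Lemma \ref{lem:lifting} together with the uniqueness-up-to-strong-equivalence of resolutions, so any two functorial resolutions yield canonically isomorphic zig-zags. No additional properties of $\CA''$ beyond the hypothesis that $\gamma$ is a weak equivalence of $\CB$-bimodules (resp.\ $\CB$-modules in the supercommutative case) are needed, since this is exactly what triggers Lemma \ref{lem:algebra weak equivalence}.
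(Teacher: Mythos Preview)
Your proof is correct and follows essentially the same approach as the paper: replace $M$ by $\gamma^*\gamma_! R_{\CA'}M$ via Lemma \ref{lem:algebra weak equivalence}, then observe that $\psi_1^*\gamma^*$ and $\psi_2^*\gamma^*$ coincide because $\gamma\psi_1=\gamma\psi_2$. Your version spells out in more detail why $\psi_i^*$ preserves weak equivalences and why the construction is independent of the choice of resolution, which the paper leaves implicit.
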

\begin{proof}
    We can replace $M$ by $\gamma^* \gamma_! R_{\CA'} M$ by Lemma \ref{lem:algebra weak equivalence}. The modules $\psi_1^*  \gamma^* \gamma_! R_{\CA'} M$ and $\psi_2^* \gamma^* \gamma_! R_{\CA'} M$ are simply the same modules.
\end{proof}

\begin{example}
    The following is a simple example which illustrates the introduced notions. Consider
    \[
    \CA=\C[x,\xi\,|\,dx=0,\,d\xi=x],\qquad \CA'=\C
    \]
    where $x$ is even, $\xi$ is odd, $\psi:\CA\to\CA'$ sends $x, \xi$ to $0$. The map $\psi$ is a quasi-isomorphism, so if we take $\CB=\C$ then $\psi$ is a weak equivalence, and  $\psi^*, \psi_!R_{\CA}$ are equivalences of categories.

On the other hand, if $\CB=\C[x]$ then $\psi$ is not a weak equivalence, for instance because there is no non-zero map from $\CA'$ to $\CA$ over $\CB$.

 For an interesting $\CA$-module, take $M=\C[\xi]$, the variable $x$ acts by zero. In the case $\CB=\C$ this module is not induced. As a resolution we can take the complex
    \[
    R_{\CA} M = \left[\C[x,\xi]\xrightarrow{x} \underline{\C[x,\xi]}\right].
    \]
    Applying $\psi_!$ produces $\C\oplus\C[-1]$. Pulling back we obtain $\C\oplus\C[-1]$. So in the category $\CA/\C$ the object $M$ is isomorphic to its homology $\C\oplus\C[-1]$.

In the case $\CB=\C[x]$ the picture is very different. First, the object $M$ is already induced. Applying $\psi_!$ gives $\C$. Clearly, $M$ is not isomorphic to $\C$ since it is not even quasi-isomorphic to $\C$.
\end{example}

\subsection{Resolutions and Koszul equivalences}\label{sec:resolutions}
In this section we compute $\Hom_{\CA/\CB}(M,N)$ in special cases and identify it with $\Hom$ between certain deformations of $M,N$. Assume $\CA$ is a super-polynomial algebra 
over a super-commutative algebra $\CB$:
\[
\CA = \CB\left[u_1,\ldots,u_m\,|\, d u_i = c_i(u_1,\ldots,u_{m})\right],\qquad |u_i|=k_i.
\]
The degrees $k_i$ can be odd or even. Assume that both $\CA$ and $\CB$ are generated in non-negative degrees. In particular, we have $k_i\geq 0$. The coefficients of $c_i$ are in $\CB$. Set
\[
\widetilde\CA = \CA\left[\Delta_1,\ldots,\Delta_m,\theta_1,\ldots,\theta_m\,|\, d \theta_i = \Delta_i,\, d \Delta_i=0\right],\qquad |\Delta_i| = k_i,\;|\theta_i|=k_i+1.
\]
The map $\varepsilon:\widetilde\CA\to \CA$ defined by $\varepsilon(\Delta_i)=\varepsilon(\theta_i)=0$ is a homotopy equivalence of $\CA$-modules. Since the algebra $\widetilde\CA$ is free over $\theta_i,\Delta_i$ and $u_i$, we can define the corresponding partial derivatives, which satisfy
\begin{equation}
\label{eq: d partials}
 \left[\frac\partial{\partial \theta_i}, d\right]=0,\qquad \left[\frac\partial{\partial \Delta_i}, d\right]=\frac\partial{\partial \theta_i},\qquad
\left[\frac\partial{\partial u_i}, d\right]=\sum_{j} \frac{\partial c_j}{\partial u_i}\frac{\partial}{\partial u_j}.
\end{equation}
\begin{lemma}
We have
\[
\widetilde\CA = \CA\otimes_\CB \CA[\theta_1,\ldots,\theta_m\,|\, d \theta_i = \Delta_i],
\]
where $\Delta_i = u_i'-u_i-\cdots$ where $u_i,u'_i$ are the actions
of $u_i$ coming from the two different factors of $\CA\otimes_{\CB}\CA$ and
$\cdots$ belongs  to the ideal generated by $\theta_1,\ldots,\theta_m$.
\end{lemma}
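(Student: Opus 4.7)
The plan is to construct elements $u_i' \in \widetilde\CA$ of homological degree $k_i$ satisfying (a) $u_i' - u_i - \Delta_i$ lies in the ideal $(\theta_1,\ldots,\theta_m)$ and (b) $du_i' = c_i(u_1',\ldots,u_m')$. Setting $\Delta_i := u_i' - u_i - \tau_i$ in the right-hand side, where $\tau_i$ is the part of $u_i' - u_i - \Delta_i$ in $(\theta)$, the triangular substitution $\{u_i,\Delta_i,\theta_i\} \leftrightarrow \{u_i,u_i',\theta_i\}$ converts $\widetilde\CA$ into the algebra on the right-hand side.

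I would construct the $u_i'$ by successive approximation along the $J$-adic filtration, where $J = (\Delta_1,\ldots,\Delta_m,\theta_1,\ldots,\theta_m)$; this filtration is $d$-stable since $d\theta_i = \Delta_i$ and $d\Delta_i = 0$. Set $u_i^{[0]} := u_i + \Delta_i$, so that the error $E_i^{[0]} := du_i^{[0]} - c_i(u^{[0]}) = c_i(u) - c_i(u+\Delta)$ lies in $J$. Inductively, given $u_i^{[n]} = u_i + \Delta_i + \rho$ with $\rho\in(\theta)$ and $E_i^{[n]} \in J^{n+1}$, I would find a correction $\rho_i^{[n]} \in (\theta)\cap J^{n+1}$ whose class in $J^{n+1}/J^{n+2}$ satisfies
\[
\bar d\,\overline{\rho}_i^{[n]} \;=\; \overline{E}_i^{[n]} + \sum_j \frac{\partial c_i}{\partial u_j}(u)\,\overline{\rho}_j^{[n]},
\]
and then set $u_i^{[n+1]} := u_i^{[n]} + \rho_i^{[n]}$. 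Existence of this primitive is the obstruction-theoretic step: a direct computation shows that $E_i^{[n]}$ is $(\bar d - M)$-closed modulo $J^{n+2}$, where $M_{ij} = (\partial c_i/\partial u_j)(u)$, using the integrability identity $\sum_j (\partial c_i/\partial u_j)\,c_j = 0$ that comes from $d^2 u_i = 0$ in $\CA$. Contractibility of the Koszul subalgebra $\CB[\Delta_j,\theta_j\mid d\theta_j = \Delta_j]$ then produces the primitive via its canonical contracting homotopy, combined with a finite geometric series in $M$.

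The overall iteration terminates in finitely many steps: since $|\theta_j| = k_j + 1 \geq 1$ and $\widetilde\CA$ is supported in non-negative degrees, any element of degree $k_i$ involves at most $k_i$ factors of $\theta$, so $(\theta)\cap J^{n+1}$ contains no elements of degree $k_i$ once $n$ is large enough and the corrections automatically vanish. With the $u_i'$ constructed, I would define $\Phi$ from the right-hand side to $\widetilde\CA$ by $u_i\mapsto u_i$, $u_i'\mapsto u_i'$, $\theta_i\mapsto \theta_i$. Property (b) ensures $\Phi$ intertwines the differentials on the primed generators, and the definition $d\theta_i = u_i' - u_i - \tau_i$ on the right matches $d\theta_i = \Delta_i$ in $\widetilde\CA$ by (a). Forgetting differentials, both sides are free super-commutative $\CB$-algebras, and $\Phi$ realizes a triangular substitution of generators with unit diagonal, hence is an isomorphism of graded $\CB$-algebras; combined with the differential check, $\Phi$ is an isomorphism of dg $\CB$-algebras.

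The main obstacle is the obstruction-theoretic construction: verifying closedness of the error $E_i^{[n]}$ modulo $J^{n+2}$ uses the integrability identity $\sum_j (\partial c_i/\partial u_j) c_j = 0$, and producing the primitive uses the Koszul acyclicity of $\CB[\Delta_j,\theta_j\mid d\theta_j=\Delta_j]$; signs from the super-grading need care but are routine. Once these inputs are assembled, termination and the algebra-isomorphism step are essentially formal.
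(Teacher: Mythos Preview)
Your obstruction-theoretic construction is sound and would succeed, but the paper takes a much shorter route that avoids the inductive bookkeeping entirely. Rather than building the $u_i'$ by successive approximation, the paper writes them down in closed form: set $h = \sum_j \theta_j\,\partial/\partial u_j$ (an odd $\CB$-linear derivation of $\widetilde{\CA}$) and $D := [h,d]$. Since $h^2 = 0$, the operator $D$ is a degree-zero derivation commuting with $d$; one checks directly that $D(\theta_i)=D(\Delta_i)=0$ and $D(u_i) = \Delta_i + \sum_j \theta_j\,\partial c_i/\partial u_j$. Local nilpotence of $D$ (by the same degree considerations you invoke) makes $\exp(D)$ a well-defined dg algebra automorphism, and $u_i' := \exp(D)(u_i)$ then automatically satisfies $du_i' = \exp(D)(c_i(u)) = c_i(u')$ together with $u_i' \equiv u_i + \Delta_i \pmod{(\theta)}$. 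The closedness of your error terms, the coupled system in $M$, and the Koszul-homotopy primitive are all subsumed in the single identity $[D,d]=0$; what your iteration produces step by step is precisely the Taylor expansion of $\exp(D)(u_i)$.

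One small gap in your writeup: the termination claim that $(\theta)\cap J^{n+1}$ has no elements of degree $k_i$ for $n$ large relies only on $|\theta_j|\geq 1$, but if some $k_j=0$ then $\theta_{j'}\Delta_j^{\,n}$ stays in bounded homological degree for all $n$. This does not arise in the paper's application (all generators there have degree $\geq 1$), and it can be repaired by also tracking the $u$-degree of the polynomial coefficient, but the sentence as written is not quite correct in the stated generality. Your integrability identity should also read $d_{\CB}c_i + \sum_j (\partial c_i/\partial u_j)\,c_j = 0$ (with signs), since the coefficients of $c_i$ may themselves have nontrivial differential in $\CB$.
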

\begin{proof}
 Set
\[
h = \sum_{i=1}^m \theta_i \frac{\partial}{\partial u_i}:\widetilde{\CA}\to\widetilde\CA,\qquad D := [h,d]=h d + d h:\widetilde{\CA}\to\widetilde\CA.
\]
Note that $h$ is homogeneous of degree 1.
We have $h^2=\frac12 [h,h]=0$ because the operators of the form $\theta_i$, $\frac{\partial}{\partial u_i}$ pairwise super-commute.

Therefore $D$ is a derivation of degree zero, it commutes with $d$ and $h$, and satisfies
\[
D(\theta_i)=D(\Delta_i)=0,\qquad D(u_i) = \Delta_i + \sum_{j=1}^{m} \theta_j \frac{\partial c_i}{\partial u_j}.
\]
Any infinite series in the variables $\theta_i$ which is homogeneous must terminate by degree reasons. In particular, $D$ is locally nilpotent, 
therefore $\exp(D) = \sum_{i=0}^\infty \frac{D^i}{i!}$ is a well-defined dg algebra automorphism of $\widetilde{\CA}$. Let
\[
u_i' = \exp(D) (u_i).
\]
Since $D$ commutes with $d$ we have
\[
d u_i' = c_i(u_1',\ldots,u_{i-1}').
\]
We have
\[
u_i' = u_i + \Delta_i + \cdots,
\]
where $\cdots$ belongs  to the ideal generated by $\theta_1,\ldots,\theta_m$. Since the degrees of $\theta_i$ are strictly positive, the variables $\Delta_j$ contained in $\cdots$ must satisfy $k_j<k_i$. Therefore the change of coordinates from $\Delta_i$ to $u_i'$ is invertible, so we can use $u_i'$ instead of $\Delta_i$ to freely generate $\widetilde\CA$.
\end{proof}
 Recall that $k_i\geq 0$. Then for any $\CA$-module $M$ the module
\begin{equation}\label{eq:M u presentation}
\widetilde M := \widetilde\CA\otimes_\CA M = M[u_1',\ldots,u_m',\theta_1,\ldots,\theta_m \,|\, d u_i' = c_i(u_1',\ldots,u_{i-1}'),\; d \theta_i = \Delta_i]
\end{equation}
on which $\CA$ acts via $u_i'$ is a bounded below twisted complex consisting of direct sums of copies of $\CA\otimes_\CB M$, and so is a resolution of $M$. Note that as a module over $\CB$ it also has a presentation
\begin{equation}\label{eq:M Delta presentation}
\widetilde M := \widetilde\CA\otimes_\CA M = M[\Delta_1,\ldots,\Delta_m,\theta_1,\ldots,\theta_m \,|\, d \Delta_i=0,\; d \theta_i = \Delta_i]
\end{equation}

\begin{example}
    In the simplest case,   $c_i$ are elements of $\CB$. In this case we have $D(u_i) = \Delta_i$, $u_i' = u_i + \Delta_i$  and the resolution \eqref{eq:M Delta presentation} looks like
    \[
    \widetilde M=M[u_1',\ldots,u_m',\theta_1,\ldots,\theta_m \,|\, d u_i' = c_i,\; d \theta_i = u_i'-u_i],
    \]
    where $\CA$ acts via $u_i'$.
\end{example}

\begin{example}
\label{ex: linear}
    Suppose more generally that $c_i$ depends linearly on the $u_j$:
$$
d u_i = c_i=B_i+\sum_j u_j A_{ij}.
$$
Here $B_i$ are homogeneous of degree $k_i-1$ and $A_{ij}$ are homogeneous of degree $k_i-k_j-1$.
Then we have
$D(u_i)=\Delta_i+\sum_j \theta_j A_{ij}$ and
    \begin{equation}\label{eq:linear Delta}
    \Delta_i = d \theta_i = u_i' - u_i - \sum_j \theta_j A_{ij}.
    \end{equation}
Also note that the equation $d^2=0$ implies
\begin{equation}
\label{eq: d square linear}
d(B_i)+ \sum_j B_j A_{ij}=0,\qquad (-1)^{k_\ell}d(A_{i\ell})+\sum_{j}A_{j\ell}A_{ij}=0.
\end{equation}
\end{example}

\begin{definition}
    We say $\CA$ is linear over $\CB$ if $\CA$ is a super-polynomial algebra over $\CB$ and for each generator $u_i$ the differential $d u_i$ depends at most linearly on the other generators, as in the Example \ref{ex: linear} above.
\end{definition}

Now let $M$ and $N$ be $\CA$-modules. Morphisms in the category $\CA/\CB$ are obtained from the complex $\Hom^\CA_{\bullet}(\widetilde M, N)$. To obtain an explicit presentation of the category we construct an explicit section $\Hom^\CA_{\bullet}(\widetilde M, N) \to \Hom^\CA_{\bullet}(\widetilde M, \widetilde N)$:
    \[
\begin{tikzcd}
    \wt M \arrow{rd}{f} \arrow[dashed]{r}{\wt f} & \wt N \arrow{d}{\varepsilon_N}\\
      & N
\end{tikzcd}
\]

 For any $\CA$-module $M$ we endow $\widetilde{M}$ with an action of $\CB$-linear operators $\frac{\partial}{\partial \theta_i}$ and $\frac{\partial}{\partial \Delta_i}$ using the presentation \eqref{eq:M Delta presentation}. 

\begin{definition}\label{def:flat morphism}
    For any $\CA$-modules $M$, $N$ a morphism $f\in\Hom^{\CA}_\bullet(\wt M, \wt N)$ is \emph{flat} if it commutes with $\frac{\partial}{\partial \theta_i}$ and $\frac{\partial}{\partial \Delta_i}$ ($i=1\ldots,m$).
\end{definition}

\begin{lemma}\label{lem:lifting f}
   Suppose $M$ and $N$ are $\CA$-modules and let $ f:\widetilde M\to N$ be any $\CB$-linear morphism.
\begin{itemize}
\item[(a)] There exists a unique flat $\CB$-linear morphism $\widetilde f: \widetilde M \to \widetilde N$ 
 whose composition with the counit map satisfies $\varepsilon_N \circ \widetilde f = f$.

\item[(b)] The lifting $f \to \widetilde f$ commutes with the differential.

\item[(c)] If $\CA$ is linear and $f$ is an $\CA$-linear map, then the lift $\widetilde f$ is also $\CA$-linear.
\end{itemize}
\end{lemma}

\begin{proof}
a) Suppose such $\tilde{f}$ is given. For any $x\in\widetilde M$ the element $\widetilde f(x)$ is a polynomial in the variables $\theta_i, \Delta_i$ with coefficients in $N$. The operation $\varepsilon_N$ extracts the constant term of a polynomial. The coefficients of $\widetilde f(x)$ can be extracted from the constant terms of iterated partial derivatives by the Taylor formula. So $\tilde f$ is unique. Explicitly, $\tilde f$ can be produced as follows:
\[
\tilde f(x) = \sum_{\substack{i_1,\ldots,i_m,\\j_1,\ldots,j_m}} \frac{\theta_1^{i_1} \cdots \theta_m^{i_m} \Delta_1^{j_1} \cdots \Delta_m^{j_m}}{i_1! \cdots i_m! j_1! \cdots j_m!} f\left(\dbyd{\Delta_m}\cdots\dbyd{\Delta_1} \dbyd{\theta_m}\cdots\dbyd{\theta_1} x\right)
\]

 b)  Let $f:\widetilde M\to N$ be a $\CB$-linear map.
Using the super-Jacobi identity, commutation relations \eqref{eq: d partials} and the fact that $\widetilde f$ commutes with $\frac\partial{\partial \theta_i}$, $\frac\partial{\partial \Delta_i}$, we obtain:
    \[
    \left[\frac\partial{\partial \theta_i}, [d, \widetilde f] \right] = \left[\left[\frac\partial{\partial \theta_i}, d\right], \widetilde f \right]=0,
    \quad
    \left[\frac\partial{\partial \Delta_i}, [d, \widetilde f] \right] = \left[\left[\frac\partial{\partial \Delta_i}, d\right], \widetilde f \right] =\left[\frac\partial{\partial \theta_i},\widetilde f \right]=0.
    \]
    Since $\varepsilon_N \circ [d,\widetilde f] = [d, f]$, we obtain that $\widetilde{[d,f]} = [d,\widetilde f]$ by the uniqueness.

c) Assume $\CA$ is linear. Then we have
    \[
    \left[\frac\partial{\partial \theta_i}, u_j'\right]=A_{ji}\in\CB,\qquad \left[\frac\partial{\partial \Delta_i}, u_j'\right]=\delta_{ij}\in\CB.
    \]
    This implies
    \[
    \left[\frac\partial{\partial \theta_i}, [u_j', \widetilde f] \right] = \left[\left[\frac\partial{\partial \theta_i}, u_j'\right], \widetilde f \right]=0,
    \]
    and similarly for $\frac\partial{\partial \Delta_i}$. If $f$ is $\CA$-linear, then
    \[
    \varepsilon_N \circ [u_j',\widetilde f] = u_j f - (-1)^{|u_j| |f|} f u_j' =0,
    \]
    so $[u_j',\widetilde f]$ is the lift of the zero map, and by the uniqueness must vanish.
\end{proof}

From now on we assume that $\CA$ is linear. Set
\begin{equation}
\widehat\theta_i = -\frac{\partial}{\partial \theta_i} + \sum_{j} A_{j i} \frac{\partial}{\partial \Delta_j}.
\end{equation}
We remind the reader that the operators $\frac{\partial}{\partial \theta_i}$, $\frac{\partial}{\partial \Delta_j}$ correspond to the presentation of $\CA$ as generated by $\theta_i, \Delta_j$.
\begin{lemma}
\label{lem: theta hat}
The operators $\widehat{\theta_i}$ satisfy
$$
\widehat{\theta_i}(u_j')=0,\ [d,\widehat{\theta_i}]=\sum_{j}  (-1)^{k_j+1}A_{ji} \widehat{\theta_j}.
$$
\end{lemma}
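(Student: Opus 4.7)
The plan is to verify both identities by explicit computation in the $\{\Delta_i,\theta_i\}$ presentation \eqref{eq:M Delta presentation} of $\widetilde{\CA}$, where $\partial/\partial\theta_i$ and $\partial/\partial\Delta_i$ are the usual super-derivations and $d$ satisfies the relations \eqref{eq: d partials}.

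For part (a), the linearity hypothesis together with the change-of-variables formula \eqref{eq:linear Delta} from Example \ref{ex: linear} gives the explicit expression
\[
u_j' = u_j+\Delta_j+\sum_k \theta_k A_{jk}
\]
as an element of $\widetilde{\CA}$. Computing the two relevant partial derivatives, $\partial u_j'/\partial \theta_i = A_{ji}$ and $\partial u_j'/\partial \Delta_k = \delta_{jk}$, and plugging into the definition of $\widehat{\theta_i}$, one obtains $\widehat{\theta_i}(u_j') = -A_{ji} + \sum_k A_{ki}\,\delta_{jk} = 0$.

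For part (b), I would expand $[d,\widehat\theta_i]$ using the super-Leibniz rule $[d,XY]=[d,X]Y+(-1)^{|X|}X[d,Y]$ for the graded commutator with $d$, together with \eqref{eq: d partials} and the identity $[d,A_{ji}]=d(A_{ji})$ as a left-multiplication operator. The contribution from $-\partial/\partial\theta_i$ vanishes, since $[d,\partial/\partial\theta_i]=0$. Each piece $[d,A_{ji}\,\partial/\partial\Delta_j]$ splits into two summands: a term $d(A_{ji})\,\partial/\partial\Delta_j$, and a term of the form (sign)$\cdot A_{ji}\,\partial/\partial\theta_j$, obtained from the inner commutator $[d,\partial/\partial\Delta_j]=\pm\,\partial/\partial\theta_j$ by super-symmetry applied to the second equation of \eqref{eq: d partials}. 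I would then invoke the quadratic constraint \eqref{eq: d square linear} to rewrite $d(A_{ji})$ as $\pm\sum_k A_{ki}A_{jk}$. On the right-hand side of the claimed identity, I expand $\widehat\theta_j=-\partial/\partial\theta_j+\sum_l A_{lj}\,\partial/\partial\Delta_l$ so that $\sum_j(-1)^{k_j+1}A_{ji}\widehat\theta_j$ becomes an explicit linear combination of the basis operators $\partial/\partial\theta_j$ and $\partial/\partial\Delta_l$. Matching coefficients of $\partial/\partial\theta_j$ and of $\partial/\partial\Delta_l$ on both sides then yields the equality.

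The main obstacle is the super-sign bookkeeping. The parities $|\theta_i|=k_i+1$, $|\Delta_i|=k_i$, $|A_{ji}|=k_j-k_i-1$, and $|d|=1$ all contribute Koszul signs in the commutators, and the quadratic relation \eqref{eq: d square linear} itself carries the sign $(-1)^{k_\ell}$. Showing that these signs assemble into precisely the prefactor $(-1)^{k_j+1}$ in front of $A_{ji}\widehat\theta_j$ is essentially the content of the calculation; no deeper structural input is required once the formulas are written out.
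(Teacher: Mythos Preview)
Your proof of the first identity $\widehat\theta_i(u_j')=0$ is the same as the paper's.

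For the second identity, your approach is correct but differs from the paper's. You propose a direct expansion of $[d,\widehat\theta_i]$ as an operator: compute each super-commutator $[d,A_{ji}\,\partial/\partial\Delta_j]$, invoke the quadratic relation \eqref{eq: d square linear} to rewrite $d(A_{ji})$, and then match coefficients of the basis operators $\partial/\partial\theta_j$ and $\partial/\partial\Delta_l$ against the expansion of the right-hand side. This works, but as you note yourself the sign bookkeeping is the entire content, and you must explicitly use the $d^2=0$ constraint.

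The paper takes a shorter route that avoids both the sign tracking and the appeal to \eqref{eq: d square linear}: it observes that both $[d,\widehat\theta_i]$ and $\sum_j(-1)^{k_j+1}A_{ji}\widehat\theta_j$ are derivations, so it suffices to check the identity on a set of algebra generators. Choosing the generators $u_\ell'$ and $\theta_\ell$ is the key move: on $u_\ell'$ both sides vanish immediately by part~(a), and on $\theta_\ell$ the computation of $[d,\widehat\theta_i](\theta_\ell)$ is a two-line evaluation giving $(-1)^{k_i}A_{\ell i}$, which visibly matches the right-hand side. The derivation trick thus turns a coefficient-matching argument into a pair of pointwise evaluations, and the quadratic relation \eqref{eq: d square linear} is never invoked.
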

\begin{proof}
For the first equation, recall that by Example \ref{ex: linear} we have
$$
u_i'=u_i+\Delta_i+\sum_{j}  \theta_j A_{ij},
$$
hence
$$
\widehat{\theta_i}(u'_j)=-A_{ji}+A_{ji}=0.
$$
For the second equation, we note that both sides are derivations, so it is sufficient to verify it on generators $u_\ell',\theta_\ell$. Both sides clearly vanish on $u_\ell'$ by the first equation. We have
\[
[d,\wh\theta_i](\theta_\ell) = d \wh \theta_i(\theta_\ell) - (-1)^{k_i+1} \wh\theta_i d (\theta_\ell) = (-1)^{k_i} \wh\theta_i \Delta_\ell = (-1)^{k_i} A_{\ell i}.
\]
This clearly matches the right hand side applied to $\theta_\ell$.
%
%
%
\end{proof}


For a multi-index $I=(i_1,\ldots,i_m)$ denote
\[
\widehat\theta^I = \widehat\theta_1^{i_1} \cdots \widehat\theta_m^{i_m}.
\]

\begin{lemma}\label{lem:flat presentation}
    Let $M$ and $N$ be modules over a linear algebra $\CA$. Then flat morphisms in $\Hom^\CA_\bullet(\wt M, \wt N)$ are precisely morphisms which can be written as series
    \begin{equation}\label{eq:series f_I}
    \sum_I f_I \widehat\theta^I \qquad (f_I\in\Hom^\CB_\bullet(M,N)).
    \end{equation}
    Here each morphism $f_I\in\Hom^\CB_\bullet(M,N)$ is naturally extended to a morphism in $\Hom^\CA_\bullet(\widetilde M, \widetilde N)$ using presentation \eqref{eq:M u presentation}. Moreover, the presentation \eqref{eq:series f_I} is unique.
\end{lemma}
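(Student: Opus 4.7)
I plan to prove the lemma in three steps: first verify that every formal series $\sum_I \widetilde{f_I}\widehat\theta^I$ is well-defined and flat, then use an inductive extraction procedure to show every flat morphism arises this way, and deduce uniqueness along the way.

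For the first step, equip $\widetilde M$ and $\widetilde N$ with the $\CB$-module filtration $F_n$ consisting of polynomials of total degree $\le n$ in the $\theta$ and $\Delta$ generators using presentation \eqref{eq:M Delta presentation}, so $F_0=M$. Each $\widehat\theta_i$ is a $\CB$-linear combination of $\frac{\partial}{\partial\theta_i}$ and $\frac{\partial}{\partial\Delta_j}$, hence lowers filtration degree by one per application, so $\widehat\theta^I$ sends $F_n$ into $F_{n-|I|}$ and the series, applied to any fixed element of $F_n$, has only finitely many nonzero terms. Each term $\widetilde{f_I}\widehat\theta^I$ is $\CA$-linear: $\widetilde{f_I}$ acts only on the $M$-coefficient in \eqref{eq:M u presentation} and therefore super-commutes with multiplication by $u'_j$, while $\widehat\theta_i$ is a derivation annihilating $u'_j$ by Lemma \ref{lem: theta hat}. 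Flatness holds because both $\widetilde{f_I}$ and $\widehat\theta_i$ super-commute with $\frac{\partial}{\partial\theta_j}$ and $\frac{\partial}{\partial\Delta_j}$; for $\widehat\theta_i$ this uses that the $A_{ji}\in\CB$ are killed by these partial derivatives.

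For the converse, given a flat morphism $\phi$, I construct $f_I$ by induction on $n$, maintaining that $\phi_n:=\phi-\sum_{|I|<n}\widetilde{f_I}\widehat\theta^I$ vanishes on $F_{n-1}$. The key preliminary observation is that $\phi_n(F_n)\subseteq N$: for any $z\in F_n$, both $\frac{\partial z}{\partial\theta_j}$ and $\frac{\partial z}{\partial\Delta_j}$ lie in $F_{n-1}$, so flatness of $\phi_n$ combined with the inductive hypothesis forces every partial derivative of $\phi_n(z)$ to vanish, giving $\phi_n(z)\in F_0(\widetilde N)=N$. To define $f_I$ for each $|I|=n$, evaluate $\phi_n$ on a pure-$\theta$ monomial $m\cdot\theta^I$; a direct computation shows $\widehat\theta^J(m\theta^I)$ is nonzero only when $J\le I$ componentwise, and $\widehat\theta^I(m\theta^I)=c_I\cdot m$ for a nonzero integer $c_I$ (a signed product of factorials of the $i_k$). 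The equation $\phi_n(m\theta^I)=c_I f_I(m)$ then defines $f_I$ uniquely and $\CB$-linearly.

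The principal obstacle is showing that $\phi_{n+1}:=\phi_n-\sum_{|I|=n}\widetilde{f_I}\widehat\theta^I$ vanishes on \emph{all} of $F_n$, not only on the pure-$\theta$ monomials (where it does by construction) and on $F_{n-1}$ (where both summands vanish since $\widehat\theta^I$ with $|I|=n$ sends $F_{n-1}$ into $F_{-1}=0$). For a mixed monomial with a factor $\Delta_i$, I use the identity $\Delta_i=u'_i-u_i-\sum_j\theta_j A_{ij}$ from Example \ref{ex: linear} to rewrite $m\theta^I\Delta_i\Delta^{K'}$ as a combination of $u'_i\cdot(m\theta^I\Delta^{K'})$, an $\CA$-action term $(u_i m)\theta^I\Delta^{K'}$, and terms $A_{ij}(m\theta^I\theta_j)\Delta^{K'}$ in which one $\Delta$ has been traded for a $\theta$. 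The $\CA$-linearity of $\phi_{n+1}$ pulls $u'_i$ outside, and a nested induction on the $\Delta$-degree $|K|$ reduces every such monomial to the already-treated pure-$\theta$ case. The specific shape of $\widehat\theta_i$ with its $A_{ji}$-correction is engineered so that the bookkeeping of the $\theta$-shift contributions matches exactly; this is where the linearity hypothesis on $\CA$ enters essentially. Uniqueness then follows by the same extraction procedure applied to the zero morphism: vanishing on $F_0$ forces $f_0=0$, vanishing on $F_1$ then forces all $f_{e_i}=0$, and so on inductively.
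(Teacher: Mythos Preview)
Your proof is correct. The paper's argument follows the same blueprint---extract the coefficients $f_I$ by evaluating on pure-$\theta$ monomials, then use $\CA$-linearity (commutation with the $u_i'$-action) to propagate vanishing of the remainder from $M[\theta_1,\dots,\theta_m]$ to all of $\widetilde M$---but packages the second half differently. Where you run an explicit double induction (first on the filtration level, then on the number of $\Delta$'s, trading each $\Delta_i$ for $u_i'-u_i-\sum_j\theta_j A_{ij}$), the paper subtracts all of the $f_I\widehat\theta^I$ at once, notes that $\varepsilon_N\circ f'$ then vanishes on $M[\theta_1,\dots,\theta_m]$ and hence, by $u_i'$-linearity, on all of $\widetilde M$, and concludes $f'=0$ immediately from the uniqueness clause of Lemma~\ref{lem:lifting f}(a). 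Your route is more self-contained in that it bypasses Lemma~\ref{lem:lifting f}; the paper's is shorter because that lemma has already done the Taylor-type uniqueness work.
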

\begin{proof}
    In the case of a linear algebra $\CA$ we can replace  $\frac{\partial}{\partial \theta_i}$ by $\widehat\theta_i$ in Definition \ref{def:flat morphism}.

    Morphisms $f_I$ and $\widehat\theta_i$ are flat. So maps of the form \eqref{eq:series f_I} are flat. Conversely, suppose we have a flat map $f$. By subtracting maps of the form \eqref{eq:series f_I} we obtain a flat map $f'$ such that $\varepsilon\circ f'$ vanishes on $M[\theta_1,\ldots,\theta_m]$. By $u_i'$-linearity, $\varepsilon\circ f'$ must vanish identically. By Lemma \ref{lem:lifting f} we have $f'=0$.

    Uniqueness follows by applying the map to elements of $M[\theta_1,\ldots,\theta_m]$.
\end{proof}

Now we are ready to prove
\begin{theorem}\label{thm:koszul}
    Suppose $\CB$ is generated in non-negative degrees and $\CA=\CB[u_1,\ldots,u_m]$ with $|u_i|=k_i\geq 0$ is linear, i.e. $d u_i = B_i + \sum_j u_j A_{ij}$ where $B_i, A_{ij}\in \CB$. Then the category $\CA/\CB$ is equivalent to a full subcategory of the category of curved dg modules over
    \[
    \widehat\CA = \CB\left[\left[\widehat\theta_1,\ldots,\widehat\theta_m \,|\, d \widehat\theta_i = (-1)^{k_i+1} \sum_{j} A_{j i} \widehat\theta_j \right]\right],\qquad |\widehat\theta_i| = -1-k_i
    \]
    with curvature $\sum_{i=1}^m B_i \widehat\theta_i$.
    An $\CA$-module $M$ corresponds to a curved $\widehat\CA$-module $\widehat M = M\left[\left[\widehat\theta_1,\ldots,\widehat\theta_m\right]\right]$ with the differential deformed according to the following rule:
    \[
    d_{\widehat M} = d_{M\otimes_{\CB}\widehat\CA} + \sum_{i} u_i \widehat\theta_i.
    \]
\end{theorem}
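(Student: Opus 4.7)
The plan is to use Theorem \ref{thm:localization} together with Lemmas \ref{lem:lifting f} and \ref{lem:flat presentation} to realize morphisms in $\CA/\CB$ as genuine morphisms in a category of $\widehat\CA$-modules obtained by deforming the $\CB$-structure on $M \otimes_\CB \widehat\CA$.

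First I verify that the deformed differential $d_{\widehat M} = d_{M \otimes_\CB \widehat\CA} + \sum_i u_i \widehat\theta_i$ squares to zero. Since $(d_{M\otimes_\CB \widehat\CA})^2 = 0$ automatically, it suffices to compute the supercommutator of this part with the deformation together with the square of the deformation. The supercommutator produces $\sum_i (B_i + \sum_j u_j A_{ij}) \widehat\theta_i$ from the action of $d$ on the $u_i$, together with $\sum_i u_i \cdot ((-1)^{k_i+1} \sum_j A_{ji} \widehat\theta_j)$ from the action of $d$ on the $\widehat\theta_i$ as specified in the theorem. The square $(\sum_i u_i \widehat\theta_i)^2$ simplifies using the supercommutativity of the $\widehat\theta_i$, and collecting everything by degree in the $\widehat\theta_i$ reduces the required vanishing to the two identities in \eqref{eq: d square linear}, which hold precisely because $\CA$ itself is a dg algebra. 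Thus $\widehat M$ is a genuine $\widehat\CA$-module.

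Next I identify the morphism spaces. By Theorem \ref{thm:localization}, $\Hom_{\CA/\CB}(M, N)$ is computed by $H_0$ of the complex $\Hom^\CA_\bullet(\widetilde M, N)$. By Lemma \ref{lem:lifting f}(c), every $\CA$-linear morphism $\widetilde M \to N$ lifts uniquely to a flat $\CA$-linear morphism $\widetilde f:\widetilde M \to \widetilde N$, and part (b) of the same lemma shows that the lift is compatible with differentials. By Lemma \ref{lem:flat presentation}, such flat morphisms are parameterized uniquely as series $\sum_I f_I \widehat\theta^I$ with $f_I \in \Hom^\CB_\bullet(M, N)$, yielding a linear bijection between $\Hom^\CA_\bullet(\widetilde M, N)$ and $\Hom^\CB_\bullet(M, N)[[\widehat\theta_1,\ldots,\widehat\theta_m]]$. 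The latter is exactly $\Hom^{\widehat\CA}_\bullet(\widehat M, \widehat N)$ once we declare the $\widehat\theta_i$ to act by left multiplication on $\widehat M = M[[\widehat\theta_1,\ldots,\widehat\theta_m]]$.

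Finally I check that this bijection intertwines the differentials, so that passing to $H_0$ recovers $\Hom_{\widehat\CA-\Mod}(\widehat M, \widehat N)$. The differential on $\Hom^\CA_\bullet(\widetilde M, \widetilde N)$ sends $\widetilde f$ to $[d,\widetilde f]$, and its effect on the coefficients $f_I$ is computed via Lemma \ref{lem: theta hat}: the identity $[d,\widehat\theta_i] = (-1)^{k_i+1} \sum_j A_{ji}\widehat\theta_j$ inside $\widetilde M$ is exactly the differential of $\widehat\theta_i$ in $\widehat\CA$, while the remaining contributions (coming from the $u_i'$-part of $d$ acting through $\widetilde f$) encode the deformation $\sum u_i \widehat\theta_i$ on both $\widehat M$ and $\widehat N$. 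Composition in the category of flat morphisms is also visibly compatible with composition of $\widehat\CA$-linear maps under this identification, so $M \mapsto \widehat M$ descends to a fully faithful functor $\CA/\CB \to \widehat\CA-\Mod$, whose essential image is the full subcategory of modules of the prescribed form. The main obstacle is the sign-intensive bookkeeping in the verification of $d_{\widehat M}^2=0$ and of the compatibility of differentials under the flat lifting; everything else is a bookkeeping consequence of the preceding lemmas.
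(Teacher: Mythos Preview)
Your overall strategy matches the paper's: identify $\Hom_{\CA/\CB}(M,N)$ with $H_0$ of $\Hom^\CA_\bullet(\widetilde M,N)$, lift to flat maps via Lemma~\ref{lem:lifting f}, parameterize by series $\sum_I f_I\widehat\theta^I$ via Lemma~\ref{lem:flat presentation}, and then match differentials. That is exactly the route taken in the paper.

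There is, however, a genuine error in your first paragraph. The claim that $d_{\widehat M}^2=0$ is false in general: carrying out the computation you describe, the $u_j A_{ij}\widehat\theta_i$ terms do cancel against the contribution from $d(\widehat\theta_i)$, but the constant terms $\sum_i B_i\widehat\theta_i$ survive. The identities \eqref{eq: d square linear} only tell you that $d(B_i)+\sum_j B_j A_{ij}=0$; they do not make $B_i$ itself vanish. So $\widehat M$ is in general a \emph{curved} dg module with curvature $\sum_i B_i\widehat\theta_i\in\widehat\CA$ (this is exactly the content of the Remark immediately following the theorem in the paper). This does not destroy the argument, because the curvature is a central element of $\widehat\CA$, and hence the induced differential on $\Hom^{\widehat\CA}_\bullet(\widehat M,\widehat N)$ still squares to zero; but you should not claim that $\widehat M$ is an honest dg module.

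Your sketch of the differential comparison is also looser than it needs to be, and the attribution is slightly off. The paper writes $d_{\widetilde M}$ explicitly in the $(u_i',\theta_i)$-presentation, observes that flat maps commute with $u_i'$, $\widehat\theta_i$, and $\partial/\partial u_i'$ (so those pieces of $d_{\widetilde M}$ drop out of the commutator $[d,\widetilde f]$), leaving $d_M + \sum_{i,j}\theta_j A_{ij}\widehat\theta_i + \sum_i u_i\widehat\theta_i$. The crucial step is then to use $[\theta_j,\widehat\theta_\ell]=(-1)^{k_j+1}\delta_{j\ell}$ to replace $\theta_j$ by $(-1)^{k_j+1}\partial/\partial\widehat\theta_j$ when acting through a series in $\widehat\theta$'s; this is what produces the $\widehat\CA$-differential on the $\widehat\theta$'s. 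It is the \emph{unprimed} $u_i$ terms (not the $u_i'$-part, which commutes away) that give the deformation $\sum u_i\widehat\theta_i$.
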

\begin{proof}
    Morphisms from $M$ to $N$ in the category $\CA/\CB$ are the homology groups of the complex $\Hom^\CA_\bullet(\widetilde M, N)$, where $\widetilde M$ is the resolution given in \eqref{eq:M u presentation}. Elements of this complex are in bijection with flat morphisms in $\Hom^\CA_\bullet(\widetilde M, \widetilde N)$ by Lemma \ref{lem:lifting f}. By Lemma \ref{lem:flat presentation} they precisely correspond to series of the form \eqref{eq:series f_I}. Elements of the complex $\Hom^{\widehat\CA}_\bullet(\widehat M, \widehat N)$ also correspond to such series. So it remains to compare the differentials. We claim that the differentials agree, i.e. for any $f$ of the form \eqref{eq:series f_I} we have
    \begin{equation}\label{eq:d commutators}
    d_{\widetilde N} f - (-1)^{|f|} f d_{\widetilde M} = d_{\widehat N} f - (-1)^{|f|} f d_{\widehat M}.
\end{equation}

    The differential on $\widetilde M$ is given in terms of the differential on $M$ as follows (we use presentation \eqref{eq:M u presentation}):
    \[
    d_{\widetilde M} = d_M - \sum_i \left(u_i'-u_i - \sum_j \theta_j A_{i j}\right) \widehat\theta_i + \sum_{i} \left(B_i + \sum_j u_j' A_{ij}\right) \frac\partial{\partial u_i'},
    \]
    and similarly for $d_{\wt N}$.
    Flat maps commute with $u_i'$, $\widehat\theta_i$ and $\frac\partial{\partial u_i'}$ (the latter operator coincides with the operator $\frac\partial{\partial \Delta_i}$ when using the presentation \eqref{eq:M Delta presentation}.) Thus, when computing the left hand side of \eqref{eq:d commutators} we can replace $d_{\widetilde M}$ resp. $d_{\widetilde N}$ with
    \[
    (d_M\;\text{resp.}\; d_N) + \sum_{i,j} \theta_j A_{i j}\widehat\theta_i + \sum_{i} u_i \widehat\theta_i.
    \]
    Note that $|A_{ij}| = k_i-k_j-1$, in particular $A_{ii}=0$ for degree reasons. This implies $\theta_j A_{i j}\widehat\theta_i = A_{i j}\widehat\theta_i \theta_j$. We have
    \[
    [\theta_j, \widehat\theta_\ell] = (-1)^{k_j+1} \delta_{\ell j},
    \]
    and each $f_I$ in the expansion \eqref{eq:series f_I} commutes with $\theta_j$, so we can replace $\theta_j$ by the operator $(-1)^{k_j+1} \frac{\partial}{\partial \widehat\theta_j}$. We arrive at the following expression for the differential:
    \[
    (d_M\;\text{resp.}\; d_N) + \sum_{i,j}  (-1)^{k_j+1} A_{i j}\widehat\theta_i \frac{\partial}{\partial \widehat\theta_j}  + \sum_{i} u_i \widehat\theta_i.
    \]
    This is precisely the differential $d_{\widehat M}$ resp. $d_{\widehat N}$ so we have shown \eqref{eq:d commutators}.

    Notice that $d^2=0$ on $\widehat\CA$ follows from \eqref{eq: d square linear}. For the deformed differential on $\widehat M$ we have the curvature
    \[
    d_{\widehat M}^2 = \left[d_{M\otimes_{\CB}\widehat\CA}, \sum_{i=1}^m u_i \widehat\theta_i\right] = \sum_{i=1}^m \left(B_i + \sum_j u_j A_{ij}\right) \widehat\theta_i - \sum_{i=1}^m u_i A_{ji}\widehat\theta_j = \sum_{i=1}^m B_i \widehat\theta_i.
    \]
    By construction the ring $\widehat\CA$ is super-commutative, so the curvature is central.
\end{proof}
\begin{remark}
	In our main application for the ring $\CCA_{c,\infty}$ below the curvature will vanish, so we will obtain honest dg modules.
\end{remark}

\section{The dg algebra $\CA$}
\label{sec:algebra}

\subsection{Algebras}
Let $R=\C[x_1,\ldots,x_n]$ and $R^e = \C[x_1,\ldots,x_n,x_1',\ldots,x_n']$.  We identify $R^e$ with $R\otimes_\C R$ by sending $x_i\mapsto x_i\otimes 1$ and $x_i'\mapsto 1\otimes x_i$.  Note that $R^e$-mod and $(R,R)$-bimod are equivalent as categories, but not as monoidal categories (the monoidal structures are given by $\otimes_{R^e}$ and $\otimes_R$ respectively).

Below we will consider various objects which are simultaneously algebra objects in $R^e$-mod and coalgebra objects in $(R,R)$-bimod.

First, let $S_n$ act on $R$ by permuting variables, and define $B:=R\otimes_{R^{S_n}} R$, where $R^{S_n}$ is the algebra of symmetric polynomials.  We can and will regard $B$ as the following quotient of $R^e$:
\[
B=\frac{\C[x_1,\ldots,x_n,x_1',\ldots,x_n']}{\left(\sum_{i=1}^n x_i^k = \sum_{i=1}^n x_i'^k \quad (k=1,\ldots,n)\right)}.
\]
In this presentation, the power sum functions can be replaced by any other set of generators of $R^{S_n}$. 

This quotient makes it obvious that $B$ is an $R^e$-algebra.  In addition to this structure, $B$ is a coalgebra object in the category of $(R,R)$-bimodules.  The counit is defined by
\[
B\rightarrow R,\qquad\quad x_i\mapsto x_i, \qquad x_i'\mapsto x_i,
\]
and the comultiplication is defined by 
\[
B\rightarrow B\otimes_R B,\qquad x_i\mapsto x_i\otimes 1,\qquad x_i'\mapsto 1\otimes x_i'.
\]
Note that $B$ and $B\otimes_R B$ are $R^e$ algebras, and the counit and comultiplication are algebras maps.

\begin{remark}
 If $M$ is a $B$-module, then we may regard $M$ as an $R^e$-module (equivalently, an $(R,R)$-bimodule) by restriction.  Thus if $M,N$ are $B$-modules, then $M\otimes_R N$ is defined.  This tensor product inherits the structure of a $B$-module via the coproduct $B\rightarrow B\otimes_R B$.

Indeed, the category of $B$-modules coincides with the full subcategory of $(R,R)$-bimodules for which the left and right actions of symmetric polynomials coincide; this full subcategory is obviously closed under $\otimes_R$.
\end{remark}
\begin{remark}
If $M$, $N$ are $B$-modules, we will often write $M\otimes_R N$ simply as $MN$, for simplicity.
\end{remark}

\begin{remark}
The algebras $B$, $R$ are graded by placing the generators $x_i$, $x_i'$ in degree $2$. This grading is called the $q$-grading. By placing $B$ or $R$ in homological degree $0$ they are viewed as differential graded algebras with zero differential.
\end{remark}

Next we will define an $R^e$-algebra $\CA$ which is also a coalgebra object in the category of $(R,R)$-bimodules, similarly to $B$ (with the exception that coassociativity for $\CA$ holds only up to homotopy).  Consequently the category of $\CA$-modules up to weak equivalence inherits the structure of a monoidal category.


\begin{definition}\label{def:A}
Let $\CA$ be the free differential graded commutative algebra over $B$ with generators $\xi_i$ ($i=1,\ldots,n$), $u_k$ ($k=1,\ldots,n$) of degrees $\deg_h \xi_i=1$, $\deg_h u_k=2$. The differential (of degree $-1$) is given by
	\[
	d \xi_i = x_i - x_i',\qquad d u_k = \sum_{i=1}^n \frac{x_i^k-x_i'^k}{x_i-x_i'} \xi_i=\sum_{i=1}^n h_{k-1}\left(x_i,x'_i\right)\xi_i,
	\]
where $h_{k}$ is the complete homogeneous symmetric polynomial of degree $k$.
	The $q$-grading extends to $\CA$ by $\deg_q \xi_i = 2$ and $\deg_q u_k = 2k$. Clearly, the differential preserves the $q$-grading and the condition $d^2=0$ is satisfied.
\end{definition}

Note that $B$ is a dg subalgebra of $\CA$, by construction.  Moreover, $\CA$ is free as a $B$-module and supported in non-negative homological degrees.


\subsection{Counit}

The commutative dg algebra $\CA$ is a resolution of $R$ over $B$ because of the following.
\begin{proposition}\label{prop:resolution}
\label{prop: A resolution}
Let $\e\colon \CA\rightarrow R$ be the algebra map sending $x_i,x_i'\mapsto x_i$ and $\xi_i,u_i\mapsto 0$.  Then $\e$ is a quasi-isomorphism of $B$-modules.
\end{proposition}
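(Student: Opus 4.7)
The plan is to show that $H_*(\CA)=R$ concentrated in homological degree $0$, with $\e$ inducing the canonical identification $H_0(\CA)=B/(x_i-x_i')\cong R$; this makes $\e$ a quasi-isomorphism of complexes of $B$-modules.

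I will filter $\CA$ by total polynomial degree in $u_1,\ldots,u_n$: set $F_p\CA$ to be the span of elements of $u$-degree at most $p$. Since $d\xi_i=x_i-x_i'$ and $du_k=\sum_i h_{k-1}(x_i,x_i')\xi_i$ both have $u$-degree zero, $F_p$ is preserved by $d$, and I split $d=d_0+d_1$ where $d_0$ preserves the $u$-degree (acting by $d_0(\xi_i)=x_i-x_i'$, $d_0(u_k)=0$) and $d_1$ strictly lowers it by one (with $d_1(u_k)=\sum_i h_{k-1}(x_i,x_i')\xi_i$). The associated graded $E^0=B[\xi_1,\ldots,\xi_n]\otimes_\C\C[u_1,\ldots,u_n]$ carries the differential $d_0$, and I run the resulting spectral sequence.

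The key input is $H_*(B[\xi],d_0)$, which I view as $\Tor^{R^e}_*(R,B)$ via the Koszul resolution $R^e[\xi]$ of $R$ over $R^e$. The kernel of $R^e\twoheadrightarrow B$ is generated by the sequence $(p_k(x)-p_k(x'))_{k=1}^n$, which is regular in $R^e$: it is part of a polynomial basis of the subring $R^{S_n}\otimes R^{S_n}$ (via the change of variables $p_k(x)-p_k(x'),\,p_k(x)+p_k(x')$), and $R^e$ is free over this subring by Chevalley--Shephard--Todd. Hence $B$ admits the alternative Koszul resolution $R^e[\eta_1,\ldots,\eta_n]$ with $|\eta_k|=1$ and $d\eta_k=p_k(x)-p_k(x')$; tensoring with $R$ makes the differential vanish, yielding $H_*(B[\xi])\cong R\otimes\Lambda[\eta_1,\ldots,\eta_n]$. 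Moreover, using the identity $d_0\bigl(\sum_i h_{k-1}(x_i,x_i')\xi_i\bigr)=p_k(x)-p_k(x')$, the element $\eta_k-\sum_i h_{k-1}(x_i,x_i')\xi_i$ is a cycle in the total complex $R^e[\xi,\eta]$ which projects to $\eta_k$ on the $R[\eta]$ side and to $-\sum_i h_{k-1}(x_i,x_i')\xi_i$ on the $B[\xi]$ side, identifying the class $\eta_k\in H_1(B[\xi])$ with the explicit cycle $\sum_i h_{k-1}(x_i,x_i')\xi_i$ up to sign.

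Consequently $E^1=R\otimes\Lambda[\eta_1,\ldots,\eta_n]\otimes\C[u_1,\ldots,u_n]$ carries a derivation differential (the filtration being multiplicative) with $d_1(u_k)=\pm\eta_k$ and $d_1(\eta_k)=0$. This identifies $(E^1,d_1)$ with $R$ tensored with the acyclic complex $(\Lambda[\eta]\otimes\C[u],\,u_k\mapsto\eta_k)$, whose homology is $\C$ concentrated in degree $0$ (by Kunneth on the $n$ rank-one contractible factors). Hence $E^2=R$ sits in a single bidegree, and since $d=d_0+d_1$ has no components lowering the filtration by more than one, the spectral sequence collapses, giving $H_*(\CA)=R$. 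The main technical step is the double-complex comparison establishing $\bigl[\sum_i h_{k-1}(x_i,x_i')\xi_i\bigr]=\pm\eta_k\in H_1(B[\xi])$; everything else is a routine spectral-sequence calculation.
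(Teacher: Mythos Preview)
Your proof is correct. Both your argument and the paper's rest on the same three ingredients: the Koszul resolution $R^e[\xi]$ of $R$ over $R^e$, the Koszul resolution $R^e[\eta]$ of $B$ over $R^e$ coming from the regular sequence $(p_k(x)-p_k(x'))_k$, and the identity $d_0\bigl(\sum_i h_{k-1}(x_i,x_i')\xi_i\bigr)=p_k(x)-p_k(x')$. The organization differs. The paper introduces an auxiliary dg algebra $\CA_2=\C[(x_i),(x_i'),(\xi_i),(u_k),(v_k)]$ with $dv_k=p_k(x)-p_k(x')$ and $du_k=-v_k+\sum_i h_{k-1}\xi_i$, shows $\CA_2\to\CA$ is a quasi-isomorphism by filtering and invoking the resolution of $B$, and then performs the change of variables $v_k'=-v_k+\sum_i h_{k-1}\xi_i$ to exhibit $\CA_2\cong\CA_1\otimes\C[(u_k),(v_k')\mid du_k=v_k']$, the second factor being contractible. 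Your spectral sequence is the same computation read differently: your $E^0$ is $B[\xi]\otimes\C[u]$, your Tor-balancing double complex $R^e[\xi,\eta]$ is precisely the paper's $\CA_2$ with the $u$'s stripped off, and your identification $[\sum_i h_{k-1}\xi_i]=\pm\eta_k$ is the paper's change of variables. What your packaging buys is that the regular-sequence argument for $B$ and the acyclicity of $\Lambda[\eta]\otimes\C[u]$ are invoked as standard homological facts rather than verified by hand; what the paper's packaging buys is an explicit intermediate model $\CA_2$ that makes all the quasi-isomorphisms visible as honest dga maps. One small remark: your final clause ``since $d=d_0+d_1$ has no components lowering the filtration by more than one'' is not the reason the spectral sequence collapses --- collapse follows already from $E^2$ being concentrated in a single bidegree, regardless of how many components $d$ has.
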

\begin{proof}
	The idea is to compare $\CA$ to the Koszul resolution of $R$ over the algebra $R \otimes R=\C[x_1,\ldots,x_n,x_1',\ldots,x_n']$. This resolution is described by
	\[
	\CA_1 := \C[(x_i),(x_i'),(\xi_i)\;|\; d \xi_i = x_i-x_i',\; d x_i = d x_i' = 0].
	\]
	We are going to construct a commutative diagram of algebra homomorphisms three of which are quasi-isomorphisms. Therefore, the remaining one will be a quasi-isomorphism:
	\[
	\begin{tikzcd}
	\CA_2=\C[(x_i),(x_i'),(\xi_i),(u_k),(v_k)] \arrow{r}{v_k\to 0} \arrow{d}{v_k'\to 0, u_k\to 0} &  \CA=B[(\xi_i),(u_k)] \arrow{d}{\xi_i\to 0, u_k\to 0, x_i'\to x_i}\\
	\CA_1=\C[(x_i),(x_i'),(\xi_i)] \arrow{r}{\xi_i\to 0,x'_i\to x_i} & R=\C[(x_i)]
	\end{tikzcd}
	\]
The algebra $\CA$ is not free over $R^e$, it is only free over $B$. The free resolution of $B$ over $R^e$ has the form:
	\begin{equation}\label{eq:quasiis B}
	\C\left[(x_i), (x_i'), (v_k) \;|\; d v_k = \sum_i x_i^k - \sum_i x_i'^k,\; d x_k=d x_k' = 0\right] \quasiis B.
	\end{equation}
	We replace each copy of $B$ in $\CA$ by this resolution and notice that the differential can be extended keeping the property $d^2=0$ as follows:
	\[
	\CA_2:=\C\Big[(x_i), (x_i'), (\xi_i), (u_k), (v_k)\;\Big|\; d x_i=d x_i'=0,\; d \xi_i = x_i-x_i',\;
         \]
\[
 d v_k = \sum_i x_i^k - \sum_i x_i'^k,\;
	 d u_k = -v_k + \sum_{i=1}^n \frac{x_i^k-x_i'^k}{x_i-x_i'} \xi_i\Big].
	\]
	The counit homomorphism $\CA_2\to \CA$ is given by sending $v_k$ to $0$ and is a quasi-isomorphism by Lemmas \ref{lem:contractible cone} and \ref{lem:twisted complex}: the complexes $\CA_2$ and $\CA$ are filtered by degree in $u_k$ first, and then by degree in $\xi_i$, and the graded pieces for this filtration are copies of \eqref{eq:quasiis B}. We perform the following change of variables in $\CA_2$:
	\[
	v_k' = -v_k + \sum_{i=1}^n \frac{x_i^k-x_i'^k}{x_i-x_i'} \xi_i.
	\]
	So $\CA_2$ is isomorphic to the following algebra:
	\[
	\CA_2\cong \C[(x_i), (x_i'), (\xi_i), (u_k), (v_k')\;|\; d x_i=d x_i'=0,\; d \xi_i = x_i-x_i',\; d v_k' = 0, d u_k = v_k']
	\]
	\[
	= \CA_1 \otimes \C[(u_k), (v_k')\;|\; d v_k' = 0, d u_k = v_k'],
	\]
	which is quasi-isomorphic to $\CA_1$ because the algebra $\C[(u_k), (v_k')\;|\; d v_k' = 0, d u_k = v_k']$ is quasi-isomorphic to $\C$.
\end{proof}

\begin{proposition}\label{prop:counit hop}
The morphism $\e\otimes \id_{\CA} - \id_{\CA}\otimes \e \colon \CA\otimes_R \CA\rightarrow \CA$ is null-homotopic.
\end{proposition}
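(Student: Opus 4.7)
The plan is to combine a derived-category argument with a sketch of a direct construction. First I would observe that both $\e \otimes \id$ and $\id \otimes \e$ are $R^e$-linear chain maps $\CA \otimes_R \CA \to \CA$ (in fact, commutative dg algebra homomorphisms), agreeing on the outer variables $x_i$ and $x_i'$. By Proposition~\ref{prop:resolution} the algebra $\CA$ is quasi-isomorphic to $R$; since $\CA$ is free as a graded left and right $R$-module (being a free $B$-module with $B$ itself $R$-free of rank $n!$ on each side), the K\"unneth formula gives $H_{\ast}(\CA \otimes_R \CA) \cong R$ concentrated in degree zero, and both maps induce the identity $R \to R$ on $H_0$.

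Now $\CA$ and $\CA \otimes_R \CA$ are bounded-below, graded-free $R^e$-modules, hence K-projective. Consequently the natural map from the homotopy category to the derived category of $R^e$-modules is a bijection on Hom out of $\CA \otimes_R \CA$. Under the derived equivalences $\CA \otimes_R \CA \simeq R \simeq \CA$, that Hom group becomes $\Hom_{D(R^e)}(R, R) = R$, with the class of a chain map determined by the induced map on $H_0$. Since both $\e \otimes \id$ and $\id \otimes \e$ induce the identity on $H_0$, they represent the same class in the derived category and hence in the homotopy category, so they are homotopic and their difference is null-homotopic.

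A more hands-on construction would build the null-homotopy $h$ on generators: set $h(y_i) := \xi_i$ on the middle generator $y_i = x_i' \otimes 1 = 1 \otimes x_i$ (realizing the required coboundary $d\xi_i = x_i - x_i'$), extend to the polynomial subalgebra via the $(f,g)$-Leibniz rule with $f = \e \otimes \id$ and $g = \id \otimes \e$ (giving the closed formula $h(y_i^m) = \xi_i h_{m-1}(x_i, x_i')$, which produces exactly the coboundary $x_i^m - (x_i')^m$), and then propagate across the generators $\xi_i^{(j)}$ and $u_k^{(j)}$ inherited from the two tensor factors. The main obstacle will be the $u_k^{(j)}$ step: the naive Leibniz extension fails because the difference $f - g$ of two algebra maps is not itself an $(f,g)$-derivation, so one must instead solve the coboundary equation $[d,h](u_k^{(j)}) = \mp u_k$ directly, with solvability resting on the positive-degree acyclicity of $\CA$ from Proposition~\ref{prop:resolution}. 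The derived-category argument above sidesteps this bookkeeping entirely.
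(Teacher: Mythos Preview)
Your derived-category argument has the right shape and is essentially the paper's proof, but there is one genuine error: $\CA$ is \emph{not} graded-free over $R^e=R\otimes_\C R$. Each homogeneous piece of $\CA$ is a direct sum of copies of $B=R\otimes_{R^{S_n}}R$, and $B$ is a nontrivial quotient of $R^e$, hence not projective over $R^e$. Being free as a left $R$-module and as a right $R$-module (which is what you actually verify) does not imply freeness as an $R^e$-module. So you cannot conclude K-projectivity over $R^e$, and the passage from the homotopy category to $D(R^e)$ is not justified.

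The fix is to work over $B$ rather than $R^e$: the terms of $\CA\otimes_R\CA$ are sums of $B\otimes_R B\cong R\otimes_{R^{S_n}}R\otimes_{R^{S_n}}R$, which \emph{is} free over $B$ (of rank $n!$, since the middle $R$ is free over $R^{S_n}$). With that correction your argument goes through verbatim, and it is then exactly the paper's proof: $\CA\otimes_R\CA$ is a bounded-below complex of projective $B$-modules, so post-composition with the quasi-isomorphism $\e\colon\CA\to R$ induces a homotopy equivalence $\underline{\Hom}^B_\bullet(\CA\otimes_R\CA,\CA)\simeq\underline{\Hom}^B_\bullet(\CA\otimes_R\CA,R)$, and $\e\circ(\e\otimes\id-\id\otimes\e)=0$. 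Your hands-on sketch in the second paragraph is reasonable as motivation but, as you note, does not close; the homological argument is the complete proof.
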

\begin{proof}
The tensor product $\CA\otimes_R\CA$ is a complex (bounded below) of projective $B$ modules of the form $R\otimes_{R^{S_n}} R\otimes_{R^{S_n}} R$.  Therefore, the functor $\Hom_{\bullet}^{B}(\CA\otimes_R \CA, - )$ sends acyclic complexes to contractible complexes, and sends quasi-isomorphisms to homotopy equivalences.  In particular post-composing with the quasi-isomorphism $\e$ defines a homotopy equivalence
\[
\Hom_{\bullet}^{B}\left(\CA\otimes_R \CA,\CA\right) \buildrel\simeq\over \rightarrow \Hom_{\bullet}^{B}\left(\CA\otimes_R \CA,R\right) .
\]
On the other hand, post-composing with $\e$ annihilates $\e\otimes \id_{\CA} - \id_{\CA}\otimes \e$, hence this morphism must be null-homotopic.
\end{proof}

\subsection{Coproduct}
\label{sec:coproduct}

We wish to define the coproduct on $\CA$ as an algebra map $\CA\rightarrow \CA\otimes_R \CA$.  To compactify some of the formulas, we will regard $B\otimes_R B$ as the quotient
\[
\C[\xx,\xx',\xx'']\twoheadrightarrow B\otimes_R B,\qquad x_i\mapsto x_i\otimes 1,\qquad x_i'\mapsto x_i'\otimes 1 = 1\otimes x_i,\qquad x_i'' \mapsto 1\otimes x_i'.
\]
Note that $\CA\otimes_R \CA$ is naturally a $B\otimes_R B$ algebra, and can be viewed as a $B$-algebra via the coproduct $B\rightarrow B\otimes_R B$.

\begin{definition}
Let $\Delta\colon \CA \rightarrow \CA\otimes_R \CA$ be the $B$-algebra map defined by
\[
\Delta(x_i)=x_i\otimes 1,\ \Delta(x'_i)=1\otimes x_i',\ \Delta(\xi_i)=\xi_i\otimes 1+1\otimes \xi_i,
\]
\[
\Delta(u_1)=1\otimes u_1+u_1\otimes 1
\]
\[
\Delta(u_k)=u_k\otimes 1+1\otimes u_k + \sum_{i=1}^{n} h_{k-2}\left(x_i,x'_i,x''_i\right) \xi_i\otimes \xi_i\ (k\ge 2).
\]
\end{definition}

We call the map $\Delta$ the coproduct on $\CA$ (over $R$). As we will see below, it is a chain map which is coassociative up to homotopy. It is easy to see that the map $\varepsilon: \CA\to R$ is a counit for this coproduct.

\begin{lemma}
	\label{lem:coproduct chain}
	The coproduct $\Delta$ is a chain map.
\end{lemma}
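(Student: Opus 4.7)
The approach is to exploit the fact that $\Delta$ is a $B$-algebra homomorphism while $d$ is a graded derivation, so the difference $D := d\circ \Delta - \Delta\circ d$ satisfies the skew Leibniz rule
\[
D(ab) = D(a)\Delta(b) + (-1)^{|a|}\Delta(a)\,D(b).
\]
Consequently $D$ vanishes identically as soon as it vanishes on a $B$-algebra generating set of $\CA$. Since $\CA$ is freely generated as a commutative dg $B$-algebra by the $\xi_i$ and the $u_k$, and since $D$ vanishes automatically on $B$ (where both $d$ and the induced differential on $B\otimes_R B$ are zero), it suffices to verify $D(\xi_i)=0$ and $D(u_k)=0$.

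The verification on $\xi_i$ is immediate: adopting the convention $x_i = x_i\otimes 1$, $x_i' = x_i'\otimes 1 = 1\otimes x_i$, $x_i'' = 1\otimes x_i'$ inside $\CA\otimes_R \CA$, the formula $\Delta(\xi_i)=\xi_i\otimes 1+1\otimes \xi_i$ gives $d(\Delta(\xi_i)) = (x_i-x_i')+(x_i'-x_i'') = x_i-x_i''$, which equals $\Delta(d\xi_i) = \Delta(x_i-x_i') = x_i-x_i''$. The case $k=1$ for $u_k$ is similarly trivial since the correction term in $\Delta(u_1)$ is absent.

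The heart of the argument is the case $u_k$ for $k\ge 2$. On the one hand,
\[
\Delta(du_k) = \sum_i h_{k-1}(x_i, x_i'')\bigl(\xi_i\otimes 1 + 1\otimes \xi_i\bigr).
\]
On the other hand, applying $d$ to $\Delta(u_k) = u_k\otimes 1 + 1\otimes u_k + \sum_i h_{k-2}(x_i,x_i',x_i'')\,\xi_i\otimes \xi_i$ and using the odd Leibniz rule $d(\xi_i\otimes \xi_i) = (x_i-x_i')\otimes \xi_i - \xi_i\otimes (x_i-x_i')$, then extracting the coefficients of $\xi_i\otimes 1$ and $1\otimes \xi_i$, reduces the desired equality to the two identities
\begin{align*}
h_{k-1}(x_i, x_i'') &= h_{k-1}(x_i, x_i') - h_{k-2}(x_i,x_i',x_i'')(x_i'-x_i''),\\
h_{k-1}(x_i, x_i'') &= h_{k-1}(x_i', x_i'') + h_{k-2}(x_i,x_i',x_i'')(x_i-x_i').
\end{align*}
Both follow from the elementary telescoping identity
\[
h_m(y_1,z)-h_m(y_2,z)=(y_1-y_2)\,h_{m-1}(y_1,y_2,z),
\]
obtained by a single peeling of the last variable in the definition of $h_m$.

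The main obstacle I anticipate is purely bookkeeping: correctly tracking the sign from the odd Leibniz rule applied to $\xi_i\otimes \xi_i$ and disentangling the three $R$-generators $x_i$, $x_i'$, $x_i''$ in $\CA\otimes_R\CA$. Once these are in place, the precise shape of the correction term $\sum_i h_{k-2}(x_i,x_i',x_i'')\,\xi_i\otimes\xi_i$ in $\Delta(u_k)$ is seen to be dictated exactly by the symmetric function identity above, which simultaneously motivates the definition of $\Delta$ and completes the proof.
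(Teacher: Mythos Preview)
Your proof is correct and follows essentially the same approach as the paper's: both reduce to checking the chain-map condition on the generators $\xi_i$ and $u_k$, and both rest on the same symmetric-function identity $h_{k-1}(a,c)=h_{k-1}(a,b)+(c-b)h_{k-2}(a,b,c)$ applied twice. Your version makes explicit why checking on generators suffices (the derivation property of $D=d\Delta-\Delta d$), which the paper leaves implicit, but otherwise the arguments are the same.
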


\begin{proof}
	Let us check that $\Delta$ commutes with the differential. Indeed,
	$$
	d(\Delta(\xi_i))=d(\xi_i\otimes 1+1\otimes \xi_i)=(x_i-x'_i)+(x'_i-x''_i)=x_i-x''_i=\Delta(d(\xi_i))
	$$
	while
	$$
	d(\Delta(u_k))=d\left(u_k\otimes 1+1\otimes u_k + \sum_i h_{k-2}\left(x_i,x'_i,x''_i\right) \xi_i\otimes \xi_i\right)=
	$$
	\begin{multline}
		\label{eq: coproduct}
		\sum_i [h_{k-1}\left(x_i,x'_i\right)\xi_i\otimes 1 +h_{k-1}\left(x'_i,x''_i\right)1\otimes \xi_i+\\ h_{k-2}\left(x_i,x'_i,x''_i\right)\left((x_i-x_i')1\otimes \xi_i+(x''_i-x'_i)\xi_i\otimes 1 \right)]
	\end{multline}
	Note that
	$$
	h_{k-1}\left(x_i,x''_i\right)=h_{k-1}\left(x_i,x'_i\right)+h_{k-2}\left(x_i,x'_i,x''_i\right)(x''_i-x'_i)=$$
$$
h_{k-1}\left(x'_i,x''_i\right)+h_{k-2}\left(x_i,x'_i,x''_i\right)(x_i-x'_i),
	$$
	so \eqref{eq: coproduct} equals $\sum_{i} h_{k-1}\left(x_i,x''_i\right)\left(\xi_i\otimes 1+1\otimes \xi_i\right)=\Delta(d(u_k))$.
\end{proof}

\begin{example}
We have $\Delta(u_2)=u_2\otimes 1+1\otimes u_2+\sum_{i=1}^{n}\xi_i\otimes \xi_i.$
\end{example}

Recall Proposition \ref{prop:counit hop} which states that $\e\otimes \id$ and $\id\otimes \e$ are homotopic as morphisms $\CA\otimes \CA\rightarrow \CA$.

\begin{corollary}\label{cor:counit equivalence}
The maps $\Id\otimes \e\simeq \e\otimes \Id$ and $\Delta$ are inverse homotopy equivalences of $(R,R)$-bimodules $\CA\simeq \CA\otimes_R \CA$.
\end{corollary}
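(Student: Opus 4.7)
The plan is to identify one composition on the nose via a direct computation, and then deduce the other by the Hom-complex argument already used in the proof of Proposition \ref{prop:counit hop}. Set $\mu:=\id_\CA\otimes\e$; by Proposition \ref{prop:counit hop} one may equally well take $\mu=\e\otimes\id_\CA$ up to $(R,R)$-bilinear homotopy.

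First I would check $\mu\circ\Delta=\id_\CA$ by evaluating on the generators $x_i,x_i',\xi_i,u_k$ of $\CA$. On $x_i$ and $x_i'$ it is immediate from the definitions; on $\xi_i$ it reads $(\id\otimes\e)(\xi_i\otimes 1+1\otimes\xi_i)=\xi_i+0$; on $u_k$ the ``correction'' term $\sum_i h_{k-2}(x_i,x_i',x_i'')\,\xi_i\otimes\xi_i$ is annihilated since $\e(\xi_i)=0$, leaving $u_k\otimes 1\mapsto u_k$. The parallel computation works for $\e\otimes\id$.

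For the reverse composition, consider the closed element
\[
\Delta\circ\mu-\id_{\CA\otimes_R\CA}\in\underline{\Hom}^B_0(\CA\otimes_R\CA,\CA\otimes_R\CA),
\]
and show it is null-homotopic. Post-composing with $\mu$ annihilates it exactly, since $\mu\circ(\Delta\circ\mu-\id)=(\mu\circ\Delta-\id)\circ\mu=0$. Now $\mu$ is a quasi-isomorphism of $B$-modules: by Proposition \ref{prop:resolution} the counit $\e$ is a quasi-isomorphism, and $\CA$ is $R$-flat on either side (free over $B$, which is free over $R$ on either side of rank $n!$), so tensoring preserves quasi-isos. Because $\CA\otimes_R\CA$ is a bounded below complex of projective $B$-modules of the form $R\otimes_{R^{S_n}}R\otimes_{R^{S_n}}R$ (as noted in the proof of Proposition \ref{prop:counit hop}), post-composition with $\mu$ induces a homotopy equivalence
\[
\mu_*\colon \underline{\Hom}^B_\bullet(\CA\otimes_R\CA,\CA\otimes_R\CA)\xrightarrow{\;\simeq\;}\underline{\Hom}^B_\bullet(\CA\otimes_R\CA,\CA).
\]
Since $\mu_*(\Delta\circ\mu-\id)=0$, its class in $H_0$ vanishes, so $\Delta\circ\mu\simeq\id$ as $B$-linear maps. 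Because $R$ embeds into $B$ on either side, a $B$-linear homotopy is automatically $(R,R)$-bilinear, giving the desired equivalence of $(R,R)$-bimodules.

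The main obstacle is bookkeeping rather than conceptual: one has to track carefully which $R$-factor is which inside $\CA\otimes_R\CA$ (three sets of variables $x_i,x_i',x_i''$ appear, with $\mu$ touching only the rightmost) and the identifications $\CA\otimes_R R\cong\CA\cong R\otimes_R\CA$ in which the counit's $R$-leg is absorbed into the right (respectively left) $R$-action on $\CA$. Apart from this, everything reduces to Proposition \ref{prop:counit hop} together with a one-line check on generators; the argument is nonconstructive, producing no explicit formula for the homotopy.
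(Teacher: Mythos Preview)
Your argument is correct. The first step, verifying $(\Id\otimes\e)\circ\Delta=\Id_\CA$ on generators, is exactly what the paper does (stated there as equation \eqref{eq: epsilon delta}). For the reverse composition you and the paper diverge: the paper observes the exact identity
\[
\Delta\circ(\Id\otimes\e)=(\Id\otimes\Id\otimes\e)\circ(\Delta\otimes\Id),
\]
then replaces $\Id\otimes\Id\otimes\e$ by $\Id\otimes\e\otimes\Id$ using Proposition~\ref{prop:counit hop} (tensored with $\Id_\CA$ on the left), after which $(\Id\otimes\e)\circ\Delta=\Id$ finishes the job in one line. You instead rerun the $\Hom$-complex argument from the proof of Proposition~\ref{prop:counit hop}, now with the quasi-isomorphism $\mu=\Id\otimes\e$ in place of $\e$, and kill $\Delta\circ\mu-\Id$ by noting that post-composition with $\mu$ annihilates it. Both routes work; the paper's is a bit slicker since it reuses Proposition~\ref{prop:counit hop} as a black box, while yours is self-contained and makes explicit why the nullhomotopy exists.
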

\begin{proof}
It is easy to verify the identity
\begin{equation}
\label{eq: epsilon delta}
(\Id\otimes \varepsilon)\circ \Delta = (\varepsilon\otimes \Id)\circ \Delta = \Id.
\end{equation}

In the other direction, we compute
\[
\Delta\circ (\Id\otimes \e)  =  (\Id\otimes \Id\otimes \e) \circ (\Delta\otimes \Id) \simeq   (\Id\otimes \e\otimes \Id) \circ (\Delta\otimes \Id) = \Id\otimes \Id.\qedhere
\]
\end{proof}

\begin{proposition}\label{prop:coassociativity}
The comultiplication on $\CA$ is coassociative up to homotopy.
\end{proposition}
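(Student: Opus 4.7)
The plan is to imitate the proof of Proposition~\ref{prop:counit hop}. I want to show that the two $B$-module chain maps
\[
(\Delta\otimes\Id)\circ\Delta,\quad (\Id\otimes\Delta)\circ\Delta\colon \CA\longrightarrow \CA\otimes_R\CA\otimes_R\CA
\]
are chain-homotopic. The key structural input is that $\CA$ is a bounded-below complex of free $B$-modules (being the free supercommutative $B$-algebra on the $\xi_i$ and $u_k$), and consequently the functor $\Hom_\bullet^B(\CA,-)$ sends quasi-isomorphisms of $B$-modules to homotopy equivalences of chain complexes, by the same argument (via Lemmas~\ref{lem:contractible cone} and~\ref{lem:twisted complex}) that appears in the proof of Proposition~\ref{prop:counit hop}.

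The first step is to verify that $\e^{\otimes 3}\colon \CA\otimes_R\CA\otimes_R\CA\to R$ is a quasi-isomorphism of $B$-modules. This follows by factoring it as
\[
\CA^{\otimes_R 3}\xrightarrow{\Id\otimes\Id\otimes\e}\CA\otimes_R\CA\xrightarrow{\Id\otimes\e}\CA\xrightarrow{\e} R,
\]
where the first two arrows are homotopy equivalences by Corollary~\ref{cor:counit equivalence} (tensoring with a spectator copy of $\CA$ preserves homotopy equivalences since the homotopies are $R$-linear) and the last is a quasi-isomorphism by Proposition~\ref{prop:resolution}. Applying $\Hom_\bullet^B(\CA,-)$ then produces a homotopy equivalence of $\Hom$-complexes
\[
\e^{\otimes 3}_*\colon \Hom_\bullet^B(\CA,\CA\otimes_R\CA\otimes_R\CA)\xrightarrow{\;\simeq\;}\Hom_\bullet^B(\CA,R).
\]

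The second step is the observation that both iterated coproducts compose with $\e^{\otimes 3}$ to yield the same map $\e$. Iterating the counit identity \eqref{eq: epsilon delta} gives
\[
\e^{\otimes 3}\circ(\Delta\otimes\Id)\circ\Delta=(\e\otimes\e)\circ\Delta=\e,
\]
and analogously for the other bracketing. Thus both iterated coproducts are degree-$0$ cycles in the source $\Hom$-complex whose images under $\e^{\otimes 3}_*$ agree; since $\e^{\otimes 3}_*$ is an isomorphism on $H_0$, the two maps represent the same homology class there and are therefore chain-homotopic.

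As a sanity check one can also verify the proposition by direct computation. On $x_i$, $x_i'$ and $\xi_i$ the two iterated coproducts agree on the nose. For $u_k$, using the elementary identity $h_n(a,b,c)-h_n(a,b,d)=(c-d)\,h_{n-1}(a,b,c,d)$, the difference simplifies to the exact expression
\[
d\biggl(\sum_{i=1}^n \xi_i\otimes\xi_i\otimes\xi_i\cdot h_{k-3}(x_i,x_i',x_i'',x_i''')\biggr)\in\CA\otimes_R\CA\otimes_R\CA.
\]
The main subtlety in pursuing this direct route would be extending this formula to a $B$-linear chain homotopy on all of $\CA$, rather than merely on the generators; the abstract argument above bypasses this issue entirely.
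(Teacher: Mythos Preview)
Your proof is correct and follows essentially the same strategy as the paper: show that post-composition with $\e^{\otimes 3}$ induces a homotopy equivalence on $\Hom^B_\bullet(\CA,-)$, then observe that both iterated coproducts become equal after applying $\e^{\otimes 3}$. The only minor variation is that you deduce the quasi-isomorphism $\CA^{\otimes_R 3}\to R$ from Corollary~\ref{cor:counit equivalence}, whereas the paper argues directly via flatness of $\CA$ over $R$; your added sanity check reproducing the explicit homotopy on $u_k$ matches the content of Appendix~\ref{sec: higher}.
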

\begin{proof}
The complex $\CA$ is a bounded below complex of free $B$-modules, and $B$ is free as a right $R$-module so if $\phi:M\rightarrow N$ is a quasi-isomorphism of left $R$-modules, then $\Id_{\CA}\otimes \phi:\CA\otimes_R M\rightarrow \CA\otimes_R N$ is a quasi-isomorphism.  From this it follows that $\e\otimes \cdots \otimes \e$ defines a quasi-isomorphism $\CA\otimes_R \cdots \otimes_R \CA\rightarrow R$.  

Also since $\CA$ is free over $B$ and bounded above, the functor $\Hom^{B}_\bullet(\CA,-)$ sends the quasi-isomorphism $\e\otimes \cdots\otimes \e$ to a homotopy equivalence
\[
\Hom^{B}_\bullet\left(\CA,\CA\otimes_R \cdots \otimes_R \CA\right)\buildrel \simeq\over \rightarrow \Hom^{R^e}_\bullet(\CA,R),
\]
Finally, $(\Delta\otimes\Id)\circ \Delta - (\Id\otimes \Delta)\circ \Delta$ is annihilated by $\e\otimes \e\otimes \e$, hence is null-homotopic.
\end{proof}

\begin{remark}
In Appendix \ref{sec: higher} we explicitly write the homotopy which realizes the coassociativity of $\Delta$, and its higher analogues.
\end{remark}

\subsection{Twists of $\CA$}

\begin{definition}\label{def:std bimodule}
Each permutation $w\in S_n$ determines a \emph{standard bimodule} $R_w$, which is defined to be the quotient of $R^e$ by the ideal generated by elements $x_{w(i)}-x_i'$ for all $i$. 
\end{definition}

Note that $R_w$ is the quotient of $R^e$ by a two-sided ideal, hence is an algebra object in $R^e$-mod. Thus $R_w\otimes_R A\otimes_R R_v$ is an algebra object in $R^e$-mod whenever $A$ is.  Such algebras are called \emph{twists of $A$}.  The following says that all twists of $B$ are isomorphic.

\begin{lemma}\label{lemma:std times B}
We have $R_w\otimes_R B\cong B\cong B\otimes_R R_w$ for all $w\in S_n$.
\end{lemma}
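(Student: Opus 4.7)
My plan is to construct explicit inverse isomorphisms using the natural $S_n$-action on the two tensor factors of $B = R \otimes_{R^{S_n}} R$.

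The key observation is that $R_w$, viewed as a $\C$-vector space, is just a copy of $R$: the quotient by $(x_{w(i)} - x'_i)$ identifies $x'_i$ with $x_{w(i)}$, leaving the free commutative algebra on $x_1,\dots,x_n$. Writing $\mathbf{1}_w$ for the image of $1$, the left $R$-action on $R_w$ is the standard one ($R_w \cong R$ as a left $R$-module, freely generated by $\mathbf{1}_w$), while the right $R$-action is twisted: $\mathbf{1}_w \cdot x'_i = x_{w(i)} \mathbf{1}_w$. A second key ingredient is that although $w$ does not act on $R^e$ preserving the left and right $R$-structures, it does induce a $\C$-algebra automorphism $w^{(L)}:B\to B$ defined on generators by $w^{(L)}(x_i) = x_{w(i)}$, $w^{(L)}(x'_i) = x'_i$; this is well-defined because $w$ permutes any set of generators of $R^{S_n}$ and hence preserves the defining relations of $B$.

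I would then define $\Phi \colon R_w \otimes_R B \to B$ on simple tensors by
\[
\Phi\bigl(a\,\mathbf{1}_w \otimes b\bigr) \;=\; a\cdot w^{(L)}(b),\qquad a\in R,\; b\in B,
\]
and check well-definedness on the relation $\mathbf{1}_w\cdot x'_i \otimes b = \mathbf{1}_w \otimes x_i b$: both sides are sent to $x_{w(i)} w^{(L)}(b)$, using $w^{(L)}(x_i b) = x_{w(i)} w^{(L)}(b)$. The inverse is given by $\Psi\colon B \to R_w \otimes_R B$, $b \mapsto \mathbf{1}_w \otimes (w^{(L)})^{-1}(b)$, and the identities $\Phi\Psi = \Id$, $\Psi\Phi = \Id$ are immediate. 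Left $R$-linearity, right $R$-linearity, and multiplicativity of $\Phi$ are straightforward verifications: on the left, $x_i$ passes through $w^{(L)}$ trivially because both actions come from the left $R$-factor of the target; on the right, $w^{(L)}$ fixes $x'_i$ so right $R$-linearity is automatic; multiplicativity follows from commutativity of $B$ and the fact that $w^{(L)}$ is an algebra map. So $\Phi$ is an $R^e$-algebra isomorphism.

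The second isomorphism $B \otimes_R R_w \cong B$ is entirely parallel, replacing $w^{(L)}$ by the algebra automorphism $w^{(R)}\colon B\to B$ which permutes the right-hand generators, $w^{(R)}(x'_i) = x'_{w(i)}$ and $w^{(R)}(x_i)=x_i$. No step here is a real obstacle; the only thing to be careful about is keeping the two $R$-actions straight and confirming that $w^{(L)}$ and $w^{(R)}$ are well-defined on the quotient $B$, both of which are immediate from the fact that $S_n$ fixes $R^{S_n}$ pointwise.
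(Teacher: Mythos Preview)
Your proof is correct and takes essentially the same approach as the paper: both arguments untwist the factor $R_w$ by applying the permutation $w$ to one set of generators of $B$, exploiting that the defining relations of $B=R\otimes_{R^{S_n}}R$ are $S_n$-invariant in each factor. The paper phrases this in terms of the three-variable presentation $\C[\xx,\xx',\xx'']$ and a substitution on the middle variables, while you package the same map via the automorphism $w^{(L)}$ of $B$; these are the same isomorphism.
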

\begin{proof}
We may identify $R_w\otimes_R B$ with the quotient of $\C\left[\xx,\xx',\xx''\right]$ by the ideal generated by $x_{w(i)}-x_i'$ and $f(\xx') - f(\xx'')$ for all symmetric polynomials $f$.  It is easy to check that the mapping
\[
f\left(\xx,\xx',\xx''\right)\rightarrow f\left(\xx,w\inv(\xx'),\xx''\right)
\]
defines an isomorphism of bimodules $R_w\otimes_R B \rightarrow R\otimes_R B\cong B$.
\end{proof}

Next we consider twists of $\CA$.  Twisting preserves the subalgebra $B\subset \CA$ by the above lemma, but acts non-trivially on the differentials of $\xi_i$ and $u_k$.

\begin{definition}[Twists of $\CA$]\label{def:twists of A}
Let $R_w\CA R_v\cong R_w\otimes_R \CA\otimes_R R_v$ be the free differential graded commutative algebra over $B$ with generators $\xi_i$ ($i=1,\ldots,n$), $u_k$ ($k=1,\ldots,n$) of degrees $\deg_h \xi_i=1$, $\deg_h u_k=2$, with differential (of degree $-1$) given by
\[
d \xi_i = x_{w(i)} - x_{v\inv(i)}',\qquad d u_k = \sum_{i=1}^n h_{k-1}\left(x_{w(i)},x'_{v\inv(i)}\right)\xi_i,
\]
where $h_{k}$ is the complete homogeneous symmetric polynomial of degree $k$. 
\end{definition}

\begin{lemma}
We have $R_w\CA\cong  \CA R_w$ as $B$-algebras, via the map sending $\xi_i\mapsto \xi_{w(i)}$ and $u_k\mapsto u_k$ for all $k$.
\end{lemma}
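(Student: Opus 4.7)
The approach is a direct verification. Both $R_w\CA$ and $\CA R_w$ are, by Definition \ref{def:twists of A}, free graded-commutative $B$-algebras on abstract generators $\xi_1,\dots,\xi_n,u_1,\dots,u_n$ in the same bidegrees. Hence defining a $B$-algebra map out of $R_w\CA$ reduces to specifying images of these generators in $\CA R_w$; the content of the lemma is that the specific choice $\xi_i\mapsto \xi_{w(i)}$, $u_k\mapsto u_k$ is compatible with the differentials.

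First, I would unpack the two differentials from Definition \ref{def:twists of A}. Taking $v=e$ gives in $R_w\CA$
\[
d\xi_i = x_{w(i)}-x'_i,\qquad du_k=\sum_{i=1}^n h_{k-1}(x_{w(i)},x'_i)\,\xi_i,
\]
while taking the first permutation to be $e$ and renaming the second to $w$ gives in $\CA R_w$
\[
d\xi_j = x_j-x'_{w\inv(j)},\qquad du_k=\sum_{j=1}^n h_{k-1}(x_j,x'_{w\inv(j)})\,\xi_j.
\]
Let $\phi:R_w\CA\to \CA R_w$ be the $B$-algebra map (i.e., fixing $x_i$ and $x'_i$) sending $\xi_i\mapsto \xi_{w(i)}$ and $u_k\mapsto u_k$, which exists and is unique by the universal property of the free graded-commutative $B$-algebra.

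Next I would verify that $\phi$ commutes with $d$ by checking on generators. On $\xi_i$:
\[
\phi(d\xi_i)=x_{w(i)}-x'_i,\qquad d(\phi\xi_i)=d\xi_{w(i)}=x_{w(i)}-x'_{w\inv(w(i))}=x_{w(i)}-x'_i.
\]
On $u_k$, reindexing the sum by $j=w(i)$ gives
\[
\phi(du_k)=\sum_{i} h_{k-1}(x_{w(i)},x'_i)\,\xi_{w(i)}=\sum_{j} h_{k-1}(x_j,x'_{w\inv(j)})\,\xi_j=d(\phi u_k).
\]
The $B$-linear map $\psi:\CA R_w\to R_w\CA$ sending $\xi_j\mapsto \xi_{w\inv(j)}$ and $u_k\mapsto u_k$ is, by the same computation with $w$ replaced by $w\inv$, a chain map in the opposite direction, and on generators $\psi\phi$ and $\phi\psi$ are visibly the identity.

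There is no real obstacle here: the argument is purely a matter of carefully tracking which side of the tensor product each variable lives on and verifying that substituting $j=w(i)$ converts one indexing convention into the other. The only point one must be careful with is the interpretation of the symbols $x_{w(i)}$ and $x'_{v\inv(i)}$ in Definition \ref{def:twists of A}: they refer to the outer $B$-generators (with indices twisted by the permutations), which is exactly why the naive relabelling of $\xi$-generators by $w$ suffices to intertwine the two differentials.
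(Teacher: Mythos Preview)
Your proof is correct and follows essentially the same approach as the paper: both arguments identify the two sides as free graded-commutative $B$-algebras on the same generators and verify that $\xi_i\mapsto\xi_{w(i)}$, $u_k\mapsto u_k$ commutes with the differential by the reindexing $j=w(i)$. Your version is slightly more explicit in writing out the inverse, but there is no substantive difference.
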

\begin{proof}
We need only verify that the $B$-algebra map $R_w\CA\rightarrow \CA R_w$ sending $\xi_i\mapsto \xi_{w(i)}$ and $u_k\mapsto u_k$ is a chain map.  This is follows from the fact that, inside $R_w\CA$ we have
\[
d \xi_i = x_{w(i)}-x_i',\qquad d u_k = \sum_i h_{k-1}\left(x_{w(i)},x_i'\right) \xi_i,
\]
while inside $\CA R_w$ we have
\[
d \xi_{w(i)} = x_{w(i)}-x_{i}',\qquad  d u_k = \sum_i h_{k-1}\left(x_i,x_{w\inv(i)}'\right) \xi_i = \sum_i h_{k-1}\left(x_{w(i)},x_i'\right) \xi_{w(i)}.\qedhere
\]
\end{proof}

\begin{definition}\label{def:Aw}
Henceforth, we will denote $\CA_w:= \CA R_w$.  An $\CA_w$-module will also be referred to as a \emph{$w$-twisted $\CA$-module}.
\end{definition}

For each pair of permutations $w,v$, the coproduct of $\CA$ induces a $B$-algebra map $\CA_{vw}\rightarrow \CA_v\otimes_R \CA_w$ of the form
\[
\Delta_{v,w}:\CA R_{vw} \rightarrow \CA \CA R_vR_w \cong \CA R_v \CA R_w,
\]
which we refer to as the \emph{twisted coproduct}.  These maps satisfy the appropriate notion of coassociativity up to homotopy.  Explicitly, the twisted coproduct $\Delta_{v,w}$ satisfies:
\begin{equation}\label{eq:Delta vw}
\Delta_{v,w}\ : \ \begin{cases}
\xi_i\mapsto \xi_i\otimes 1+ 1\otimes \xi_{v\inv(i)}\\
u_k\mapsto u_k\otimes 1 + 1\otimes u_k + \sum_i h_{k-2}\left(x_i,x_{v\inv(i)}',x_{w\inv v\inv (i)}''\right) \xi_i\otimes \xi_{v\inv(i)}.
\end{cases}
\end{equation}
In addition to the twisted coproduct there is also the \emph{twisted counit} $\e_v : \CA_v\cong \CA\otimes_R R_v\rightarrow R_v$, which satisfies:
\begin{equation}\label{eq:epsilon v}
\e_v(x_i) = x_i,\qquad \e_v(x_i')=x_{v(i)},\qquad \e_v(\xi_i)=0,\qquad \qquad \e_v(u_k)=0
\end{equation}
for all $1\leq i,k\leq n$.

\subsection{The category of modules}

We will apply the constructions of Section \ref{sec:homotopy theory} to the case $\CB=B\subset \CA_w$. Thus we have a category of dg modules over $\CA_w$ where the morphisms are viewed up to homotopy, and we localize by the class of morphisms which become homotopy equivalences when viewed as morphisms of complexes of $B$-modules. The resulting category is denoted $\CA_w/B-\Mod$ or simply $\CA_w/B$ and there is a forgetful functor from this category to the homotopy category of complexes of $B$-modules. Objects isomorphic in this category are called weakly equivalent (relative to $B$).

Given a $\CA_w$-module $M$ and a $\CA_v$-module $N$, the tensor product $M\otimes_R N$ is naturally an $\CA_w\otimes_R\CA_v$-module; pulling back along the $B$-algebra map $\CA_{wv}\rightarrow \CA_w\otimes_R \CA_v$ allows us to regard $M\otimes_R N$ as a $\CA_{wv}$-module.

Consider three $\CA$-modules $M, N, K$. Applying Lemma \ref{lem:algebra homomorphisms} for $\psi_1=(\Delta \otimes \Id)\circ\Delta$, $\psi_2=(\Id \otimes \Delta)\circ\Delta$ and $\gamma=(\varepsilon\otimes\varepsilon\otimes 1)$ we obtain
\begin{proposition}\label{prop:associativity on modules}
The isomorphism of $B$-modules $$(M\otimes_R N)\otimes_R K\cong M\otimes_R (N\otimes_R K)$$ induces a weak equivalence of $\CA$-modules, i.e.~ an isomorphism in $\CA/B$.  Similar remarks apply if, instead, $M,N,K$ are modules over twisted algebras $\CA_u,\CA_v,\CA_w$.
\end{proposition}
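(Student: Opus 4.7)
The approach is to invoke Lemma \ref{lem:algebra homomorphisms} with $\CA' = \CA\otimes_R\CA\otimes_R\CA$ and the two $B$-algebra maps
$$
\varphi_1 = (\Delta\otimes\Id)\circ\Delta, \qquad \varphi_2 = (\Id\otimes\Delta)\circ\Delta
$$
from $\CA$ to $\CA'$. Pulling back the canonical $\CA'$-module structure on $M\otimes_R N\otimes_R K$ along $\varphi_1$ and $\varphi_2$ yields precisely the two associations $(M\otimes_R N)\otimes_R K$ and $M\otimes_R (N\otimes_R K)$, so it suffices to produce a weak equivalence of $B$-algebras $\gamma\colon\CA'\to\CA''$ with $\gamma\circ\varphi_1=\gamma\circ\varphi_2$.

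I will take $\CA''=\CA$ and $\gamma = \e\otimes\e\otimes\Id_\CA$, which is a $B$-algebra homomorphism since $\e$ is. The counit identities $(\e\otimes\Id)\circ\Delta=(\Id\otimes\e)\circ\Delta=\Id_\CA$ of Corollary \ref{cor:counit equivalence} immediately give $(\e\otimes\e)\circ\Delta=\e$, so both $\gamma\circ\varphi_1$ and $\gamma\circ\varphi_2$ collapse to $(\e\otimes\Id_\CA)\circ\Delta=\Id_\CA$; the two compositions thus agree on the nose.

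The main technical point is to verify that $\gamma$ is a weak equivalence of $B$-modules. By Proposition \ref{prop:resolution} the counit $\e\colon\CA\to R$ is a quasi-isomorphism of $B$-modules, and since $\CA$ is free over $B$ (hence flat over $R$) tensoring with $\CA$ over $R$ preserves quasi-isomorphisms; composing these observations twice shows that $\gamma$ is a quasi-isomorphism between bounded-below complexes of projective $B$-modules, and therefore a homotopy equivalence by the standard argument already used in the proof of Proposition \ref{prop:counit hop}. This flatness bookkeeping is the only place any care is required; I do not expect a genuine obstacle, just attention to the fact that $\CA^{\otimes_R 3}$ inherits projectivity from $\CA$. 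Lemma \ref{lem:algebra homomorphisms} then yields the desired weak equivalence of $\CA$-modules.

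The twisted case runs identically with $\varphi_1=(\Delta_{u,v}\otimes\Id)\circ\Delta_{uv,w}$, $\varphi_2=(\Id\otimes\Delta_{v,w})\circ\Delta_{u,vw}$, and $\gamma=\e_u\otimes\e_v\otimes\Id_{\CA_w}$, where one uses the identification $R_u\otimes_R R_v\otimes_R\CA_w\cong\CA_{uvw}$ following Lemma \ref{lemma:std times B}. The twisted counit identities derived from \eqref{eq:Delta vw} and \eqref{eq:epsilon v} give $\gamma\circ\varphi_1=\gamma\circ\varphi_2=\Id_{\CA_{uvw}}$, and the weak equivalence property of $\gamma$ reduces to the untwisted statement after trivializing the $R_u,R_v,R_w$-factors.
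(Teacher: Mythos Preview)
Your approach is essentially identical to the paper's: the paper also invokes Lemma~\ref{lem:algebra homomorphisms} with $\varphi_1=(\Delta\otimes\Id)\circ\Delta$, $\varphi_2=(\Id\otimes\Delta)\circ\Delta$, and the coequalizing weak equivalence $\varepsilon\otimes\varepsilon\otimes\Id$. Your write-up simply fills in more of the verification (the counit identities and why $\varepsilon\otimes\varepsilon\otimes\Id$ is a $B$-homotopy equivalence) than the paper's one-line proof does.
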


\section{Modules over $\CA$}
\label{sec:modules}

\subsection{Soergel bimodules}

Define the $R$-$R$ bimodules
$$
B_i=R\otimes_{R^{s_i}} R=\frac{\C[x_1,\ldots,x_n,x'_1,\ldots,x'_n]}{x_i+x_{i+1}=x'_i+x'_{i+1},\ x_ix_{i+1}=x'_ix'_{i+1},\ x_j=x'_j\ (j\neq i,i+1)}
$$
where $s_i=(i, i+1)$ is the simple reflection. The category of Soergel bimodules $\SBim_n$ is defined as a smallest full subcategory of the category of $R$-$R$ bimodules containing $R$ and $B_i$ and closed under tensor products, direct sums and direct summands. This is an additive tensor category, although it is not abelian. We denote by $\CK(\SBim_n)$ the homotopy category of bounded above complexes in
$\SBim_n$.

An important result of Soergel \cite{Soergel} states that the indecomposable objects $B_w$ in  $\SBim_n$ are in bijection with the permutations $w\in S_n$. The bimodule $B$ from previous section in fact coincides with $B_{w_0}$ corresponding to the longest element $w_0$ in $S_n$. We note that the action of $R\otimes R$ on any Soergel bimodule factors through $B$ under the natural projection $R\otimes R\to B$.

\subsection{Rouquier complexes and Khovanov-Rozansky homology}
\label{sec:rouquier}

Let $b_i:B_i\rightarrow R$ be the $R^e$-linear map sending $1\mapsto 1$, and let $b_i^\ast:R\rightarrow B_i(2)$ be the bimodule map sending $1\mapsto x_i-x_{i+1}'$.

In \cite{Rouquier} Rouquier defined the following two-term complexes of Soergel bimodules:
$$
T_i=\left[\underline{B_i}(1)\xrightarrow{b_i} R(1)\right], \qquad\quad T_i^{-1}=\left[R(-1)\xrightarrow{b^*_i} \underline{B_i}(1)\right]
$$
and proved that $T_i,T_i^{-1}$ satisfy braid relations up to homotopy:
$$
T_i\otimes_R T_i^{-1}\simeq T_i^{-1}\otimes_R T_i\simeq R,\qquad
$$
$$
T_i\otimes_R T_{i+1}\otimes_R T_i\simeq T_{i+1}\otimes_R T_{i}\otimes_R T_{i+1},\ T_i\otimes_R T_j\simeq T_j\otimes_R T_i\ (|i-j|>1).
$$
To a braid $\beta$ one can associate  a complex of Soergel bimodules $T_{\beta}$ (the product of $T_i$ and $T_i^{-1}$ corresponding to crossings in $\beta$) which is well defined up to homotopy equivalence.

Given a Soergel bimodule $M$, one can define its Hochschild cohomology $\HH^i(M)=\Ext^i_{R\otimes R}(R,M)$. Given a complex of Soergel bimodules
$$
\ldots \rightarrow M_k\rightarrow M_{k-1}\rightarrow M_{k-2}\rightarrow\ldots
$$
one can associate the complex of Hochschild cohomologies
$$
\ldots \rightarrow \HH^{i}(M_k)\rightarrow \HH^i(M_{k-1})\rightarrow \HH^i(M_{k-2})\rightarrow\ldots,
$$
its homology is denoted by $\HHH(M)$. The Khovanov-Rozansky homology \cite{Kh, KR2} of the braid $\beta$ is defined as $\HHH(T_{\beta})$,
it is a topological invariant of the closure of $\beta$. Khovanov-Rozansky homology is triply graded: in addition to homological grading $k$ and Hochschild grading $i$, there is a quantum grading $j$ induced by grading on Soergel bimodules $B_i$. We assume that all variables $x_i$ have quantum grading 2. There is also an overall grading shift, see Theorem \ref{thm:link invariant}.

For higher Hochschild degrees, we normalize the quantum grading such that the minimal quantum grading in $\HH^i(R)$ is zero for all $i$ (see also Proposition \ref{prop:markov 1}). 
With this normalization, the homology of the $n$-component unlink is given by
\begin{equation}
\label{eq: hhh unlink}
\HHH(R)=\C[x_1,\ldots,x_n,\theta_1,\ldots,\theta_n].
\end{equation}
Here $x_i$ are even variables of quantum grading $2$ and Hochschild grading 0, and $\theta_i$ are odd variables of quantum grading 0 and Hochschild grading 1.

\subsection{Action of $\CA$}

We will construct an action of the dg algebra $\CA$ and its twists on various complexes.

\begin{lemma}
	\label{lem:action ti}
	The $B$-module structure on the Rouquier complexes $T_i$, $T_i^{-1}$ lifts to an action of $\CA_s$, where $s=(i, i+1)$, and $u_k$ acts by 0 for all $k$.
\end{lemma}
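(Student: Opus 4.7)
The strategy is to set $u_k=0$ for all $k$ and to construct explicit $B$-linear operators $\xi_j$ of homological degree $+1$ and $q$-degree $+2$ on $T_i$ and $T_i^{-1}$ satisfying $[d,\xi_j]=x_j-x'_{s(j)}$. Since each Rouquier complex is supported in only two consecutive homological degrees, any composition of two $\xi$-operators lands outside the complex, so the super-commutativity relations $\xi_j\xi_k+\xi_k\xi_j=0$ in $\CA$ hold automatically. For $j\notin\{i,i+1\}$ the operator $x_j-x'_j$ already acts as zero on both $R$ and $B_i$, so one can take $\xi_j=0$; only $\xi_i$ and $\xi_{i+1}$ require work.

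On $T_i=[\underline{B_i}(1)\xrightarrow{b_i}R(1)]$ the only nonzero component of $\xi_i$ must be a bimodule map $R(1)\to B_i(1)$ of the correct $q$-degree, and I would take it to be $b_i^*$; on $T_i^{-1}=[R(-1)\xrightarrow{b_i^*}\underline{B_i}(1)]$ the corresponding component goes $B_i(1)\to R(-1)$ and should be $b_i$. In both cases set $\xi_{i+1}=-\xi_i$. This sign is forced up to closed corrections by the identity $(x_i-x'_{i+1})+(x_{i+1}-x'_i)=(x_i+x_{i+1})-(x'_i+x'_{i+1})=0$ in $B_i$, which says that the two right hand sides of $[d,\xi_j]=x_j-x'_{s(j)}$ for $j\in\{i,i+1\}$ already sum to zero as operators on the complex.

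The required identity $[d,\xi_i]=x_i-x'_{i+1}$ then reduces to the two classical Soergel calculus relations $b_i\circ b_i^{\ast}=x_i-x_{i+1}$ as an operator on $R$, and $b_i^{\ast}\circ b_i=x_i-x'_{i+1}$ as an operator on $B_i$, each of which is checked directly by evaluating on $1$ and extending by bimodule linearity; the analogous identity for $\xi_{i+1}$ follows from the sign observation above.

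The main subtlety is the consistency condition forced by $u_k=0$: the relation $du_k=\sum_j h_{k-1}(x_j,x'_{s(j)})\xi_j$ in $\CA_s$ requires that $\sum_j h_{k-1}(x_j,x'_{s(j)})\xi_j$ vanish as an operator on each of $T_i$ and $T_i^{-1}$. On $T_i^{-1}$ this is immediate, since all nonzero components factor through $b_i$ into $R$, where $x'_a=x_a$ forces $h_{k-1}(x_i,x'_{i+1})=h_{k-1}(x_{i+1},x'_i)$, and the two terms cancel by $\xi_{i+1}=-\xi_i$. On $T_i$ the problem reduces to proving
\[
\bigl(h_{k-1}(x_i,x'_{i+1})-h_{k-1}(x_{i+1},x'_i)\bigr)\,(x_i-x'_{i+1})=0 \qquad\text{in } B_i.
\]
Applying the telescoping identity $h_{k-1}(a,b)(a-b)=a^k-b^k$ twice, together with the relation $x_i-x'_{i+1}=-(x_{i+1}-x'_i)$ in $B_i$, the left hand side collapses to $(x_i^k+x_{i+1}^k)-((x'_i)^k+(x'_{i+1})^k)$, which vanishes because the pairs $\{x_i,x_{i+1}\}$ and $\{x'_i,x'_{i+1}\}$ have equal elementary symmetric functions in $B_i$ and hence equal power sums of every order. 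This last identity is the main obstacle; everything else is a bookkeeping exercise.
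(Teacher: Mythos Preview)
Your proof is correct and follows the same approach as the paper: set $\xi_i=b_i^*$ (resp.\ $b_i$) on $T_i$ (resp.\ $T_i^{-1}$), take $\xi_{i+1}=-\xi_i$ and $\xi_j=0$ otherwise, and verify that $\sum_j h_{k-1}(x_j,x'_{s(j)})\xi_j$ acts by zero so that $u_k=0$ is consistent. The only minor difference is in that last verification for $T_i$: the paper observes that the coefficient $h_{k-1}(x_i,x'_{i+1})-h_{k-1}(x_{i+1},x'_i)$ can be evaluated on the \emph{source} $R$ of $b_i^*$, where $x'_a=x_a$ and the expression vanishes immediately by the symmetry of $h_{k-1}$; you instead push through to the target $B_i$ and invoke the power-sum identity, which is valid but unnecessarily elaborate.
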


\begin{proof}
We define the action of $\xi_i$ by ``dot-sliding homotopies" as in \cite{GH}.
	Recall that $T_i=[\underline{B_i}\xrightarrow{b_i} R]$ and $\xi_i$ acts as $b_i^*:R\to B_i$ while $\xi_{i+1}$ acts by $-b_i^*$ and $\xi_j$ acts by 0 for $j\neq i,i+1$. Indeed,
$$
b_ib^*_i=b^*_ib_i=x_i-x'_{i+1}=-(x_{i+1}-x'_i).
$$
It is clear that the actions of all generators commute.	Now
	$$
	\sum _{j=1}^{n} h_{k-1}\left(x_j,x'_{w^{-1}(j)}\right)\xi_j=\left(h_{k-1}\left(x_i,x'_{i+1}\right)-h_{k-1}\left(x_{i+1},x'_{i}\right)\right)b_i.
	$$
	On $R$ we have $x_i=x'_i$ and $x_{i+1}=x'_{i+1}$, so
$$
h_{k-1}\left(x_i,x'_{i+1}\right)-h_{k-1}\left(x_{i+1},x'_{i}\right)=h_{k-1}\left(x_i,x_{i+1}\right)-h_{k-1}\left(x_{i+1},x_{i}\right)=0.
$$

The proof for $T_i^{-1}$ is similar:  $\xi_i$ acts as $b_i:B_i\to R$ while $\xi_{i+1}$ acts by $-b_i$ and $\xi_j$ acts by 0 for $j\neq i,i+1$.
 Therefore we obtain an action of $\CA_s$.
\end{proof}

\begin{lemma}
	\label{lem:unique}
	Suppose that $X$ and $Y$ are invertible complexes of Soergel bimodules admitting an action of $\CA_w$. Any homotopy equivalence $X\to Y$ of $B$-modules lifts to a unique isomorphism in $\CA_w/B$.
\end{lemma}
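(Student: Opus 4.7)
My plan is to translate the statement into a Hochschild-type computation via Proposition \ref{prop:localization hochschild} and then exploit the rigidity of invertible complexes of Soergel bimodules.

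First, I would reduce to the case $Y=R$. Since $Y$ is invertible, choose an inverse $Y^{-1}$ with $Y\otimes_R Y^{-1}\simeq R$; using Theorem \ref{thm:braid action} and the twisted coproduct, $Y^{-1}$ can be promoted to an $\CA_{w^{-1}}$-module. Tensoring $f$ with $\Id_{Y^{-1}}$ yields a $B$-homotopy equivalence $Z := X\otimes_R Y^{-1}\to R$, and by Proposition \ref{prop:associativity on modules} lifting this morphism to an isomorphism in $\CA/B$ is equivalent to lifting the original $f$. So from now on I can assume $Y=R$, $w=e$, and $Z$ invertible.

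Next, apply Proposition \ref{prop:localization hochschild} to identify
\[
\Hom_{\CA/B}(Z, R) = \HH^0_{\CA}(\Hom_B(Z, R)),
\]
the forgetful map to $H^0(\Hom_B(Z, R))$ being the natural edge map induced by the augmentation $\CA\otimes_B \CA^{\op}\to\CA$. Thus the lemma reduces to showing that this edge map is a bijection (at least on the subset of $B$-homotopy equivalences): surjectivity yields existence of the lift, injectivity yields its uniqueness.

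The heart of the argument is to show that, for invertible $Z$, the $\CA\otimes_B\CA^{\op}$-module structure on $\Hom_B(Z, R)$ factors up to weak equivalence through the augmentation $\CA\to R$. The reason, morally, is that the $\CA$-action on any invertible object is rigid up to weak equivalence: the invertibility datum intertwines the left and right $\CA$-actions on $\Hom_B(Z, R)$ so that, on homology, both actions coincide with the scalar action of $R$. Once the bimodule structure is trivialized, the Hochschild complex collapses to the $B$-cohomology and $\HH^0 \to H^0$ is an isomorphism. Finally, any lift $\tilde{f}\in\Hom_{\CA/B}(Z,R)$ is automatically an isomorphism in $\CA/B$, because $\Cone(\tilde f)$ underlies the cone of a $B$-homotopy equivalence and is therefore $B$-contractible by Lemma \ref{lem:contractible cone}, hence becomes zero in $\CA/B$.

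The main obstacle is establishing the triviality of the $\CA$-bimodule structure on $\Hom_B(Z, R)$, which is the place where invertibility of $Z$ must be used essentially. A more hands-on alternative, which avoids the trivialization step, is to build $\tilde f$ inductively as an $A_\infty$-type extension along the bar resolution $\widetilde Z$ of Lemma \ref{lem:resolution}: starting from $f_0=f$, one constructs higher components $f_k\in\Hom^B_\bullet(\CA^{\otimes_B k}\otimes_B Z, R)$ whose obstructions to closure are cocycles in $B$-contractible complexes (because the cone of $f$ is $B$-contractible and tensoring preserves contractibility), so they are killed by explicit $B$-homotopies. The same contractibility, applied one homological degree lower, forces uniqueness of the choices, thereby giving both existence and uniqueness of $\tilde f$ directly.
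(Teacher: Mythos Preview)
Your plan starts in the right place—Proposition \ref{prop:localization hochschild} is indeed the bridge between $\Hom_{\CA_w/B}$ and a Hochschild computation—but it then takes an unnecessary detour and leaves the crucial step undone.

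The reduction to $Y=R$ is not needed and is not entirely free: promoting $Y^{-1}$ to an $\CA_{w^{-1}}$-module via Theorem \ref{thm:braid action} presupposes that $Y$ is a Rouquier complex, which the lemma does not assume, and the tensoring step then drags in the homotopy-coassociativity issues of Proposition \ref{prop:associativity on modules}. More importantly, you correctly isolate ``trivialization of the $\CA$-bimodule structure on $\Hom_B(Z,R)$'' as the heart of the matter, but you do not actually carry it out; both the ``moral'' argument and the bar-resolution alternative remain at the level of intention.

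The paper's proof bypasses all of this with one observation and one degree count. Since $X$ and $Y$ are invertible and homotopy equivalent, the complex $\Hom^B_\bullet(X,Y)\simeq \Hom^B_\bullet(X,X)\simeq R$ is quasi-isomorphic to $R$; this is exactly where invertibility enters, and it replaces your reduction step. Then
\[
\HH^0_{\CA_w}(R)=\Hom_{\CA_w\otimes_B\CA_w}(\CA_w,R)=R,
\]
because $\CA_w$ is concentrated in non-negative homological degrees, so any degree-$0$ bimodule map $\CA_w\to R$ is determined by its restriction to $B\subset\CA_w$. This shows that the forgetful map $\Hom_{\CA_w/B}(X,Y)\to\Hom_{B\text{-}\Mod}(X,Y)$ is an isomorphism, giving both existence and uniqueness of the lift in one stroke. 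The missing ingredient in your sketch is precisely this non-negativity argument, which makes the ``trivialization'' you were seeking automatic.
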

\begin{proof}
	The statement is analogous to \cite[Proposition 2.20]{GH}. Let $\phi:X\to Y$ be a homotopy equivalence. Since $X$ and $Y$ are homotopy equivalent and invertible, we have quasi-isomorphisms $\Hom^B_\bullet(X,Y)\cong \Hom^B_\bullet(X,X) \cong R$, so $\HH^0_{\CA_w}(\Hom_B(X, Y))$, which classifies morphisms from $\widetilde X$ to $Y$ by Proposition \ref{prop:localization hochschild}, equals
	\[
	\HH^0_{\CA_w}(R) = \Hom_{\CA_w\otimes_B \CA_w}(\CA_w, R) = R,
	\]
	since $\CA$ is concentrated in non-negative degrees. Thus the map
    \[
    \Hom_{\CA_w/B}(X, Y)\to \Hom_{B-\Mod}(X, Y)
    \]
    is an isomorphism.
\end{proof}

\begin{theorem}
\label{thm:braid action}
The $B$-module structure on any Rouquier complex $T_\beta$ lifts to an action of $\CA_w$, where $w$ is the permutation represented by $\beta$. This action is invariant under braid relations (up to a 
weak equivalence).
\end{theorem}

\begin{proof}
Suppose we are given a braid $\beta = \sigma_{i_1}^{\e_1}\cdots\sigma_{i_r}^{\e_r}$, presented as a product of Artin generators, where $1\leq i_j\leq n-1$ and $\e_j\in \{-1,1\}$, and let 
$T_{\beta} = T_{i_1}^{\e_1}\otimes_R \cdots  \otimes_R T_{i_r}^{\e_r}$ be the associated Rouquier complex.  Lemma \ref{lem:action ti} constructs 
an action of $\CA_{w_j}$ on each $T_{i_j}$, where $w_j = (i_j\ i_j+1)$.  The coproduct gives an algebra map
\[
\CA_w\rightarrow \CA_{w_1}\otimes_R \cdots \otimes_R \CA_{w_r},
\]
canonical up to homotopy.  Pulling back along this algebra map gives an action of $\CA_w$ on $T_\beta$, where $w=w_1\cdots w_r$.  The complex $T_\beta$ is invertible, so by Lemma \ref{lem:unique}  the action of
	$\CA_w$ is unique up to a weak equivalence and invariant under homotopy equivalences (in particular, braid relations).
\end{proof}

\begin{remark}
\label{rem: u1}
The action of $u_1$ vanishes on any Rouquier complex. Indeed, it vanishes on $T_i^{\pm}$, and $\Delta(u_1)=u_1\otimes 1+1\otimes u_1$.
\end{remark}

\begin{example}
	Consider the full twist on two strands $\FT_2=T_1^2$. We have
$$
\Delta(u_2)=u_2\otimes 1+1\otimes u_2+\sum_i  \xi_i\otimes \xi_i,
$$
the actions of $u_2\otimes 1$ and $1\otimes u_2$ vanish, so $u_2$ acts on the complex $\FT_2$ by
	$$
	u_2= \xi_1\otimes \xi_2+ \xi_2\otimes \xi_1.
	$$
	If we write $\FT_2=\left[\underline{B}\to B\to R\right]$, one can check that $\xi_1\otimes \xi_2=\xi_2\otimes \xi_1$ acts as the map $b^*$ from $R$ to the leftmost $B$, and $u_2$ acts by $2b^*$ (see equation \eqref{eq: intro full twist}). Note that $u_1$ acts by 0 by Remark \ref{rem: u1}.
\end{example}

\begin{definition}
An $\CA_w$-module will be called \emph{elementary} if $u_k$ acts by zero for all $k=1,\ldots,n$.
\end{definition}
For example, $T_i^{\pm}$ are elementary.  The tensor product of elementary modules is no longer elementary; for instance we have the following:
\begin{lemma}
\label{lem: u2}
Let $w_1,\ldots,w_r$ be permutations and suppose $X_1,\ldots,X_r$ are elementary modules over $\CA_{w_1},\ldots,\CA_{w_r}$.  The action of $u_2$ on $X_1\otimes_R X_2\cdots \otimes_R X_r$ is given by
	\begin{equation}
		\label{eq: u2}
		u_2=\sum_{1\leq j\leq n} \sum_{k<l} \xi_j^{(k)}\circ \xi_{(w_k\cdots w_{l-1})\inv(j)}^{(l)},
	\end{equation}
	where $\xi_i^{(k)}$ denotes the action of $\xi_i$ on the $k$-th tensor factor of $X_1\otimes_R\cdots\otimes_R X_r$.
\end{lemma}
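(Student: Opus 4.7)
The plan is to compute the action of $u_2$ on $X_1 \otimes_R \cdots \otimes_R X_r$ by pulling back along the iterated twisted coproduct
\[
\Delta^{(r)} : \CA_{w_1 \cdots w_r} \longrightarrow \CA_{w_1} \otimes_R \cdots \otimes_R \CA_{w_r},
\]
which, by Theorem \ref{thm:braid action}, is exactly how $\CA_{w_1 \cdots w_r}$ acts on the tensor product. Because each $X_k$ is elementary, every $u_\ell^{(k)}$-term in $\Delta^{(r)}(u_2)$ acts as zero, so we only need to extract the ``purely-$\xi$'' part of $\Delta^{(r)}(u_2)$. The relevant input is the specialization of \eqref{eq:Delta vw} to $k=2$, which (using $h_0=1$) gives
\[
\Delta_{v,w}(\xi_i)=\xi_i\otimes 1+1\otimes \xi_{v^{-1}(i)},\qquad
\Delta_{v,w}(u_2)=u_2\otimes 1+1\otimes u_2+\sum_{i=1}^n \xi_i\otimes \xi_{v^{-1}(i)}.
\]

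Next I would proceed by induction on $r$, using coassociativity (Proposition \ref{prop:coassociativity}, Proposition \ref{prop:associativity on modules}) to write
\[
\Delta^{(r)} \;=\; (\Delta^{(r-1)}\otimes 1)\circ \Delta_{w_1\cdots w_{r-1},\,w_r}.
\]
A short induction using the first formula above yields
\[
\Delta^{(r)}(\xi_i) \;=\; \sum_{k=1}^r \xi^{(k)}_{(w_1\cdots w_{k-1})^{-1}(i)},
\]
with the convention that $w_1\cdots w_{k-1}$ is the identity when $k=1$. Feeding this into the inductive formula for $\Delta^{(r)}(u_2)$ produces, on top of the inductively-known $l<r$ contributions and the (vanishing) $u_2^{(k)}$ terms, the new $l=r$ summands
\[
\sum_{k=1}^{r-1}\sum_{i=1}^n \xi^{(k)}_{(w_1\cdots w_{k-1})^{-1}(i)} \circ \xi^{(r)}_{(w_1\cdots w_{r-1})^{-1}(i)}.
\]
The substitution $j=(w_1\cdots w_{k-1})^{-1}(i)$, i.e.\ $i=w_1\cdots w_{k-1}(j)$, transforms the right exponent into $(w_1\cdots w_{r-1})^{-1}w_1\cdots w_{k-1}(j)=(w_k\cdots w_{r-1})^{-1}(j)$, matching exactly the $l=r$ terms of \eqref{eq: u2}. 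Combined with the inductive hypothesis this closes the induction.

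There is no deep obstacle here; the argument is a careful bookkeeping exercise. The only point requiring attention is tracking the twists: each new application of $\Delta_{v,w}$ pre-composes the $\xi$-index with the permutation $v^{-1}$ attached to the \emph{preceding} factor, and iterating this is precisely what produces the chained permutation $(w_k\cdots w_{l-1})^{-1}$ between positions $k$ and $l$. Alternatively, one can compute $\Delta^{(r)}(u_2)$ in one shot by iterating $\Delta_{w_1,w_2\cdots w_r}$ from the left, which gives the same formula after the same change of variables.
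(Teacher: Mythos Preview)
Your proposal is correct and follows essentially the same approach as the paper: both compute the iterated twisted coproduct on $u_2$, discard the $u_2^{(k)}$ terms because the modules are elementary, and perform the change of variables $j=(w_1\cdots w_{k-1})^{-1}(i)$. The only cosmetic difference is that the paper first writes the untwisted iterated coproduct of $u_2$ and then applies the twist by migrating the $R_{w_1\cdots w_{k-1}}$ past each factor, whereas you package this as an induction using the already-twisted coproducts $\Delta_{v,w}$; the bookkeeping is identical.
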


\begin{proof}
It is easy to see that the $r$-th iterated coproduct\footnote{This iterated coproduct is not well-defined since coassociativity holds only up to homotopy.  Nonetheless, any two choices for the $r$-th iterated coproduct act in the same way on $u_2$.} on $\CA$ satisfies
\[
\Delta^{(r)}(u_2) = \sum_{1\leq k\leq r}  1^{\otimes k-1}\otimes u_2\otimes 1^{\otimes r-k} + \sum_{1\leq i\leq n}\sum_{1\leq k<l\leq r} 1^{\otimes k-1}\otimes \xi_i\otimes 1^{\otimes l-k-1} \otimes \xi_i\otimes 1^{\otimes r-l}
\]

For any permutation $w$, the isomorphism $\CA R_w\buildrel\cong\over\rightarrow R_w\CA$ sends $\xi_i\mapsto \xi_{w\inv(i)}$.  The composition
\[
\CA R_{w_1\cdots w_r} \buildrel \Delta^{(r)}\over\longrightarrow \CA\otimes_R \cdots\otimes_R \CA R_{w_1\cdots w_r} \cong \CA R_{w_1}\otimes_R \cdots\otimes_R \CA R_{w_r}
\]
sends
\begin{align*}
u_2
&\mapsto \sum_{1\leq k\leq r}  1^{\otimes k-1}\otimes u_2\otimes 1^{\otimes r-k}\\
&  + \sum_{1\leq i\leq n}\sum_{1\leq k<l\leq r} 1^{\otimes k-1}\otimes \xi_{(w_1\cdots w_{k-1})\inv(i)}\otimes 1^{\otimes l-k-1} \otimes \xi_{(w_1\cdots w_{l-1})\inv(i)}\otimes 1^{\otimes r-l},
\end{align*}
since the standard bimodule $R_{w_1\cdots w_{k-1}}$ must migrate past the $k$-th tensor factor.

Now, when acting on the tensor product of elementary modules as in the statement the only surviving summands are those of the form $\xi_{(w_1\cdots w_{k-1})\inv(i)}^{(k)}\circ  \xi_{(w_1\cdots w_{l-1})\inv(i)}^{(l)}$ with $1\leq i\leq n$ and $1\leq k<l\leq r$.  Letting $j=(w_1\cdots w_{k-1})\inv(i)$ proves the formula in the statement.
\end{proof}

\subsection{Koszul complexes}

Recall that inside $S_n$ we have the \emph{simple reflections} (or \emph{simple transpositions}) $s_i:=(i,i+1)$ which swap indices $i$ and $i+1$.  Any permutation which is conjugate to a simple reflection is called a \emph{reflection}.  The reflections in $S_n$ are just the permutations of the form $(i, j)$ which swap indices $i$ and $j$, leaving all other indices fixed.  


\begin{definition}\label{def:Kij}
For each reflection $r=(i,j)$ we let $K_r$ (also denoted $K_{ij}$, or $K_{i,j}$) denote the Koszul complex 
\[
K_{ij} = [R(-1) \xrightarrow{x_i-x_j} \underline{R}(1)].
\]
This can be conveniently described as a dg algebra by
\begin{equation}\label{eq:K as dga}
K_{ij} = R[\eta\;|\;d\eta=x_i-x_j](1),
\end{equation}
where $\deg_h\eta=1$.
\end{definition}
It is clear that $K_{ij}\cong K_{ji}$, so the definition of $K_r$ doesn't depend on the ordering of indices in the expression $r=(i,j)$.

\begin{proposition}
The Koszul complex $K_r$ admits a structure of an $r$-twisted $\CA$-module.
\end{proposition}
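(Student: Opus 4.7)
The plan is to construct the $\CA_r$-action on $K_r$ directly by hand. Since $K_r = K_{ij}$ is concentrated in homological degrees $0$ and $1$ (with $\eta$ generating the degree $1$ piece), this severely restricts what operators of positive homological degree can act: every $u_k$, being of homological degree $2$, must act as zero by degree reasons, and every $\xi_\ell$ is forced to be a scalar multiple of left multiplication by $\eta$, the unique (up to scaling) nonzero $B$-linear operator of homological degree $1$ on $K_r$.

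First, I would equip $K_r$ with the diagonal $(R,R)$-bimodule structure in each homological degree, so that $x'_\ell$ acts identically to $x_\ell$. This manifestly makes $K_r$ into a $B$-module (left and right symmetric-polynomial actions trivially coincide), and the differential $d\eta = x_i - x_j$ is $R^e$-linear. I would then declare
\[
\xi_i = \eta\cdot(-), \qquad \xi_j = -\eta\cdot(-), \qquad \xi_\ell = 0 \;(\ell\notin\{i,j\}), \qquad u_k = 0.
\]
Graded-commutativity of the generators is immediate from $\eta^2 = 0$.

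The remaining verification is that the super-commutator of $d$ with these operators reproduces the differentials of $\CA_r$ prescribed in Definition \ref{def:twists of A}. For $\xi_i$ one has $[d,\xi_i](f) = d(f\eta) = f(x_i-x_j)$, which matches the action of $d\xi_i = x_i - x'_j$ since $x'_j$ acts as $x_j$ in the diagonal bimodule. The case $\xi_j$ is symmetric, the extra sign being exactly what is needed so that $[d,\xi_j]$ realizes $d\xi_j = x_j - x'_i$. For $\xi_\ell$ with $\ell\notin\{i,j\}$ both sides vanish, since $r(\ell) = \ell$ implies $d\xi_\ell = x_\ell - x'_\ell$ acts as $0$. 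Finally, $u_k = 0$ is consistent with $d(u_k)$ acting as zero on $K_r$, because
\[
d(u_k) = \sum_\ell h_{k-1}(x_\ell, x'_{r(\ell)})\,\xi_\ell \;\longmapsto\; h_{k-1}(x_i,x_j)\,\eta - h_{k-1}(x_j,x_i)\,\eta = 0
\]
by the symmetry of $h_{k-1}$.

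There is no substantive obstacle: the construction is essentially forced by the two-step shape of $K_r$ together with the degree of $\eta$. The only subtlety worth noting is the sign $\xi_j = -\eta$, which is precisely what makes the two nonzero contributions to $d(u_k)$ cancel, so that taking $u_k = 0$ is compatible with the differential relations in $\CA_r$.
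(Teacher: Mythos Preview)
Your proof is correct and follows essentially the same route as the paper's: both define $\xi_i$ as multiplication by $\eta$, set $\xi_j=-\xi_i$ and all other $\xi_\ell=0$, force $u_k=0$ by degree reasons, and then check that $d(u_k)$ acts by zero via the cancellation $h_{k-1}(x_i,x_j)-h_{k-1}(x_j,x_i)=0$. The only cosmetic difference is that you spell out the diagonal bimodule structure on $K_r$ explicitly, whereas the paper simply uses that $x_k-x_k'$ acts by zero.
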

\begin{proof}
We let $r=(i,j)$.  Observe that $x_k-x_{r(k)}'$ is zero unless $k\in\{i,j\}$.  Thus, we may take $\xi_k=0$ for $k\not\in\{i,j\}$.  We let $\xi_i$ act by the multiplication by $\eta$ in the expression \eqref{eq:K as dga}, or diagrammatically as the morphism
\[
\begin{tikzpicture}[baseline=-3em]
\tikzstyle{every node}=[font=\small]
\node (a) at (0,0) {$K_r$};
\node at (.8,0) {$=$};
\node at (1.6,0) {$\Big($};
\node (b) at (2.2,0) {$R(-1)$};
\node (c) at (4.8,0) {$\underline{R}(1)$};
\node at (5.3,0) {$\Big)$};
\node (d) at (0,-2) {$K_r$};
\node at (.8,-2) {$=$};
\node at (1.6,-2) {$\Big($};
\node (e) at (2.2,-2) {$R(-1)$};
\node (f) at (4.8,-2) {$\underline{R}(1)$};
\node at (5.3,-2) {$\Big)$};
\path[->,>=stealth,shorten >=1pt,auto,node distance=1.8cm]
(b) edge node {$x_i - x_j$} (c)
(e) edge node {$x_i - x_j$} (f)
(c) edge node[above] {$1$} (e)
(a) edge node {$\xi_i$} (d);
\end{tikzpicture},
\]
and we let $\xi_j$ act by $-\xi_i$.  The action of $u_1,\ldots,u_n$ must be zero for degree reasons.  In order for this to define a valid action of $\CA_r$, we must verify that following elements act by zero on $K_r$:
\[
d(u_l) = \sum_k h_{l-1}\left(x_k,x_{r(k)}'\right) \xi_k.
\]
Indeed, $x_k-x_k'$ acts by zero on $K_r$ for all $k$, and $\xi_k$ acts by zero unless $k=i,j$ and so the above becomes $\left(h_{l-1}\left(x_i,x_j\right) - h_{l-1}\left(x_j,x_i\right)\right)\xi_i$ which indeed is zero.
\end{proof}

We expect the following to be true, but do not need it or prove it here.

\begin{conjecture}\label{conj:Kw central}
Let $X$ be an $\CA_w$ module for $w\in S_n$.  Then for any reflection $r$ we have a weak equivalence of $\CA_{wr}$-modules
\begin{equation}\label{eq:K_r commutes}
X\otimes_R K_r \cong K_{wrw\inv}\otimes_R  X.
\end{equation}
\end{conjecture}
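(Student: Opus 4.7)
The plan is to construct an explicit weak equivalence $\phi: X \otimes_R K_r \to K_{wrw\inv}\otimes_R X$ by exploiting the $\CA_w$-structure of $X$. First I would analyze both sides as $B$-module complexes: writing $r = (i,j)$, the module $X \otimes_R K_r$ is a cone $[X \to X]$ whose connecting map is right multiplication on $X$ by $x_i-x_j$, while $K_{wrw\inv}\otimes_R X$ is a cone whose connecting map is left multiplication by $x_{w(i)}-x_{w(j)}$. Since $X$ is an $\CA_w$-module we have $[d_X, \xi_{w(k)}] = x_{w(k)} - x_k'$ for all $k$, so the operator $\zeta := \xi_{w(i)} - \xi_{w(j)}$ is a $B$-linear null-homotopy of the difference of the two connecting maps. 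Thus the unitriangular $B$-linear map $\phi$ with $\zeta$ in the off-diagonal is a strict chain isomorphism between the two $B$-module cones.

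Next I would verify that $\phi$ intertwines the $\CA_{wr}$-actions defined via the two twisted coproducts $\Delta_{w, r}$ and $\Delta_{wrw\inv, w}$ obtained from \eqref{eq:Delta vw}. For a generator $\xi_k$ with $k \notin \{w(i), w(j)\}$ the permutation $wrw\inv$ fixes $k$, and a direct computation shows the two actions agree through $\phi$. For $k \in \{w(i), w(j)\}$, the $\zeta$-correction in $\phi$ is designed to compensate for the discrepancy between the summand $\xi_{w(i)}\otimes 1$, acting through $X$ on the left-hand side, and $1 \otimes \xi_{w(j)}$, acting through $X$ on the right-hand side (the index $w(j)$ rather than $w(i)$ appears because $wrw\inv$ swaps these two indices).

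The main obstacle is verifying the compatibility of $\phi$ with the action of $u_k$ for $k \geq 2$: the coproduct formula \eqref{eq:Delta vw} contains quadratic terms of the form $\sum_i h_{k-2}(\cdots)\, \xi_i \otimes \xi_{v\inv(i)}$, and matching these across $\phi$ generally requires producing additional explicit homotopies of higher order. A cleaner organization may be to work conceptually within the localization framework of Section \ref{sec:homotopy theory}: by Proposition \ref{prop:localization hochschild}, morphisms in $\CA_{wr}/B$ between the two modules are controlled by a Hochschild cohomology group, so one could attempt to lift the strict $B$-module equivalence $\phi$ to a weak equivalence by obstruction theory. Equivalently, Lemma \ref{lem:algebra homomorphisms} suggests finding a common weak equivalence of auxiliary dg algebras through which both $\CA_{wr}$-structures factor, which would reformulate the conjecture as a higher-coherence identity on $\CA$ following from the coassociativity of $\Delta$ (Proposition \ref{prop:coassociativity}).
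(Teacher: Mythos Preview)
The paper does \emph{not} prove this statement: immediately before the conjecture the authors write ``We expect the following to be true, but do not need it or prove it here,'' and they establish only the special cases $X = T_i^{\pm}$ and $X = K_v$ in Proposition~\ref{prop:relations}, by explicit case-by-case verification of the $\CA$-module structures on both sides. There is no general argument in the paper for you to compare against.

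Your proposal correctly identifies the easy part---the $B$-module isomorphism of cones via the homotopy $\zeta = \xi_{w(i)} - \xi_{w(j)}$---and correctly identifies the hard part, namely compatibility with the $u_k$-action. But you do not resolve that hard part. The two fallback strategies you suggest are not worked out and it is not clear they succeed: for the Hochschild approach you would need the relevant obstruction groups to vanish for \emph{arbitrary} $\CA_w$-modules $X$, which you have not argued; for the Lemma~\ref{lem:algebra homomorphisms} approach, the two algebra maps in play land in different tensor products $\CA_w \otimes_R \CA_r$ and $\CA_{wrw^{-1}} \otimes_R \CA_w$, and what is needed to equalize them after composing with a further weak equivalence is a braiding-type compatibility (swapping the order of tensor factors), not merely the homotopy-coassociativity of Proposition~\ref{prop:coassociativity}. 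Indeed, the paper's cyclicity argument in Proposition~\ref{prop:cyclicity} handles a related swap only \emph{after} passing to Hochschild homology $\HH_0$, and only up to weak equivalence over $\C$ rather than over $B$; this is a hint that the bimodule-level statement you are attempting is genuinely harder. As it stands your proposal is an outline of the difficulty rather than a proof.
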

We will prove this in the special case when $X$ is $T_i^\pm$ or $K_v$, below.

\begin{proposition}\label{prop:relations}
We have weak equivalences of twisted $\CA$-modules:
\begin{enumerate}
\item $T_i\otimes_R K_r \simeq K_{srs}\otimes_R T_i$.
\item $T_i\inv\otimes_R K_r \simeq K_{srs}\otimes_R T_i\inv$.
\item $K_v\otimes_R K_r \simeq K_{vrv}\otimes_R K_v$.
\end{enumerate}
where $s=(i,i+1)$ and $v,r\in S_n$ are reflections.
\end{proposition}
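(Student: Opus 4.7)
The plan is to construct, in each case, an explicit $B$-linear isomorphism (or homotopy equivalence) of the underlying complexes and then verify that it intertwines the twisted $\CA$-action up to homotopy; this directly gives a weak equivalence. Because all tensor factors appearing in the statement are elementary (so $u_k$ acts by zero on each factor), the $\CA$-action on any tensor product is determined by the $\xi_a$-actions via the twisted coproduct formula \eqref{eq:Delta vw}. Thus it suffices to check compatibility of the $\xi_a$-actions for every $a$, after which compatibility of the $u_k$-actions follows automatically, cf.\ Lemma \ref{lem: u2}.

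For part (3), I present both sides as free super-commutative dg $R$-algebras on two odd generators: $K_v \otimes_R K_r = R[\eta_v, \eta_r]$ with $d\eta_v = x_i - x_j$ and $d\eta_r = x_k - x_l$, while $K_{vrv} \otimes_R K_v = R[\eta_{vrv}, \eta_v]$ with $d\eta_{vrv} = x_{v(k)} - x_{v(l)}$. The key observation is that $x_{v(k)} - x_{v(l)} - (x_k - x_l) \in R \cdot (x_i - x_j)$: it vanishes when the supports of $v$ and $r$ are disjoint and equals $\pm(x_i - x_j)$ otherwise. Hence there is a constant $c \in \{0, \pm 1\}$ such that the $R$-algebra map $\eta_v \mapsto \eta_v$, $\eta_{vrv} \mapsto \eta_r + c\,\eta_v$ is a strict dg isomorphism. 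Compatibility with each $\xi_a$ is then a direct computation from \eqref{eq:Delta vw}, nontrivial only for $a$ in the union of the supports of $v$ and $r$.

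For parts (1) and (2), I do case analysis on $|\mathrm{supp}(r) \cap \{i, i+1\}|$. When the supports are disjoint, $srs = r$ and all generators appearing in $K_r$ commute strictly with the $\CA_s$-action on $T_i^{\pm 1}$, so the obvious braiding is $\CA_{sr}$-linear on the nose. When $r = s$ (so $srs = s$), the equivalence $T_i^{\pm 1} \otimes_R K_s \simeq K_s \otimes_R T_i^{\pm 1}$ is verified directly on the two-term complex $T_i = [\underline{B_i}(1) \to R(1)]$ using the maps $b_i$ and $b_i^\ast$. Finally, when $r$ shares exactly one index with $\{i, i+1\}$, say $r = (i, k)$ with $k \notin \{i, i+1\}$ so that $srs = (i+1, k)$ (the case $r = (i+1, k)$ being symmetric), I construct the $B$-linear map by interchanging the Koszul generator across the crossing and correcting by a dot-sliding homotopy, so that the Koszul differential $x_i - x_k$ on $K_r$ is carried to an element with differential $x_{i+1} - x_k$, matching $K_{srs}$; the argument for $T_i^{-1}$ is parallel, with $\xi_i$ acting as $b_i$ rather than $b_i^\ast$.

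The main obstacle is the $\CA$-compatibility check in this last case. The twisted coproduct gives $\Delta_{s,r}(\xi_a) = \xi_a \otimes 1 + 1 \otimes \xi_{s^{-1}(a)}$ on one side and $\Delta_{srs, s}(\xi_a) = \xi_a \otimes 1 + 1 \otimes \xi_{(srs)^{-1}(a)}$ on the other, so matching the actions requires careful bookkeeping for each $a \in \{i, i+1, k\}$, with attention to signs and to the dot-sliding correction introduced in the construction. The computation is finite and elementary but delicate; once completed, the constructed chain map is an $\CA_{sr}$-linear morphism that is a homotopy equivalence over $B$, hence a weak equivalence.
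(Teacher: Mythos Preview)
Your overall plan---case analysis plus explicit dg-algebra presentations of the two sides---is exactly what the paper does, and your treatment of part (3) via the substitution $\eta_{vrv}\mapsto \eta_r + c\,\eta_v$ is correct. However, the key claim in your opening paragraph is not justified and is in fact the main content of the proof. You assert that once a $B$-linear chain map intertwines every total $\xi_a$, compatibility with each $u_k$ ``follows automatically'' from the coproduct formula. It does not: on a two-factor product of elementary modules $u_k$ acts by $\sum_a h_{k-2}(x_a,x'_{\bullet},x''_{\bullet})\,\xi_a^{(1)}\otimes\xi_{\bullet}^{(2)}$, which is written in terms of the \emph{factorwise} operators $\xi^{(1)},\xi^{(2)}$, not the global $\xi_a=\xi_a^{(1)}+\xi_{\bullet}^{(2)}$. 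Your isomorphism (e.g.\ $\eta_{vrv}\mapsto \eta_r+c\eta_v$) mixes the two tensor factors, so intertwining the global $\xi_a$'s tells you nothing a priori about the factorwise pieces, and hence nothing about $u_k$. The paper therefore computes the $u_k$-action on each side explicitly and checks that the constructed isomorphism carries one to the other; in the hardest case (your ``$r=s$'' case, the paper's case d)) this forces the non-obvious substitution $(\eta_1,\eta_2)\mapsto(\eta_2,\,2\eta_2-\eta_1)$ rather than anything one would guess from the $\xi$'s alone.

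Two smaller remarks. First, the paper proves (2) and (3) as honest isomorphisms (not just weak equivalences) and then \emph{deduces} (1) from (2) by tensoring both sides with $T_i$; this avoids repeating the case analysis for $T_i$, at the cost of invoking associativity of the tensor product of $\CA$-modules (hence only a weak equivalence). Your symmetric treatment of (1) and (2) is fine but doubles the work. Second, your sketch for the ``one shared index'' case is on the right track, but once you drop the false shortcut you will need to write down and match the $u_m$-term $h_{m-2}(x_{i+1},x_i',x_k)\xi_i\xi_k$ as the paper does in case b); the ``dot-sliding correction'' you mention is not needed there---the two dg-algebra presentations simply coincide after renaming generators.
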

In fact, the proof will show that (2) and (3) are honest isomorphisms.
\begin{proof}
The proof (2) and (3) amounts to the following case-by-case analysis:
	\begin{itemize}
		\item [a)] $T_i^{-1}K_{j k}= K_{j k}  T_i^{-1} \qquad j,k\notin\{i,i+1\},$
		\item [b)] $T_i^{-1}  K_{ik}= K_{i+1\; k}  T_i^{-1} \qquad k\notin\{i,i+1\},$
		\item [c)] $T_i^{-1} K_{i+1\;k}= K_{i k}  T_i^{-1} \qquad k\notin\{i,i+1\},$
		\item [d)] $T_i^{-1}  K_{i\; i+1}= K_{i\; i+1}  T_i^{-1},$
		\item [e)] $K_{ij} K_{kl} = K_{kl} K_{ij} \qquad i,j,k,l\;\text{distinct},$
		\item [f)] $K_{ij} K_{jk} = K_{ik} K_{ij} = K_{jk} K_{ik} \qquad \qquad i,j,k\;\text{distinct}.$
	\end{itemize}
We will use equation \eqref{eq:Delta vw} for the twisted coproduct. The case a) is straightforward. We continue to the case b). It is convenient to view $T_i^{-1}$, $K_{ij}$ as dg algebras.  So $T_i^{-1}$ is presented by
	\[
	T_i = B_i[\xi_i\;|\;d \xi_i = x_i-x_{i+1}']/\left(\xi_i(x_i-x_i')\right).
	\]
	Thus $T_i^{-1}   K_{ik}$ can be presented by
	\[
	T_i^{-1}   K_{ik} = B_i[\xi_i,\xi_k \;|\;d \xi_k = x_k-x_i',\; d \xi_i = x_i-x_{i+1}']/\left(\xi_i(x_{i}-x_{i}')\right).
	\]
	The only remaining non-zero generators of $\CA$ act by
	\[
	\xi_{i+1}=-\xi_i-\xi_k, \quad u_m = h_{m-2}\left(x_{i+1},x_i', x_k\right)\xi_i \xi_k.
	\]
	Similarly, we compute
	\[
	K_{i+1\; k}   T_i^{-1} = B_i[\xi_i, \xi_{i+1}\;|\; d \xi_i = x_i-x_{i+1}',\; d \xi_{i+1} = x_{i+1}-x_k]/\left(\xi_i(x_i-x_i')\right).
	\]
	The $\CA$-action is given by
	\[
	\xi_k=-\xi_i-\xi_{i+1},\quad u_m = h_{m-2}\left(x_k, x_{i+1}, x_i'\right) \xi_{i+1} \xi_{i}.
	\]
	We observe that the modules and the actions coincide.

	The case c) is analogous.

	Let us describe the case d).
	We have
	\[
	T_i^{-1}   K_{i\; i+1} = B_i[\eta_1, \eta_2\;|\;d \eta_1 = x_i-x_{i+1}',\; d \eta_2 = x_i'-x_{i+1}']/\left(\eta_1(x_i-x_i')\right).
	\]
	The action of $\CA$ is given by $\xi_i = -\xi_{i+1}= \eta_1 - \eta_2$,
	\[
	u_k = h_{k-2}\left(x_i, x_{i+1}', x_i'\right) \eta_1 (-\eta_2) + h_{k-2}\left(x_{i+1}, x_i', x_{i+1}'\right) (-\eta_1)\eta_2
	\]
	\[
	= \eta_1 (\eta_1-\eta_2)\left(h_{k-2}\left(x_i, x_i, x_{i+1}\right) + h_{k-2}\left(x_i, x_{i+1}, x_{i+1}\right)\right).
	\]
	Similarly, we have
	\[
	K_{i\; i+1}   T_i^{-1} = B_i[\eta_1, \eta_2\;|\;d \eta_1 = x_i-x_{i+1},\; d \eta_2 = x_{i}-x_{i+1}']/\left(\eta_2(x_i-x_i')\right),
	\]
	using the identities of $B_i$.
	The action of $\CA$ is given by $\xi_i = -\xi_{i+1}= \eta_1 - \eta_2$,
	\[
	u_k = h_{k-2}\left(x_i, x_{i+1}, x_i'\right) \eta_1 (-\eta_2) + h_{k-2}\left(x_{i+1}, x_i, x_{i+1}'\right) (-\eta_1)\eta_2
	\]
	\[
	=\eta_2(\eta_1-\eta_2)\left(h_{k-2}\left(x_i, x_{i+1},x_i\right) + h_{k-2}\left(x_{i+1},x_i,x_{i+1}\right)\right),
	\]
	using the identities $\eta_2 x_i = \eta_2 x_i'$ and $\eta_2 x_{i+1} = \eta_2 x_{i+1}'$. The required isomorphism is given by the substitution $(\eta_1,\eta_2)\to (\eta_2, 2\eta_2-\eta_1)$.

	The case e) is straightforward. The case f) is somewhat similar to b), but simpler.
	
This completes the proof of statements (2) and (3) from the statement.  We deduce (1) from (2) by multiplying with $T_i$ on the left and right.  The resulting calculation involves coassociativity of the tensor product of $\CA$-modules, hence a priori is only a weak equivalence.
\end{proof}

%

%

\begin{lemma}\label{lemma:w and refl}
Given a permutation $w\in S_n$, suppose $w=r_1\cdots r_l$ expresses $w$ as a minimal length product of reflections. Then the twisted $\CA$-module
\[
K_w = K_{r_1}\cdots K_{r_l}
\]
depends only on $w$, and not the choice of reflections $r_1,\ldots,r_l$, up to weak equivalence.
\end{lemma}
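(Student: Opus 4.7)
The plan is to reduce the statement to the fact that the Hurwitz braid action is transitive on minimal reflection factorizations of $w$ in $S_n$. Recall that the reflection length of $w\in S_n$ equals $n-c(w)$, where $c(w)$ is the number of cycles. A classical result (see e.g.\ the work of Bessis on dual braid monoids, or a direct combinatorial argument in the symmetric group) asserts that if $w=r_1\cdots r_l = r_1'\cdots r_l'$ are two minimum-length factorizations of $w$ into reflections, then the two tuples $(r_1,\ldots,r_l)$ and $(r_1',\ldots,r_l')$ are related by a finite sequence of \emph{Hurwitz moves}
\[
\sigma_i : (r_1,\ldots,r_i,r_{i+1},\ldots,r_l)\longmapsto (r_1,\ldots,r_i r_{i+1}r_i,\, r_i,\ldots,r_l)
\]
together with their inverses. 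So it suffices to show that a single Hurwitz move changes $K_{r_1}\cdots K_{r_l}$ into a weakly equivalent twisted $\CA$-module.

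For a fixed $i$, Proposition~\ref{prop:relations}(3) applied to the reflections $v=r_i$ and $r=r_{i+1}$ (noting $v\inv=v$) provides a weak equivalence of $\CA_{r_i r_{i+1}}$-modules
\[
K_{r_i}\otimes_R K_{r_{i+1}} \;\simeq\; K_{r_i r_{i+1} r_i}\otimes_R K_{r_i}.
\]
Tensoring this weak equivalence on the left by $K_{r_1}\otimes_R\cdots\otimes_R K_{r_{i-1}}$ and on the right by $K_{r_{i+2}}\otimes_R\cdots\otimes_R K_{r_l}$ produces the desired weak equivalence between the two $\CA_w$-modules obtained before and after the move $\sigma_i$. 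The fact that such tensoring preserves weak equivalences and is well-defined up to canonical weak equivalence is exactly the content of Theorem~\ref{thm:intro coproduct} together with Proposition~\ref{prop:associativity on modules}, so we may freely tensor the local equivalence into the full product.

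The main obstacle, insofar as there is one, is invoking Hurwitz transitivity: one must be sure that the transitivity result applies to \emph{all} minimal reflection factorizations of an arbitrary $w\in S_n$, not just reduced expressions in simple reflections. For $S_n$ this can either be cited from the literature (Kluitmann, Bessis) or proved directly by an induction on $l$: pick any reflection $r$ appearing in one factorization, use Hurwitz moves to shuffle an instance of $r$ into the leftmost position of both factorizations, cancel it, and induct on the (shorter) factorizations of $rw$. Once transitivity is granted, the argument above is entirely mechanical and uses only the previously established tensor-product formalism in $\CA_w/B$.
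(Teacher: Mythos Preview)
Your proof is correct and follows essentially the same approach as the paper: reduce to showing invariance under a single Hurwitz move via Proposition~\ref{prop:relations}(3), then invoke transitivity of the Hurwitz action on minimal reflection factorizations. The only cosmetic difference is that the paper first reduces to the case where $w$ is a single cycle (so as to cite Bessis's result for Coxeter elements specifically), whereas you invoke Hurwitz transitivity for arbitrary $w\in S_n$ directly; either route is fine.
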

\begin{proof}
Consider the action of $w$ on $\{1,\ldots,n\}$.  We decompose into orbits:
\[
\{1,\ldots,n\} = O_1\sqcup \cdots \sqcup O_c
\]
Let $w= r_1\cdots r_l$ be an minimal expression of $w$ as product of reflections.  Then each reflection $r$ appearing in this expression must be of the form $r = (i,j)$ with $i,j$ both living in the same orbit $O_m$.   Note that if $r=(i,j)$ and $r'=(i',j')$ with $i,j\in O_m$ and $i',j'\in O_{m'}$ with $m\neq m'$, then $r$ and $r'$ commute.  Thus, by rearranging we may assume that  our expression for $w$ respects the cycle decomposition in the following sense:
\[
w = \left(r_1^{(1)}\cdots r_{l_1}^{(1)}\right) \cdots \left(r_1^{(c)}\cdots r_{l_c}^{(c)}\right)
\] 
where the parenthesized expressions are disjoint cycles in $S_n$.

Without loss of generality we may as well assume that $c=1$, so that $w$ is an $n$-cycle.  Let $I$ denote the collection of tuples $(r_1,\ldots,r_{n-1})$ such that $w = r_1\cdots r_{n-1}$ is a minimal length expression of $w$ as a product of reflections.   The braid group $\Br_n$ acts on $I$ according to
$$
\sigma_i \ :\ (r_1,\ldots, r_{n-1}) \ \mapsto\ \left( r_1,\cdots ,r_{i-1}, r_i', r_{i+1}',r_{i+2},\ldots, r_{n-1}\right).
$$
where $r_i' = r_i r_{i+1} r_i$ and $r_{i+1}' = r_i$.  Here $\sigma_i\in \Br_n$ is the standard Artin generator.  If $(r_1,\ldots,r_l)$ and $(r_1',\ldots,r_l')$ are related by the braid group action then the associated Koszul complexes $K$ and $K'$ are weakly equivalent since $K_{r_i} K_{r_{i+1}}=K_{r_i r_{i+1} r_i} K_{r_i}$.   By \cite[Proposition 1.6.1]{Bessis}, the braid group acts transitively on $I$, which completes the proof.
\end{proof}

\begin{lemma}\label{lemma:K squared central}
We have weak equivalences
\[
K_v  K_r^2 \simeq K_r^2  K_v \simeq K_v K_{vrv}^2\simeq  K_{vrv}^2 K_v
\]
for all reflections $r,v\in S_n$.
\end{lemma}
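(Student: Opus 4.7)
The plan is to reduce the statement to iterated applications of Proposition \ref{prop:relations}(3), which supplies the basic slide rule $K_a K_b \simeq K_{aba} K_a$ for any two reflections $a,b$. Throughout I will implicitly use Proposition \ref{prop:associativity on modules} to reassociate triple tensor products up to weak equivalence.

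First I will prove the ``centrality of the square,'' $K_r^2 K_v \simeq K_v K_r^2$, by sliding one $K_r$ past $K_v$ and then sliding the resulting conjugated factor back through the other $K_r$:
\[
K_r K_r K_v \;\simeq\; K_r K_{rvr} K_r \;\simeq\; K_{r(rvr)r} K_r K_r \;=\; K_v K_r^2,
\]
where each $\simeq$ is an instance of Proposition \ref{prop:relations}(3) and the final equality uses $r^2=e$ to compute $r(rvr)r = v$. This establishes the first weak equivalence in the statement.

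Next, by the mirror-image computation I slide $K_v$ through both copies of $K_r$:
\[
K_v K_r K_r \;\simeq\; K_{vrv} K_v K_r \;\simeq\; K_{vrv} K_{vrv} K_v \;=\; K_{vrv}^2 K_v,
\]
giving $K_v K_r^2 \simeq K_{vrv}^2 K_v$. Applying the centrality result of the first paragraph with $r$ replaced by the reflection $vrv$ yields $K_{vrv}^2 K_v \simeq K_v K_{vrv}^2$. Chaining these with the centrality relation $K_v K_r^2 \simeq K_r^2 K_v$ produces the full three-way equivalence $K_v K_r^2 \simeq K_r^2 K_v \simeq K_v K_{vrv}^2$ asserted in the lemma.

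I do not anticipate any real obstacle here: the lemma is a purely formal consequence of the slide rule in Proposition \ref{prop:relations}(3) together with the involutivity $r^2 = e$, and amounts to careful bookkeeping of the twists $\CA_w$ at each stage. The only subtlety worth checking is that all intermediate modules live over the correct twisted algebra, but this is automatic since the products of the underlying reflections simplify to $v$ in every case (for instance $v \cdot r^2 = v$, $r^2 \cdot v = v$, and $v \cdot (vrv)^2 = v$).
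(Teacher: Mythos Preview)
Your proof is correct and follows essentially the same approach as the paper: both arguments repeatedly apply the slide rule $K_a K_b \simeq K_{aba} K_a$ from Proposition \ref{prop:relations}(3) together with $r^2=e$. Your write-up is in fact a bit more careful than the paper's, which contains apparent typos in its displayed computations (e.g.\ the first line ends with $K_r^2 K_v$ rather than the intended $K_v K_r^2$); you also make explicit the final step of applying the centrality relation with $vrv$ in place of $r$ to pass from $K_{vrv}^2 K_v$ to $K_v K_{vrv}^2$.
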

\begin{proof}
Since $rvr$ and $vrv$ are both reflections, by Lemma \ref{lemma:w and refl} we get
\[
K_{v}K_{r}\simeq K_{vrv}K_v\simeq K_rK_{rvr}.
\]
Indeed, for $v=r$ this is trivial and for $v\neq r$ we get minimal length factorizations
$$
v\cdot r=(vrv)\cdot v=r\cdot (rvr)
$$
as a product of reflections. Now we compute
\[
K_v K_r^2   \simeq K_r  K_{rvr} K_r \simeq  K_r^2K_v
\]
and
\[
K_v K_r^2  \simeq  K_{vrv} K_v K_r \simeq  K_{vrv}^2 K_v.
\]
Finally, by the first equation we get 
\[
 K_{vrv}^2 K_v\simeq K_v K_{vrv}^2.\qedhere
\]
\end{proof}

Below, we will use the notion of \emph{reflection length} of a permutation $w$, which is defined to be the length of a minimal product of reflections yielding $w$.  We will denote the reflection length by $\ell_{\mathrm{refl}}(w)$.  If $r=(i,j)$ then $\ell_{\mathrm{refl}}(wr) = \ell_{\mathrm{refl}}(w)+1$ if $i,j$ belong to different cycles of $w$; otherwise if $i,j$ belong to the same cycle of $w$ then $\ell_{\mathrm{refl}}(wr) = \ell_{\mathrm{refl}}(w)-1$.
\begin{lemma}\label{lemma:refl length and K}
If $\ell_{\mathrm{refl}}(wr) = \ell_{\mathrm{refl}}(w)+1$ then $K_w K_r = K_{wr}$.  Otherwise $\ell_{\mathrm{refl}}(wr) = \ell_{\mathrm{refl}}(w)-1$, and $K_w K_r\simeq K_{wr} K_r^2$.
\end{lemma}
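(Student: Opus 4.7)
\bigskip

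The plan is to deduce both claims directly from the well-definedness of $K_w$ up to weak equivalence (Lemma \ref{lemma:w and refl}) by checking compatibility of minimal reflection factorizations. The key observation is that for any permutation $u$, Lemma \ref{lemma:w and refl} tells us that we may compute $K_u$ using \emph{any} minimal length product of reflections representing $u$, and the resulting object is independent of the choice up to weak equivalence.

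First I would handle the case $\ell_{\mathrm{refl}}(wr) = \ell_{\mathrm{refl}}(w)+1$. Pick a minimal reflection factorization $w = r_1 \cdots r_l$ of length $l=\ell_{\mathrm{refl}}(w)$, so that $K_w \simeq K_{r_1}\cdots K_{r_l}$. Then $wr = r_1 \cdots r_l\, r$ is a product of $l+1 = \ell_{\mathrm{refl}}(wr)$ reflections, hence is itself minimal. Therefore
\[
K_{wr} \simeq K_{r_1}\cdots K_{r_l} K_r \simeq K_w K_r,
\]
which gives the first claim.

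For the second case, $\ell_{\mathrm{refl}}(wr) = \ell_{\mathrm{refl}}(w)-1$, I would reduce to the first case by setting $w' := wr$. Then $\ell_{\mathrm{refl}}(w') = \ell_{\mathrm{refl}}(w)-1$, and since $r^2 = e$ we have $w = w' r$ with $\ell_{\mathrm{refl}}(w) = \ell_{\mathrm{refl}}(w')+1$. Applying the first case to the pair $(w', r)$ yields $K_w \simeq K_{w'} K_r = K_{wr} K_r$. Tensoring on the right by $K_r$ gives
\[
K_w K_r \simeq K_{wr} K_r^2,
\]
as desired.

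The only potential subtlety is that tensor products of $\CA$-modules are only associative up to weak equivalence (Proposition \ref{prop:associativity on modules}), so all equalities above are genuinely weak equivalences rather than strict identities; this is why we appeal to Lemma \ref{lemma:w and refl} at each step rather than rearranging tensor factors by hand. No further obstacle arises, and no case analysis on the cycle structure of $w$ is needed beyond what is already encoded in the statement's hypothesis on $\ell_{\mathrm{refl}}$.
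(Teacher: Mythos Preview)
Your proof is correct and follows essentially the same approach as the paper: the first case is immediate from Lemma~\ref{lemma:w and refl}, and the second case is obtained by applying the first case to the pair $(wr,r)$ and then tensoring on the right by $K_r$. The paper's argument is identical in structure, and your remark about associativity only holding up to weak equivalence is a fair point that the paper leaves implicit.
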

\begin{proof}
Let $w=u_1\cdots u_l$ be a minimal product of reflections yielding $w$.  If $u_1\cdots u_l r$ is minimal, then $K_{wr} \simeq K_w K_r$ by construction.  Suppose on the other hand that $u_1\cdots u_l r$ is non-minimal.  Then $\ell_{\mathrm{refl}}(w)=\ell_{\mathrm{refl}}(wr)+1$, hence $K_w \simeq K_{wr}K_r$ by the first statement of this lemma, hence
\[
K_w K_r \simeq K_{wr} K_r K_r ,
\]
which is the second statement.
\end{proof}

\begin{lemma}
\label{lem: K standard}
Up to weak equivalence relative to $\CB$, any product $K$ of $K_{ij}$ can be written in the form
\begin{equation}\label{eq:decomposition K}
K\simeq K_{w} K_{i_1,j_1}^2 \cdots  K_{i_s,j_s}^2,
\end{equation}
where $w$ is the permutation obtained by multiplying transpositions in $K$. Moreover, $K$ is completely determined up to weak equivalence by the following data:
\begin{itemize}
    \item the permutation $w$, which is the product of transpositions in $K$;
    \item an equivalence relation $\sim$ on the set $\{1,\ldots,n\}$, defined to be the minimal equivalence relation such that $i\sim w(i)$ for all $i$ and $i_t\sim j_t$ for all $1\le t\le s$;
    \item for each equivalence class $S$ for $\sim$, the number of $K_{i,j}$ in $K$ such that $i,j\in S$.
\end{itemize}
\end{lemma}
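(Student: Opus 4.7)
The plan is to establish the standard form \eqref{eq:decomposition K} by induction and then check that $(w,\sim,(n_S)_S)$ is both a well-defined invariant and a complete one. Throughout I work up to weak equivalence over $\CB$, using Lemmas \ref{lemma:w and refl}, \ref{lemma:K squared central}, and \ref{lemma:refl length and K} as the allowed rewriting moves. For existence, I would induct on $m$ where $K = K_{r_1}\cdots K_{r_m}$. Assuming $K_{r_1}\cdots K_{r_{m-1}}\simeq K_{w'}K_{t_1}^2\cdots K_{t_p}^2$ with $w'=r_1\cdots r_{m-1}$, I tensor on the right by $K_{r_m}$ and use the commutation $K_v K_r^2\simeq K_r^2 K_v$ of Lemma \ref{lemma:K squared central} to slide $K_{r_m}$ to the left past each squared factor, arriving at $K_{w'}K_{r_m}K_{t_1}^2\cdots K_{t_p}^2$. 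Lemma \ref{lemma:refl length and K} then rewrites $K_{w'}K_{r_m}$ either as $K_w$ (if the reflection length increases) or as $K_w K_{r_m}^2$ (if it drops), where $w=w'r_m$, in either case yielding the standard form.

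\textbf{The triple is an invariant.} I would check that each elementary move preserves $(w,\sim,(n_S)_S)$. The permutation $w$ is tautologically preserved since squared reflections are trivial in $S_n$. For the move $K_vK_r^2\simeq K_vK_{vrv}^2$, since $v$-conjugation fixes the $\sim$-class containing $v$, the pairs $\{i,j\}$ and $\{v(i),v(j)\}$ lie in the same class, and the count $1+2$ in that class is unchanged. The rewrite $K_wK_r\simeq K_{wr}K_r^2$ of Lemma \ref{lemma:refl length and K} is invoked only when $r$ joins two elements in the same $w$-cycle $C$; both sides then have $C$ as a $\sim$-class and the count in $C$ equals $|C|=(|C|-1)+1=(|C|-2)+2$. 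Finally, changing the minimal expression of $K_w$ (Lemma \ref{lemma:w and refl}) preserves the count of $|C|-1$ reflections contained in each cycle $C$.

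\textbf{Completeness.} Given two products with the same triple, I reduce each to a standard form $K_wK_{r_1}^2\cdots K_{r_s}^2$. For each class $S$ containing $m_S$ cycles of $w$, the identity $n_S=(|S|-m_S)+2s_S$ (contribution $|C|-1$ from each $w$-cycle $C\subset S$ in $K_w$ plus $2$ per squared factor in $S$) forces the two standard forms to share the same $s_S$. To match the $s_S$-tuples within $S$, I use $K_{r_i}^2K_{r_j}^2\simeq K_{r_i}^2K_{r_ir_jr_i}^2$ (apply Lemma \ref{lemma:K squared central} taking $v$ to be one copy of $K_{r_i}$ inside $K_{r_i}^2$), together with conjugation of each $r_i$ by any reflection appearing in the minimal expression of $K_w$ (again by Lemma \ref{lemma:K squared central}, after first sliding the squared factor into position). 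These conjugations generate $\mathrm{Sym}(S)$ acting by conjugation on transpositions inside $S$, whose single orbit is $\binom{S}{2}$. Combined with the commutativity of squared factors, this lets me match the two $s_S$-tuples element by element.

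\textbf{Main obstacle.} The subtlest point is completeness, because the triple records only total counts per class and not the split between within-cycle and cross-cycle squared factors. The identity $(a_1,b)(a_2,b)(a_1,b)=(a_1,a_2)$ demonstrates the crucial transmutation: two cross-cycle squared factors may be traded for one cross-cycle factor together with one within-cycle factor, and vice versa. Verifying that the induced $\mathrm{Sym}(S)$-action is transitive on the set of $s_S$-tuples of transpositions in $S$ with the required connectivity property is the essential combinatorial step; once that is in hand, the rest of the argument reduces to routine bookkeeping.
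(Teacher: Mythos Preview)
Your approach is essentially the paper's: the inductive reduction to the standard form is identical, and your completeness argument uses the same basic move $K_{ij}^2 K_{ik}\simeq K_{kj}^2 K_{ik}$ (conjugation of a squared factor by an adjacent reflection), just phrased in group-theoretic language.

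The gap is in completeness. Your sentence ``these conjugations generate $\mathrm{Sym}(S)$ \ldots whose single orbit is $\binom{S}{2}$, so match the two $s_S$-tuples element by element'' does not actually work, and you correctly flag this in your final paragraph without resolving it. The problem is that the set of available conjugators is the \emph{current} list of reflections (those from $K_w$ plus the other squared factors), and this set only generates $\mathrm{Sym}(S)$ when it already connects $S$. When the squared factor $r_j$ you are trying to move is a bridge---i.e.\ removing it disconnects $S$ into $D_1\sqcup D_2$---the remaining conjugators generate only $\mathrm{Sym}(D_1)\times\mathrm{Sym}(D_2)$, so you can only move $r_j$ to another cross-transposition, never to a within-$D_i$ transposition. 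Hence ``match element by element'' can get stuck: you may need to move a bridge to a non-bridge position to agree with the target tuple, and no single-entry move does this.

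The paper fills this gap by a two-stage normalization rather than a direct matching. First, for each $\sim$-class $S$ one fixes cycle representatives $J$ and a base point $i_S\in I\subset J$; then for each non-base cycle representative $j\in J\setminus I$ one finds \emph{some} squared factor bridging the cycle of $j$ to another cycle and moves it (within the constraints above) to the canonical pair $(i_S,j)$. After this spanning set is in place, the remaining squared factors are redundant for connectivity (removing any one does not change $\sim$), so each lies in your Case~1 and may be conjugated freely to any transposition in $S$. Both standard forms with the same triple thus reduce to the \emph{same} canonical object, which is cleaner than trying to transform one tuple directly into the other.
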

\begin{proof}
We prove \eqref{eq:decomposition K} by induction on the number of factors in $K$.  The base case $K=K_{(i,j)}$ is trivial.  Suppose now we have a complex of the form
\[
K_{w} K_{i_1,j_1}^2 \cdots  K_{i_s,j_s}^2,
\]
and tensor on the right with $K_{k,l}$.   If $k$ and $l$ belong to different cycles of $w$, then  then $w'=w\cdot(k,l)$ has $\ell_{\mathrm{refl}}(w') = \ell_{\mathrm{refl}}(w)+1$, hence $K_w K_{k,l} \simeq K_{w\cdot (k,l)}$, hence
\[
\left(K_{w} K_{i_1,j_1}^2 \cdots  K_{i_s,j_s}^2\right)K_{k,l} \simeq K_{w\cdot(k,l)} K_{i_1,j_1}^2 \cdots  K_{i_s,j_s}^2.
\]

On the other hand, if $k,l$ belong to the same cycle of $w$, then $w\cdot(k,l)$ has one more cycle than $w$, and $\ell_{\mathrm{refl}}(w\cdot(k,l))=\ell_{\mathrm{refl}}(w)-1$, and we find that
\[
\left(K_{w} K_{i_1,j_1}^2 \cdots  K_{i_s,j_s}^2\right)K_{k,l} \simeq K_{w\cdot(k,l)} K_{k,l}^2 K_{i_1,j_1}^2 \cdots  K_{i_s,j_s}^2.
\]
This proves existence of the decomposition \eqref{eq:decomposition K}.

To prove the second statement, we choose a representative for $K$ up to weak equivalence. Let $J\subset\{1,\dots,n\}$ be a set containing exactly one element out of each cycle of $w$ and let $I\subset J$ be a set containing exactly one element of each equivalence class with respect to $\sim$. For any $j\in J \setminus I$ there is a unique $i(j)\in I$ such that $i(j)\sim j$. Note that the relations $j\sim i(j)$ and the cycles of $w$ already generate the equivalence relation $\sim$. Let us prove that we can rewrite the product \eqref{eq:decomposition K} such that it contains the factors $K_{j,i(j)}^2$ for all $j\in J \setminus I$.

Indeed, Lemma \ref{lemma:K squared central} tells us that 
\[
K_{ab}^2  K_{a c} \simeq K_{a c}K_{ab}^2\simeq K_{c b}^2  K_{a c}\simeq K_{a c}K_{c b}^2.
\]
Therefore if $K$ is a product \eqref{eq:decomposition K} which contains a factor $K_{a c}$ then $$K K_{ab}^2\simeq K K_{c b}^2.$$ 

Since $j\sim i(j)$, we can find a chain of pairwise distinct elements $j_0=j,j_1,\ldots,j_{r}=i(j)\in J$ and a set of pairs $(k_1,l_1),\ldots, (k_{r},l_{r})$ appearing in \eqref{eq:decomposition K}  such that $k_{t+1}$ and $l_{t}$ are in the same cycle of $w$ as $j_t$ for all $t$. Then
$$
K_wK_{k_1,l_1}^2\cdots K_{k_{r},l_{r}}^2\simeq K_wK_{j,j_1}^2\cdots K_{j_{r-1},i(j)}^2\simeq K_wK_{j,i(j)}^2\cdots K_{j_{r-1},i(j)}^2.
$$
All remaining factors $K_{k,l}^2$ can be replaced by $K_{k',l'}^2$ for $k'\sim k\sim l\sim l'$  and therefore their number in each class is a complete invariant.

\end{proof}


\begin{corollary}
\label{cor: T and K standard}
Given a product $C$ of complexes $T_i,T_i^{-1}$ and $K_{jk}$, let $u\in S_n$ be the product of the corresponding transpositions. Then $C$ can be written in the form
$$
C\simeq T_{\beta} K_w  K_{i_1, j_1}^2  \cdots K_{i_s,j_s}^2
$$
where $\beta$ is the braid obtained by erasing all $K_{ij}$ from the product, and $w$ is determined by $u$ and $\beta$. This isomorphism agrees with  the
$\CA$-module structures.
\end{corollary}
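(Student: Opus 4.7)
The plan is to combine the earlier commutation relations with the standardization lemma for products of Koszul complexes. The strategy has two stages: first, push every Koszul factor past all Rouquier factors to its right; second, apply Lemma~\ref{lem: K standard} to the resulting product of Koszul complexes.

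For the first stage, Proposition~\ref{prop:relations}(1)--(2) gives, for every reflection $r$ and every simple reflection $s=(i,i+1)$, weak equivalences of twisted $\CA$-modules
\[
T_i^{\pm 1}\otimes_R K_r \simeq K_{srs}\otimes_R T_i^{\pm 1}.
\]
Replacing $r$ by $srs$ and using $s^2=e$ yields the dual commutation rule $K_r\otimes_R T_i^{\pm 1} \simeq T_i^{\pm 1}\otimes_R K_{srs}$, which allows us to move any Koszul factor to the right past a single Rouquier factor. Iterating, and using Proposition~\ref{prop:associativity on modules} to reparenthesize freely, after finitely many steps we obtain
\[
C\simeq T_\beta\otimes_R K_{r_1'}\otimes_R \cdots \otimes_R K_{r_m'},
\]
where $\beta$ is the braid obtained from $C$ by erasing all Koszul factors, and each $r_\ell'$ is the conjugate of the $\ell$-th original Koszul reflection by the product of Artin generators occurring to its right in $C$.

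For the second stage, apply Lemma~\ref{lem: K standard} to the Koszul tail $K:=K_{r_1'}\otimes_R\cdots\otimes_R K_{r_m'}$, producing $K\simeq K_w\otimes K_{i_1,j_1}^2\otimes\cdots\otimes K_{i_s,j_s}^2$, where $w=r_1'\cdots r_m'$ is the product of the reflections appearing in $K$. To see that $w$ is determined by $u$ and $\beta$ alone, compare twists: each commutation step preserves the product of transpositions (since $r\cdot s = s\cdot (srs)$), so this product equals $u$ throughout; the rearranged complex is a module over $\CA_{w(\beta)\cdot w}$, because each squared factor $K_{ij}^2$ is an untwisted $\CA$-module. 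Matching twists forces $w=w(\beta)^{-1}u$, which depends only on $u$ and $\beta$.

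Compatibility with the $\CA$-module structures is automatic: every intermediate equivalence is a weak equivalence of twisted $\CA$-modules, supplied by Proposition~\ref{prop:relations} or Lemma~\ref{lem: K standard}, and the associativity of the tensor product up to weak equivalence is guaranteed by Proposition~\ref{prop:associativity on modules}. The only real bookkeeping step is tracking the conjugates $r_\ell'$ as one pushes the Koszul factors rightward; since the cycle structure of a reflection is invariant under conjugation and the total permutation is preserved, no obstruction arises.
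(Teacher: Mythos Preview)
Your proof is correct and follows essentially the same route as the paper's: use Proposition~\ref{prop:relations} to slide all Rouquier factors to one side (the paper phrases this as moving the $T_i$ left, you phrase it as moving the $K_r$ right, which is the same operation), then invoke Lemma~\ref{lem: K standard}. Your version is in fact more detailed than the paper's, since you explicitly verify that the commutation relations preserve the total permutation and deduce the formula $w=w(\beta)^{-1}u$, which the paper leaves to the reader.
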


\begin{proof}
By Proposition \ref{prop:relations} we can move all Rouquier complexes $T_i$ to the left without changing their indices or the order, so we get $C=T_{\beta}  K$ where $K$ is some product of $K_{ij}$. By Lemma \ref{lem: K standard} we can write
 $K=K_w\otimes K_{i_1, j_1}^2  \cdots K_{i_s,j_s}^2$ and the result follows.
\end{proof}

\section{Markov moves}\label{sec:markov}
\subsection{Cyclic property}




Recall that any $\CA_w$-module $X$ is a complex of  $(R,R)$ bimodules, and one consider its Hochschild homology 
 $\HH^k(X)$ component-wise.
Since $\HH$ is a functor, 
$\HH^k(X)$ again has the structure of an $\CA_w$ module, with the property that the difference $x_i'-x_i$ acts by zero for each $i$.

\begin{definition}
If $M$ is a $B$-module, then we let $\HH_0(M)=M\otimes_B R$.  Equivalently, $M/IM$ where $I\subset B$ is the ideal generated by $x_i'-x_{i}$ for $i=1,\ldots,n$.  Let $\CCA_w:=\HH_0(\CA_w)$.
\end{definition}

Modules over $\CCA_w$ are simply modules over $\CA_w$ on which the actions of $x_i'$ and $x_i$ coincide for all $i$.  In particular we have the following.
\begin{proposition}
    For any $\CA_w$-module $X$ the Hochschild cohomology $\HH^k(X)$ is a $\CCA_w$-module.\qed
\end{proposition}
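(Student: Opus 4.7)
The plan is to reduce the proposition to two observations: (a) the Hochschild cohomology $\HH^k(X)$ carries an induced $\CA_w$-action, and (b) on this action the elements $x_i - x_i'$ vanish. Once both are established, the conclusion is immediate from the defining presentation $\CCA_w = \HH_0(\CA_w) = \CA_w/(x_i-x_i')\CA_w$, since by the remark just above the proposition, modules over this quotient are precisely $\CA_w$-modules on which $x_i$ and $x_i'$ act the same way.

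For (a), I would invoke the fact that $\CA_w = \CA R_w$ is a super-commutative $R^e$-algebra (inherited from $\CA$ being free graded-commutative over $B$, together with the quotient structure on $R_w$). Consequently the image of $R^e$ lies in the graded center of $\CA_w$, so that for each homogeneous $a \in \CA_w$ the operation of left multiplication by $a$ is a homomorphism of $R$-bimodules of the appropriate internal and homological bidegree between the graded pieces of the complex $X$. Since $\HH^k = \Ext^k_{R^e}(R,-)$ is a functor on $R$-bimodules, applying it termwise to $X$ promotes each such operator to an operator on the complex $\HH^k(X)$. These operators compose as they do in $\CA_w$, and the Leibniz compatibility $[d_X, a] = d_{\CA_w}(a)$ passes through the functor termwise, yielding a dg $\CA_w$-module structure on $\HH^k(X)$.

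For (b), observe that the $R^e$-action on $\HH^k(M) = \Ext^k_{R^e}(R, M)$ is inherited through the first argument $R$, the diagonal bimodule in which $x_i$ and $x_i'$ both act as multiplication by $x_i$. Hence $x_i - x_i'$ annihilates $\HH^k(M)$ for every bimodule $M$, and in particular for every term of the complex $X$. Combining (a) and (b), the induced $\CA_w$-action on $\HH^k(X)$ factors through $\CCA_w$, as required. The statement is essentially a formal consequence of functoriality together with the definition of $\CCA_w$; I do not foresee a substantive obstacle, and the argument should occupy only a few lines once the setup is unpacked.
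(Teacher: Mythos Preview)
Your proposal is correct and matches the paper's approach exactly. The paper states the content of your (a) and (b) in the single sentence preceding the proposition (``Since $\HH$ is a functor, for any $\CA_w$-module $X$ its Hochschild homology $\HH^k(X)$ again has the structure of an $\CA_w$ module, with the property that the difference $x_i'-x_i$ acts by zero for each $i$'') and then marks the proposition with a bare $\qed$; you have simply unpacked that sentence.
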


It is clear that $\CCA_w-\Mod$ is a full subcategory of $\CA_w-\Mod$.  Furthermore the functor $\HH_0$ defines a left inverse to the inclusion $\CCA_w-\Mod\rightarrow \CA_w-\Mod$.  This implies that the inclusion functor $\CCA_w/R = \CCA_w/B \to \CA_w/B$ is fully faithful.  The tensor product of a $\CCA_{w_1}$-module and a $\CCA_{w_2}$-module is a $\CCA_{w_1 w_2}$-module. So the following is evident:
\begin{proposition}\label{prop:projection}
    For each $w\in S_n$, the category $\CCA_w/R$ is a full subcategory of $\CA_w/B$. The collection of categories $\CCA_w/R$ is closed under the tensor product, and we have a projection formula:
    \[
    \HH^k(X\otimes_R Y) = \HH^k(X) \otimes_R Y \qquad(X\in\CA_{w_1}-\Mod,\;Y\in\CCA_{w_2}-\Mod)
    \]
    provided $Y$ is free as an $R$-module.
\end{proposition}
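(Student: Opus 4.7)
The proposition contains three independent assertions, which I would prove in turn. For the first, that $\CCA_w/R$ is a full subcategory of $\CA_w/B$, Theorem \ref{thm:localization} lets me represent both localizations by the corresponding categories of induced modules. The key observation is that the bar resolutions of Lemma \ref{lem:resolution} are compatible: writing $\CCA_w = \CA_w \otimes_B R$, I have
\[
\CCA_w^{\otimes_R k}\otimes_R X \;\cong\; \CA_w^{\otimes_B k}\otimes_B R\otimes_R X \;\cong\; \CA_w^{\otimes_B k}\otimes_B X
\]
for any $\CCA_w$-module $X$, so the $R$-relative bar resolution of $X$ as a $\CCA_w$-module coincides with its $B$-relative bar resolution as a $\CA_w$-module. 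Moreover any $\CA_w$-linear map between $\CCA_w$-modules is automatically $\CCA_w$-linear, so the two $\Hom$-complexes agree.

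For closure under tensor product, the $\CA_{w_1 w_2}$-module structure on $X\otimes_R Y$ is by pullback along $\Delta_{w_1,w_2}$, under which $x_i$ becomes $x_i\otimes 1$ (the left $R$-action on $X$) and $x_i'$ becomes $1\otimes x_i'$ (the right $R$-action on $Y$). Since $X$ and $Y$ are $\CCA$-modules, their respective left and right $R$-actions coincide, and since the tensor is formed over $R$ these four actions are identified on $X\otimes_R Y$; thus $x_i-x_i'$ acts by zero, as required.

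For the projection formula, I would use the Koszul resolution $K$ of $R$ over $R^e$ with exterior generators $\eta_i$ satisfying $d\eta_i = x_i-x_i'$. For any $R^e$-module $Z$ one has $\Hom^{R^e}_\bullet(K, Z) \cong Z\otimes \Lambda^\bullet(\eta_i^*)$ with Koszul differential. Substituting $Z = X\otimes_R Y$ and using the $\CCA_{w_2}$-structure on $Y$ to transport the action of $x_i'$ back across the tensor $\otimes_R$ to the right action of $x_i'$ on $X$, I obtain a natural isomorphism of complexes
\[
\Hom^{R^e}_\bullet(K, X\otimes_R Y) \;\cong\; \Hom^{R^e}_\bullet(K, X)\otimes_R Y.
\]
Since $Y$ is $R$-free, cohomology commutes with $\otimes_R Y$, yielding the claim. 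The main bookkeeping challenge throughout is tracking the various left and right $R$-actions on bimodules; the $\CCA_{w_2}$ hypothesis on $Y$ is essential precisely because it allows the Koszul differential, which a priori involves the external right action coming from $Y$, to be rewritten entirely in terms of the bimodule structure of $X$ after tensoring with $Y$.
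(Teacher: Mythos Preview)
Your proof is correct and supplies the details that the paper omits: the paper simply declares the proposition ``evident'' after the short paragraph preceding it, noting only that $\CCA_w\text{-}\Mod$ sits inside $\CA_w\text{-}\Mod$ as the full subcategory where $x_i=x_i'$, with $\HH_0$ as a one-sided inverse. Your bar-resolution comparison for the first part and your Koszul-resolution computation for the projection formula are exactly the natural verifications of these claims; there is nothing to add.
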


\begin{remark}
We observe that $\HH_0$ is a functor from $B$-modules from $R$-modules.  This functor comes equipped with natural maps
\[
\HH_0(M\otimes_R N)\rightarrow \HH_0(M)\otimes_R \HH_0(N)
\]
satisfying an appropriate version of the coassociative property\footnote{In literature, such functors are sometimes called \emph{oplax monoidal}.}.  Combining with the usual coproduct for algebras $\CA_w$ gives an algebra map
\[
\HH_0(\CA_{vw})\rightarrow \HH_0(\CA_v\otimes_R \CA_w) \rightarrow \HH_0(\CA_v)\otimes_R \HH_0(\CA_w).
\]
This defines the \emph{coproduct} $\CCA_{vw}\rightarrow \CCA_v\otimes_R \CCA_w$.  This in turn allows us to define the structure of a $\CCA_{vw}$-module on the tensor product $M\otimes_R N$ where $M$ is a $\CCA_v$-module and $N$ is a $\CCA_w$-module.  This tensor product coincides with the tensor product of twisted $\CA$-modules as in Proposition \ref{prop:projection}.
\end{remark}

\begin{definition}\label{def:cycling}
For each pair of permutations, let $\tau_{v,w}: \CCA_{vw}\rightarrow \CCA_{wv}$ be the $B$-algebra isomorphism sending $x_i\mapsto x_{v\inv(i)}$, $\xi_i\mapsto \xi_{v\inv(i)}$, and $u_k\mapsto u_k$.
\end{definition}

\begin{proposition}\label{prop:cyclicity}
    Let $X$ and $Y$ be $\CA_{v}$ and $\CA_{w}$-modules respectively, for $v,w\in  S_n$. Then
 \begin{equation}\label{eq:HH cyclic}
 \HH^k(X\otimes_R Y) \cong \tau_{v,w}^\ast\left(\HH^k(Y\otimes_R X)\right).
 \end{equation}
inside $\CCA_{vw}/\C$. 
\end{proposition}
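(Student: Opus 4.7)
The plan is to exhibit the isomorphism as the classical cyclic symmetry of Hochschild cohomology for $R$-bimodule tensor products, combined with a careful tracking of how the coproduct-induced $\CCA$-action transports through the swap. For $R$-bimodules $M,N$ there is a natural iso $\HH^*(M\otimes_R N)\cong \HH^*(N\otimes_R M)$: computing $\HH^*$ via $\Ext^*_{R^e}(R,-)$ and resolving $R$ by a bar complex, the flip $m\otimes n\mapsto n\otimes m$ descends to an iso at the Hochschild level because outer left and right $R$-actions have been identified. Applied to $X,Y$ viewed as $R$-bimodules by restriction, this produces the underlying iso of complexes computing $\HH^k(X\otimes_R Y)$ and $\HH^k(Y\otimes_R X)$.

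Second, I would check that this cyclic iso intertwines the $\CCA_{vw}$-action with the pullback $\tau_{v,w}^{\ast}$ of the $\CCA_{wv}$-action on each generator. For $x_i$: its left action on $X\otimes_R Y$ corresponds under the swap to the right action on $X$ inside $Y\otimes_R X$; by the $\CA_v$-structure on $X$ this right action equals $x'_{v^{-1}(i)}$, which after $\HH_0$ identifies with the left action of $x_{v^{-1}(i)}$, matching $\tau_{v,w}(x_i)=x_{v^{-1}(i)}$. For $\xi_i$ and $u_k$, one uses the twisted coproduct formulas \eqref{eq:Delta vw}: on $X\otimes_R Y$, $\xi_i$ acts as $\xi_i\otimes 1+1\otimes \xi_{v^{-1}(i)}$, while on $Y\otimes_R X$ the element $\tau_{v,w}(\xi_i)=\xi_{v^{-1}(i)}$ acts via $\Delta_{w,v}$ as $\xi_{v^{-1}(i)}\otimes 1+1\otimes \xi_{(vw)^{-1}(i)}$. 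The cyclic swap matches the two outer summands; the residual discrepancy between $\xi_i$ and $\xi_{(vw)^{-1}(i)}$ on the $X$-factor vanishes up to chain homotopy after bar resolution, reflecting the fact that indices within a single cycle of $vw$ become identified after passing to $\HH^*$. The case of $u_k$ proceeds analogously, using $\tau_{v,w}(u_k)=u_k$ and matching the cross-terms in $\Delta_{v,w}(u_k)$ against those in $\Delta_{w,v}(u_k)$.

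The main obstacle is reconciling the $\xi$-asymmetry between $\Delta_{v,w}$ and $\Delta_{w,v}$ after the cyclic swap. Two approaches present themselves: either construct the required homotopy directly on a bar-type resolution at the chain level, or pass to the quasi-isomorphic presentation $\CCA_{c,n}$ of Proposition \ref{prop: twisted projector}, in which $\xi$ is indexed only by the cycles of the permutation and the asymmetry disappears by construction. Either route yields the desired iso in $\CCA_{vw}/\C$, with naturality and independence of resolution choices furnished by the machinery of Section \ref{sec:homotopy theory}.
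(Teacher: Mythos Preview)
Your setup is right — the underlying isomorphism is the classical cyclic symmetry of Hochschild cohomology, and you have correctly isolated the obstruction: after the swap, the two induced $\CCA_{vw}$-actions on $\HH^k(Y\otimes_R X)$ disagree on the second tensor factor, where one gives $\xi_i^X$ and the other $\xi_{(vw)^{-1}(i)}^X$ (and analogously for the cross-terms in $u_k$). But the resolution you offer is where the gap lies. The sentence ``the residual discrepancy \ldots\ vanishes up to chain homotopy after bar resolution, reflecting the fact that indices within a single cycle of $vw$ become identified after passing to $\HH^*$'' is not correct as stated: passing to Hochschild homology identifies the scalars $x_i$ and $x_i'$, but it does \emph{not} identify the operators $\xi_i$ and $\xi_j$ for $i,j$ in the same cycle. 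These remain distinct endomorphisms of $X$, and no amount of bar resolving makes them coincide on the nose. Your two proposed fixes are only sketched; the first would require an explicit homotopy you have not produced, and the second (passing to $\CCA_{c,n}$) does work in principle — summing $\xi_j$ over a cycle of $vw$ kills the asymmetry — but you would still have to chase the correction terms for $u_k$ in Proposition~\ref{prop: twisted projector} and then invoke the equivalence $\alpha^*\dashv\beta^*$ to return to $\CCA_{vw}/\C$.

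The paper's argument is both simpler and avoids generator-by-generator homotopies entirely. Rather than show the two actions agree up to homotopy, it observes that the two algebra maps
\[
\sigma_{v,w}\circ[\Delta_{v,w}],\quad [\Delta_{w,v}]\circ\tau_{v,w}\;:\;\CCA_{vw}\longrightarrow \HH_0(\CA_w\otimes_R\CA_v)
\]
become \emph{equal} after post-composing with the quasi-isomorphism $\Id\otimes\e_v$ (the twisted counit on the right factor), since $\e_v$ annihilates every $\xi_j$ and $u_k$. Lemma~\ref{lem:algebra homomorphisms} then immediately gives that the two pullback module structures are canonically weakly equivalent. This single trick handles $x_i$, $\xi_i$, and $u_k$ uniformly and replaces your missing homotopy with a one-line verification.
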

\begin{proof}
On the level of complexes of vector spaces, the isomorphism \eqref{eq:HH cyclic} is well-known.   This isomorphism endows $\HH^k(Y\otimes_R X)$ with the structure of an $\HH_0(\CA_{vw})$-module by pulling back the obvious $\HH_0(\CA_w\otimes_R \CA_v)$-action along the algebra map $\sigma\circ [\Delta_{v,w}]$, where $[\Delta_{v,w}]:=\HH_0(\Delta_{v,w})$ is $\HH_0$ applied to the twisted coproduct \eqref{eq:Delta vw} and
\[
\sigma_{v,w} : \HH_0(\CA_{v}\otimes_R \CA_{w})\rightarrow  \HH_0(\CA_{w}\otimes_R \CA_{v})
\]
is inherited from the isomorphism which swaps the order of tensor factors in $\HH_0(-\otimes_R -)$.   Note that $\sigma$ is $\C$-linear but not $R$-linear, since the left (=right) $R$-action on $\HH_0(X\otimes_R Y)$ corresponds to ``middle'' $R$-action on $\HH_0(Y\otimes_R X)$. 

Thus, we have two $\CA_{vw}$-actions on $\HH^k(Y\otimes_R X)$, given by pulling back along $\sigma_{v,w}\circ \Delta_{v,w}$ and $\Delta_{w,v}\circ \tau_{v,w}$, respectively.  The former is isomorphic to $\HH_0(X\otimes_R Y)$ with its $\CA_{vw}$-action, by construction.  To show that these two $\CA_{vw}$-module structures are weakly equivalent, we show that the algebra maps $\sigma_{v,w}\circ \Delta_{v,w}$ and $\Delta_{w,v}\circ \tau_{v,w}$  become equal after post-composing with an appropriate quasi-isomorphism, and use Lemma \ref{lem:algebra homomorphisms}.

We first examine the action of $\sigma_{v,w}\circ \Delta_{v,w}$ and $\Delta_{w,v}\circ \tau_{v,w}$ on generators.  We have
\[
\sigma_{v,w}\circ \Delta_{v,w} \ :\ \begin{cases}
x_i\mapsto  x_i' \\
\xi_i\mapsto \xi_{v\inv(i)}\otimes 1 + 1\otimes \xi_i\\
u_k\mapsto u_k\otimes 1 + 1\otimes u_k +\sum_i h_{k-2}\left(x_i', x_{v\inv (i)}, x_{w\inv v\inv(i)}'\right) \xi_{v\inv(i)} \otimes \xi_i.
\end{cases}
\]
\[
\Delta_{w,v}\circ \tau_{v,w} \ :\ \begin{cases}
x_i\mapsto x_{v\inv(i)}\\
\xi_i\mapsto \xi_{v\inv(i)}\otimes 1 + 1\otimes \xi_{w\inv v\inv(i)}\\
u_k\mapsto u_k\otimes 1 + 1\otimes u_k +\sum_j h_{k-2}\left(x_{w(j)}, x_{j}', x_{v\inv (j)}\right) \xi_{w(j)} \otimes \xi_{j}.
\end{cases}
\]
In the first line, we have let $x_i' = x_i'\otimes 1 = 1\otimes x_i$ in $\HH_0(\CA_w\otimes_R \CA_v)$.  Now apply the twisted counit $\e_v$ from \eqref{eq:epsilon v} on the right.  From the formulas it is clear that
\[
 (\Id\otimes \e_v) \circ \sigma_{v,w}\circ \Delta_{v,w} = (\Id\otimes \e_v) \circ \Delta_{w,v}\circ \tau_{v,w}.
\]
On the other hand $\Id\otimes \e_v$ is a quasi-isomorphism because $\e_v$ is. This completes the proof.
\end{proof}

\begin{remark}
    In general, we do not expect the isomorphism to exist in $\CCA_w/R$. The homomorphism $\sigma$ is not $R$-linear, so the proof for $R$ instead of $\C$ fails. In our situation, the modules will turn out to be free over $R$ and we will show that the isomorphism can be upgraded to an isomorphism in $\CCA_w/R$.
\end{remark}


\subsection{Stabilization}\label{ssec:stabilization}
In this section we will be dealing with operations that change the number of strands, so we will use $n$ as an index in $\CA_n$, $\CCA_n$ to specify the number of strands. Given an $\CA_n$-module $M$ we construct an $\CA_{n+1}$ module $M\sqcup \one_1$ as follows. As a $B$-module this is simply $M\otimes \C[x_{n+1}]$. Construct a homomorphism of dg algebras
\[
\varphi:\CA_{n+1} \to \CA_n \otimes \C[x_{n+1}]
\]
as follows. The generators $x_i$, $x_i'$, $\xi_i$, $u_k$ for $i\leq n$, $k\leq n$ are sent to the corresponding generators of $\CA_n$. The generator $\xi_{n+1}$ is sent to $0$, the generators $x_{n+1}, x_{n+1}'$ both go to $x_{n+1}$. Finally, for the remaining generator $u_{n+1}$ we choose any homogeneous element $u$ of $\CA_{n}$ satisfying
\[
d u = \sum_{i=1}^n h_n(x_i, x_i') \xi_i
\]
and set $\varphi(u_{n+1})=u$. Such $u$ exists by Proposition \ref{prop:resolution} and Lemma \ref{lem:corrections} explains how to construct it explicitly. Any two choices of $u$ differ by a boundary. The $\CA_{n+1}$-module structure on $M\sqcup \one_1$ is obtained with the help of $\varphi$:
\[
M\sqcup \one_1 = \varphi^* (M\otimes \C[x_{n+1}]).
\]
The following says that the resulting functor $M\mapsto M\sqcup \one_1$ from $\CA_n/B_n$ to $\CA_{n+1}/B_{n+1}$ is monoidal.
\begin{proposition}\label{prop:adding a thread}
    For any $\CA_{n}$-modules $M$, $N$ the isomorphism
    \[
    (M\otimes_{R_n} N)\sqcup \one_1 \cong (M\sqcup \one_1) \otimes_{R_{n+1}} (N\sqcup \one_1)
    \]
    of complexes of $B$-modules lifts to an isomorphism in the category $\CA_{n+1}/B$.
\end{proposition}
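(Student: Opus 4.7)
My plan is to realize both sides of the claimed isomorphism as pullbacks of the same underlying $B$-module along two dg algebra homomorphisms
\[
\psi_1,\psi_2\ :\ \CA_{n+1}\ \rightrightarrows\ \CA' := \CA_n \otimes_{R_n} \CA_n \otimes \C[x_{n+1}],
\]
and then apply Lemma~\ref{lem:algebra homomorphisms}. Explicitly, the module $(M \otimes_{R_n} N) \sqcup \one_1$ is obtained by first forming $M \otimes_{R_n} N$ as an $\CA_n$-module via $\Delta_n$ and then pulling back along $\varphi$, so it corresponds to $\psi_1 = (\Delta_n \otimes \Id_{\C[x_{n+1}]}) \circ \varphi$. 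Dually, $(M \sqcup \one_1) \otimes_{R_{n+1}} (N \sqcup \one_1)$ is obtained by first pulling back each factor along $\varphi$ and then forming the tensor product via $\Delta_{n+1}$; after the natural identification $(\CA_n \otimes \C[x_{n+1}]) \otimes_{R_{n+1}} (\CA_n \otimes \C[x_{n+1}]) \cong \CA'$ that merges the $\C[x_{n+1}]$ factors by multiplication, this is the pullback along $\psi_2 = (\varphi \otimes \varphi) \circ \Delta_{n+1}$.

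The second step is to compare $\psi_1$ and $\psi_2$ on the generators of $\CA_{n+1}$. On $x_i$, $x_i'$, $\xi_i$ for all $i \le n+1$ and on $u_k$ for $k \le n$, inspection shows the two maps already agree: the key point is that $\varphi(\xi_{n+1}) = 0$ kills the $i = n+1$ summand of the correction in $\Delta_{n+1}(u_k)$, so the image of $u_k$ under $(\varphi \otimes \varphi) \circ \Delta_{n+1}$ is precisely $\Delta_n(u_k) \otimes 1$. Setting $u := \varphi(u_{n+1}) \in \CA_n$, the only generator where $\psi_1$ and $\psi_2$ genuinely disagree is $u_{n+1}$, where
\begin{align*}
\psi_1(u_{n+1}) &= \Delta_n(u) \otimes 1, \\
\psi_2(u_{n+1}) &= u \otimes 1 \otimes 1 + 1 \otimes u \otimes 1 + \sum_{i=1}^n h_{n-1}(x_i, x_i', x_i'')\, \xi_i \otimes \xi_i \otimes 1.
\end{align*}

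The third step is to post-compose with the map
\[
\gamma := \e_n \otimes \Id_{\CA_n} \otimes \Id_{\C[x_{n+1}]}\ :\ \CA' \longrightarrow \CA_n \otimes \C[x_{n+1}],
\]
which is a weak equivalence of algebras over $B_{n+1}$ by Proposition~\ref{prop:resolution} (since $\CA_n$ is free over $R_n$ on the left). By the counit identity \eqref{eq: epsilon delta}, $\gamma \circ \psi_1(u_{n+1}) = (\e_n \otimes \Id)(\Delta_n(u)) \otimes 1 = u \otimes 1$. For $\gamma \circ \psi_2(u_{n+1})$, the term $u \otimes 1 \otimes 1$ dies because $\e_n(u) = 0$ (as $u$ has positive homological degree), each correction term dies because $\e_n(\xi_i) = 0$, and only $1 \otimes u \otimes 1$ survives, giving $u \otimes 1$ as well. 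Hence $\gamma \circ \psi_1 = \gamma \circ \psi_2$, and Lemma~\ref{lem:algebra homomorphisms} delivers the required isomorphism in $\CA_{n+1}/B_{n+1}$.

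The main obstacle I anticipate is the bookkeeping in step two: tracking how the three tensor slots of $\CA'$ receive the various generators under $\Delta_{n+1}$ and $\varphi \otimes \varphi$, and verifying that the identification $(\CA_n \otimes \C[x_{n+1}]) \otimes_{R_{n+1}} (\CA_n \otimes \C[x_{n+1}]) \cong \CA'$ intertwines the two natural algebra structures compatibly. Once this is nailed down, step three is essentially automatic from \eqref{eq: epsilon delta} and the vanishing of $\e_n$ on positive-degree generators. Note that any two admissible choices of $u$ differ by a boundary, so the conclusion is independent of this choice.
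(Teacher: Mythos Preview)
Your proof is correct and follows essentially the same strategy as the paper's: both sides are pullbacks of a common module along the two routes around the square
\[
\begin{tikzcd}
    \CA_{n+1} \arrow{r}{\varphi}\arrow{d}{\Delta_{n+1}} & \CA_n\otimes\C[x_{n+1}] \arrow{d}{\Delta_n\otimes\Id}\\
    \CA_{n+1}\otimes_{R_{n+1}}\CA_{n+1} \arrow{r}{\varphi\otimes\varphi} & (\CA_n\otimes_{R_n}\CA_n)\otimes\C[x_{n+1}]
\end{tikzcd}
\]
and one post-composes with the weak equivalence $\e_n\otimes\Id\otimes\Id$ before invoking Lemma~\ref{lem:algebra homomorphisms}. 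The only cosmetic difference is that the paper dispatches all generators at once by observing $(\e_n\otimes\Id_{\C[x_{n+1}]})\circ\varphi=\e_{n+1}$ and then using \eqref{eq: epsilon delta} to see that both compositions collapse to $\varphi$ itself, whereas you check $u_{n+1}$ by hand; your explicit verification is of course consistent with that identity.
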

\begin{proof}
Similarly to the proofs of Propositions \ref{prop:associativity on modules} and \ref{prop:cyclicity} we use Lemma \ref{lem:algebra homomorphisms} to compare the pullbacks via the compositions of the maps in the diagram
\[
\begin{tikzcd}
    \CA_{n+1} \arrow{d}{\Delta}\arrow{r}{\varphi} & \CA_{n}\otimes \C[x_{n+1}] \arrow{d}{\Delta\otimes\Id_{\C[x_{n+1}]}}\\
    \CA_{n+1}\otimes_{R_{n+1}} \CA_{n+1} \arrow{r}{\varphi\otimes\varphi} & (\CA_n\otimes \C[x_{n+1}]) \otimes_{R_{n+1}}(\CA_n\otimes \C[x_{n+1}]) \cong \left(\CA_{n}\otimes_{R_n} \CA_n\right) \otimes \C[x_{n+1}]\\
\end{tikzcd}
\]
We compose the two maps with the homotopy equivalence 
$$\varepsilon_n \otimes \Id\otimes \Id:\left(\CA_{n}\otimes_{R_n} \CA_n\right) \otimes \C[x_{n+1}] \to \CA_n \otimes \C[x_{n+1}].$$
 Using the identity $(\varepsilon_n\otimes\Id_{\C[x_{n+1}]})\circ \varphi=\varepsilon_{n+1}$
and \eqref{eq: epsilon delta} we conclude that both of the two resulting maps $\CA_{n+1}\to \CA_n\otimes \C[x_{n+1}]$ agree with $\varphi$.
\end{proof}

If $M$ is a module over $\CA_w$, then $M\sqcup \one_1$ is naturally a module over $\CA_{w\sqcup\one_1}$, where $w\sqcup\one_1\in S_{n+1}$ is the permutation whose restriction to $1,\ldots,n$ coincides with $w$.

The following observation turns out to be very useful:
\begin{lemma}
    \label{lem:skein}
    There are maps of Rouquier complexes $T^{-1}_i \to K_{i,i+1} \to T_i[-1]$ whose composition is homotopic to zero. The maps and the homotopy are $\CA$-linear. The resulting twisted complex is contractible. In particular, in the category $\CA/B$ we have isomorphisms
    \[
    T_i^{-1} \cong \left[\underline{K_{i,i+1}} \to T_i[-1]\right], \qquad T_i \cong \left[\underline{T_i^{-1}} \to K_{i,i+1}\right].
    \]
\end{lemma}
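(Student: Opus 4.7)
The plan is to write down the two maps and a null-homotopy for their composition as explicit $2\times 2$ matrices on the two-term Rouquier/Koszul complexes, and then verify chain-map, $\CA$-linearity, and contractibility in turn. Define $f\colon T_i^{-1}\to K_{i,i+1}$ to be the identity $R\to R$ in homological degree $1$ and $b_i\colon B_i\to R$ in degree $0$; dually, define $g\colon K_{i,i+1}\to T_i[-1]$ to be $b_i^*\colon R\to B_i$ in degree $1$ and the identity in degree $0$. Commutativity of both squares is immediate from $b_i\circ b_i^* = x_i-x_{i+1}'$ and the fact that the right hand side collapses to $x_i-x_{i+1}$ when pulled back to $R$. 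The composition $g\circ f$ has components $b_i$ in degree $0$ and $b_i^*$ in degree $1$, and the map $h$ defined as the identity $B_i\to B_i$ from the degree-$0$ summand of $T_i^{-1}$ to the degree-$1$ summand of $T_i[-1]$ (and zero elsewhere) satisfies $dh+hd=g\circ f$ by inspection of the Rouquier differentials.

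For $\CA$-linearity, I use that on each of $T_i^{-1}$, $K_{i,i+1}$, $T_i[-1]$ the only nontrivial part of the $\CA_s$-action (with $s=(i,i+1)$) is via $\xi_i=-\xi_{i+1}$; all other $\xi_j$ and all $u_k$ act by zero for degree reasons or by Lemma \ref{lem:action ti}. Moreover, in each case $\xi_i$ is precisely the ``natural degree-one structural map'' of the complex: $b_i$ on $T_i^{-1}$, the Koszul generator $\eta$ on $K_{i,i+1}$, and $b_i^*$ on $T_i[-1]$. Under this identification, $f$ and $g$ intertwine the action of $\xi_i$ on the nose, and $h$ super-commutes with $\xi_i$; each assertion is a one-line check at the unique nonzero matrix entry.

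To conclude, I show the three-term twisted complex $X=[T_i^{-1}\to K_{i,i+1}\to T_i[-1]]$ built from $f$, $g$, and $h$ is contractible. Viewing $X$ as the mapping cone of the chain map $T_i^{-1}\to [\underline{K_{i,i+1}}\to T_i[-1]]$ assembled from $f$ and $h$, I would expand both sides into their $R$- and $B_i$-summands, use the decomposition $B_i\cong R\oplus R\cdot(x_i-x_{i+1}')$ of left $R$-modules, and write down an explicit contracting homotopy built from $b_i$, $b_i^*$, $\Id$, and the evident projections. Contractibility as a $B$-complex already implies the cone-forming morphism is a weak equivalence, giving the isomorphism $T_i^{-1}\simeq[\underline{K_{i,i+1}}\to T_i[-1]]$ in $\CA/B$. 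The second isomorphism $T_i\simeq[\underline{T_i^{-1}}\to K_{i,i+1}]$ follows by reading the same contractible three-term object from the opposite end, or equivalently by tensoring the first isomorphism with $T_i$ on the left and invoking $T_i\otimes_R T_i^{-1}\simeq R$.

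The main obstacle I expect is sign- and shift-bookkeeping: the null-homotopy $h$ lives in odd homological degree and must super-commute (not commute) with the odd element $\xi_i$, and the $[-1]$-shift on the third term of $X$ introduces auxiliary signs in the twisted differentials. Once those signs are pinned down, the actual content of the argument — writing the three maps and the contracting homotopy — is entirely elementary given the explicit $b_i$/$b_i^*$-presentations of the complexes.
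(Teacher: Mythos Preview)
Your proposal is correct and follows essentially the same approach as the paper: the paper's proof consists of displaying the single commutative diagram
\[
\begin{tikzcd}
    R(-1) \ar{r}{b_i^*} \ar{d}{\Id_R} & B_i(1) \ar{d}{b_i} \\
    R(-1) \ar{r}{x_i-x_{i+1}} \ar{d}{b_i^*} & R(1) \ar{d}{\Id_R}\\
    B_i(1) \ar{r}{b_i} & R(1)
\end{tikzcd}
\]
and saying ``this is clear'', which encodes exactly the maps $f$, $g$ and the homotopy $h=\Id_{B_i}$ you wrote down. Your only unnecessary step is invoking a splitting $B_i\cong R\oplus R\cdot(x_i-x_{i+1}')$ for contractibility: the total four-term complex already contains two identity arrows (the $\Id_R$ and the $\Id_{B_i}$ you built into $h$), so Gaussian elimination over $B$ contracts it directly without any module decomposition.
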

\begin{proof}
    This is clear from the diagram
    \[
    \begin{tikzcd}
        R(-1) \ar{r}{b_i^*} \ar{d}{\Id_R} & B_i(1) \ar{d}{b_i} \\
        R(-1) \ar{r}{x_i-x_{i+1}} \ar{d}{b_i^*} & R(1) \ar{d}{\Id_R}\\
        B_i(1) \ar{r}{b_i} & R(1)
    \end{tikzcd}
    \]
\end{proof}

The Markov II property first has a version which holds in $\CCA_w/R$.
\begin{proposition}\label{prop:markov 1}
    For any $\CA_w$-module $M$ and any $k$ we have the following isomorphisms in $\CCA_{w\sqcup\one_1}/R_{n+1}$, respectively $\CCA_{(w\sqcup\one_1)(n\; n+1)}/R_{n+1}$:
    \begin{equation}\label{eq:decomposition}
    \HH^k(M\sqcup\one_1) = (\HH^k(M)\sqcup\one_1) \oplus (\HH^{k-1}(M)\sqcup\one_1),
    \end{equation}
    \begin{multline}\label{eq:markov}
    \HH^{k+1}\left((M\sqcup\one_1) \otimes_{R_{n+1}}T_n^{-1} \right) \xrightarrow{\sim} \left(\HH^{k}(M)\sqcup\one_1\right) \otimes_{R_{n+1}} K_{n,n+1}
    \\
    \xrightarrow{\sim} \HH^k\left((M\sqcup\one_1) \otimes_{R_{n+1}}T_n \right)[-1].
    \end{multline}
\end{proposition}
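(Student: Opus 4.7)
\emph{Strategy.} Equation \eqref{eq:decomposition} is a K\"unneth-type statement for Hochschild cohomology of an external tensor product, and the two isomorphisms in \eqref{eq:markov} are obtained by feeding the skein relation of Lemma \ref{lem:skein} into $\HH^{k+1}$ and identifying the resulting pieces.

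\emph{Proof of \eqref{eq:decomposition}.} By construction $M\sqcup\one_1 = \varphi^{*}(M\otimes_{\C}\C[x_{n+1}])$, and the $R_{n+1}^e$-bimodule structure on the target factors as an external product of the $R_n^e$-structure on $M$ with the $\C[x_{n+1},x_{n+1}']$-structure on $\C[x_{n+1}]$. Since the Koszul resolution of $\C[x_{n+1}]$ over $\C[x_{n+1},x_{n+1}']$ is two-term, $\HH^{0}(\C[x_{n+1}])=\HH^{1}(\C[x_{n+1}])=\C[x_{n+1}]$ and all higher Hochschild groups vanish. The K\"unneth formula then yields
\[
\HH^{k}(M\sqcup\one_1) \;\cong\; \HH^{k}(M)\otimes\C[x_{n+1}] \,\oplus\, \HH^{k-1}(M)\otimes\C[x_{n+1}] \;=\; \HH^{k}(M)\sqcup\one_1 \,\oplus\, \HH^{k-1}(M)\sqcup\one_1
\]
as $R_{n+1}$-modules. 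To lift this isomorphism to $\CCA_{w\sqcup\one_1}/R_{n+1}$, I apply Lemma \ref{lem:algebra homomorphisms}: the $\CCA_{w\sqcup\one_1}$-action on both sides factors through $\varphi$, so $\xi_{n+1}$ acts by zero throughout and $u_{n+1}$ acts through the chosen lift $u\in\CA_n$, while the remaining generators act on the $\HH^{\ast}(M)$-factor of each summand in the obvious way.

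\emph{Proof of \eqref{eq:markov}.} Apply Lemma \ref{lem:skein} to obtain the contractible three-term twisted complex $[T_n^{-1}\to K_{n,n+1}\to T_n[-1]]$ in $\CA_{s_n}/B_{n+1}$. Tensoring with $M\sqcup\one_1$, which is functorial by Proposition \ref{prop:associativity on modules}, gives a contractible three-term complex of $\CA_{(w\sqcup\one_1)s_n}$-modules, and applying $\HH^{k+1}$ preserves contractibility (contractibility is sent to contractibility by any additive functor). This produces a contractible three-term complex of $\CCA_{(w\sqcup\one_1)s_n}$-modules
\[
\HH^{k+1}\!\bigl((M\sqcup\one_1)\otimes T_n^{-1}\bigr) \to \HH^{k+1}\!\bigl((M\sqcup\one_1)\otimes K_{n,n+1}\bigr) \to \HH^{k+1}\!\bigl((M\sqcup\one_1)\otimes T_n\bigr)[-1],
\]
so the leftmost term is weakly equivalent to the fibre of the map between the latter two. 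The middle term is computed using the projection formula (Proposition \ref{prop:projection}, which applies since $K_{n,n+1}$ is free as a right $R_{n+1}$-module) and Step 1, giving $(\HH^{k+1}(M)\sqcup\one_1 \oplus \HH^{k}(M)\sqcup\one_1)\otimes K_{n,n+1}$. The right term is computed from the presentation $T_n=[\underline{B_n(1)}\to R(1)]$: a standard Markov~II calculation for $\HH^{\ast}(B_n)$ (together with the decomposition of Step 1 applied to the $R(1)$-piece) identifies it with $(\HH^{k+1}(M)\sqcup\one_1)\otimes K_{n,n+1}[-1]$. The connecting map projects the $\HH^{k+1}(M)\sqcup\one_1$-summand of the middle isomorphically onto the right, so its fibre is $(\HH^{k}(M)\sqcup\one_1)\otimes K_{n,n+1}$, yielding the first asserted isomorphism. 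The second isomorphism follows by the same procedure using the companion form $T_n\cong[\underline{T_n^{-1}}\to K_{n,n+1}]$ of the skein relation, or equivalently by reading off the other end of the same contractible three-term complex.

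\emph{Main obstacle.} The outline is straightforward, but the core technical work lies in the explicit computation of $\HH^{k+1}((M\sqcup\one_1)\otimes T_n)$ and the identification of the connecting map in the skein triangle. One must pin down precisely which summand of the K\"unneth decomposition is "selected" by $T_n^{\pm 1}$, and verify that this selection is compatible not merely as complexes of $R_{n+1}$-bimodules but as complexes of $\CCA_{(w\sqcup\one_1) s_n}$-modules; in particular the actions of $\xi_n,\xi_{n+1}$ and $u_{n+1}$ on both sides must be matched via Lemma \ref{lem:algebra homomorphisms}. A secondary subtlety is ensuring that the three-term complex remains contractible after passage to $\CA/B$ (as opposed to merely at the level of bimodules), which is what Lemma \ref{lem:skein} provides.
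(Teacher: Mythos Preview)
Your treatment of \eqref{eq:decomposition} matches the paper's: both use that the Koszul complex computing $\HH$ splits for an external product $M\otimes\C[x_{n+1}]$.

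For \eqref{eq:markov} your route is genuinely different from the paper's, and it is worth seeing why the paper's is cleaner. The paper does \emph{not} try to analyze the full skein triangle or compute the connecting map. Instead it simply writes down a specific morphism as the composition
\[
\HH^{k+1}\bigl((M\sqcup\one_1)\otimes T_n^{-1}\bigr)
\xrightarrow{\ \text{skein}\ }
\HH^{k+1}\bigl((M\sqcup\one_1)\otimes K_{n,n+1}\bigr)
\xrightarrow{\ \text{proj.~formula}\ }
\HH^{k+1}(M\sqcup\one_1)\otimes K_{n,n+1}
\xrightarrow{\ \text{project}\ }
(\HH^{k}(M)\sqcup\one_1)\otimes K_{n,n+1},
\]
where the last arrow is projection onto one summand of \eqref{eq:decomposition}. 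Each arrow is $\CCA$-linear by construction (Lemma \ref{lem:skein}, Proposition \ref{prop:projection}, and the splitting in \eqref{eq:decomposition}), so the composite is a morphism in $\CCA_{(w\sqcup\one_1)(n\;n+1)}/R_{n+1}$ for free. Then the paper simply cites the classical Markov~II argument (e.g.\ \cite{Kh}) to say this composite is a homotopy equivalence of $R_{n+1}$-modules, which by definition of the localized category makes it an isomorphism there. The second isomorphism is handled the same way using the other skein map $K_{n,n+1}\to T_n[-1]$.

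Your approach instead tries to identify the left term as the fibre of the right-hand map in the triangle. This forces you to (i) compute $\HH^{k+1}((M\sqcup\one_1)\otimes T_n)$ independently, which is precisely the second isomorphism you are also trying to prove, and (ii) pin down the connecting map and check it is the expected projection, all at the $\CCA$-linear level. You correctly flag this as the ``main obstacle,'' and it is real: your argument as written is essentially circular (each half of \eqref{eq:markov} is used to deduce the other) and the identification of the connecting map is asserted rather than proved. The paper's trick dissolves exactly this obstacle: build the map from $\CCA$-linear pieces, then defer the ``is it an equivalence'' question to the underlying $R$-module level where it is already known.

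A minor point: your claimed identification $\HH^{k+1}((M\sqcup\one_1)\otimes T_n)\cong (\HH^{k+1}(M)\sqcup\one_1)\otimes K_{n,n+1}$ is off by a homological shift $[1]$ from what \eqref{eq:markov} asserts; this would propagate through your fibre computation.
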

\begin{proof}
    The decomposition \eqref{eq:decomposition} is a property of the functor $\HH$. The functor is computed by the Koszul complex, which splits as a direct sum in the case of a module of the form $M\sqcup\one_1$.
    The first isomorphism in \eqref{eq:markov} is obtained by composing the three maps
    \[
    \HH^{k+1}\left((M\sqcup\one_1) \otimes_{R_{n+1}}T_n^{-1} \right) \to
    \HH^{k+1}\left((M\sqcup\one_1) \otimes_{R_{n+1}}K_{n,n+1} \right)
    \]
    \[ \to \HH^{k+1}\left(M\sqcup\one_1\right) \otimes_{R_{n+1}}K_{n,n+1} \to \left(\HH^{k}(M)\sqcup\one_1\right) \otimes_{R_{n+1}} K_{n,n+1}.
    \]
    The first map is induced by a map from Lemma \ref{lem:skein}. The second map is the isomorphism of the projection formula (Proposition \ref{prop:projection}). Finally, the last map is obtained from \eqref{eq:decomposition}. So the composition is a well-defined map of $\CCA_{n+1}$-modules. It is a homotopy equivalence of $R_{n+1}$-modules by a well-known argument, e.g. see \cite{Kh}. Construction of the second isomorphism in \eqref{eq:markov} is analogous.
\end{proof}

\subsection{The stabilized algebras}
\begin{definition}
    For any $c\leq n$, where $c$ is a positive integer and $n$ is a positive integer or infinity, let
    \[
    \CCA_{c,n} = \C\left[(x_i)_{i=1}^c, (\xi_i)_{i=1}^c, (u_k)_{k=1}^n \;|\; d x_i=d\xi_i=0,\; d u_k = k \sum_{i=1}^c x_i^{k-1} \xi_i\right].
    \]
\end{definition}
Clearly, we have $\CCA_{n,n}=\CCA_{\Id_n}$. More generally, the algebra $\CCA_{c,n}$ is a free extension of $\CCA_{\Id_c}$ by the generators $u_{c+1},\dots,u_{n}$.

\begin{remark}
    The ring of symmetric polynomials $R^{S_n}$ is a free commutative algebra generated by the power sum polynomials $p_1,\dots,p_n$. Let $U$ be the unique derivation $R^{S_n}\to \CCA_{n,n}$ satisfying $U(p_k)=u_k$ for $k\leq n$. Then for any symmetric polynomial $f$ we have the identity
    \[
    d U(f) = \sum_{i=1}^n \frac{\partial f}{\partial x_i} \xi_i.
    \]
    One can obtain different presentations of the algebra $\CCA_{n,n}$ by choosing different sets of generators of $R^{S_n}$.
\end{remark}

Let $w\in S_n$ be a permutation with $c$ cycles. Let $C_1, \ldots, C_c$ be the cycles and choose a representative $j_i\in C_i$ for each cycle $C_i$. Define a homomorphism
$\alpha:\CCA_w \to \CCA_{c,n}$ by
\[
\alpha(x_j) = x_i\;(j\in C_i),\quad \alpha(\xi_j) = \begin{cases}\xi_i& (j=j_i)\\ 0 & \text{otherwise}\end{cases},\quad \alpha(u_k)=u_k.
\]
Define a homomorphism
$\beta:\CCA_{c,n}\to \CCA_w$ by
\[
\beta(x_i) = x_{j_i},\quad \beta(\xi_i) = \sum_{j\in C_i} \xi_j,\quad \beta(u_k) = u_k + \text{correction}.
\]
\begin{proposition}
\label{prop: twisted projector}
\begin{enumerate}
\item[(a)]  For each $k$ there exists a correction making $\beta$ into a dg algebra homomorphism such that the composition $\alpha\circ\beta$ is the identity and
 $\alpha$ and $\beta$ are quasi-isomorphisms.
\item[(b)] The pull-back functors
\[
\alpha^*: \CCA_{c,n}/\C\to \CCA_w/\C,\quad \beta^*:\CCA_w/\C \to \CCA_{c,n}/\C
\]
are mutually inverse equivalences of categories .
\item[(c)] For any $\CCA_w$-module $X$ the isomorphism class of the pull-back $\beta^* X$ in the category $\CCA_{c,n}/\C$ is independent of the choices of the representatives $j_i$ and the corrections, and similarly for $\alpha^*$.
\end{enumerate}
\end{proposition}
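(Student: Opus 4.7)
The strategy is to prove (a) by explicit construction (with a spectral sequence for the quasi-isomorphism statement), and then deduce (b) and (c) as formal consequences of Lemmas \ref{lem:algebra weak equivalence} and \ref{lem:algebra homomorphisms}.

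For part (a), the check that $\alpha$ is a dg algebra homomorphism reduces to verifying $\alpha(du_k) = du_k$. Writing $du_k$ in $\CCA_w$ as $\sum_j h_{k-1}(x_j, x_{w^{-1}(j)})\xi_j$ (after the $\HH_0$-identification $x'_i = x_i$), and applying $\alpha$, all terms with $j \neq j_i$ vanish since $\alpha(\xi_j) = 0$; for $j = j_i$ both $x_{j_i}$ and $x_{w^{-1}(j_i)}$ lie in $C_i$ and map to $x_i$, so the sum collapses to $\sum_i h_{k-1}(x_i,x_i)\xi_i = k\sum_i x_i^{k-1}\xi_i$ using $h_{k-1}(x,x)=kx^{k-1}$, matching $du_k$ in $\CCA_{c,n}$. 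To construct $\beta$, the naive assignment $\beta(u_k) = u_k$ fails, with discrepancy
\[
y_k := \beta(du_k) - du_k = \sum_i\sum_{j\in C_i}\bigl[kx_{j_i}^{k-1} - h_{k-1}(x_j, x_{w^{-1}(j)})\bigr]\xi_j,
\]
which is a $d$-cycle, verified by telescoping $\sum_{j\in C_i}(x_j - x_{w^{-1}(j)}) = 0$ and using $h_{k-1}(a,b)(a-b) = a^k - b^k$. I then produce an explicit primitive $\delta_k$ with $d\delta_k = y_k$ via a finite-difference expansion of $h_{k-1}$ combined with the observation that each $x_j - x_{j_i}$ for $j \in C_i$ has canonical primitive given by a partial sum of $\xi$'s along $C_i$. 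Choosing $\delta_k$ to lie in the ideal generated by $\{\xi_j : j \notin \{j_1,\ldots,j_c\}\}$ forces $\alpha(\delta_k) = 0$ and thus $\alpha\circ\beta = \Id$. For the quasi-isomorphism statement, filter both algebras by total $u$-degree: the $E_1$-page of $\CCA_w$ factors over cycles of $w$ and has cohomology $\C[x_1,\ldots,x_c]\otimes \Lambda[\eta_1,\ldots,\eta_c]\otimes \C[u_1,\ldots,u_n]$ with $\eta_i := \sum_{j\in C_i}\xi_j$; the $E_1$-page of $\CCA_{c,n}$ is identical after the substitution $\eta_i \leftrightarrow \xi_i$, and $\alpha$ induces precisely this identification on all $E_r$-pages.

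For part (b), Lemma \ref{lem:algebra weak equivalence} applied with $\CB = \C$ immediately gives that $\alpha^*$ and $\beta^*$ are mutually inverse equivalences of $\CCA_{c,n}/\C$ and $\CCA_w/\C$, since $\alpha,\beta$ are weak equivalences of $\C$-modules. For part (c), given two homomorphisms $\beta, \beta': \CCA_{c,n} \to \CCA_w$ arising from different choices, fix a single $\alpha$ and apply Lemma \ref{lem:algebra homomorphisms}: both $\alpha\circ\beta$ and $\alpha\circ\beta'$ are chain endomorphisms of $\CCA_{c,n}$ agreeing with $\Id$ on the generators $x_i$ and $\xi_i$, while their potential discrepancy on each $u_k$ is a degree-2 $d$-cycle in $\CCA_{c,n}$. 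To equalize the two compositions, post-compose with a further weak equivalence $\gamma':\CCA_{c,n}\hookrightarrow \CCA''$ obtained by freely adjoining odd generators with prescribed differentials absorbing these cycles; the lemma then yields a canonical weak equivalence $\beta^* X \simeq \beta'^* X$ in $\CCA_{c,n}/\C$. Independence from the choice of $\alpha$ is handled symmetrically.

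The principal obstacle is the explicit construction of the correction $\delta_k$ in (a) satisfying both the chain-map condition $d\delta_k = y_k$ and the requirement $\alpha(\delta_k) = 0$. This is a finite, cycle-local calculation, but combinatorially delicate, requiring careful bookkeeping of the partial-sum primitives for $x_j - x_{j_i}$ together with finite-difference expansions of $h_{k-1}(x_j, x_{w^{-1}(j)}) - k x_{j_i}^{k-1}$ in such a way that no $\xi_{j_i}$ terms are introduced. Once (a) is established, parts (b) and (c) are direct applications of the homotopy-theoretic machinery of Section \ref{sec:homotopy theory}.
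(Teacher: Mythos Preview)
Your treatment of (a) and (b) is essentially correct and parallel to the paper's. Two remarks: first, for the quasi-isomorphism claim in (a) the paper bypasses the spectral sequence entirely by observing that once $\beta$ is constructed one can write
\[
\CCA_w \;=\; \CCA_{c,n}\bigl[\,t_j,\ \xi_j\ (j\notin\{j_1,\dots,j_c\})\ \big|\ d\xi_j = t_j\,\bigr],\qquad t_j := x_j - x_{w^{-1}(j)},
\]
so that $\CCA_w$ is obtained from $\CCA_{c,n}$ by adjoining contractible pairs; this makes $\beta$ (and hence its retraction $\alpha$) a quasi-isomorphism immediately. Your spectral-sequence argument is fine but heavier than needed. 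Second, your explicit correction $\delta_k$ is exactly what the paper writes down, namely $\sum_{i} h_{k-2}(x_{j_1},x_i,x_{w(i)})\,\xi_i\,(\text{partial sum of }\xi\text{'s})$ after reducing to a single cycle; the ``finite-difference'' description is the same computation.

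Your argument for (c), however, has a genuine gap. You propose to equalize $\alpha\circ\beta$ and $\alpha\circ\beta'$ by passing to $\CCA'' = \CCA_{c,n}[\eta_k\mid d\eta_k = z_k]$, where $z_k$ is the degree-$2$ discrepancy cycle, and invoking Lemma~\ref{lem:algebra homomorphisms}. But this fails on two counts. First, the inclusion $\gamma':\CCA_{c,n}\hookrightarrow\CCA''$ is \emph{not} a quasi-isomorphism: if $z_k$ is exact in $\CCA_{c,n}$, adjoining $\eta_k$ creates a new class in $H_3$; if $z_k$ is not exact, adjoining $\eta_k$ kills $[z_k]\in H_2$. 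Either way the cohomology changes. Second, and more basically, Lemma~\ref{lem:algebra homomorphisms} requires $\gamma'\circ\psi_1 = \gamma'\circ\psi_2$ \emph{on the nose}, not merely up to homotopy; making $z_k$ exact in $\CCA''$ does not make it zero, so $\gamma'(u_k)\neq\gamma'(u_k+z_k)$ and the hypothesis is not met.

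The paper's route for (c) is much shorter and avoids this entirely: once (b) is established, $\beta^*$ and $\beta'^*$ are both equivalences inverse to (some choice of) $\alpha^*$, and any two inverses of an equivalence of categories are naturally isomorphic. No auxiliary algebra $\CCA''$ is needed.
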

\begin{proof}
(a) The correction needs to satisfy
\[
d(\text{correction}) = \beta(d u_k)  - d(u_k)= \sum_{i=1}^c \sum_{j\in C_i} \left(k x_{j_i}^{k-1} - h_{k-1}\left(x_j, x_{w^{-1}(j)}\right)\right) \xi_j.
\]
So it is sufficient to find a separate correction for each cycle. Without loss of generality, assume $w$ consists of a single cycle. Moreover, by reindexing assume $w(j)=j-1 \pmod{n}$ and $j_1=1$. A correction can be constructed as follows:
\[
d\left(\sum_{i=1}^{n-1} h_{k-2}\left(x_1, x_i, x_{i+1}\right) \xi_i (\xi_{i+1}+\cdots+\xi_n)\right)
\]
\[
= \sum_{i=1}^{n-1} \left(h_{k-1}\left(x_1, x_i\right) - h_{k-1}\left(x_1, x_{i+1}\right)\right)(\xi_{i+1}+\cdots+\xi_n) + \sum_{i=1}^{n-1} \left(h_{k-1}\left(x_1,x_i\right)-h_{k-1}\left(x_i,x_{i+1}\right)\right) \xi_i.
\]
\[
= \sum_{i=1}^{n-1} \left(h_{k-1}\left(x_1,x_1\right)-h_{k-1}\left(x_i,x_{i+1}\right)\right) \xi_i,
\]
as required. By construction, $\alpha\circ\beta=\Id$.

To show that $\beta$ is a quasi-isomorphism notice that via $\beta$ we can view the algebra $\CCA_w$ as
    \[
    \CCA_w = \CCA_{c,n}\left[t_j,\xi_j\;(j\in\{1,\ldots,n\}\setminus\{j_1,\ldots,j_c\})\right],
    \]
    where $t_j=x_j-x_{w^{-1}(j)}$. We have $d \xi_j=t_j$, so the algebras $\CCA_w$ and $\CCA_{c,n}$ are quasi-isomorphic.
     The homomorphism $\alpha$ is a partial inverse to $\beta$, and therefore is a quasi-isomorphism too.

By Lemma \ref{lem:algebra weak equivalence} the functors $\alpha^*$ and $\beta^*$ are equivalences of categories. Since we have $\alpha\circ\beta=\Id$ the composition $\beta^* \alpha^*$ is the identity functor. Therefore $\alpha$ and $\beta$ are inverses of each other.

Now we can construct $\alpha$ and $\beta$ using different choices of $j_i$. Keeping the choice for $\alpha$ fixed, if $\beta$ and $\beta'$ are defined using two different choices, since both $\beta^*$ and $\beta'^*$ are inverses of $\alpha^*$ they must be isomorphic. We proceed similarly in the case of two different versions of $\alpha$.
\end{proof}

\begin{remark}
A similar result is proved in \cite[Theorem 4.43]{AH}.
\end{remark}

Let $\mu=(\mu_1,\ldots,\mu_c)$ be a collection of positive integers so that $\sum \mu_i = n$. Such a collection is called a composition of $n$. We write $\mu\vDash n$.
For $N>n$ and $\mu\vDash n$ we construct a homomorphism $\Phi_\mu^N:\CCA_{c,N}\to \CCA_{c,n}$ as follows:
\begin{proposition}\label{prop:limit}
    \begin{enumerate}
    \item[(a)] There is a unique homomorphism $\Phi_\mu^N:\CCA_{c,N}\to \CCA_{c,n}$ which sends the variables $x_i$, $\xi_i$ and $u_k$ for $k\leq n$ to themselves and the generating series
    \[
    \left(\sum_{k=1}^N \frac{\Phi_\mu^N(u_k)}{k} t^k\right) \prod_{i=1}^c (1-t x_i)^{\mu_i} + O(t^{N+1})
    \]
    has no terms of degree $>n$ in $t$.
    \item[(b)] Let $x^\mu$ be a sequence of variables $x_1, \cdots,x_1, x_2,\ldots,x_c$ where each $x_i$ occures $\mu_i$ times. We have
    \[
    \Phi_\mu^N(u_k) = \sum_{\ell=1}^n u_\ell \frac{\partial p_k}{\partial p_\ell} (x^\mu),
    \]
    where the partial derivative is taken in the ring of symmetric functions in $n$ variables.
    \item[(c)] for any $\mu'\vDash N$ satisfying $\mu_i'\geq \mu_i$ and $N'\geq N$ we have $\Phi_\mu^N \circ \Phi_{\mu'}^{N'} = \Phi_\mu^{N'}$.
    \end{enumerate}
\end{proposition}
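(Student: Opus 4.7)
The strategy throughout is to work generating-function-style. For a graded algebra homomorphism $\Phi:\CCA_{c,N}\to\CCA_{c,n}$ fixing $x_i,\xi_i$ and $u_k$ for $k\leq n$, let
\[
U(t):=\sum_{k=1}^N \frac{\Phi(u_k)}{k}t^k\qquad\text{and}\qquad g_\mu(t):=\prod_{i=1}^c(1-tx_i)^{\mu_i},
\]
so that the truncation condition in (a) reads $U(t)g_\mu(t)\equiv Q(t)\pmod{t^{N+1}}$ for some polynomial $Q(t)$ of $t$-degree $\leq n$. Uniqueness as a graded algebra map is automatic: since $g_\mu(0)=1$, the coefficient of $t^k$ in $U(t)g_\mu(t)$ is $\Phi(u_k)/k$ plus a linear combination of the $\Phi(u_j)$ for $j<k$, so requiring these coefficients to vanish for $n<k\leq N$ determines $\Phi(u_k)$ inductively.

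For existence together with the formula of (b), I would take $\Phi(u_k):=\sum_{\ell=1}^n u_\ell\,\frac{\partial p_k}{\partial p_\ell}(x^\mu)$ as a definition (noting $\Phi(u_k)=u_k$ for $k\leq n$). To verify the truncation condition, apply the derivation $\sum_\ell u_\ell\,\partial/\partial p_\ell$ (treating the $u_\ell$ as constants) to the classical identity $\sum_{k\geq 1}p_k(y)\,t^k/k=-\log\prod_j(1-ty_j)$ in the power-series ring over the ring of symmetric polynomials in $n$ variables $y_1,\ldots,y_n$. Writing $E(-t):=\prod_{j=1}^n(1-ty_j)$, a polynomial of $t$-degree $n$ specializing to $g_\mu(t)$ at $y=x^\mu$, and multiplying both sides by $g_\mu(t)$ after evaluating at $y=x^\mu$, one obtains the exact polynomial identity
\[
\sum_{k\geq 1}\frac{\Phi(u_k)}{k}\,t^k\cdot g_\mu(t)\;=\;-\sum_{\ell=1}^n u_\ell\cdot\frac{\partial E(-t)}{\partial p_\ell}\bigg|_{y=x^\mu},
\]
whose right-hand side has $t$-degree $\leq n$. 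Truncating the left-hand side at $k\leq N$ introduces only error terms of $t$-degree $>N$, so $U(t)g_\mu(t)$ satisfies the required condition. For differential compatibility, compute
\[
d\Phi(u_k)\;=\;\sum_\ell (du_\ell)\,\frac{\partial p_k}{\partial p_\ell}(x^\mu)\;=\;\sum_i\xi_i\sum_\ell \ell\,x_i^{\ell-1}\,\frac{\partial p_k}{\partial p_\ell}(x^\mu),
\]
and invoke the chain rule identity $ky_j^{k-1}=\partial p_k(y)/\partial y_j=\sum_\ell \ell\,y_j^{\ell-1}\,\partial p_k/\partial p_\ell$, specialized to $y_j=x_i$ (each $x_i$ occurs in $x^\mu$ since $\mu_i\geq 1$), to conclude $d\Phi(u_k)=k\sum_i x_i^{k-1}\xi_i=\Phi(du_k)$.

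For (c), both $\Phi_\mu^N\circ\Phi_{\mu'}^{N'}$ and $\Phi_\mu^{N'}$ fix $x_i,\xi_i$ and $u_k$ for $k\leq n$, so by the uniqueness from (a) it suffices to verify the truncation condition for the composition. Let $\widetilde U(t):=\sum_k \Phi_\mu^N(\Phi_{\mu'}^{N'}(u_k))\,t^k/k$ and denote by $Q'(t)$ and $Q_\mu^N(t)$ the polynomials of $t$-degrees $\leq N$ and $\leq n$ from the truncation conditions of $\Phi_{\mu'}^{N'}$ and $\Phi_\mu^N$ respectively. Applying $\Phi_\mu^N$ coefficient-wise to the truncation identity for $\Phi_{\mu'}^{N'}$, and using $\Phi_{\mu'}^{N'}(u_k)=u_k$ for $k\leq N$, one checks modulo $t^{N+1}$ that $\Phi_\mu^N(Q'(t))\equiv Q_\mu^N(t)\prod_i(1-tx_i)^{\mu'_i-\mu_i}$; since both sides are polynomials of $t$-degree $\leq N$, this is an exact identity. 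Consequently
\[
\widetilde U(t)\,g_{\mu'}(t)\;\equiv\;\Phi_\mu^N(Q'(t))\;=\;Q_\mu^N(t)\prod_i(1-tx_i)^{\mu'_i-\mu_i}\pmod{t^{N'+1}},
\]
and multiplying by the formal power series $\prod_i(1-tx_i)^{-(\mu'_i-\mu_i)}=g_\mu(t)/g_{\mu'}(t)$ (well-defined since $\mu'_i\geq\mu_i$) yields $\widetilde U(t)g_\mu(t)\equiv Q_\mu^N(t)\pmod{t^{N'+1}}$, the required truncation condition. The main technical point requiring care is the bookkeeping of the three thresholds $t^{n+1},t^{N+1},t^{N'+1}$ and the justification that the key divisibility $\Phi_\mu^N(Q'(t))=Q_\mu^N(t)\prod_i(1-tx_i)^{\mu'_i-\mu_i}$ holds exactly (via the degree bounds), not merely modulo $t^{N+1}$.
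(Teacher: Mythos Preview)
Your proof is correct and follows essentially the same strategy as the paper: all three parts are handled via the same generating-series manipulations, the symmetric-function identity $\sum_k p_k t^k/k = -\log\prod_j(1-ty_j)$, and the degree-counting trick in (c). The only organizational differences are that you merge (a) and (b) by \emph{defining} $\Phi$ via the formula in (b) and then verifying the truncation condition and $d$-compatibility (the paper proves existence/uniqueness abstractly from the truncation condition and checks (b) afterward), and you verify $d\Phi(u_k)=\Phi(du_k)$ by the chain rule on the explicit formula rather than by re-applying the truncation characterization to the differentiated series; in (c) you invoke uniqueness directly whereas the paper reproves the relevant characterization. These are cosmetic rearrangements, not a different route.
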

\begin{proof}
    Since $\Phi_\mu^N(u_k)=u_k$ for $k\leq n$, we can determine the expansion of the generating series up to $O(t^{n+1})$, but since there are no terms of degree $>n$ the series is completely determined, and we can compute $\Phi_\mu^N(u_k)$ for $k>n$. Let us prove that the obtained homomorphism commutes with the differential. Applying $d$ to the generating series we see that the series
    \begin{equation}\label{eq:series1}
    \left(\sum_{k=1}^N \frac{d(\Phi_\mu^N(u_k))}{k} t^k\right) \prod_{i=1}^c (1-t x_i)^{\mu_i} + O(t^{N+1})
    \end{equation}
    has no terms of degree $>n$. On the other hand, the series
    \begin{equation}\label{eq:series2}
    \left(\sum_{k=1}^N \left(\sum_{i=1}^c \xi_i x_i^{k-1}\right) t^k\right) \prod_{i=1}^c (1-t x_i)^{\mu_i} + O(t^{N+1})
    \end{equation}
    also has no terms of degree $>n$ because we have
    \[
    \sum_{k=1}^\infty x_i^{k-1} t^k = \frac{t}{1-x_i t}.
    \]
    So the series \eqref{eq:series1} and \eqref{eq:series2} must agree and therefore we have
    \[
    d(\Phi_\mu^N(u_k)) = \sum_{i=1}^c k x_i^{k-1} \xi_i = \Phi_\mu^N(d(u_k)).
    \]
    This establishes (a). To verify (b) it is sufficient to verify that the series
    \[
    \sum_{\ell=1}^n u_\ell \sum_{k=1}^\infty \frac{t^k}{k}  \frac{\partial p_k}{\partial p_\ell} (x^\mu)
    \]
    when multiplied by $\prod_{i=1}^c (1-t x_i)^{\mu_i}$ has no terms of degree $>n$. The series in question can be rewritten as
    \[
    \sum_{\ell=1}^n u_\ell \frac{\partial}{\partial p_\ell} \log(1-e_1 t +\cdots \pm e_n t^n),
    \]
    and so is a rational function in $t$ with numerator of degree $n$ and denominator $1-e_1 t +\cdots \pm e_n t^n$, which when specialized to $x^\mu$ becomes $\prod_{i=1}^c (1-t x_i)^{\mu_i}$. So the claim is evident.

    To prove (c) we use (a). For $k\leq N$ we have $\Phi_{\mu'}^{N'}(u_k)=u_k$ and $\Phi_\mu^N(u_k)=\Phi_\mu^{N'}(u_k)$, so the claim holds. The series $\sum_{k=1}^{N'} \frac{\Phi_\mu^{N'}(u_k)}{k} t^k$ when multiplied by the product $\prod_{i=1}^c (1-t x_i)^{\mu_i}$ has no terms of degree $>n$. Multiplying further by $\prod_{i=1}^c (1-t x_i)^{\mu_i'-\mu_i}$ cannot introduce terms of degree $>N$. So we have that
    \[
    \sum_{k=1}^{N'} \frac{\Phi_\mu^{N'}(u_k)}{k} t^k \prod_{i=1}^c (1-t x_i)^{\mu_i'}
    \]
    has no terms of degree $>N$. The same holds for $\Phi_{\mu'}^{N'}(u_k)$, and therefore for $\Phi_\mu^N(\Phi_{\mu'}^{N'}(u_k))$ in the place of $\Phi_\mu^{N'}(u_k)$. Since the sequences $\Phi_\mu^N(\Phi_{\mu'}^{N'}(u_k))$ and $\Phi_\mu^{N'}(u_k)$ have the same first $N$ terms, the sequences must agree.
\end{proof}
Now we can finish proving the Markov property. For a permutation $w$ denote by $\mu(w)$ the composition $\mu_i = |C_i|$. Denote $w'= (w\sqcup\one_1)(n\;n+1)$.
\begin{proposition}\label{prop:markov 2}
    Let $w\in S_n$ and let $X$ be a $\CCA_w$-module. Suppose $N\geq n+1$. Then the modules $\Phi_{\mu(w')}^{N*} \beta^*((X\sqcup\one_1)\otimes_{R_{n+1}} K_{n,n+1}(-1))$ and $\Phi_{\mu(w)}^{N*} \beta^* X$ are isomorphic in the category $\CCA_{c,N}/\C$.
\end{proposition}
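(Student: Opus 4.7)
The strategy is to construct an explicit quasi-isomorphism between the underlying complexes of the two modules in the statement and to verify that it intertwines the two $\CCA_{c,N}$-actions up to homotopy. As a complex of $R_{n+1}$-modules, $Y := (X\sqcup\one_1)\otimes_{R_{n+1}} K_{n,n+1}(-1)$ is the two-term Koszul complex
\[
\bigl[X\otimes\C[x_{n+1}](-2) \xrightarrow{x_n - x_{n+1}} X\otimes\C[x_{n+1}]\bigr],
\]
which is a free resolution of the $R_{n+1}$-module $X\otimes\C[x_{n+1}]/(x_n - x_{n+1}) \cong X$. Define $\pi\colon Y \to X$ by $\eta \mapsto 0$ and $x_{n+1}\mapsto x_n$; this is a chain-level quasi-isomorphism.

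Write $v = w\sqcup\one_1$ and $r=(n,n+1)$ so that $w' = vr$, and denote by $i_0$ the cycle of $w'$ containing both $n$ and $n+1$. To check that $\pi$ intertwines the $\CCA_{c,N}$-actions, one analyzes each generator using the twisted coproduct \eqref{eq:Delta vw} together with the definitions of $\beta$ and $\Phi_\mu^N$. The $x_i$-actions match under the identification $x_n = x_{n+1}$ built into $\pi$. For $\xi_i$ with $i\neq i_0$ the contributions come only from the $X$-factor. For $i=i_0$, the element $\beta_{w'}(\xi_{i_0})=\sum_{j\in C_{i_0}}\xi_j + \xi_{n+1}$ acting on $Y$ produces two ``exotic'' summands on the $K_{n,n+1}$-factor --- one proportional to $1\otimes\xi_n = 1\otimes \eta$ (arising from the $\xi_j$ summand with $v^{-1}(j)=n$ through \eqref{eq:Delta vw}) and one equal to $1\otimes\xi_{n+1} = -1\otimes\eta$ (from $\xi_{n+1}$ itself) --- which cancel, leaving $\beta_w(\xi_{i_0})$ acting purely on the $X$-factor.

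The main step is compatibility of the $u_k$-action. Three ingredients combine: (i) the twisted coproduct
\[
\Delta_{v,r}(u_k) = u_k\otimes 1 + 1\otimes u_k + \sum_{i=1}^{n+1} h_{k-2}(x_i,x'_{v^{-1}(i)},x''_{r^{-1}v^{-1}(i)})\,\xi_i\otimes\xi_{v^{-1}(i)},
\]
of which only the summand $i=v(n)=w(n)$ survives on $Y$ (the $i=n+1$ summand vanishes because $\xi_{n+1}=0$ on $X\sqcup\one_1$, and $1\otimes u_k$ vanishes because $u_k=0$ on $K_{n,n+1}$); (ii) the explicit corrections in $\beta_{w'}(u_k)-\beta_w(u_k)$ from Proposition \ref{prop: twisted projector}; and (iii) the generating-series characterization of $\Phi_\mu^N$ from Proposition \ref{prop:limit}. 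The decisive identity is the factorization
\[
\prod_{j=1}^c (1-tx_j)^{\mu(w')_j} \;=\; (1-tx_{i_0})\prod_{j=1}^c (1-tx_j)^{\mu(w)_j},
\]
which via Proposition \ref{prop:limit}(a) relates $\Phi_{\mu(w')}^N(u_k)$ to $\Phi_{\mu(w)}^N(u_k)$ --- the extra factor $(1-tx_{i_0})$ corresponds precisely to the Koszul complex $K_{n,n+1}$ that has been tensored in. Combining (i)--(iii) yields agreement of the two $u_k$-actions after projecting through $\pi$, and an application of Lemma \ref{lem:algebra homomorphisms} (or a natural variant for modules related by a chain-level quasi-isomorphism) then promotes $\pi$ to the required weak equivalence in $\CCA_{c,N}/\C$. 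The main obstacle is orchestrating this last matching: one must show that the corrections in $\beta$ conspire with the generating-series coefficients of $\Phi_\mu^N$ in exactly the way dictated by the Koszul factor $(1-tx_{i_0})$. This is the $u_k$-level analog of the $\xi_{i_0}$-cancellation observed in the previous paragraph.
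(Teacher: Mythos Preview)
Your quasi-isomorphism $\pi$ is correct, and your $\xi_i$-cancellation argument is fine. The problem is the $u_k$-compatibility: you identify the three ingredients (i)--(iii) but never actually carry out the computation. Your final paragraph explicitly names ``the main obstacle'' and then leaves it open. Concretely, you need $\pi$ to intertwine the full action of $\beta_{w'}\bigl(\Phi_{\mu(w')}^N(u_k)\bigr)$ on $Y$ with that of $\beta_{w}\bigl(\Phi_{\mu(w)}^N(u_k)\bigr)$ on $X$, for \emph{every} $k\le N$. The corrections in $\beta$ from Proposition~\ref{prop: twisted projector} are only specified up to a choice, and $\Phi_\mu^N(u_k)$ is a polynomial in $u_1,\ldots,u_n$ with coefficients in $R_c$; checking that everything cancels after composing with the (choice-dependent) corrections is exactly the work you have not done. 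Your appeal to Lemma~\ref{lem:algebra homomorphisms} is also misplaced: that lemma compares two pullbacks of a \emph{single} module along two algebra maps that become equal after composing with a weak equivalence of algebras --- it says nothing about a quasi-isomorphism $\pi$ between two different modules.

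The paper's argument avoids this bookkeeping by reorganizing the problem at the algebra level. First it reduces to $N=n+1$ via Proposition~\ref{prop:limit}(c), so only the single new generator $u_{n+1}$ needs attention. Then it realizes $Y$ as the pullback $\widetilde\varphi^*\bigl(X[x_{n+1},\xi_{n+1}]\bigr)$ along an explicit algebra map $\widetilde\varphi\colon \CCA_{w'}\to \CCA_w[x_{n+1},\xi_{n+1}]$, so that both modules in the statement are pullbacks of essentially the same object. Now one is squarely in the setting of Lemma~\ref{lem:algebra homomorphisms}: it suffices to show that two compositions of algebra maps $\CCA_{c,n+1}\to\CCA_{c,n}$ agree, which reduces to a single check on $u_{n+1}$ using the explicit formula from Proposition~\ref{prop:limit}(b). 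The ``conspiracy'' you are worried about is thus replaced by one commutative square of dg algebras.
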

\begin{proof}
    In view of (3) of Proposition \ref{prop:limit} it is sufficient to prove the statement for $N=n+1$. Consider the following diagram of dg algebras
    \[
    \begin{tikzcd}
        \CCA_{c,n+1} \arrow{rr}{\beta} \ar{d}{\Phi_{\mu(w)}^{n+1}} && \CCA_{w'} \arrow{d}{\widetilde\varphi} \arrow{rr}{\alpha} && \CCA_{c,n+1} \ar{d}{\Phi_{\mu(w)}^{n+1}}\\
        \CCA_{c,n} \arrow{r}{\beta} & \CCA_{w} \arrow{r}{\sim} & \CCA_w[x_{n+1},\xi_{n+1}]\arrow{r}{\sim} & \CCA_{w}\arrow{r}{\alpha} & \CCA_{c,n}\\
    \end{tikzcd}
    \]
    The algebra $\CCA_w[x_{n+1},\xi_{n+1}]$ has the differential $d x_{n+1}=0, \ d\xi_{n+1}=x_{n+1} - x_n$. The quasi-isomorphisms to and from $\CCA_w$ to this algebra are evident. The morphism $\widetilde\varphi$ is defined similarly to the homomorphism $\varphi$ in Section \ref{ssec:stabilization} in such a way that
    \[
    (X\sqcup\one_1)\otimes_{R_{n+1}} K_{n,n+1}(-1) = \widetilde\varphi^* X[x_{n+1},\xi_{n+1}]
    \]
    holds. Indeed, the left hand side is isomorphic to $X[x_{n+1},\xi_{n+1}]$ as an abstract complex, and the action of each generator of $\CCA_{w'}$ on it is given by an explicit expression which involves the action of $\CCA_w$ on $X$ and the elements $x_{n+1},\xi_{n+1}$. These are packaged into $\widetilde\varphi$. 
    
    The large square containing both $\alpha$ and $\beta$ is commutative. Let us show that the rightmost square is commutative. It is clearly commutative on the generators $x_i$ and $\xi_i$, as well as the generators $u_k$ for $k\leq n$. It remains to consider the generator $u_{n+1}$. The element $\widetilde \varphi(u_{n+1})$ is the sum of
    \[
    \sum_{k=1}^n u_k \frac{\partial p_{n+1}}{\partial p_k} (x_1,\ldots,x_n) + h_{n-1}\left(x_{w^{-1}(n)}, x_n, x_{n+1}\right) \xi_{w^{-1}(n)} (-\xi_{n+1}),
    \]
    and certain corrections (see Lemma \ref{lem:corrections}). The second term above goes to zero in $\CCA_w$. By Lemma \ref{lem:corrections} we can make sure that the corrections belong to the ideal generated by $x_i-x_{w(i)}$, and therefore go to zero after applying $\alpha$. So we are left with
    \[
    \sum_{k=1}^n u_k \frac{\partial p_{n+1}}{\partial p_k} (\alpha(x_1),\ldots,\alpha(x_n)),
    \]
    which equals $\Phi_{\mu(w)}^{n+1}(u_{n+1})$ by  Proposition \ref{prop:limit}(b).

    So we have shown that the rightmost square of the diagram is commutative. Applying Lemma \ref{lem:algebra homomorphisms}, we obtain that module $\beta^*((X\sqcup\one_1)\otimes_{R_{n+1}} K_{n,n+1}(-1))$ is isomorphic to the module obtained from $X[x_{n+1},\xi_{n+1}]$ by pulling back along $\beta$, $\Phi_{\mu(w)}^{n+1}$ and the quasi-isomorphism $\CCA_w\simeq \CCA_w[x_{n+1},\xi_{n+1}]$. But as soon as we pull it back to $\CCA_w$, the module becomes isomorphic in the category $\CCA_w/\C$ to $X$.
\end{proof}

Putting things together, we obtain
\begin{theorem}\label{thm:link invariant}
    Let $L$ be a link with $c$ components labeled $1, \ldots, c$ represented as the closure of a braid $b$ on $n$ strands with the corresponding permutation $w$. Let $e$ be the number of the positive crossings of $\beta$ minus the number of the negative ones. For each $k$ define a $\CCA_{c,\infty}$-module by
    \[
    \HH^k(L) = \Phi_{\mu(w)}^{*\infty} \beta^* \HH^{k+ \frac{n-e-c}2} (T_b)\left[\frac{-n-e+c}2\right](c-n).
    \]
    This module is independent of the presentation of the link up to an isomorphism in the category $\CCA_{c,\infty}/\C$.
\end{theorem}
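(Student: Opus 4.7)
The plan is to invoke the classical Markov theorem and verify invariance under each of the two Markov moves separately, using the results of the preceding two sections. Two braids yield isotopic closed links if and only if they are related by a finite sequence of (I) conjugations $\beta \leadsto \gamma^{-1}\beta\gamma$ and (II) positive or negative stabilizations, so the task reduces to checking that the right-hand side of the defining formula is unchanged, as an object of $\CCA_{c,\infty}/\C$, under each move.

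For conjugation, using $T_{\gamma^{-1}}\otimes_R T_\gamma \simeq R$ together with the weak associativity of the tensor product (Proposition \ref{prop:associativity on modules}), I would first reduce to the case of a single cyclic rotation $\beta_1\beta_2\leadsto\beta_2\beta_1$. Let $v,w$ denote the permutations of $\beta_1,\beta_2$. Proposition \ref{prop:cyclicity} then provides an isomorphism $\HH^k(T_{\beta_1}\otimes_R T_{\beta_2}) \cong \tau_{v,w}^* \HH^k(T_{\beta_2}\otimes_R T_{\beta_1})$ in $\CCA_{vw}/\C$, where $\tau_{v,w}$ relabels variables by $v^{-1}$. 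The permutations $vw$ and $wv$ are conjugate, hence have the same cycle partition, so neither $c$ nor the composition $\mu$ is affected. Choosing cycle representatives used to build the maps $\CCA_{c,n}\to\CCA_{vw}$ and $\CCA_{c,n}\to\CCA_{wv}$ so that they correspond under $v^{-1}$, the triangle $\tau_{v,w}\circ\beta_{wv}=\beta_{vw}$ commutes; Proposition \ref{prop: twisted projector}(c) then absorbs $\tau_{v,w}$ upon pull-back, and $\Phi_{\mu(vw)}^{*\infty}=\Phi_{\mu(wv)}^{*\infty}$ delivers the desired isomorphism in $\CCA_{c,\infty}/\C$.

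For stabilization, I would first treat the negative case $\beta\leadsto (\beta\sqcup\one_1)\sigma_n^{-1}$, which sends the triple $(n,e,w)$ to $(n+1,\,e-1,\,w':=(w\sqcup\one_1)(n,n+1))$, while $c$ and $\mu$ are unchanged (one cycle of $w$ grows by one, absorbing the new strand). The stabilized Rouquier complex is $(T_\beta\sqcup\one_1)\otimes_{R_{n+1}}T_n^{-1}$. Setting $j=k+\tfrac{n-e-c}{2}$, the Hochschild index appearing in the definition via the stabilized braid is $j+1$. The first arrow of Proposition \ref{prop:markov 1} with $M=\HH^j(T_\beta)$, combined with the identity $\tfrac{-(n+1)-(e-1)+c}{2}=\tfrac{-n-e+c}{2}$ and the splitting $(c-n-1)=(c-n)+(-1)$ (absorbing a unit of the quantum shift into the Koszul factor), produces an isomorphism in $\CCA_{w'}/R_{n+1}$
\[
\HH^{j+1}(T_{\beta'})\bigl[\tfrac{-n-e+c}{2}\bigr](c-n-1)\ \cong\ \bigl(\HH^j(T_\beta)(c-n)\sqcup\one_1\bigr)\otimes_{R_{n+1}} K_{n,n+1}(-1)\bigl[\tfrac{-n-e+c}{2}\bigr].
\]
Applying $\Phi_{\mu(w')}^{*\infty}\beta^*$ to both sides and invoking Proposition \ref{prop:markov 2} with $X=\HH^j(T_\beta)(c-n)[\tfrac{-n-e+c}{2}]$ identifies the right-hand side with $\Phi_{\mu(w)}^{*\infty}\beta^* X$, exactly the definition of $\HH^k(L)$ computed from $\beta$. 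Positive stabilization runs along the same lines using the second arrow of Proposition \ref{prop:markov 1}: the Hochschild indices on the two sides now agree, and an extra $[1]$ cancels against the $-1$ change in the homological shift coming from $e\to e+1$.

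The hard part is purely bookkeeping: the four kinds of shifts in the definition---Hochschild index, homological, quantum, and the $K_{n,n+1}(-1)$ correction built into Proposition \ref{prop:markov 2}---must cancel precisely under each Markov move, and the cycle-representative and correction choices made in the constructions of $\alpha$, $\beta$ and $\Phi_\mu^N$ must be aligned so that the identities used above hold strictly. Beyond this arithmetic no new ideas are required; the conceptual content is entirely contained in Propositions \ref{prop:cyclicity}, \ref{prop:markov 1}, \ref{prop:markov 2}, and \ref{prop: twisted projector}(c).
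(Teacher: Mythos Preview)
Your proposal is correct and follows essentially the same route as the paper's proof: Markov I via Proposition \ref{prop:cyclicity} and Markov II via Propositions \ref{prop:markov 1} and \ref{prop:markov 2}. The paper's proof is terse and leaves the shift bookkeeping and the absorption of $\tau_{v,w}$ via Proposition \ref{prop: twisted projector}(c) implicit, so your expanded treatment of these points is a faithful elaboration rather than a different argument.
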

\begin{proof}
The cyclic invariance was established in Proposition \ref{prop:cyclicity}. Indeed, for permutations $v,w$ there is a bijection between the cycles of permutations $vw$ and $wv$, therefore the isomorphism $\tau_{v,w}:\CA_{vw}\to \CA_{wv}$ intertwines the maps $\beta_{vw}:\CA_{c,n}\to \CA_{vw}$ and  $\beta_{wv}:\CA_{c,n}\to \CA_{wv}$ for some choices of representatives $j_i$ and corrections. By Proposition  \ref{prop: twisted projector} these choices do not affect the isomorphism classes of pullbacks under  $\beta_{vw}$ and $\beta_{wv}$ respectively.  

Let us prove invariance under the Markov II moves. If $b$ is a braid on $n$ strands, then
\[
\HH^k\left((T_b \sqcup \one_1)\otimes_{R_{n+1}} T_n\right)\cong \left(\HH^k(T_b) \sqcup \one_1\right)\otimes_{R_{n+1}} K_{n,n+1}[1]
\]
by Proposition \ref{prop:markov 1}. The claim follows from Proposition \ref{prop:markov 2}. The proof for $T_n^{-1}$ is analogous.
\end{proof}

\subsection{$\CCA_{c,\infty}/\C$ vs. $\CCA_{c,\infty}/R_c$} The complexes $\HH^k(T_b)$ are known to be complexes of free $R$-modules, hence free over the subring $R_c=\C[x_1,\ldots,x_c]$.
This is also clear from Proposition \ref{prop:reduction 1} below. In this case there is no difference between viewing them as objects of $\CCA_{c,\infty}/\C$ or $\CCA_{c,\infty}/R_c$ by the following:
\begin{proposition}\label{prop:two versions of the category}
    The category $\CCA_{c,\infty}/\C$ is equivalent to the full subcategory of $\CCA_{c,\infty}/R_c$ consisting of objects which are free over $R_c$.
\end{proposition}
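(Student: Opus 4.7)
The strategy is to realize both $\CCA_{c,\infty}/\C$ and $\CCA_{c,\infty}/R_c$ as full subcategories of the homotopy category $\CCA_{c,\infty}-\Mod$ via Theorem \ref{thm:localization}, namely as the subcategories of $\C$-induced and $R_c$-induced modules, respectively. The canonical identification $\CCA_{c,\infty}\otimes_\C V\cong\CCA_{c,\infty}\otimes_{R_c}(R_c\otimes_\C V)$ exhibits every $\C$-induced module as $R_c$-induced, producing an automatically fully faithful inclusion $\iota\colon\CCA_{c,\infty}/\C\hookrightarrow\CCA_{c,\infty}/R_c$. The plan is to show that the essential image of $\iota$ is exactly the full subcategory of $R_c$-free objects.

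One direction is immediate. Since $\CCA_{c,\infty}$ is freely generated as a dg $R_c$-algebra by the odd $\xi_i$ and the even $u_k$, it is free as an $R_c$-module, so any twisted complex of modules of the form $\CCA_{c,\infty}\otimes_\C V$---that is, any $\C$-induced module---is a complex of free $R_c$-modules. Thus the image of $\iota$ lies in the $R_c$-free subcategory.

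The other direction rests on the following Key Lemma, which is the main and only nontrivial input: any $\CCA_{c,\infty}$-linear quasi-isomorphism $f\colon M\to N$ between complexes of free $R_c$-modules is automatically a homotopy equivalence of $R_c$-modules. By Lemma \ref{lem:contractible cone}, this reduces to showing that the cone of $f$, an acyclic complex of free $R_c$-modules, is $R_c$-contractible; in the bounded case (which covers all applications to link homology in the paper) this is the classical fact that bounded acyclic complexes of projective modules are contractible, while the general case follows from the finite global dimension of $R_c=\C[x_1,\ldots,x_c]$. Given the Key Lemma, essential surjectivity is quick: for any $R_c$-free $M$, the counit of the $\C$-bar resolution $\widetilde M^\C\to M$ from Lemma \ref{lem:resolution} (with $\CB=\C$) is a quasi-isomorphism between $R_c$-free modules, hence an $R_c$-homotopy equivalence; so $\widetilde M^\C$ is also an $R_c$-resolution of $M$ and, by the uniqueness of resolutions (Corollary following Lemma \ref{lem:lifting}), is $\CCA_{c,\infty}$-homotopy equivalent to the $R_c$-bar resolution $\widetilde M^{R_c}$ that represents $M$ in $\CCA_{c,\infty}/R_c$. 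So $M$ lies in the essential image of $\iota$, as required.
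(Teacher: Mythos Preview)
Your approach is essentially the paper's: both arguments hinge on showing that for $R_c$-free $M$ the counit $\widetilde M^\C\to M$ of the $\C$-resolution is already an $R_c$-homotopy equivalence, so that $\widetilde M^\C$ serves simultaneously as a $\C$-resolution and an $R_c$-resolution and the two Hom complexes coincide. The paper runs the functor in the opposite direction (identity from the $R_c$-free subcategory of $\CCA_{c,\infty}/R_c$ to $\CCA_{c,\infty}/\C$), but this is a cosmetic difference; your use of Theorem~\ref{thm:localization} to make full faithfulness automatic is a nice shortcut.

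One caveat on your Key Lemma: the appeal to finite global dimension for the unbounded case is not correct. The doubly infinite complex $\cdots\xrightarrow{x}\C[x]\xrightarrow{x}\C[x]\xrightarrow{x}\cdots$ is acyclic and consists of free modules over $\C[x]$ (global dimension $1$), yet is not $\C[x]$-contractible, since a contracting homotopy would force $x$ to be invertible. The paper sidesteps this by reading ``free over $R_c$'' as ``$\C$-induced as an $R_c$-module'' in the sense of Section~\ref{sec:homotopy theory}, i.e.\ a bounded-below twisted complex of free $R_c$-modules; then $M$ is literally its own $\C$-resolution as an $R_c$-module, and Lemma~\ref{lem:lifting} (applied with $\CA=R_c$, $\CB=\C$) supplies the homotopy inverse to the counit directly, with no separate contractibility argument needed. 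Your bounded-case argument is fine and, as you note, covers the applications.
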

\begin{proof}
    Denote the category of objects of $\CCA_{c,\infty}/R_c$ which are free over $R_c$ by $\CCA_{c,\infty}/R_c/\C$.
    The identity functor induces a functor
    \begin{equation}\label{eq:3 functor arrows}
    \CCA_{c,\infty}/R_c/\C \to \CCA_{c,\infty}/\C.
    \end{equation}
    Let $X$ be a $\CCA_{c,\infty}$-module and let $\widetilde X$ be its resolution relative to $\C$ with the counit map $\widetilde X \to X$.
    Since $\CCA_{c,\infty}$ is free over $R_c$, any $\CCA_{c,\infty}$ module which is induced from $\C$ is free over $R_c$. In particular, $\widetilde X$ is free over $R_c$ and so we see that $\widetilde X$ belongs to $\CCA_{c,\infty}/R_c/\C$. So \eqref{eq:3 functor arrows} is essentially surjective. If $X$ is free over $R_c$ then when restricted to $R_c$ it can be viewed as its own resolution relative to $\C$, so the counit map must be a homotopy equivalence in $R_c-\Mod$. So $\widetilde X$ can be viewed as a resolution of $X$ relative to $R_c$. So $\widetilde X$ is both a resolution of $X$ relative to $\C$ and relative to $R_c$, so the $\Hom$ spaces in the two categories are isomorphic.
\end{proof}

\subsection{Basic objects}\label{sec:basic objects classification}
We can restrict the kind of objects that appear as $\HH^k(L)$ as follows:
\begin{proposition}\label{prop:reduction 1}
    For each link $L$ with $c$ components and any $k$ the object $\HH^k(L)$ is equivalent to a twisted complex
    \[
    \HH^k(L) \cong \left[\cdots \to X_{1} \to \underline{X_0} \to X_{-1} \to \cdots \right],
    \]
    where each $X_i$ is a direct sum of objects of the form $\Phi_{\mu(w)}^{*\infty} \beta^* K(c-n)$,  $K=K_{r_1}\cdots K_{r_l}$ is some Koszul complex and $w=r_1\cdots r_l \in S_n$ is a permutation with $c$ cycles. 
\end{proposition}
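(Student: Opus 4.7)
The plan is to proceed by induction on the number $N$ of crossings of the braid $\beta$ representing $L$.

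In the base case $N=0$, the braid is the identity on $n=c$ strands, $T_\beta = R$, and $\HH^{\bullet}(R)$ is the free exterior algebra on $n$ generators over $R$. Each summand is a trivial basic object for $w=\mathrm{id}$ (which has $n=c$ cycles) with empty Koszul complex.

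For the inductive step I would write $\beta = \beta_1\sigma_{i_0}^{\epsilon}\beta_2$ and apply Lemma \ref{lem:skein} at the chosen crossing to present $T_\beta$ as the cone
\[
T_\beta \simeq \bigl[\, T_{\beta_1}\otimes_R T_{i_0}^{-\epsilon}[-\delta]\otimes_R T_{\beta_2} \longrightarrow T_{\beta_1}\otimes_R K_{i_0,i_0+1}\otimes_R T_{\beta_2}\,\bigr],
\]
where $\delta\in\{0,1\}$ depends on $\epsilon$.  For the right-hand ``resolved'' term, Proposition \ref{prop:relations} lets me commute $K_{i_0,i_0+1}$ past $T_{\beta_2}$, producing a weakly equivalent expression of the form $T_{\beta_1\beta_2}\otimes_R K_{r}$, where $\beta_1\beta_2$ has $N-1$ crossings and $r = w(\beta_2)^{-1}(i_0,i_0+1)w(\beta_2)$.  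The inductive hypothesis then expresses the invariant of $\beta_1\beta_2$ as a twisted complex of basic objects; tensoring each such object with $K_r$ and reorganizing via Lemma \ref{lem: K standard} preserves the basic form, with total permutation $w(\beta_1\beta_2)\cdot r = w(\beta)$ having $c$ cycles (the number of components of $L$).

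The left-hand (``flipped'') term is a Rouquier complex for a braid still of length $N$, so the inductive hypothesis does not apply directly.  The plan is to iterate Lemma \ref{lem:skein} on this term, alternating between its two forms.  Each pair of iterations produces a new ``resolved'' branch (reducing to a twisted complex of basic objects exactly as above) together with a residual tail equal to $T_\beta$ shifted further by $[-1]$, arising from the $T_i[-1]$ factor in Lemma \ref{lem:skein}.  After $m$ iterations one obtains an iterated cone whose resolved branches sit at bounded homological degrees and whose tail is of the form $T_\beta[-m]$. Passing to the bounded-above framework of the remark after Lemma \ref{lem:twisted complex}, the tail migrates to arbitrarily negative homological degrees and the total complex presents $\HH^k(L)$ in each fixed degree as a finite twisted complex of basic objects in $\CCA_{c,\infty}/\C$. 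Composing with $\beta^*$ and $\Phi_{\mu(w)}^{*\infty}$ and the grading shift $(c-n)$ then yields the required decomposition.

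The hard part will be making the flipped-term iteration precise: specifically, controlling the limit so that the residual tail $T_\beta[-m]$ does not contribute to $\HH^k(L)$ and showing that the resulting bounded-above twisted complex is a genuine isomorphism in $\CCA_{c,\infty}/\C$. This should follow from the strictly monotone homological shift at each step combined with the bounded-above version of Lemma \ref{lem:twisted complex}, together with careful bookkeeping to ensure that the permutation $w(\beta_1\beta_2)\cdot r = w(\beta)$ associated with each basic object has the required $c$ cycles throughout the reduction.
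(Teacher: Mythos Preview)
Your plan has a genuine gap in the flipped-term iteration, and it is not merely a bookkeeping issue. When you iterate the two forms of Lemma~\ref{lem:skein} on the same crossing, the tail after $2m$ steps is indeed $T_\beta[-m]$, but it sits at \emph{position} $-m$ in the iterated twisted complex. Its contribution to the total complex in homological degree $j$ is therefore $(T_\beta[-m])_{j+m}=(T_\beta)_j$: the module shift and the position shift cancel, and the tail never leaves the degree range of $T_\beta$. In the language of distinguished triangles, the relevant triangle is $K_{i,i+1}\to T_i^{-1}\to T_i\to K_{i,i+1}[1]$; rotating it shows that the cofiber you are iterating on is always $T_i$ or $T_i^{-1}$ with no net homological shift. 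So the bounded-above limit you describe is not the object you want --- passing to the limit discards exactly the information carried by $T_\beta$.

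The paper's proof avoids any infinite process by using two ingredients your plan omits: cyclic invariance under conjugation (Proposition~\ref{prop:cyclicity}) and the Markov reduction (Propositions~\ref{prop:markov 1} and~\ref{prop:markov 2}). The inductive statement is strengthened to cover $\HH^k(T_b\otimes_R K)$ for an arbitrary Koszul product $K$. The skein relation is used only to trade $T_i\leftrightarrow T_i^{-1}$ at the cost of a cone on a strictly shorter braid; combined with conjugation this (Jaeger's algorithm) reduces in finitely many steps to $T_{i_1}\cdots T_{i_m}\otimes K$ with $i_1<\cdots<i_m$. The projection formula then moves $K$ outside $\HH^k$, and iterated Markov moves convert each $T_{i_j}$ into a Koszul factor while dropping a strand. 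The result is a finite twisted complex of basic objects.
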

\begin{proof}
If $L$ is presented as the closure of a braid $b\in \Br_n$, then $\HH^k(L)$ is isomorphic to $\HH^k(b)(c-n)$ up to homological shift, by definition.  We will prove that
\[
\Phi_{\mu(w)}^{*\infty} \beta^* \HH^k(T_b \otimes_{R_n} K)
\]
is weakly equivalent to a twisted complex consisting of objects of the form $\Phi_{\mu(w)}^{*\infty} \beta^* K$ (and homological shifts thereof), for any braid $b\in \Br_n$ and any Koszul complex $K$. We proceed by induction on the length of $b$. Using Lemma \ref{lem:skein} we can always replace $T_i$ by $T_i^{-1}$ and vice versa modulo complexes for shorter braids. Moreover, we are free to replace $b$ by a conjugate braid by Proposition \ref{prop:cyclicity} and Corollary \ref{cor: T and K standard}.

Using the above transformations we can always reduce the number of crossings in $b$ (this is essentially Jaeger's algorithm for computing HOMFLY-PT polynomial \cite{Jaeger}) until we obtain a product of the form $T_{i_1} T_{i_2} \cdots T_{i_m} K$ with $i_1<\ldots<i_m$. Consider
\[
\Phi_{\mu(w)}^{*\infty} \beta^* \HH^k(T_{i_1} T_{i_2} \cdots T_{i_m} K).
\]
Applying Proposition \ref{prop:projection}, we can move $K$ outside $\HH^k$. Finally, we iteratively apply Propositions \ref{prop:adding a thread} and \ref{prop:markov 1} to show that $\HH^k(T_{i_1} T_{i_2} \cdots T_{i_m})$
is a direct sum of copies of $K_{i_1,i_1+1} K_{i_2, i_2+1}\cdots K_{i_m, i_m+1}$.
\end{proof}

Next we want to classify objects of the form $\Phi_{\mu(w)}^{*\infty} \beta^* K(c-n)$.


 By Lemma \ref{lem: K standard} the object $K$ is completely classified by its permutation $w$, the equivalence relation $\sim$ and the number of $K_{i,j}^2$ in each equivalence class.
\begin{proposition}
    In Proposition \ref{prop:reduction 1} we can assume that the product of Koszul complexes $K$ satisfies the following extra assumptions:
    \begin{enumerate}
        \item All cycles of $w$ have length $1$ or $2$.
        \item If $(i,j)$ is a cycle of $w$ of length $2$ then $K$ contains some odd power of $K_{i,j}$ and no other $K_{i',j'}$ with $i'\in \{i,j\}$ or $j'\in \{i,j\}$.
    \end{enumerate}
\end{proposition}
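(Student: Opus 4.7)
The plan is to establish (1) and (2) successively by applying Proposition \ref{prop:markov 2} in the destabilizing direction, using the flexibility of Lemma \ref{lem: K standard} (to redistribute $K_{i,j}^2$-factors within a $\sim$-class) and Lemma \ref{lemma:w and refl} (to choose a reduced expression for $K_w$ with a convenient last factor).

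For (1), I would take a cycle $C=(a_1,\ldots,a_k)$ of $w$ of length $k\ge 3$ and, after conjugating via cyclicity (Proposition \ref{prop:cyclicity}) so that $a_{k-1}=n-1$ and $a_k=n$, observe that the $\sim$-class $S$ of $C$ has $|S|\ge k\ge 3$. Hence by Lemma \ref{lem: K standard} every $K_{p,q}^2$-factor supported in $S$ can be represented by a pair $(p,q)\subset S\setminus\{n\}$, and by Lemma \ref{lemma:w and refl} one can write $K_w=K_{\bar w}\otimes K_{(n-1,n)}$, where $\bar w$ agrees with $w$ outside $C$ and replaces $C$ by $(a_1,\ldots,a_{k-1})$. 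The resulting factorization $K\simeq K''\otimes K_{(n-1,n)}$ with $K''$ supported on $\{1,\ldots,n-1\}$ is in the form required by Proposition \ref{prop:markov 2} for destabilization, producing an equivalent object on $n-1$ strands whose permutation shortens the cycle by one. Iterating reduces all cycles of $w$ to length at most $2$.

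For (2), suppose a 2-cycle $(i,j)$ of $w$ has $\sim$-class $S\supsetneq\{i,j\}$, and pick $k\in S\setminus\{i,j\}$. Conjugate so that $(i,j)=(n-1,n)$. The key move is to use Lemma \ref{lem: K standard} repeatedly to relocate every $K^2$-factor involving strand $n$ (in particular each $K_{(n-1,n)}^2$-factor and each $K_{(n,q)}^2$-factor with $q\in S\setminus\{n-1,n\}$) to $K_{(n-1,k)}^2$; this is permitted because, given the bridging factors that remain in $S$, removing the factor being moved does not alter $\sim$. After these rewrites $K$ has the shape $K_{(n-1,k)}^{2N}\otimes K_{\mathrm{rest}}\otimes K_{(n-1,n)}$, where $K_{\mathrm{rest}}$ consists of factors supported on $\{1,\ldots,n-1\}$ and one uses Lemma \ref{lemma:K squared central} to commute $K_{(n-1,n)}$ to the right end of the product. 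Proposition \ref{prop:markov 2} then destabilizes to $n-1$ strands and replaces the 2-cycle $(n-1,n)$ by two 1-cycles $\{n-1,k\}$ still joined through the bridging $K^{2N}$-factor. Iterating over the remaining non-$\sim$-isolated 2-cycles yields the desired presentation.

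The main obstacle lies in (2) when the original $K$ carries a high odd power $K_{(i,j)}^{2m+1}$ of the 2-cycle generator, because the extra $K_{(i,j)}^2$-factors by themselves block a direct destabilization; what saves the argument is that these factors can legitimately be pushed onto bridging pairs by Lemma \ref{lem: K standard}, which is precisely possible because $|S|\ge 3$ follows from $S\supsetneq\{i,j\}$ combined with the existence of an element in a different cycle. The same $|S|\ge 3$ condition is what powers the reduction in (1). Once all factors involving strand $n$ except the final $K_{(n-1,n)}$ have been eliminated, Proposition \ref{prop:markov 2} completes each reduction.
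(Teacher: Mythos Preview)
Your proof is correct and follows essentially the same approach as the paper's: both arguments renumber strands so that the relevant cycle (or 2-cycle) occupies the top positions, use Lemma~\ref{lem: K standard} to push all $K^2$-factors off the top strand, and then invoke Proposition~\ref{prop:markov 2} to destabilize. The paper's proof is terser but the content is the same; your explicit discussion of why the $K_{i,j}^{2m}$ factors in case~(2) can be relocated onto a bridging pair is a useful elaboration of what the paper leaves implicit.
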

\begin{proof}
    If $w$ has a cycle of length at least $3$ we first renumber the strands to make sure that $n-1, n, n+1$ are in the cycle. Choose the decomposition of $w$ to contain exactly one $(n\ n+1)$ and replace any occurrence $K_{i,n+1}^2$ by $K_{i,n}^2$ ($i<n$) and any $K_{n,n+1}^2$ by $K_{n-1,n}^2$. This does not change the isomorphism class of the object $K$ by Lemma \ref{lem: K standard}. Then apply Proposition \ref{prop:markov 2} to reduce the number of strands.

    If $(i,j)$ is a cycle of $w$ of length $2$ and there exist $K_{i, j'}^2$, we can relabel the indices so that $i=n$, $j=n+1$ and $j'=n-1$. Then continue as in the first case.
\end{proof}

We encode any object satisfying (1) and (2) by an unordered collection of pairs
\[
(\bar n, \bar g) = ((n_1, g_1),\ldots,(n_s, g_s))
\]
as follows. Each cycle $(i,j)$ of $w$ of length $2$ is encoded as $(1,g)$ if the number of occurrences of $K_{i, j}$ is $1+2 g$. The remaining cycles all have lengths $1$. Each equivalence class $S$ of  cycles of lengths $1$ is encoded as $(n, g)$ where $n$ is the number of cycles in the class and $g+n-1$ is the number of occurrences of $K_{i,j}^2$ in the decomposition of $K$ with $i,j\in S$. Note that $n-1$ squares are used to generate the equivalence relation. Note that $(1,0)$ may correspond to an index which doesn't appear in $K$ or it can correspond to a $2$-cycle $i,j$ with a single $K_{i,j}$, but these produce equivalent objects by Proposition \ref{prop:markov 2}.

To go back, a single pair $(n,g)$ corresponds to $K_{12}^{2g+1}(-1)$ if $n=1$ and
\[
K_{12}^{2g+2} K_{23}^2 \cdots K_{n-1, n}^2 \qquad(n\geq 2).
\]
The objects for a collection of pairs have to be stacked together horizontally, i.e. the total set of indices is identified with the disjoint union of the sets of indices, one set for each pair, the generators $K_{i,j}$ have to be relabeled accordingly and multiplied together. We denote the result by $K_{\bar n, \bar g}$.

\begin{remark}\label{rmk:K abuse}
Below we will frequently abuse notation, denoting $\Phi_{\mu(w)}^{*\infty} \beta^*(K_{\bar n, \bar g})$  simply by $K_{\bar n, \bar g}$.
\end{remark}

\begin{remark}
	Products of $K_{ij}$ have the following topological interpretation. Consider $n$ disks labeled by $1,\ldots, n$ and connect $i$-th and $j$-th disk by a twisted band for each $K_{ij}$ appearing in $K$. The result is an oriented surface $\Sigma$.  The classification in Lemma \ref{lem: K standard} has a simple topological meaning: the cycles in $w$ correspond to the components of the boundary $\partial \Sigma$, equivalence classes for $\sim$ correspond to the connected components of $\Sigma$ and the number of  $K_{i,j}^2$ in each equivalence class encodes the Euler characteristic of the corresponding component so that adding an extra $K_{ij}^2$  for $i$ and $j$ in the same equivalence class corresponds to adding a handle. The topological meaning of the invariant $(\bar n, \bar g)$ is that $\Sigma$ is the disjoint union of surfaces $\Sigma_{n_i,g_i}$, where $\Sigma_{n,g}$ stands for the connected surface of genus $g$ with $n$ boundary components.

For instance, $K_{12}^{2g+1}$ corresponds to $2g+1$ twisted bands between two disks. This is a surface with one boundary component and Euler characteristic $2-(2g+1)=1-2g$, so it is a genus $g$ surface with one puncture. The product $K_{12}^{2g+2} K_{23}^2 \cdots K_{n-1, n}^2 $
corresponds to a surface with $n$ boundary components and Euler characteristic $n-(2g+2n-2)=2-2g-n$, hence it is a genus $g$ surface with $n$ punctures. 
\end{remark}

Finally, we describe $\beta^*K_{\bar n, \bar g}$. It is convenient to notice that objects on which $\xi_i$ acts as zero can be factored away:
\begin{proposition}\label{prop:factor away xi zero}
    Suppose $M$ is a module over $\CCA_w$ for some permutation $w\in S_n$ with $c$ cycles, and suppose $N$ is a module over $\CCA_n$ which is free over $R$ such that every $\xi_i$ acts as zero. Then we have an isomorphism of $\CCA_{c,n}$-modules
    \[
    \beta^*(M\otimes_{R_n} N) \cong \beta^*(M) \otimes_{R_c} \left(R_c\otimes_{R_n} N\right),
    \]
    where the homomorphism $R_n\to R_c$ is the natural homomorphism sending each variable $x_i$ to the variable corresponding to the cycle to which $i$ belongs. The action of $u_k$ on the right hand side is given by $u_k\otimes 1 + 1\otimes u_k$ and the action of $\xi_i$ is the one coming from $\beta^*(M)$.
\end{proposition}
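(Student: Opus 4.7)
The plan is to show that the obvious associativity isomorphism of underlying $R$-modules
\[
\beta^*(M)\otimes_{R_c} (R_c\otimes_{R_n} N) \;\cong\; M\otimes_{R_n} N
\]
upgrades to a strict isomorphism of $\CCA_{c,n}$-modules, with no resolution needed. First I would verify that the two sides agree as $R$-modules. In $\CCA_w = \HH_0(\CA_w)$, the twist relation $x'_i = x_{w(i)}$ combines with the $\HH_0$-identification $x'_i = x_i$ to give $x_i = x_{w(i)}$. Hence the $R_n$-action on $M$ is constant along cycles of $w$ and factors through $R_n\twoheadrightarrow R_c$; this is exactly the $R_c$-action on $\beta^*(M)$ prescribed by $\beta(x_i)=x_{j_i}$. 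Tensor associativity then yields the underlying bijection.

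Next I would compare the actions of the remaining generators. For $\xi_i$, note $\beta(\xi_i) = \sum_{j\in C_i}\xi_j$, and each $\xi_j$ acts on $M\otimes_{R_n} N$ via the twisted coproduct \eqref{eq:Delta vw} as $\xi_j\cdot(m\otimes n) = \xi_j m\otimes n + m\otimes \xi_{w\inv(j)}n$. Since every $\xi_k$ annihilates $N$ by hypothesis, the second term drops out, matching the prescribed action of $\xi_i$ on the right-hand side (which acts solely on $\beta^*(M)$). Iterating this observation, any element of $\CCA_w$ lying in the two-sided ideal generated by the $\xi_j$'s acts on $M\otimes_{R_n} N$ only through its action on the $M$-factor.

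For $u_k$, write $\beta(u_k) = u_k + C_k$, where $C_k$ is the correction of Proposition \ref{prop: twisted projector}; by the explicit formula there, $C_k$ lies in the ideal generated by products of two $\xi_j$'s. By the previous paragraph $C_k$ acts only on the $M$-factor. The twisted coproduct also gives
\[
u_k\cdot(m\otimes n) \;=\; u_km\otimes n + m\otimes u_kn + \sum_i h_{k-2}(x_i,x'_{w\inv(i)},x''_{w\inv(i)})\,\xi_im\otimes \xi_{w\inv(i)}n,
\]
and the sum vanishes because $\xi_{w\inv(i)}n=0$. Thus $\beta(u_k)$ acts on $M\otimes_{R_n} N$ as $\beta(u_k)|_M\otimes 1 + 1\otimes u_k|_N$, which is precisely the action of $u_k$ defined on the right-hand side.

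Finally, the differentials on both sides coincide with $d_M\otimes 1 + 1\otimes d_N$, so the identification is a strict $\CCA_{c,n}$-linear isomorphism. The freeness of $N$ over $R$ enters only to guarantee that $R_c\otimes_{R_n} N$ is the correct (underived) tensor product and plays no role in the module structure comparison. The main subtlety is notational: one must track which occurrence of a symbol like $u_k$ refers to an element of $\CCA_{c,n}$, of $\CCA_w$, or to the induced operator on $M$ or $N$, and invoke the hypothesis $\xi_j|_N = 0$ every time a term involving $\xi$ or the correction $C_k$ is analyzed.
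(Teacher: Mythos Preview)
There is a genuine gap at the very first step. You assert that in $\CCA_w=\HH_0(\CA_w)$ there is a ``twist relation $x'_i=x_{w(i)}$'' which, combined with $x'_i=x_i$, forces $x_i=x_{w(i)}$. This is not so: $\CA_w=\CA R_w$ is presented in Definition~\ref{def:twists of A} as a $B$-algebra with the \emph{same} generators $x_i,x'_i$ as $\CA$ and with the twist encoded only in the differentials $d\xi_i=x_i-x'_{w^{-1}(i)}$. Passing to $\CCA_w$ sets $x'_i=x_i$, so $d\xi_i=x_i-x_{w^{-1}(i)}$, but the $x_i$'s remain algebraically independent in $R\subset\CCA_w$. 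The difference $x_i-x_{w^{-1}(i)}$ is a boundary, not zero; on a general $\CCA_w$-module $M$ the $R_n$-action does \emph{not} factor through $R_c$ at the chain level, and your ``obvious associativity isomorphism'' $\beta^*(M)\otimes_{R_c}(R_c\otimes_{R_n}N)\cong M\otimes_{R_n}N$ is simply not defined.

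The paper's proof handles exactly this point: it first replaces $M$ by $\alpha^*\beta^* M$, which is a weak equivalence (since $\alpha\circ\beta=\Id$ and $\alpha$ is a quasi-isomorphism), and on $\alpha^*\beta^* M$ all the $x_j$ with $j$ in a given cycle \emph{do} act identically, by construction of $\alpha$. This is precisely where the freeness of $N$ over $R$ is used --- not to make $R_c\otimes_{R_n}N$ underived, as you suggest, but to ensure that tensoring with $N$ preserves the quasi-isomorphism $M\simeq\alpha^*\beta^* M$. After this replacement your analysis of the $\xi_i$ and $u_k$ actions (the coproduct terms involving $\xi$ on the $N$-side vanish, the correction $C_k$ acts only on the $M$-factor) goes through essentially as you wrote it, and that part of your argument matches the paper's.
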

\begin{proof}
    First, since $\alpha\circ\beta\circ\alpha=\alpha$ and $\alpha$ is a quasi-isomorphism, the pullbacks via $\beta\circ\alpha$ and $\Id$ produce equivalent objects by Lemma \ref{lem:algebra weak equivalence}. Since $N$ is free, tensoring by $N$ preserves quasi-isomorphisms, so we can replace $M$ by $\alpha^*\beta^*M$ on the left hand side. On $\alpha^*\beta^*M$ variables $x_i$ corresponding to $i$ from the same cycle act in the same way, so we have an isomorphism
    \[
    \alpha^*\beta^*M\otimes_{R_n} N = \alpha^*\beta^*M\otimes_{R_n} \left(R_c\otimes_{R_n} N\right).
    \]
    Because all the $\xi_i$ vanish on $N$, the action of $\xi_i$ on the tensor product comes from the action on $M$ only. Now when we apply $\beta^*$ the corrections in the definition of $\beta^*$ are all contained in the corrections for $M$, so we have
    \[
    \beta^*(\alpha^*\beta^*M\otimes_{R_n} \left(R_c\otimes_{R_n} N\right)) = \beta^*(\alpha^*\beta^*M)\otimes_{R_c} \left(R_c\otimes_{R_n} N\right) = \beta^*(M) \otimes_{R_c} \left(R_c\otimes_{R_n} N\right).
    \]
\end{proof}

The following is useful in computations of higher powers of $K_{i,j}$:
\begin{proposition}\label{prop:factor cube}
    Denote by $K_{i,j}^*$ the $\CCA$-module $R[\eta_1,\eta_2](2)$, where $\eta_1, \eta_2$ have homological degree $1$, $d \eta_1=d\eta_2=0$, all $\xi_i$ act by zero and $u_k$ acts via
    \[
    u_k \to k h_{k-2}(x_i, x_j) \eta_1 \eta_2.
    \]
    Then we have
    \[
    K_{i,j}^3 = K_{i,j} K_{i,j}^*.
    \]
\end{proposition}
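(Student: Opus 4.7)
The plan is to prove Proposition~\ref{prop:factor cube} by presenting both sides as explicit $\CA_{(i,j)}$-modules on polynomial-type underlying sets and then constructing a linear change of exterior generators that identifies them.

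First I unpack the left side. As a left $R$-module, $K_{i,j}^3 = K_{i,j}\otimes_R K_{i,j}\otimes_R K_{i,j}$ is the exterior algebra $R[\eta^{(1)},\eta^{(2)},\eta^{(3)}]$ on three odd generators, one from each factor. The twist on each $K_{i,j}$ (with $x_i'=x_j$, $x_j'=x_i$) propagates through the middle tensor identifications, so the operator $x_\ell^{(s)}$ (the action of $x_\ell$ coming from the $s$-th of the four possible ``slots'' between factors) alternates: for $\ell\in\{i,j\}$ it swaps between $x_i$ and $x_j$ as $s$ varies over $1,2,3,4$. Tracking these signs gives $d\eta^{(s)} = (-1)^{s-1}(x_i-x_j)$. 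The coproduct $\Delta(\xi_\ell) = \xi_\ell\otimes 1 + 1\otimes\xi_\ell$ iterated shows that $\xi_\ell$ vanishes for $\ell\ne i,j$, while $\xi_i$ acts by left wedge with $\eta^{(1)}+\eta^{(2)}+\eta^{(3)}$ and $\xi_j=-\xi_i$.

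The core computation is the action of $u_k$, obtained via a chosen realization of the iterated coproduct, say $(\mathrm{Id}\otimes\Delta)\circ\Delta$. Each single-factor term $1^{\otimes s-1}\otimes u_k\otimes 1^{\otimes 3-s}$ vanishes since $u_k$ is zero on $K_{i,j}$, so only the pairwise $\xi_\ell\otimes\xi_\ell$ cross-terms with coefficients $h_{k-2}$ of triples of $x_\ell^{(\bullet)}$ contribute. After substituting the alternating values of $x_\ell^{(s)}$ for $\ell\in\{i,j\}$ and using $\xi_j=-\xi_i$, these contributions collapse to $[\,h_{k-2}(x_i,x_j,x_j) + h_{k-2}(x_i,x_i,x_j)\,]\,\Sigma$, where $\Sigma := \eta^{(1)}\eta^{(2)}+\eta^{(1)}\eta^{(3)}+\eta^{(2)}\eta^{(3)}$. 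The key symmetric function identity
\[
h_{k-2}(x_i,x_j,x_j)+h_{k-2}(x_i,x_i,x_j)=k\,h_{k-2}(x_i,x_j),
\]
readily checked by comparing coefficients of $t^{k-2}$ in the generating functions (both sides evaluate to the power series expansion of $(2-(x_i+x_j)t)/\bigl((1-x_it)^2(1-x_jt)^2\bigr)$), then yields $u_k = k\,h_{k-2}(x_i,x_j)\,\Sigma$ on $K_{i,j}^3$.

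The right side $K_{i,j}\otimes_R K_{i,j}^*$ is easier: since every $\xi_\ell$ vanishes on $K_{i,j}^*$, all cross-terms in the coproduct for $u_k$ disappear, leaving $u_k = k\,h_{k-2}(x_i,x_j)\,\eta_1\eta_2$, while $\xi_i$ acts by left wedge with $\eta$ (from the $K_{i,j}$-factor). I will then exhibit the isomorphism via the linear substitution $\eta = \eta^{(1)}+\eta^{(2)}+\eta^{(3)}$, $\eta_1=\eta^{(1)}+\eta^{(2)}$, $\eta_2=\eta^{(2)}+\eta^{(3)}$. A short calculation verifies $d\eta = x_i-x_j$, $d\eta_1=d\eta_2=0$, that $\xi_i$ becomes left wedge with $\eta$, and the purely exterior-algebra identity $\eta_1\eta_2 = (\eta^{(1)}+\eta^{(2)})(\eta^{(2)}+\eta^{(3)}) = \Sigma$, so the $u_k$-actions also match; the quantum shifts $(3)$ on both sides agree. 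The main obstacle is the careful bookkeeping of signs and twists across the triple tensor product — tracking the $(-1)^{s-1}$ signs in $d\eta^{(s)}$, correctly reading off the four positions $x_\ell^{(1)},\ldots,x_\ell^{(4)}$ in the cross-term coefficients produced by the (non-strictly-coassociative) iterated coproduct, and verifying the symmetric function identity. Once these are in place the change of variables makes the identification transparent.
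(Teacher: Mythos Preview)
Your proposal is correct and follows essentially the same route as the paper: explicitly compute the $\CA_{(i,j)}$-module structure on $K_{i,j}^3$, simplify the $u_k$-action using the identity $h_{k-2}(x_i,x_i,x_j)+h_{k-2}(x_i,x_j,x_j)=k\,h_{k-2}(x_i,x_j)$, and then perform a linear change of the odd generators to recognize the result as $K_{i,j}\otimes K_{i,j}^*$. The only differences are cosmetic: the paper uses the convention $d\eta_s=x_i-x_j$ for all $s$ with $\xi_i=\eta_1-\eta_2+\eta_3$ (related to yours by $\eta^{(s)}=(-1)^{s-1}\eta_s$), verifies the symmetric function identity by counting monomial coefficients rather than via generating functions, and uses the change of variables $(\eta_1,\eta_2)\mapsto(\eta_1-\eta_3,\eta_1-\eta_2)$ instead of your $(\eta,\eta_1,\eta_2)=(\eta^{(1)}+\eta^{(2)}+\eta^{(3)},\,\eta^{(1)}+\eta^{(2)},\,\eta^{(2)}+\eta^{(3)})$.
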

\begin{proof}
    Explicit computation of $K_{i,j}^3$ produces $R[\eta_1,\eta_2,\eta_3](3)$ with
    \[
    d \eta_i = x_i-x_j,\quad \xi_i = -\xi_j= \eta_1-\eta_2+\eta_3,
    \]
    \[
    u_k=(\eta_1(-\eta_2) + (\eta_1-\eta_2) \eta_3) (h_{k-2}(x_i,x_j,x_i) + h_{k-2}(x_j,x_i,x_j)).
    \]
    The first factor in $u_k$ can be written as $(\eta_1-\eta_3)(\eta_1-\eta_2)$. The second factor contains every monomial $x_i^p x_j^q$ with coefficient $p+1+q+1=k$, so we obtain that $u_k$ acts by
    \[
    (\eta_1-\eta_3)(\eta_1-\eta_2) k h_{k-2}(x_i, x_j).
    \]
    Redefining $\eta_1$, $\eta_2$ as $\eta_1-\eta_3$, $\eta_1-\eta_2$ we arrive at the following presentation of $K_{i,j}^3$:
    \[
    K_{i,j}^3 = R[\eta_1,\eta_2,\xi_i | d \eta_1=d \eta_2=0,\; d \xi_i=x_i-x_j],
    \]
    the action of $\xi_j$ is $-\xi_i$, and $u_k$ acts via $k h_{k-2}(x_i, x_j) \eta_1 \eta_2$. This is precisely the module $K_{i,j} K_{i,j}^*$.
\end{proof}

Finally we obtain
\begin{proposition}\label{prop:basic object}
    Let $\bar n = (n_1,\ldots,n_s)$, $\bar g=(g_1,\ldots,g_s)$, $c=\sum_i n_i$, $g=\sum_i g_i$, $m_i=n_1+\cdots+n_i$, $I=\{m_i\}_{i=1}^s$.
    The module $K_{\bar n, \bar g}$ is isomorphic in the category $\CCA_{c,\infty}/\C$ to
    \[
    \C[(x_j)_{j\in I},\; (\xi_j)_{j\notin I}, \; (\eta_{i j})_{j\leq 2 g_i,i\leq s}]\left(c-s+2 g\right)
    \]
    with zero differential and the action of the remaining variables given by
\begin{equation}\label{eq:remaining variables}
    x_j=x_{m_i}\;(m_{i-1}<j<m_i),\quad \xi_{m_i} = -\sum_{j=m_{i-1}+1}^{m_i-1} \xi_j,
\end{equation}
    \[
    u_k = k(k-1) \sum_{i=1}^s x_{m_i}^{k-2} \sum_{j=1}^{g_i} \eta_{i, 2j-1} \eta_{i, 2 j}.
    \]
\end{proposition}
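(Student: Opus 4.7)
My approach is to decompose $K_{\bar n,\bar g}$ pair-by-pair, split off a tensor factor $(K_{1,2}^*)^{\otimes g}$ that carries the $\eta_{ij}$-generators and the stated $u_k$-action, and then identify the residual Koszul factor by a direct quasi-isomorphism combined with an $A_\infty$-adjustment.

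Both sides of the claimed isomorphism are multiplicative in the pairs $(n_i,g_i)$: by construction $K_{\bar n,\bar g}$ is a tensor product over disjoint blocks of strand indices, one per pair, and the right-hand side factors analogously as a tensor product over $i$. Combined with Proposition \ref{prop:adding a thread} and the compatibility of $\beta$ and $\Phi_\mu^N$ with disjoint-union structures of cycles, this reduces the problem to the single-pair case $s=1$.

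For a pair $(n,g)$, iterating the identity $K_{1,2}^3\simeq K_{1,2}K_{1,2}^*$ of Proposition \ref{prop:factor cube} yields $K_{1,2}^{2g+1}\simeq K_{1,2}\otimes(K_{1,2}^*)^{\otimes g}$ and $K_{1,2}^{2g+2}\simeq K_{1,2}^2\otimes(K_{1,2}^*)^{\otimes g}$. Writing $K'$ for the residual Koszul factor ($K_{1,2}$ if $n=1$, and $K_{1,2}^2 K_{2,3}^2\cdots K_{n-1,n}^2$ if $n\geq 2$) and noting that $\xi_i$ acts by zero on each $K_{1,2}^*$, Proposition \ref{prop:factor away xi zero} gives
\[
\beta^*K_{(n),(g)}\;\cong\;\beta^*(K')\otimes_{R_c}\bigl(R_c\otimes_R(K_{1,2}^*)^{\otimes g}\bigr).
\]
The $(K_{1,2}^*)^{\otimes g}$-factor contributes the $2g$ free generators $\eta_{1,j}$ with trivial differential and, using $h_{k-2}(x,\ldots,x)=(k-1)x^{k-2}$ together with Proposition \ref{prop:limit}(b), the announced term $k(k-1)x_{m_1}^{k-2}\sum_j\eta_{1,2j-1}\eta_{1,2j}$ in the $u_k$-action. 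To identify $\beta^*(K')$: for $n=1$, the map $\beta:\CCA_{1,2}\to\CCA_{(12)}$ sends $\xi_1\mapsto\xi_1+\xi_2$, which acts as $\eta+(-\eta)=0$ on $K_{1,2}$, so $\beta^*K_{1,2}$ is quasi-isomorphic to $\C[x_1]$ and matches the target (no $\xi$-generators since $I=\{1\}$); for $n\geq 2$, $w=\Id$ and $\beta$ is the identity, and after the change of variables $\alpha_i=\eta_1^{(i)}$, $\beta_i=\eta_1^{(i)}-\eta_2^{(i)}$ in each block of $K'$, the underlying complex becomes the Koszul resolution of $\C[x_n]$ on the $\alpha_i$ tensored with the free algebra $\C[x_n][\beta_1,\ldots,\beta_{n-1}]$; a direct computation shows $\xi_j$ acts on cohomology as $\beta_j-\beta_{j-1}$ (with $\beta_0=\beta_n=0$), reproducing the asserted $\xi$-relations via $\beta_i\leftrightarrow\xi_1+\cdots+\xi_i$.

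The principal obstacle is that the strict $u_k$-action on $K'$ contains cross-terms of the form $\xi_j^{(a)}\xi_{j'}^{(b)}$ arising from the iterated $\CA$-coproduct (for instance, $u_2$ on $K_{1,2}^2K_{2,3}^2$ acquires a $-\beta_1\beta_2$ summand beyond the $2\alpha_i\beta_i$ boundary contributions), and these must be trivialized in the localized category $\CCA_{c,\infty}/\C$. The natural framework is Theorem \ref{thm:koszul}, which realizes $\CCA/\C$-modules as (curved) dg modules over a Koszul-dual linear algebra and reinterprets the cross-term contributions as explicit boundaries encoded in the deformed differential, so that they are absorbed by the corresponding $A_\infty$-morphism. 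One also must book-keep the corrections to $\beta(u_k)$ from the proof of Proposition \ref{prop: twisted projector}, which contribute further $\xi\xi$-monomials with coefficients in the ideal $(x_j-x_{w(j)})$ and hence differ from the target action only by boundary terms.
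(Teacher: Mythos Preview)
Your overall strategy matches the paper's: reduce to a single pair, separate the genus contribution via Propositions \ref{prop:factor cube} and \ref{prop:factor away xi zero}, and then identify the residual $g=0$ Koszul factor. The paper does these in the opposite order (first $g=0$, then tensor in the genus), but that is immaterial.

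The genuine gap is exactly where you flag ``the principal obstacle.'' You correctly observe that the iterated coproduct produces cross-terms in the $u_k$-action on $K'=K_{12}^2K_{23}^2\cdots K_{n-1,n}^2$, but you do not actually kill them; the appeal to Theorem \ref{thm:koszul} and to ``the corresponding $A_\infty$-morphism'' is not a proof. It is not enough to know abstractly that such cross-terms \emph{might} be boundaries in some Koszul-dual picture: you must exhibit a concrete weak equivalence of $\CCA$-modules to the target on which $u_k$ acts by zero, and nothing in your outline does this.

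The paper's resolution is entirely elementary and avoids any $A_\infty$ machinery. After writing
\[
K^{(n)}\cong R_n[\xi_1^*,\ldots,\xi_{n-1}^*,\eta_1,\ldots,\eta_{n-1}\,|\, d\xi_i^*=0,\ d\eta_i=x_{i+1}-x_i]
\]
with $\xi_i=\xi_i^*-\xi_{i-1}^*$, one computes $u_k$ explicitly and then performs the coordinate shift $\eta_i\mapsto \eta_i+\tfrac{1}{2}\xi_i$. The point of this shift is that in the new coordinates every monomial in $u_k$ is divisible either by some $\eta_i$ or by some $x_i-x_{i+1}$. Consequently the algebra map $K^{(n)}\to \C[x_n,\xi_1,\ldots,\xi_{n-1}]$ sending $\eta_i\mapsto 0$ and all $x_i\mapsto x_n$ is a \emph{strict} homomorphism of $\CCA$-modules (it intertwines $u_k$ with zero on the nose) and a quasi-isomorphism. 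No localization subtleties, no deformed differentials. Your sketch was one concrete coordinate change away from closing.

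A secondary point you gloss over: once you have factored out $(K_{1,2}^*)^{\otimes g}$, you still have to pass through $\Phi_\mu^{*\infty}$, and you should check that the formula $u_k\mapsto k(k-1)x^{k-2}\sum_j\eta_{2j-1}\eta_{2j}$ survives this pullback for \emph{all} $k$, not just $k\le n$. The paper sidesteps this by first establishing that $u_k$ acts by zero on the $g=0$ part (which is trivially preserved by $\Phi^*$), and only then tensoring in the genus factors $K_{i,i}^*$ over $R_c$.
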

\begin{proof}
Suppose $g=0$. Then each $2$-cycle of $w$ corresponds to $n_i=1, g_i=0$, so the corresponding element $K_{i',i'+1}$ appears in degree $1$. Applying Proposition \ref{prop:markov 2} we get rid of this element decreasing the number of strands by $1$. So the permutation becomes the identity permutation and each $n_i$ corresponds to a shifted version of the product
\[
K^{(n_i)}=K_{12}^2 K_{23}^2 \cdots K_{n_i-1,n_i}^2.
\]
These products are multiplied together over $\C$. Let us consider each product separately.
Let $n=n_i$. From the definition, $K_{i,j}^2$ can be computed as follows:
\[
K_{i,j}^2 = R_n[\eta_1,\eta_2,\,|\, d \eta_1 = d \eta_2 = x_1-x_2](1),
\]
where the action is given by
\[
\xi_i = -\xi_j = \eta_1-\eta_2,\; u_k = - k \eta_1 \eta_2 h_{k-2}(x_i, x_j).
\]
After a change of variables, we can represent it as
\[
K_{i,j}^2 = R_n[\eta,\xi_i\,|\, d \eta = x_1 - x_2,\; d \xi_i = 0](1),
\]
with the action $u_k = k \eta \xi_i h_{k-2}(x_i, x_j)$. Tensoring these together, we obtain
\[
K^{(n)} \cong R_n[\xi_1^*,\cdots,\xi_{n-1}^*, \eta_1,\cdots,\eta_{n-1}\,|\, d\xi_i^* = 0, d\eta_i = x_{i+1} - x_i](n-1),
\]
with the action given by $\xi_{n}=-\xi_{n-1}^*$, $\xi_1=\xi_1^*$,
\[
\xi_i = \xi_i^* - \xi_{i-1}^*,\;u_k = k \sum_{i=1}^{n-1} \eta_i \xi_i^* h_{k-2}(x_i, x_{i+1}) + \sum_{i=2}^{n-1} (-\xi_{i-1}^*) \xi_i^* h_{k-2}(x_i, x_i, x_i)
\]
\[
= \sum_{n-1\geq i\geq j\geq 1} \left( k \eta_i \xi_j h_{k-2}(x_i, x_{i+1}) + \binom{k}{2} \xi_i \xi_j x_i^{k-2}\right).
\]
Let us choose $\xi_1,\cdots,\xi_{n-1}, \eta_1+\frac{\xi_1}{2},\cdots,\eta_{n-1}+\frac{\xi_{n-1}}{2}$ as the new generators of $K^{(n)}$. The description of the module becomes
\[
K^{(n)} \cong R_n[\xi_1,\cdots,\xi_{n-1}, \eta_1,\cdots,\eta_{n-1}\,|\, d\xi_i = 0, d\eta_i = x_{i+1} - x_i](n-1),
\]
\[
u_k = \sum_{n-1\geq i\geq j\geq 1} k \left(\eta_i \xi_j h_{k-2}(x_i, x_{i+1}) + \frac{1}{2} \xi_i \xi_j (x_i - x_{i+1}) h_{k-3}(x_i, x_i, x_{i+1})\right)
\]
Let $K':=\C[x_n,\xi_1,\cdots,\xi_{n-1}]$ with zero differential and zero action of $u_k$. Each $x_i$ acts by $x_n$ and $\xi_n$ acts by $-\sum_{i=1}^{n-1}\xi_i$. Then we have a quasi-isomorphism $K^{(n)}\to K'(n-1)$ which sends $\eta_i$ to $0$ and all $x_i$ to $x_n$.

Tensoring these quasi-isomorphisms over $\C$ we obtain a quasi-isomorphism from $K_{\bar n, 0}$ to \[
\C[(x_i)_{i\in I},\; (\xi_i)_{i\notin I}](n-1)
\]
with zero differential and $u_k$ acting by zero, the remaining variables acting as in \eqref{eq:remaining variables}. Note that applying $\Phi^*$ to a module on which $u_k$ for all $k$ act by zero again produces a module on which all $u_k$ act by zero. So the statement for $g=0$ is proved.

Now suppose $g$ is arbitrary. The ``genus contribution'' can be factored out using Propositions \ref{prop:factor cube} and \ref{prop:factor away xi zero}:
\[
\beta^*(K_{\bar n, \bar g})\cong \beta^*(K_{\bar n, 0}) \otimes_{R_c} \bigotimes_{i=1}^c (K_{i,i}^*)^{g_i},
\]
where $K_{i,i}^*$ is the object defined analogously to $K_{i,j}^*$, but with $j=i$. So the statement follows from the case $g=0$.
\end{proof}

\section{$y$-ification of $\CA$-modules}
\label{sec:yified}

\subsection{$y$-ification}

Let $(C,d)$ be a complex of Soergel bimodules with a structure of $w$-twisted module over the the algebra $\CA$. By definition, this implies that $C$ admits an action of anticommuting operators $\xi_i$ such that $d(\xi_i)=x_i-x'_{w^{-1}(i)}.$

Let $R_y = \C[y_1,\ldots,y_n]$. Following \cite{GH}, we define the {\em strict $y$-ification} of $C$  as the complex $\CY(C):=C[y_1,\ldots,y_n]=C \otimes R_y$ with the twisted differential
\[
d_y = d + \sum_{i=1}^n \xi_i y_i.
\]
The variables $y_i$ are placed in homological degree $\deg_h y_i=-2$. Note that
\[
d_y^2= \sum_{i=1}^{n}\left(x_i-x'_{w^{-1}(i)}\right)y_i.
\]
Since $d_y^2\neq 0$, we will sometimes refer to $\CY(C)$ as to curved complex. It is easy to see that the definition of tensor product of $y$-ifications in \cite{GH} agrees with $y$-ification of tensor product of $\CA$-modules.

In particular, given a braid $\beta$ with the corresponding permutation $w$, we can use the $w$-twisted $\CA$-module structure on the Rouquier complex $T_{\beta}$ from Theorem \ref{thm:braid action} to define the $y$-ified complex $\CY(\beta)=\CY(T_\beta)$.

Passing to the Hochschild homology we have:
\begin{proposition}
    Consider the differential $d_{\HH,y}$ on the curved complex $\HH(\CY(\beta))$. Then
    $$
    d_{\HH,y}^2=0\mod \left(y_i-y_{w(i)}\right).
    $$
    After taking quotient by the ideal generated by $\left(y_i-y_{w(i)}\right)$ the curved complex $\HH(\CY(\beta))$ becomes an honest chain complex.
\end{proposition}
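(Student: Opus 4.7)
The plan is to reduce the statement to a one-line reindexing identity by transporting the curvature formula from the $y$-ified Rouquier complex to its Hochschild homology. On $\CY(\beta) = \CY(T_\beta)$ we already know $d_y^2 = \sum_i (x_i - x'_{w^{-1}(i)}) y_i$, and the two ingredients I need are: (i) that the operators $\xi_i$, $x_i$, $x'_i$, $y_i$ descend to $\HH(\CY(\beta))$, so that the curvature formula transports termwise, and (ii) that Hochschild homology identifies the left and right $R$-actions on any Soergel bimodule in $T_\beta$, which means $x'_j = x_j$ as operators on $\HH(\CY(\beta))$ for every $j$.

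Combining these two observations, the curvature on the Hochschild complex becomes $d_{\HH,y}^2 = \sum_i (x_i - x_{w^{-1}(i)}) y_i$. The final step is to rewrite this via the substitution $i = w(j)$ in the second term: $\sum_i x_{w^{-1}(i)} y_i = \sum_j x_j y_{w(j)}$, so the curvature equals $\sum_i x_i (y_i - y_{w(i)})$. This element visibly lies in the ideal of $\C[y_1,\ldots,y_n]$ generated by $\{y_i - y_{w(i)}\}_{i=1}^n$, which proves the first assertion. Passing to the quotient by that ideal kills the curvature, so the induced differential $d_{\HH,y}$ squares to zero and $\HH(\CY(\beta))$ becomes an honest chain complex, as claimed.

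There is no serious obstacle: the computation is essentially a reindexing identity together with the standard behavior of $\HH$. The only point where I will be slightly careful is step (i), to check that the curved structure on $\CY(\beta)$ — and in particular the operators $\xi_i$ and the formal variables $y_i$ coming from $y$-ification — are sent to well-defined operators on $\HH(\CY(\beta))$, so that one may legitimately compute $d_{\HH,y}^2$ by applying $\HH$ to the identity $d_y^2 = \sum_i (x_i - x'_{w^{-1}(i)}) y_i$. This is automatic from the fact that $\HH$ is a functor applied componentwise to the complex underlying $\CY(\beta)$, together with the observation that the action of each $\xi_i$ is $R^e$-linear and hence descends.
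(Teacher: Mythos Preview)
Your proof is correct. The paper does not supply an explicit proof of this proposition; it is left as an immediate consequence of the curvature formula $d_y^2=\sum_i(x_i-x'_{w^{-1}(i)})y_i$ stated just above together with the fact that Hochschild homology identifies $x_i'$ with $x_i$, and your reindexing $\sum_i x_{w^{-1}(i)} y_i=\sum_i x_i y_{w(i)}$ is exactly the intended computation.
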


\begin{remark}
    Note that $\C[y_1,\ldots,y_n]/\left(y_i-y_{w(i)}\right)$ is a polynomial ring in $c$ variables if the closure of $\beta$ has $c$ components.
\end{remark}

This yields the following definition.
\begin{definition}
The $y$-ified Khovanov-Rozansky homology of the braid $\beta$ is defined as
$$
\HY(\beta):=\HHH\left(\CY(\beta) \otimes_{R_y} \C[y_1,\ldots,y_n]/\left(y_i-y_{w(i)}\right)_{i=1}^n,d_y\right).
$$
\end{definition}

It is proved in \cite{GH} that $\HY(\beta)$ up to grading shifts is the topological invariant of the closure of $\beta$.
We refer to \cite{GH} for more details on $y$-ifications and their properties.

The above constructions use only the action of $\xi_i$. The action of $u_k$ gives rise to interesting operators in the $y$-ified homology.

\begin{theorem}
	\label{thm:y braid action}
	Let $\beta$ be an arbitrary braid. Then:

\noindent (a) There is a family of chain maps $F_k$ on the $y$-ified Rouquier complex $\CY(\beta)$ satisfying
$$
[d_y,F_k]=[F_k,F_m]=[F_k,x_i]=0,\ [F_k,y_i]=h_{k-1}\left(x_i,x'_{w^{-1}(i)}\right).
$$
(b) There is a family of chain maps $F_k$ on the $y$-ified link homology $\HY(\beta)$ satisfying
$$
[F_k,F_m]=[F_k,x_i]=0,\ [F_k,y_i]=kx_i^{k-1}
$$
Both actions are invariant up to homotopy under braid relations.
\end{theorem}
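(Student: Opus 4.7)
The plan is to define $F_k$ by the explicit formula suggested in Theorem \ref{thm:intro Dk} from the introduction, namely
\[
F_k := \sum_{i=1}^{n} h_{k-1}\left(x_i,x'_{w^{-1}(i)}\right)\frac{\partial}{\partial y_i}\;+\; u_k,
\]
where $u_k$ acts on $T_\beta$ via the $w$-twisted $\CA$-module structure supplied by Theorem \ref{thm:braid action}, and $\partial/\partial y_i$ acts on $\CY(\beta)=T_\beta\otimes R_y$ in the obvious way. Since $y_i$ has $(A,Q,T)$-degree $(0,-2,-2)$ and $u_k$ has degree $(0,2k,2)$, both summands are of tri-degree $(0,2k,2)$, so $F_k$ is homogeneous of the claimed degree and even in the homological grading.

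First I would verify part (a) by direct computation, checking each relation in turn. For the chain map property $[d_y,F_k]=0$, observe that $[d,u_k]=d(u_k)=\sum_i h_{k-1}(x_i,x'_{w^{-1}(i)})\xi_i$ by definition of the twisted differential in $\CA_w$, while a one-line super-commutator computation using $[\partial/\partial y_j,y_i]=\delta_{ij}$ and the fact that the coefficients $h_{k-1}(x_i,x'_{w^{-1}(i)})$ are central gives
\[
\Bigl[\sum_j \xi_j y_j,\; F_k\Bigr]=-\sum_i h_{k-1}(x_i,x'_{w^{-1}(i)})\xi_i,
\]
and the two terms cancel. The relations $[F_k,x_i]=0$ and $[F_k,y_i]=h_{k-1}(x_i,x'_{w^{-1}(i)})$ are immediate (the $u_k$ summand and the coefficients commute with $x_i$ and $y_i$, and only the $\partial/\partial y_i$ factor contributes to the bracket with $y_i$). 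The relation $[F_k,F_m]=0$ follows from super-commutativity of $\CA$ together with the fact that the coefficients $h_{k-1}(x_i,x'_{w^{-1}(i)})$ depend only on $x$-variables and hence commute with $\partial/\partial y_j$.

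For part (b), apply the Hochschild functor $\HH$ to $\CY(\beta)$ and then pass to the quotient by $(y_i-y_{w(i)})$ as in the definition of $\HY(\beta)$. Under $\HH$ the actions of $x_i$ and $x'_i$ are identified, so the coefficient $h_{k-1}(x_i,x'_{w^{-1}(i)})$ becomes $h_{k-1}(x_i,x_{w^{-1}(i)})$; restricting further to the link-homology quotient, where $x_i$ and $x_{w(i)}$ act via the same basepoint along each cycle of $w$, this specializes to $h_{k-1}(x_i,x_i)=kx_i^{k-1}$. The four relations of (a) descend to the four relations claimed in (b). Invariance under braid relations reduces to two points: (i) the $\CA_w$-module structure on $T_\beta$ is canonical up to weak equivalence by Theorem \ref{thm:braid action} and Lemma \ref{lem:unique}, and the construction of $\CY$ and of $F_k$ from that $\CA_w$-structure is functorial; (ii) any two realizations of $u_k$ differ by a $B$-linear homotopy whose induced contribution to $F_k$ is again a null-homotopy (since $\partial/\partial y_i$ commutes with $d_y$ modulo the $\xi_i y_i$-terms, which are accounted for by the defining equation $d(u_k)=\sum_i h_{k-1}\xi_i$). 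Combined, this shows the chain-homotopy class of $F_k$ is a well-defined endomorphism of $\HY(\beta)$, braid-relation invariant.

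The main obstacle, and what I would verify most carefully, is the chain map property $[d_y,F_k]=0$: the sign bookkeeping combines an odd-degree generator $\xi_j$, an even-degree $y_j$ of negative homological degree, and an even-degree $\partial/\partial y_i$, and the cancellation of the two nonzero contributions relies precisely on the form of the twisted differential $du_k=\sum_i h_{k-1}(x_i,x'_{w^{-1}(i)})\xi_i$ on $\CA_w$. Everything else is either super-commutativity or the canonical Heisenberg-type relation between $y_i$ and $\partial/\partial y_i$. Once this identity is in hand, (a) is formal and (b) follows by taking Hochschild homology and applying the invariance results of Section \ref{sec:markov}.
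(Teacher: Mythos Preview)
Your proposal is correct and follows essentially the same approach as the paper: you define $F_k$ by the same explicit formula and verify $[d_y,F_k]=0$ via the identical cancellation between $[d,u_k]=\sum_i h_{k-1}(x_i,x'_{w^{-1}(i)})\xi_i$ and the contribution from $[\sum_j\xi_j y_j,\,\partial/\partial y_i]$. In fact your write-up is more thorough than the paper's own proof, which checks only the chain-map identity and appeals to Theorem~\ref{thm:braid action} for braid-invariance, leaving the remaining commutation relations and the passage to part~(b) implicit.
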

\begin{proof}
Let $C$ be a complex of Soergel bimodules  which admits a $w$-twisted module structure over $\CA$.
Its $y$-fication $\CY(C)$  is a free module over $\C[y_1,\ldots,y_n]$, hence one can define the operators $\frac{\partial}{\partial y_i}$ on $\CY(C)$. We define
\begin{equation}
\label{eq: def Fk}
F_k:=\sum_{i=1}^{n}h_{k-1}\left(x_i,x'_{w^{-1}(i)}\right)\frac{\partial}{\partial y_i}+u_k.
\end{equation}
By Theorem \ref{thm:braid action}  the operators $F_k$ are  invariant up to homotopy under braid relations.
Let us check that they are chain maps. Indeed,
$$
[d_y,F_k]=\left[d+\sum_{i=1}^{n} \xi_i y_i, \sum_{i=1}^{n}h_{k-1}\left(x_i,x'_{w^{-1}(i)}\right)\frac{\partial}{\partial y_i}+u_k\right]=$$
$$
[d,u_k]-\sum_{i=1}^{n}h_{k-1}\left(x_i,x'_{w^{-1}(i)}\right)\xi_i=0.
$$
\end{proof}

\begin{example}
By Remark \ref{rem: u1} we have $u_1=0$, hence $F_1=\sum_{i=1}^{n}\frac{\partial}{\partial y_i}$.
\end{example}

\subsection{From $\CA$ modules to $y$-ifications}\label{sec:from A to yifications}
The $y$-ification can be recognized as a special instance of Theorem \ref{thm:koszul}. We have

\begin{corollary}
    Let $B_\xi$ be the subalgebra of $\CA$ generated by $x_i, x_i', \xi_i$. Then $\CY$ defines a fully faithful embedding of $B_\xi/B$ into the homotopy category of curved complexes of modules over $B_y=B[y_1,\ldots,y_n]$ with curvature $\sum_{i=1}^{n}(x_i-x'_i)y_i$.
\end{corollary}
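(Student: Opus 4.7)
The plan is to recognize the $y$-ification functor $\CY$ as a direct instance of the Koszul-type equivalence established in Theorem \ref{thm:koszul}, applied to the pair $(\CA,\CB) = (B_\xi, B)$.

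First, I would verify that $B_\xi$ is linear over $B$ in the sense of the definition immediately preceding Theorem \ref{thm:koszul}. Indeed
\[
B_\xi = B[\xi_1,\ldots,\xi_n \,|\, d\xi_i = x_i - x_i'],
\]
so each $\xi_i$ plays the role of a generator $u_i$ with $|\xi_i| = 1$, and the differential $d\xi_i$ has constant term $B_i = x_i - x_i' \in B$ and no dependence on the other $\xi_j$, i.e.\ $A_{ij} = 0$. Thus the hypotheses of Theorem \ref{thm:koszul} are satisfied.

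Applying the theorem, the category $B_\xi/B$ is equivalent to a full subcategory of $\widehat{B_\xi}-\Mod$, where
\[
\widehat{B_\xi} = B[[\widehat{\theta}_1,\ldots,\widehat{\theta}_n\,|\,d\widehat{\theta}_i = 0]], \qquad |\widehat{\theta}_i| = -1 - 1 = -2,
\]
the vanishing of $d\widehat{\theta}_i$ coming from $A_{ji} = 0$. Since $|\widehat{\theta}_i|$ is even, the variables $\widehat{\theta}_i$ commute, so $\widehat{B_\xi}$ is a (graded) power series algebra over $B$ in variables of homological degree $-2$; identifying $\widehat{\theta}_i$ with $y_i$ gives precisely the ring $B_y$ (modulo the polynomial-vs.-series issue addressed below). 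The deformed differential prescribed by Theorem \ref{thm:koszul} on an $B_\xi$-module $M$ is
\[
d_{\widehat{M}} = d_{M\otimes_B \widehat{B_\xi}} + \sum_{i=1}^n \xi_i \widehat{\theta}_i,
\]
matching the definition of $d_y = d + \sum_i \xi_i y_i$ verbatim under $\widehat{\theta}_i \leftrightarrow y_i$. The remark following Theorem \ref{thm:koszul} computes the curvature as $\sum_i B_i \widehat{\theta}_i = \sum_i (x_i - x_i') y_i$, so $\widehat{M}$ is naturally a curved complex with the required curvature. Hence $\CY$ coincides with the functor $M \mapsto \widehat{M}$ constructed in Theorem \ref{thm:koszul}, and fully faithfulness is immediate from that theorem.

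The only real subtlety I see is the distinction between $B_y = B[y_1,\ldots,y_n]$ (polynomials) in the statement and the power-series algebra $B[[\widehat{\theta}_1,\ldots,\widehat{\theta}_n]]$ produced by Theorem \ref{thm:koszul}. The main step left is to argue that this makes no difference for us: because $y_i$ has strictly positive $q$-degree $2$ (equivalently, strictly negative homological degree $-2$), each fixed internal bidegree of $\widehat{M}$ receives contributions from only finitely many monomials in the $y_i$ for any reasonably bounded $M$, so the power series and polynomial versions agree as graded modules and produce the same Hom complexes. With this identification in place the statement follows. The only ``obstacle'' is really a bookkeeping one: checking that everything I have done is genuinely compatible with the curved-module conventions and with the weak equivalences used to localize $B_\xi/B$; the algebraic content is entirely supplied by Theorem \ref{thm:koszul}.
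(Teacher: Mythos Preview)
Your proposal is correct and takes exactly the same approach as the paper: the paper states this corollary immediately after the sentence ``The $y$-ification can be recognized as a special instance of Theorem \ref{thm:koszul}'' and gives no further proof, so your verification that $B_\xi$ is linear over $B$ with $A_{ij}=0$, your identification $\widehat{\theta}_i \leftrightarrow y_i$, and the curvature computation are precisely the unpacking the paper leaves to the reader. Your remark on the polynomial-versus-power-series issue is a reasonable observation that the paper does not explicitly address.
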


\begin{remark}
Theorem \ref{thm:koszul} requires power series in $y_1,\ldots,y_n$ but here we choose to work with polynomials in $y_i$ instead.  For $y$-ifications of bounded complexes, the two approaches are identical.

Indeed, if $C$ is a bounded complex then any  power series $\sum f_{k_1,\ldots,k_n}y_1^{k_1}\cdots y_n^{k_n}$ defines an endomorphism of $\CY(C)=C[y_1,\ldots,y_n]$, where $f_{k_1,\ldots,k_n} \in \End(C)$ are any elements with
$$
\deg_h\left(f_{k_1,\ldots,k_n}\right) =  \deg_h(f_0)+2(k_1+\ldots+k_n).
$$
Since $C$ is bounded, all but finitely many of the $f_{k_1,\ldots,k_n}$ must vanish for
degree reasons.
\end{remark}

So the $y$-ification essentially corresponds to forgetting the generators $u_k$. Theorem \ref{thm:koszul} tells us how to capture the complete information.

\begin{definition}
    For an $\CA$-module $X$ the extended $y$-ification is defined by
    \[
    X_{y,\nu}=\left(X[y_1,\ldots,y_n,\nu_1,\ldots,\nu_n],d_{y,v}\right)
    \]
     with the variables of degrees $\deg_h y_i=-2$, $\deg_h \nu_k=-3$. The differential is given by
    \[
    d_{y,v} = d_y+\sum_{k=1}^n \nu_k F_k = d + \sum_{i=1}^n y_i \xi_i + \sum_{k=1}^n \nu_k F_k.
    \]
\end{definition}

We have
\begin{corollary}
    The extended $y$-ification $X\to X_{y,v}$ is a fully faithful embedding of the category $\CA/B$ into the category of curved complexes over $B_{y,\nu}$, where
    \[
    B_{y,\nu} = B\left[y_1,\ldots,y_n,\nu_1,\ldots,\nu_n \,|\, d \nu_k = 0,\; d y_i = \sum_{k=1}^n h_{k-1}(x_i,x_i') \nu_k\right].
    \]
    The curvature on $B_{y,\nu}$-modules is $\sum_{i=1}^{n}(x_i-x'_i)y_i$.
\end{corollary}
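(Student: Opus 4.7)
The plan is to derive this Corollary as a direct instance of Theorem \ref{thm:koszul}, with $\CB = B$ and $\CA$ the dg algebra of Definition \ref{def:A}. I would first verify that $\CA$ is linear over $B$ in the sense of Example \ref{ex: linear}: its free generators over $B$ are $\xi_1,\ldots,\xi_n$ in homological degree $1$ and $u_1,\ldots,u_n$ in homological degree $2$, and the differentials $d\xi_i = x_i - x_i'$, $d u_k = \sum_{i} h_{k-1}(x_i, x_i')\xi_i$ are respectively constant and linear in these generators, with all coefficients already in $B$. The relevant data is then $B_{\xi_i} = x_i - x_i'$, $B_{u_k} = 0$, and the only non-zero linear coefficients are $A_{u_k,\xi_i} = h_{k-1}(x_i, x_i')$.

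With this data fed into Theorem \ref{thm:koszul}, the Koszul dual algebra $\widehat\CA$ acquires generators of two types: $\widehat\theta_{\xi_i}$ of homological degree $-2$, which I relabel $y_i$, and $\widehat\theta_{u_k}$ of degree $-3$, which I relabel $\nu_k$. The differential formula $d\widehat\theta_i = (-1)^{k_i+1}\sum_j A_{ji}\widehat\theta_j$ then gives
\[
dy_i = \sum_{k=1}^n h_{k-1}(x_i, x_i')\,\nu_k,\qquad d\nu_k = 0,
\]
so $\widehat\CA$ coincides precisely with $B_{y,\nu}$ as presented in the statement.

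Next I would match the module $\widehat X = X[[y,\nu]]$ of Theorem \ref{thm:koszul} with the extended $y$-ification $X_{y,\nu}$. Its differential is $d_{X \otimes_B \widehat\CA} + \sum_i \xi_i y_i + \sum_k u_k \nu_k$, where the first term incorporates the internal differential on $\widehat\CA$ via the Leibniz rule: using $dy_i = \sum_k h_{k-1}(x_i,x_i')\nu_k$ this contributes the operator $\sum_{i,k} h_{k-1}(x_i, x_i')\,\nu_k\,\partial/\partial y_i$ on $X[y,\nu]$. Combining the three contributions one obtains exactly $d_X + \sum_i y_i \xi_i + \sum_k \nu_k F_k = d_{y,\nu}$ by the definition of $F_k$ in \eqref{eq: def Fk}, and Theorem \ref{thm:koszul} itself guarantees that $X\mapsto \widehat X$ is a fully faithful embedding of $\CA/B$ into $\widehat\CA-\Mod$.

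It remains to account for the curvature and the issue of completion vs.\ polynomials. For the curvature I would invoke the formula in the remark following Theorem \ref{thm:koszul}: since only $B_{\xi_i} = x_i - x_i'$ is non-zero among the constant parts, $d_{\widehat X}^2 = \sum_i B_i \widehat\theta_i = \sum_i (x_i - x_i')\,y_i$, which matches the stated curvature and explains why the target is the category of \emph{curved} modules over $B_{y,\nu}$. The one genuine matter of care is the replacement of the formal power series $[[y,\nu]]$ by polynomials $[y,\nu]$; this reduction is harmless because $y_i, \nu_k$ have strictly negative homological degree and the $\CA$-modules of interest are bounded below in homological degree, so any formal series in $y,\nu$ acting on a given element is automatically finite. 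I expect this bookkeeping around the completion to be the only substantive (if mild) obstacle; the rest amounts to reading off the identifications from Theorem \ref{thm:koszul}.
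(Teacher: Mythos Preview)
Your proposal is correct and follows exactly the approach the paper intends: the Corollary is stated without proof precisely because it is the specialization of Theorem~\ref{thm:koszul} to $\CB=B$ and the linear algebra $\CA$ of Definition~\ref{def:A}, and you have carried out that specialization accurately, including the identification of $\widehat\CA$ with $B_{y,\nu}$, the matching of $d_{\widehat X}$ with $d_{y,\nu}$ via the definition of $F_k$, and the curvature computation from the remark after Theorem~\ref{thm:koszul}. Your remark on power series versus polynomials is also apt; the paper is tacitly relying on the additional $q$-grading (and boundedness in applications) to make the distinction immaterial.
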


Collecting the linear terms in $\nu_k$ we obtain
\begin{corollary}\label{cor:extra structures}
    Let $X$, $Y$ be $\CA$-modules. Any morphism of $y$-ifications $\CY(X)\to\CY(Y)$ coming from a morphism in $\CA/B$ commutes with the operators $F_k$ up to homotopy. In particular, the action of the operators $F_k$ up to homotopy is an invariant of the isomorphism class of an object in $\CA/B$.
\end{corollary}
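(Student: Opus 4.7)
The plan is to extract the claim directly from the preceding corollary by expanding the lift of a morphism in powers of the new odd variables $\nu_k$. By that corollary, a morphism $\phi:X\to Y$ in $\CA/B$ corresponds, via the fully faithful embedding $X\mapsto X_{y,\nu}$, to a morphism of curved $B_{y,\nu}$-modules
\[
\wt\phi:X_{y,\nu}\longrightarrow Y_{y,\nu},
\]
canonical up to homotopy of curved $B_{y,\nu}$-modules. Since $\wt\phi$ is $B_{y,\nu}$-linear, in particular $\C[\nu_1,\ldots,\nu_n]$-linear, it is determined by its values on $X[y_1,\ldots,y_n]$ and can be written uniquely as a finite sum
\[
\wt\phi \;=\; f_0 \;+\; \sum_{k=1}^n \nu_k\, g_k \;+\; \sum_{k<l} \nu_k\nu_l\, g_{kl}\;+\;\cdots,
\]
where $f_0,g_k,g_{kl},\ldots$ are $B_y$-linear maps $X[y_1,\ldots,y_n]\to Y[y_1,\ldots,y_n]$. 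Specialising $\nu_k=0$ identifies $f_0$ with the morphism of $y$-ifications $\CY(X)\to\CY(Y)$ induced by $\phi$.

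The next step is to write out the compatibility of $\wt\phi$ with the curved differential
\[
d_{y,\nu} \;=\; d_y + \sum_{k=1}^n \nu_k F_k,
\]
and compare coefficients of $\nu_k^0$ and $\nu_k^1$. The degree-zero part in $\nu$ merely recovers the fact that $f_0$ is a chain map of $y$-ifications. The coefficient of $\nu_k$ (for a fixed $k$) in the identity $d_{y,\nu}^{Y}\wt\phi = (-1)^{|\wt\phi|}\wt\phi\, d_{y,\nu}^X$, using that $d_y$ commutes $\C[\nu]$-linearly with the $\nu_k$ and keeping track of signs coming from $|\nu_k|=-3$, yields an identity of the form
\[
F_k^Y\circ f_0 \;-\; f_0\circ F_k^X \;=\; \pm\bigl[d_y,\, g_k\bigr],
\]
that is, $g_k$ is an explicit homotopy exhibiting $[F_k,f_0]\simeq 0$ in the category of $y$-ifications. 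This is the first assertion of the corollary; the higher-order coefficients in $\nu$ produce the analogous $A_\infty$-coherence data, but only the linear term is needed here.

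Finally, suppose $\phi:X\to Y$ is an isomorphism in $\CA/B$ with inverse $\psi$; let $f_0,\psi_0$ be the induced morphisms of $y$-ifications and $g_k,h_k$ the linear coefficients of their lifts. Then $f_0$ is a homotopy equivalence between $\CY(X)$ and $\CY(Y)$ intertwining $F_k^X$ with $F_k^Y$ modulo $[d_y,g_k]$, and similarly for $\psi_0$. Hence the action of each $F_k$ on $\CY(X)$ is conjugate to that on $\CY(Y)$ up to homotopy, proving the second assertion. The only genuinely content-bearing step is the sign computation in the $\nu_k$-coefficient; everything else is formal once Theorem \ref{thm:koszul} and the preceding corollary are in place.
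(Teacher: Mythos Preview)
Your argument is correct and is exactly the approach the paper takes: the paper's entire proof is the phrase ``Collecting the linear terms in $\nu_k$ we obtain,'' and you have simply unpacked that phrase carefully, expanding $\wt\phi$ in $\nu$ and reading off $[F_k,f_0]=[d_y,g_k]$ from the $\nu_k$-coefficient of $[d_{y,\nu},\wt\phi]=0$.
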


Analogously, we can define extended $y$-ifications of objects of $\CCA_{c,n}/R_c$ and $\CCA_{c,\infty}/R_c$. Together with Theorem \ref{thm:link invariant} and Proposition \ref{prop:two versions of the category} we obtain
\begin{corollary}
    \label{cor: full Markov}
    The $y$-ified link homology as a module over $\C[(x_i)_{i=1}^c, (y_i)_{i=1}^c, F_1,F_2,\ldots]$ is an invariant of a link.
\end{corollary}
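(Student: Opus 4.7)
The plan is to combine the link invariance from Theorem \ref{thm:link invariant} with the extended $y$-ification construction of Section \ref{sec:from A to yifications}, applied to the stabilized algebras $\CCA_{c,\infty}$ instead of $\CA_n$. First, I would note that Theorem \ref{thm:link invariant} gives, for each $k$, a well-defined isomorphism class of a $\CCA_{c,\infty}$-module $\HH^k(L)$ in the category $\CCA_{c,\infty}/\C$. By Proposition \ref{prop:reduction 1} and the explicit description in Proposition \ref{prop:basic object}, the underlying object is quasi-isomorphic to a twisted complex of basic objects, each of which is manifestly free over $R_c=\C[x_1,\ldots,x_c]$. Hence by Proposition \ref{prop:two versions of the category}, $\HH^k(L)$ lifts to a well-defined isomorphism class in the full subcategory $\CCA_{c,\infty}/R_c$.

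Next, I would mimic the construction of Section \ref{sec:from A to yifications} in the $\CCA_{c,\infty}$ setting. Namely, for any $\CCA_{c,\infty}$-module $X$ (free over $R_c$), define the extended $y$-ification
\[
X_{y,\nu} = \bigl(X[y_1,\ldots,y_c,\nu_1,\nu_2,\ldots],\; d_{y,\nu}\bigr), \qquad d_{y,\nu} = d + \sum_{i=1}^c y_i\,\xi_i + \sum_{k\ge 1}\nu_k\,F_k,
\]
with $F_k$ given by formula \eqref{eq: def Fk} specialized to $\CCA_{c,\infty}$ (so that $h_{k-1}(x_i,x_{w^{-1}(i)}')$ becomes $k x_i^{k-1}$ after the identification $x_i'=x_i$). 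The commutation relations in Theorem \ref{thm:y braid action}(b) together with $d(u_k)=k\sum_i x_i^{k-1}\xi_i$ in $\CCA_{c,\infty}$ imply that $d_{y,\nu}^2$ is a central element (the curvature $\sum_i(x_i-x_i')y_i$ vanishes because $x_i=x_i'$ in $\CCA_{c,\infty}$), so $X_{y,\nu}$ lives in a well-defined category of (curved) complexes over $R_c[y_1,\ldots,y_c,\nu_1,\nu_2,\ldots]$. Applying Theorem \ref{thm:koszul} in this stabilized setting yields that this extended $y$-ification is a fully faithful embedding of $\CCA_{c,\infty}/R_c$ into the homotopy category of such curved complexes.

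Consequently, the analogue of Corollary \ref{cor:extra structures} holds: any morphism arising from an isomorphism in $\CCA_{c,\infty}/R_c$ induces, after reading off the linear coefficients in the $\nu_k$, a morphism of $y$-ified complexes commuting with all the $F_k$ up to homotopy. Combined with the previous two paragraphs, this shows that the full data $(\HY(L), x_1,\ldots,x_c, y_1,\ldots,y_c, F_1, F_2, \ldots)$ depends only on the isomorphism class of $\HH^{\bullet}(L)$ in $\CCA_{c,\infty}/R_c$, hence by Theorem \ref{thm:link invariant} is a link invariant.

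The main obstacle I expect is verifying that the formalism of Section \ref{sec:from A to yifications} transfers cleanly from $\CA_n/B$ to $\CCA_{c,\infty}/R_c$: one must check that the relevant algebra is still linear in the sense of Theorem \ref{thm:koszul} (so the embedding is an equivalence onto its image), and that the operators $F_k$ from \eqref{eq: def Fk} match the structure coming from the $\CCA_{c,\infty}$-action, so that the commutation relations $[F_k,y_i]=kx_i^{k-1}$ follow. Once this identification is made, the invariance follows formally from Theorem \ref{thm:link invariant} and the fully faithful embedding.
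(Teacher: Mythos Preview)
Your proposal is correct and follows essentially the same route as the paper: the paper's proof is just the one-line remark that extended $y$-ifications can be defined analogously for $\CCA_{c,\infty}/R_c$, after which the result follows from Theorem \ref{thm:link invariant} combined with Proposition \ref{prop:two versions of the category}. You have simply unpacked these steps in more detail, including the freeness over $R_c$ (which the paper also notes just before Proposition \ref{prop:two versions of the category}) and the verification that Theorem \ref{thm:koszul} applies in the stabilized setting.
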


\subsection{$y$-ification of the basic objects}\label{sec:yified building blocks}
From the description in Proposition \ref{prop:basic object} it is easy to compute the $y$-ifications. Indeed, by \eqref{eq:remaining variables}
we have
$$
    \xi_{m_i} = -\sum_{j=m_{i-1}+1}^{m_i-1} \xi_j, 
$$
so 
$$
d_y=\sum_{i=1}^s\sum_{j=m_{i-1}+1}^{m_i}\xi_jy_j=\sum_{i=1}^s\sum_{j=m_{i-1}+1}^{m_i-1}\xi_j(y_j-y_{m_i})
$$
Therefore the $y$-ification of $K_{\bar n, \bar g}$ has cohomology
\[
H(K_{\bar n, \bar g}\otimes \C[y_1,\ldots,y_n],d_y)\simeq \C[(x_j)_{j\in I},\; (y_j)_{j\in I}, \; (\eta_{i j})_{j\leq 2 g_i,i\leq s}]\left(2 g\right).
\]
Since $K_{\bar n, \bar g}$ is free over $R$, taking
Hochschild cohomology and then homology with respect to $d_y$ is the same as
taking homology with respect to $d_y$ and tensoring with an exterior algebra:
\[
\HY(K_{\bar n, \bar g})\simeq \C\left[(x_j)_{j\in I},\; (y_j)_{j\in I}, \; (\eta_{i j})_{j\leq 2 g_i,i\leq s}, \; (\theta_k)_{1\le k\le n}\right]\left(2 g\right).
\]
The action of the operators $F_k$ is given by
\[
F_k = \sum_{i=1}^s \left(k x_{m_i}^{k-1} \frac\partial{\partial y_{m_i}} + k(k-1) x_{m_i}^{k-2} \sum_{j=1}^{g_i} \eta_{i, 2j-1} \eta_{i, 2 j}\right).
\]
Here the notations follow Proposition \ref{prop:basic object} and \eqref{eq: hhh unlink}. 

\section{Hard Lefshetz and symmetry}
\label{sec:Lefshetz}

\subsection{Lefshetz operators}

In this section we work with bigraded complexes and grading-preserving chain maps between them. Let $\left(A=\bigoplus A_{j,k},D\right)$ be a doubly graded complex with differential $D:A_{j,k}\to A_{j,k-1}$. We will assume that for nonzero components  $A_{j,k}$ the values of $j$ are even.

We call a chain map $F:A\to A$ a {\em Lefshetz map} if the following statements hold:
\begin{itemize}
	\item[(a)] $F$ sends $A_{j,k}$ to $A_{j+4,k+2}$.
	\item[(b)] For all $j\ge 0$ the map in homology $F^{j}:H_{-2j,k}(A)\to H_{2j,k+2j}(A)$ is an isomorphism
\end{itemize}

\begin{lemma}
	\label{lem:5 lemma}
	Assume that $0\to A\to B\to C\to 0$ is a short exact sequence of triply graded complexes where the maps preserve all three gradings. Furthermore, assume that $A, B, C$ are equipped with endomorphisms $F_A,F_B,F_C$ which commute with maps between them up to homotopy. Then if $F_A,F_C$ are Lefshetz then $F_{B}$ is Lefshetz too.
\end{lemma}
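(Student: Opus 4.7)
The plan is to reduce the statement to the classical five-lemma applied to the long exact sequence in homology induced by the short exact sequence $0 \to A \to B \to C \to 0$. First, I would observe that since $F_A, F_B, F_C$ commute up to chain homotopy with the two maps of the short exact sequence, they descend to a strictly commuting triple of endomorphisms of the long exact sequence in homology; in particular they intertwine the connecting morphism $\partial$, which is functorial in morphisms of short exact sequences of complexes up to homotopy (the homotopy terms vanish after passing to homology).

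Next, fix $j \geq 0$ and an integer $k$. Since $F$ has bidegree $(4,2)$, the operator $F^j$ maps the five-term segment
\[
H_{-2j,k+1}(C) \to H_{-2j,k}(A) \to H_{-2j,k}(B) \to H_{-2j,k}(C) \to H_{-2j,k-1}(A)
\]
of the long exact sequence into the segment
\[
H_{2j,k+2j+1}(C) \to H_{2j,k+2j}(A) \to H_{2j,k+2j}(B) \to H_{2j,k+2j}(C) \to H_{2j,k+2j-1}(A),
\]
producing a commutative ladder with exact rows. The four outer vertical arrows are instances of $(F_A)^j$ or $(F_C)^j$ applied at $j$-degree $-2j$, so by the Lefschetz hypothesis on $A$ and $C$ they are isomorphisms.

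The classical five-lemma then forces the middle vertical arrow $(F_B)^j : H_{-2j,k}(B) \to H_{2j,k+2j}(B)$ to be an isomorphism as well, which is precisely the Lefschetz property for $F_B$ at the chosen $j$ and $k$. Since both were arbitrary, this completes the argument. The only place where one needs to pause is to check the strict commutativity of $F^j$ with the connecting map $\partial$, but as noted this is a formal consequence of the hypothesis that $F$ commutes with the chain maps of the short exact sequence up to homotopy, together with the standard construction of $\partial$; hence no genuine obstacle arises.
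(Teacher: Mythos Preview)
Your proposal is correct and follows essentially the same approach as the paper: write down the ladder of long exact sequences connected by the maps $F^j$ and apply the five-lemma, using that the outer four vertical arrows are isomorphisms by the Lefschetz hypothesis on $A$ and $C$. The paper's proof is terser (it simply draws the diagram and invokes the five-lemma), while you spell out the point about commutativity with the connecting homomorphism, but there is no substantive difference.
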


\begin{proof}
	We have the following commutative diagrams of long exact sequences:
	\begin{center}
		\begin{tikzcd}
			H_{-2j,k+1}(C) \arrow{r} \arrow{d}{F_C^{j}} &H_{-2j,k}(A) \arrow{r} \arrow{d}{F_A^{j}}& H_{-2j,k}(B) \arrow{r} \arrow{d}{F_B^{j}}& H_{-2j,k}(C) \arrow{r} \arrow{d}{F_C^{j}}&  H_{-2j,k-1}(A)    \arrow{d}{F_A^{j}}  \\
			H_{2j,k+1+2j}(C) \arrow{r} &H_{2j,k+2j}(A) \arrow{r} & H_{2j,k+2j}(B) \arrow{r} & H_{2j,k+2j}(C) \arrow{r} &  H_{2j,k-1+2j}(A)
		\end{tikzcd}
	\end{center}
	Since $F_A$, $F_C$ are Lefshetz, by 5-lemma $F_B$ is Lefshetz as well.
\end{proof}

\begin{lemma}
	\label{lem:Jacobson Morozov}
	Assume that $F$ is a Lefshetz operator for a complex $A$, and for all $N$ the sum $\bigoplus_{j-2k=N}H_{j,k}(A)$ is finite-dimensional. Then the action of $F$ extends to an action of $\mathfrak{sl}_2$ on homology of $A$ where $H$ acts on $H_{2j,k}(A)$ by $j$.
\end{lemma}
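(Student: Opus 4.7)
The plan is to reduce the statement to the classical fact that any finite-dimensional graded vector space equipped with a Lefshetz operator carries a canonical $\mathfrak{sl}_2$-action. To begin, I would define $H$ to act on $H_{2j,k}(A)$ by the scalar $j$; the identity $[H,F]=2F$ then follows immediately from the bidegree of $F$, which sends $H_{2j,k}$ to $H_{2(j+2),k+2}$. Next, I observe that the quantity $N := j - 2k$ is preserved by $F$, since $(j,k)\mapsto (j+4,k+2)$ leaves $j-2k$ unchanged. Hence the subspace
\[
V^{(N)} \;:=\; \bigoplus_{j-2k=N} H_{j,k}(A)
\]
is $F$- and $H$-stable, and finite-dimensional by hypothesis. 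Splitting $V^{(N)}$ further by parity of the $H$-eigenvalue if necessary, it suffices to construct the Lie algebra action on each finite-dimensional piece $V = \bigoplus_\ell V_\ell$ with weights $\ell$ of fixed parity, operator $F$ of weight $+2$, and isomorphisms $F^\ell : V_{-\ell} \to V_\ell$ for all $\ell \geq 0$.

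On such $V$, I would establish the Lefshetz decomposition: setting $P_\ell := \ker\bigl(F^{\ell+1}\colon V_{-\ell} \to V_{\ell+2}\bigr)$ for each $\ell \geq 0$, one proves by downward induction on $\ell$ that $V_{-\ell} = P_\ell \oplus F(V_{-\ell-2})$. The key inputs are injectivity of $F$ on negative-weight spaces (a consequence of hard Lefshetz, since $F^{\ell+2}|_{V_{-\ell-2}}$ is injective) together with the dimension equalities $\dim V_{-\ell} = \dim V_\ell$ forced by the Lefshetz isomorphisms. Iterating yields the direct sum decomposition
\[
V \;=\; \bigoplus_{\ell\geq 0}\;\bigoplus_{i=0}^{\ell} F^i(P_\ell),
\]
in which each inner summand is concentrated in weight $-\ell + 2i$.

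Finally, I would define $E$ on this basis by the classical Clebsch--Gordan formula
\[
E(F^i p) \;:=\; i(\ell - i + 1)\, F^{i-1}p \qquad (p\in P_\ell,\; 0\leq i\leq \ell),
\]
extending linearly; a direct computation on each irreducible summand $\bigoplus_i F^i(P_\ell)$ verifies $[E,F]=H$ and $[H,E]=-2E$, giving the required triple. The argument is entirely classical once the Lefshetz decomposition is in place, so the only delicate step is the induction establishing that decomposition; no essentially new difficulty arises beyond repackaging the hard Lefshetz hypothesis into the form needed for $\mathfrak{sl}_2$ representation theory.
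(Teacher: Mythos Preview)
Your proposal is correct and follows essentially the same route as the paper: both arguments use that $F$ preserves the diagonal $j-2k$ to reduce to finite-dimensional pieces, establish the primitive (Lefschetz) decomposition, and then define $E$ by inverting $F$ on each irreducible string. The paper phrases the decomposition as $H_{2j,k}(A)=\Ker(F^{s+1})\cap\Imm(F^{j+s})$ and describes $E$ as ``preimage under $F$ times a constant, then project,'' which is exactly your explicit formula $E(F^i p)=i(\ell-i+1)F^{i-1}p$ in different clothing.
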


\begin{proof}
	Clearly, $[F,H]=2F$, so it is sufficient to construct an operator $E$. By the Jacobson-Morozov theorem, we can construct it as follows.

	Since the operator $F$ preserves the sum $j-2k$, by the assumption of the lemma it is locally nilpotent.  Let us prove that for $j\ge 0$ and $s\ge 0$ one has
	$$
	H_{2j,k}(A)=\Ker(F^s)\oplus \Imm(F^{j+s}).
	$$
	Assume that $v\in \Ker(F^s)\cap \Imm(F^{j+s})$ and $v\neq 0$, then $v=F^{j+s}u$ for some $u\in H_{-2j-4s,k-2j-2s}(A)$, and $F^{j+2s}(u)=0$.
	Contradiction, so $\Ker(F^s)\cap \Imm(F^{j+s})=0$.

	On the other hand, for arbitrary $v\in H_{2j,k}(A)$ we can write $F^{s}v=F^{j+2s}(u)$ for some $u\in H_{-2j-4s,k-2j-2s}(A)$,
	so $v-F^{j+s}(u)\in \Ker(F^s)$, so $\Ker(F^s)+\Imm(F^{j+s})=H_{2j,k}(A)$.

	Now on $H_{2j,k}(A)$ we have an ascending filtration by $\Ker(F^s)$ and a complementary descending filtration by $\Imm(F^{j+s})$, so we can split these filtrations by
	$$
	H_{2j,k}(A)=\bigoplus_s H_{2j,k}^{(s)}(A),\  H_{2j,k}^{(s)}(A)=\Ker(F^{s+1})\cap \Imm(F^{j+s}).
	$$
	Then $E$ is defined on $H_{2j,k}^{(s)}$ by taking preimage under $F$ multiplied by a constant and projecting to $H_{2j-4,k-2}^{(s+1)}$.
\end{proof}

\begin{lemma}
	\label{lem:Lefshetz product}
	Let $A,B$ be two complexes of vector spaces with Lefshetz maps $F_{A}$ and $F_{B}$ satisfying the assumptions of Lemma \ref{lem:Jacobson Morozov}. Then  $F_{A}\otimes 1+1\otimes F_{B}$ is a Lefshetz map for $A\otimes B$.
\end{lemma}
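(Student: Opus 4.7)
The plan is to reduce the hard Lefschetz statement for $A\otimes B$ to the representation theory of $\mathfrak{sl}_2$, using Lemma \ref{lem:Jacobson Morozov} to upgrade the two Lefschetz operators to full $\mathfrak{sl}_2$-actions on the cohomology of the factors. First I would verify the routine check that $F:=F_A\otimes\id+\id\otimes F_B$ is a chain map on $A\otimes B$ of bidegree $(+4,+2)$: both summands are chain maps, and acting on the opposite tensor factor the cross-terms cancel (all degrees appearing in the Koszul signs are even, since $|F|=2$ and $|\id|=0$). Next, by the K\"unneth theorem there is an identification $H(A\otimes B)\cong H(A)\otimes_\C H(B)$ of bigraded vector spaces under which $F$ corresponds to the primitive (coproduct) operator $F_A\otimes 1+1\otimes F_B$.

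By Lemma \ref{lem:Jacobson Morozov}, applied to $A$ and $B$ separately, $F_A$ and $F_B$ extend to $\mathfrak{sl}_2$-triples $(E_A,H_A,F_A)$ on $H(A)$ and $(E_B,H_B,F_B)$ on $H(B)$, with $H$ acting on $H_{2j,k}$ as multiplication by $j$. Since each $H$-weight space is finite-dimensional (by the hypothesis of that lemma) and $F$ is locally nilpotent on it, these $\mathfrak{sl}_2$-modules are semisimple, i.e.\ direct sums of finite-dimensional irreducibles. Then the triple
\[
\bigl(E_A\otimes\id+\id\otimes E_B,\; H_A\otimes\id+\id\otimes H_B,\; F_A\otimes\id+\id\otimes F_B\bigr)
\]
defines the standard tensor-product $\mathfrak{sl}_2$-action on $H(A)\otimes H(B)$, and by the Clebsch--Gordan decomposition applied summand-by-summand this tensor product is again semisimple.

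In any finite-dimensional irreducible $\mathfrak{sl}_2$-module, $F^j$ maps the $H$-weight $-j$ subspace isomorphically onto the $H$-weight $+j$ subspace for every $j\ge 0$, so the same holds for any direct sum of such. Re-expressing $H$-weight $m$ as first grading $2m$, this is exactly the statement that $F^j:H_{-2j,k}(A\otimes B)\to H_{2j,k+2j}(A\otimes B)$ is an isomorphism, which is condition (b) in the definition of a Lefschetz map. The one point to keep in mind is that the $H$-weight spaces of $H(A)\otimes H(B)$ may fail to be finite-dimensional, so Lemma \ref{lem:Jacobson Morozov} cannot be applied to $A\otimes B$ directly; but this is harmless, since the Lefschetz isomorphism is verified on each finite-dimensional irreducible summand of the tensor product module and then assembled by taking direct sums. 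The main thing to be careful about in writing out the details is simply tracking the factor of two between the $H$-weight and the first grading, and checking that the K\"unneth identification intertwines the two $\mathfrak{sl}_2$-actions, which boils down to the coproduct formula $\Delta(X)=X\otimes 1+1\otimes X$ for primitive elements $X\in\mathfrak{sl}_2$.
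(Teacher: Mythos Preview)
Your proof is correct and follows essentially the same route as the paper: check the bidegree of $F$, apply K\"unneth, use Lemma~\ref{lem:Jacobson Morozov} to upgrade $F_A$ and $F_B$ to $\mathfrak{sl}_2$-actions on the factors, and then deduce the Lefschetz property from the $\mathfrak{sl}_2$-representation theory of the tensor product. If anything, you are slightly more careful than the paper about the finite-dimensionality issue, correctly arguing summand-by-summand via Clebsch--Gordan rather than asserting that the tensor product is itself finite-dimensional.
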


\begin{proof}
	We have $(A\otimes B)_{j,k}=\bigoplus_{j',k'}A_{j',k'}B_{j-j',k-k'}$. Since $F_{A}:A_{j',k'}\to A_{j'+4,k'+2}$ and
	$F_{B}:B_{j-j',k-k'}\to A_{j-j'+4,k-k'+2}$, both $F_{A}\otimes 1$ and $1\otimes F_{B}$ send $(A\otimes B)_{j,k}$ to $(A\otimes B)_{j+4,k+2}$.

	Furthermore, by the K\"unneth formula we have $H_{*,*}(A\otimes B)=\bigoplus H_{*,*}(A)\otimes H_{*,*}(B)$. By Lemma \ref{lem:Jacobson Morozov} the actions of $F_A$ and $F_B$ extend to the actions of $\mathfrak{sl}_2$, so $F_{A}\otimes 1+1\otimes F_{B}$ defines the action of $F$ on the product of $\mathfrak{sl}_2$ representations. Since this product is a finite dimensional representation of $\mathfrak{sl}_2$, it is symmetric and $F$ is a Lefshetz map.
\end{proof}

\begin{lemma}
	\label{lem:lef poly}
	Let $A=\C[x_1,\ldots,x_n,y_1,\ldots,y_n]$ with zero differential, where $x_i$ has bidegree $(2,0)$ and $y_i$ has bidegree $(-2,-2)$. Then the operator $F=\sum x_i\frac{\partial}{\partial y_i}$ is Lefshetz.
\end{lemma}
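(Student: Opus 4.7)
The plan is to decompose $A = \bigotimes_{i=1}^n A_i$ with $A_i = \C[x_i, y_i]$ and correspondingly $F = \sum_i F_i$ with $F_i = x_i \partial_{y_i}$, and then apply Lemma~\ref{lem:Lefshetz product} inductively starting from the base case $n=1$. Before invoking the lemma I would verify the finite-dimensionality hypothesis underlying Lemmas~\ref{lem:Jacobson Morozov} and~\ref{lem:Lefshetz product}: for any monomial $x^a y^b$ one has $j - 2k = 2(|a|+|b|)$, so the subspace of $A$ (or of any tensor factor) with $j - 2k = N$ is concentrated in total degree $N/2$, which is finite-dimensional.

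For the base case $n=1$, each bigraded piece of $A_1$ is at most one-dimensional. Writing $m := -k/2$, the space $(A_1)_{-2j,-2m}$ is spanned by $x^{m-j} y^m$ when $m \geq j$ and vanishes otherwise. Since $F_1(x^a y^b) = b\, x^{a+1} y^{b-1}$, iteration gives $F_1^j(x^{m-j} y^m) = \frac{m!}{(m-j)!}\, x^m y^{m-j}$, a nonzero scalar multiple of the generator $x^m y^{m-j}$ of $(A_1)_{2j, -2(m-j)}$. When both source and target vanish the isomorphism is trivial, so $F_1$ is Lefshetz on $A_1$.

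The induction step is then immediate from Lemma~\ref{lem:Lefshetz product}: assuming $F_1 + \cdots + F_i$ is Lefshetz on $A_1 \otimes \cdots \otimes A_i$, combining with the base case yields that $F_1 + \cdots + F_{i+1}$ is Lefshetz on $A_1 \otimes \cdots \otimes A_{i+1}$, and after $n-1$ iterations we are done. The main effort lies in the single-variable base case, which is an explicit monomial computation; there is no substantive obstacle, since the assembly across variables is handled by the previously established K\"unneth-type Lemma~\ref{lem:Lefshetz product}. Equivalently, one may identify $A$ with the symmetric algebra $S^*(V \otimes \C^n)$ on the standard $\mathfrak{sl}_2$-representation $V = \C x \oplus \C y$, recognize $F$ as the raising operator, and invoke classical hard Lefschetz on the resulting decomposition into finite-dimensional irreducible $\mathfrak{sl}_2$-representations.
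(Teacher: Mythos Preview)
Your proof is correct and follows essentially the same approach as the paper: decompose $A$ as the tensor product of the $\C[x_i,y_i]$, check the single-variable case directly, and assemble via Lemma~\ref{lem:Lefshetz product}. Your version simply spells out the base case and the finite-dimensionality hypothesis that the paper leaves as ``easy to see.''
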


\begin{proof}
	We can write $A=\C[x_1,y_1]\otimes \cdots \otimes \C[x_n,y_n]$. It is easy to see that $x_i\frac{\partial}{\partial y_i}$ is Lefshetz on $\C[x_i,y_i]$, and the statement follows from Lemma \ref{lem:Lefshetz product}.
\end{proof}

\begin{lemma}
	\label{lem:lef wedge}
	Let $A=\C[\eta_1,\ldots,\eta_{2n}](2n)$, where $\eta_i$ has bidegree $(2,1)$, and $F=\eta_1\eta_2+\eta_3\eta_4+\ldots+\eta_{2n-1}\eta_{2n}$.
	Then $F$ is Lefshetz.
\end{lemma}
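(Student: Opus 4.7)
The plan is to decompose $A$ as a tensor product of two-variable exterior algebras and apply Lemma \ref{lem:Lefshetz product} inductively, in direct analogy with the proof of Lemma \ref{lem:lef poly}. Explicitly, write $A = A_1 \otimes A_2 \otimes \cdots \otimes A_n$ as bigraded dg algebras, where $A_i = \C[\eta_{2i-1}, \eta_{2i}](2)$ with the two odd generators in bidegree $(2,1)$ and zero differential; the overall $q$-shift of the tensor product is $(2n)$, matching the statement. Correspondingly, $F$ decomposes as $F = \sum_{i=1}^n F_i$ with $F_i = \eta_{2i-1}\eta_{2i}$ acting on the $i$-th tensor factor.

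Next I would verify the base case $n = 1$ by inspection. The algebra $A_1$ is four-dimensional with basis $1, \eta_1, \eta_2, \eta_1\eta_2$, placed after the shift by $(2)$ in bidegrees $(-2, 0)$, $(0, 1)$, $(0, 1)$, $(2, 2)$ respectively. Since the differential vanishes, $H_{\bullet,\bullet}(A_1) = A_1$. The Lefshetz condition asks that $F_1^j : (A_1)_{-2j, k} \to (A_1)_{2j, k+2j}$ be an isomorphism for all $j \geq 0$ and all $k$. For $j = 0$ this is the identity. For $j = 1$ the unique nontrivial case is $(A_1)_{-2,0} \to (A_1)_{2,2}$, $1 \mapsto \eta_1\eta_2$, an isomorphism of one-dimensional spaces. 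For $j \geq 2$ both the source and target vanish by bidegree reasons; one also notes $F_1^2 = (\eta_1\eta_2)^2 = 0$ by anticommutation of the $\eta_i$.

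Finally I would induct on $n$: assuming $F_2 + \cdots + F_n$ is a Lefshetz operator on $A_2\otimes\cdots\otimes A_n$, Lemma \ref{lem:Lefshetz product} yields that $F = F_1 \otimes 1 + 1\otimes (F_2+\cdots+F_n)$ is Lefshetz on $A_1 \otimes (A_2\otimes\cdots\otimes A_n) = A$. The finite-dimensionality hypothesis required by Lemma \ref{lem:Jacobson Morozov} (and hence by Lemma \ref{lem:Lefshetz product}) is immediate since each $A_i$ is four-dimensional. There is no substantial obstacle beyond the one-line base case calculation — the whole argument is a clean parallel of the proof of Lemma \ref{lem:lef poly}, with the exterior-algebra factor $A_1$ playing the role played there by the polynomial-algebra factor $\C[x_i,y_i]$.
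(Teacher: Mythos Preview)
Your proof is correct and follows essentially the same approach as the paper: decompose $A$ as the tensor product $\C[\eta_1,\eta_2](2)\otimes\cdots\otimes\C[\eta_{2n-1},\eta_{2n}](2)$, verify the Lefshetz property directly on each two-variable factor, and apply Lemma~\ref{lem:Lefshetz product}. The paper's version simply omits the explicit bidegree check for the base case that you spell out.
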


\begin{proof}
	$$A=\C[\eta_1,\ldots,\eta_{2n}](2n)=\C[\eta_1,\eta_2](2)\otimes \C[\eta_3,\eta_4](2)\otimes \cdots \otimes \C[\eta_{2n-1},\eta_{2n}](2).$$
	It is easy to see that $\eta_i\eta_j$ is Lefshetz for $\C[\eta_i,\eta_j](2)$, so by Lemma \ref{lem:Lefshetz product} $F$ is Lefshetz for $A$.
\end{proof}

Lemmas \ref{lem:Lefshetz product}, \ref{lem:lef poly} and \ref{lem:lef wedge} together with the explicit description in Section \ref{sec:yified building blocks} immediately implies
\begin{corollary}
    The $y$-ified basic objects $\CY(K_{\bar n, \bar g})$ are Lefschetz.
\end{corollary}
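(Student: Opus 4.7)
The plan is to combine the explicit description of $\CY(K_{\bar n,\bar g})$ from Section \ref{sec:yified building blocks} with the three building-block Lemmas \ref{lem:Lefshetz product}, \ref{lem:lef poly}, and \ref{lem:lef wedge}. Since only the relative grading shifts induced by $F_2$ matter for the Lefschetz property (never the overall degree shift $(2g)$), I can freely identify $\CY(K_{\bar n,\bar g})$ with its cohomology, which by Section \ref{sec:yified building blocks} is the bigraded ring
\[
H := \C[(x_{m_i})_{i=1}^s,\,(y_{m_i})_{i=1}^s,\,(\eta_{i,j})_{i\leq s,\,j\leq 2g_i}](2g)
\]
carrying zero differential, on which $F_2$ acts by
\[
F_2 \;=\; 2\sum_{i=1}^{s} x_{m_i}\frac{\partial}{\partial y_{m_i}} \;+\; 2\sum_{i=1}^{s}\sum_{j=1}^{g_i}\eta_{i,2j-1}\eta_{i,2j}.
\]

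Next I would factor $H$ as a tensor product
\[
H \;\cong\; \underbrace{\bigotimes_{i=1}^{s}\C[x_{m_i},y_{m_i}]}_{=:P} \;\otimes\; \underbrace{\bigotimes_{i=1}^{s}\C[\eta_{i,1},\dots,\eta_{i,2g_i}](2g_i)}_{=:E},
\]
and observe that under this factorization $F_2$ (up to the harmless overall factor of $2$) is the sum of an operator $F_P = \sum_i x_{m_i}\partial_{y_{m_i}}$ acting only on $P$ and an operator $F_E = \sum_{i,j}\eta_{i,2j-1}\eta_{i,2j}$ acting only on $E$. Lemma \ref{lem:lef poly} (applied with $n=s$) shows that $F_P$ is Lefschetz on $P$, and iterating Lemma \ref{lem:lef wedge} together with Lemma \ref{lem:Lefshetz product} across the $s$ tensor factors of $E$ shows that $F_E$ is Lefschetz on $E$. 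A final application of Lemma \ref{lem:Lefshetz product} to the pair $(P,F_P)$, $(E,F_E)$ then shows that $F_P\otimes 1 + 1\otimes F_E$, which equals $\tfrac12 F_2$, is Lefschetz on $H$; multiplying by the nonzero scalar $2$ does not change this.

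There is essentially no obstacle here, just bookkeeping: one must check that the finite-dimensionality hypothesis in Lemma \ref{lem:Jacobson Morozov} required by Lemma \ref{lem:Lefshetz product} is satisfied so that the K\"unneth argument applies, but this is immediate since each fixed value of $q-2t$ picks out a finite-dimensional graded piece of $\C[x,y]$ and $E$ is finite-dimensional to begin with. It also bears noting that the overall degree shift $(2g)$ affects both source and target of $F_2^{\,j}\colon H_{i,-2j,k}\to H_{i,2j,k+2j}$ in the same way, so it does not interfere with the Lefschetz condition. This completes the proof of the corollary.
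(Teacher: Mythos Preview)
Your proof is correct and follows the same approach as the paper: the paper simply states that the corollary follows immediately from Lemmas \ref{lem:Lefshetz product}, \ref{lem:lef poly}, \ref{lem:lef wedge} together with the explicit description in Section \ref{sec:yified building blocks}, and you have spelled out exactly how these ingredients combine. Your bookkeeping (the factor of $2$, the distribution of the shift $(2g)$ as $\sum_i(2g_i)$, and the finite-dimensionality check for Lemma \ref{lem:Lefshetz product}) is all in order.
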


Proposition \ref{prop:reduction 1} together with the classification of the basic objects implies that $\CY(\HH^k(L))$ is homotopy equivalent to a twisted complex built from copies of $\CY(K_{\bar n, \bar g})$ and homological shifts thereof.  By Lemma \ref{lem:5 lemma} we obtain our main theorem:
\begin{theorem}\label{thm:full lefschetz}
    For any link $L$ the $y$-ified homology $H(\CY(\HH^k(L)))$ is Lefschetz.
\end{theorem}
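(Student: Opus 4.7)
The plan is straightforward given the machinery already developed in the paper: reduce to basic objects via Proposition \ref{prop:reduction 1}, verify the Lefshetz property there, and propagate through the twisted complex structure via the five-lemma.

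First, I would invoke Proposition \ref{prop:reduction 1} to realize $\HH^k(L)$ in $\CCA_{c,\infty}/\C$ as a finite iterated cone of basic objects $K_{\bar n_s, \bar g_s}$. Applying the $y$-ification functor $\CY$ (which, as explained in Section \ref{sec:from A to yifications}, is a fully faithful embedding obtained by adjoining the polynomial variables $y_i$ and twisting the differential) yields a homotopy equivalence of $\CY(\HH^k(L))$ with a twisted complex whose terms are the $y$-ifications $\CY(K_{\bar n_s, \bar g_s})$ and their homological shifts. By Corollary \ref{cor:extra structures}, the operator $F_2$ acts compatibly on the resulting twisted complex, commuting with all structure morphisms up to homotopy.

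Second, the corollary immediately preceding the theorem already establishes that each $\CY(K_{\bar n, \bar g})$ is Lefshetz. This is seen from the explicit description in Section \ref{sec:yified building blocks}: the cohomology factors as a tensor product of the polynomial algebra $\C[(x_{m_i}),(y_{m_i})]$ with the exterior algebra on the $\eta_{ij}$, on which $F_2$ acts as the sum of the operators from Lemma \ref{lem:lef poly} and Lemma \ref{lem:lef wedge}; Lemma \ref{lem:Lefshetz product} then packages the Lefshetz property across the tensor product.

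Finally, I would propagate the Lefshetz property up the twisted complex by induction on its length. A length-$N$ twisted complex $B$ sits in a distinguished triangle $A \to B \to C$ in which $A$ is (a shift of) a basic object and $C$ is a twisted complex of length $N-1$; both carry Lefshetz $F_2$ by the previous step and the induction hypothesis. The associated short exact sequence of triply graded complexes then feeds directly into Lemma \ref{lem:5 lemma}, giving the Lefshetz property for $B$. The main subtlety — and the only real obstacle — is that $F_2$ commutes with the connecting maps only up to homotopy rather than on the nose; but Lemma \ref{lem:5 lemma} is stated precisely for this homotopy-commutative setting, and the homotopy-compatibility supplied by Corollary \ref{cor:extra structures} is exactly the input needed to close the argument.
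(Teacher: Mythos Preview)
Your proposal is correct and matches the paper's own argument essentially line for line: reduce to basic objects via Proposition \ref{prop:reduction 1}, verify Lefshetz there from the explicit description in Section \ref{sec:yified building blocks} together with Lemmas \ref{lem:lef poly}, \ref{lem:lef wedge}, \ref{lem:Lefshetz product}, and then propagate through the twisted complex using Lemma \ref{lem:5 lemma}. Your explicit mention of the homotopy-commutativity input from Corollary \ref{cor:extra structures} makes the induction step slightly more carefully stated than in the paper, but the logic is identical.
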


\begin{example}
By the main result of \cite{GH}, the $y$-ified homology of $(n,n)$ torus link has the form:
$$
\HY(T(n,n))=\bigcap_{i\neq j}(x_i-x_j,y_i-y_j,\theta_i-\theta_j)\subset \C[x_1,\ldots,x_n,y_1,\ldots,y_n,\theta_1,\ldots,\theta_n].
$$
The symmetry exchanges $x_i$ and $y_i$, clearly leaving the ideals $$(x_i-x_j,y_i-y_j,\theta_i-\theta_j)$$ and their intersection unchanged. Furthermore, the operator $F=\sum x_i\frac{\partial}{\partial y_i}$ satisfies 
$$
F\left(f(x_i-x_j)+g(y_i-y_j)+h(\theta_i-\theta_j)\right)=F(f)(x_i-x_j)+F(g)(y_i-y_j)+
$$
$$
g(x_i-x_j)+F(h)(\theta_i-\theta_j)
$$
and preserves these ideals as well. The operator $E$ acts by $\sum y_i \frac{\partial}{\partial x_i}$, so $\HY(T(n,n))$ has a natural action of $\mathfrak{sl}_2$.
\end{example}

\begin{appendix}
\section{Higher coproducts on $\CA$}
\label{sec: higher}

\subsection{Higher coproducts}

As we discussed in Section \ref{sec:coproduct}, the coproduct $\Delta:\CA\to \CA\otimes_{R}\CA$ is coassociative up to homotopy. In this section we write this homotopy and its higher analogues explicitly.
Define $\delta^{(3)}:\CA\to \CA\otimes_{R}\CA\otimes_{R} \CA$ by the equation $\delta^{(3)}(x_i)=\delta^{(3)}(x'_i)=\delta^{(3)}(\xi_i)=0$ and 
$$
\delta^{(3)}(u_k)=\sum_{i} h_{k-3}\left(x_i,x'_i,x''_i,x'''_i\right)\xi_i\otimes \xi_i\otimes \xi_i,\ k\ge 3.
$$
We extend $\delta^{(3)}$ to all monomials inductively by the equations:
\begin{multline}
\label{eq:coproduct extend}
\delta^{(3)}(ax_i)=\delta^{(3)}(a)x_i,\ \delta^{(3)}(ax'_i)=\delta^{(3)}(a)x''_i,\\
\delta^{(3)}(a\xi_i)=\delta^{(3)}(a)\left(\xi_i\otimes 1\otimes 1+1\otimes \xi_i\otimes 1+1\otimes 1\otimes \xi_i\right),\\
\delta^{(3)}(au_k)=\delta^{(3)}(a)\Delta_1\Delta(u_k)+\Delta_2\Delta(a)\delta^{(3)}(u_k).
\end{multline}
To make the last equation in \eqref{eq:coproduct extend} well defined, we may assume that $u_k$ is lexicographically maximal in the monomial $au_k$.

\begin{lemma}
\label{lem: assoc}
We have $\Delta_1\Delta-\Delta_2\Delta=[d,\delta^{(3)}]$
where $\Delta_i$ acts on $i$-th tensor factor.
\end{lemma}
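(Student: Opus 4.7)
The plan is to verify the identity on each generator of $\CA$ and then extend to arbitrary monomials by induction using the Leibniz-type rules \eqref{eq:coproduct extend} that define $\delta^{(3)}$. On $x_i, x_i', \xi_i$, direct inspection shows $\Delta_1\Delta = \Delta_2\Delta$ (both are the obvious three-fold embeddings), so $\delta := \Delta_1\Delta - \Delta_2\Delta$ vanishes; meanwhile $\delta^{(3)}$ vanishes on these generators and on their differentials, so $[d,\delta^{(3)}]$ is zero as well. On $u_1$ and $u_2$ the coproduct is strictly coassociative (for $u_2$ this is essentially the content of the example after Lemma \ref{lem:coproduct chain}) and $\delta^{(3)}$ vanishes since $h_{k-3}$ does.

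The substantive base case is $u_k$ for $k \ge 3$. I would first expand $\Delta_1\Delta(u_k) - \Delta_2\Delta(u_k)$ by applying $\Delta$ twice to $\Delta(u_k) = u_k \otimes 1 + 1 \otimes u_k + \sum_i h_{k-2}(x_i,x_i',x_i'')\,\xi_i \otimes \xi_i$: the $u_k$-triple-tensor terms cancel, and the coefficients of $\xi_i \otimes \xi_i \otimes 1$, $\xi_i \otimes 1 \otimes \xi_i$ and $1 \otimes \xi_i \otimes \xi_i$ all come out as differences of $h_{k-2}$'s on the four-variable alphabet $\{x_i, x_i', x_i'', x_i'''\}$ with one variable omitted in each slot. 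Independently, $d(\delta^{(3)}(u_k))$ is computed by distributing $d$ over $\xi_i \otimes \xi_i \otimes \xi_i$ using $d\xi_i^{(j)} = x_i^{(j-1)} - x_i^{(j)}$ and multiplying by $h_{k-3}(x_i,x_i',x_i'',x_i''')$; the two expressions agree via the telescoping identity
\[ (x_a - x_b)\,h_{k-3}(x_0,x_1,x_2,x_3) = h_{k-2}(x_0,\ldots,\widehat{x_a},\ldots,x_3) - h_{k-2}(x_0,\ldots,\widehat{x_b},\ldots,x_3), \]
which is obtained by peeling off the omitted variable in $h_{k-2}$. Finally $\delta^{(3)}(du_k) = 0$ because $du_k = \sum_i h_{k-1}(x_i,x_i')\,\xi_i$ is a sum of polynomial-times-$\xi$ terms, and the extension rules annihilate such expressions.

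For the inductive step, the point is that both sides satisfy the same biderivation-type product rule. Since $\Delta_1\Delta$ and $\Delta_2\Delta$ are algebra maps, $\delta$ satisfies $\delta(ab) = \delta(a)\,\Delta_1\Delta(b) + \Delta_2\Delta(a)\,\delta(b)$. The formulas \eqref{eq:coproduct extend} are precisely engineered so that $[d,\delta^{(3)}]$ obeys the identical rule on products $a g$ with $g$ a generator: differentiating $\delta^{(3)}(ag) = \delta^{(3)}(a)\Delta_1\Delta(g) + \Delta_2\Delta(a)\delta^{(3)}(g)$ and using that $d$ supercommutes with the algebra maps $\Delta_1\Delta$ and $\Delta_2\Delta$ yields $[d,\delta^{(3)}](ag) = [d,\delta^{(3)}](a)\,\Delta_1\Delta(g) + \Delta_2\Delta(a)\,[d,\delta^{(3)}](g)$, and induction on monomial length then lifts the generator-level identity to all of $\CA$. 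The hard part will be the combinatorial bookkeeping on $u_k$: three differences of $h_{k-2}$'s on the $\Delta$-side must be matched to the three pieces of $d(\xi_i\otimes\xi_i\otimes\xi_i)$ on the $\delta^{(3)}$-side, with the graded signs and the $R$-bimodule conventions on $\CA\otimes_R\CA\otimes_R\CA$ tracked carefully.
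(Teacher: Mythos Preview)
Your proposal is correct and follows essentially the same approach as the paper: verify the identity on generators (the nontrivial case being $u_k$, matched via the $h_{k-2}/h_{k-3}$ telescoping identity), observe $\delta^{(3)}(du_k)=0$, and then extend inductively using that both $\Delta_1\Delta-\Delta_2\Delta$ and $[d,\delta^{(3)}]$ satisfy the same $(\Delta_1\Delta,\Delta_2\Delta)$-biderivation rule encoded in \eqref{eq:coproduct extend}. One small caveat: your displayed telescoping identity has the sign reversed (omitting $x_a$ minus omitting $x_b$ gives $(x_b-x_a)h_{k-3}$, not $(x_a-x_b)h_{k-3}$), but this is cosmetic and does not affect the argument.
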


\begin{proof}
Note that $\delta^{(3)}(du_k)=0$. We have
$$
\Delta_1\Delta(u_k)=u_k\otimes 1\otimes 1+1\otimes u_k\otimes 1+1\otimes 1\otimes u_k+
$$
$$
\sum_i h_{k-2}\left(x_i,x'_i,x''_i\right)\xi_i\otimes \xi_i\otimes 1+\sum_i h_{k-2}\left(x_i,x''_i,x'''_i\right) \left(\xi_i\otimes 1\otimes \xi_i+1\otimes \xi_i\otimes \xi_i\right),
$$
so
\begin{multline*}
\Delta_1\Delta(u_k)-\Delta_2\Delta(u_k)=\sum_i\left(h_{k-2}\left(x_i,x'_i,x''_i\right)-h_{k-2}\left(x_i,x'_i,x'''_i\right)\right)\xi_i\otimes \xi_i\otimes 1+\\
\sum_i\left(h_{k-2}\left(x_i,x''_i,x'''_i\right)-h_{k-2}\left(x_i,x'_i,x'''_i\right)\right)\xi_i\otimes 1\otimes \xi_i+\\
\sum_i\left(h_{k-2}\left(x_i,x''_i,x'''_i\right)-h_{k-2}\left(x'_i,x''_i,x'''_i\right)\right)1\otimes \xi_i\otimes \xi_i=\\
\sum_i h_{k-3}\left(x_i,x'_i,x''_i,x'''_i\right)\left[(x''_i-x'''_i)\xi_i\otimes \xi_i\otimes 1+(x''_i-x'_i)\xi_i\otimes 1\otimes \xi_i+(x_i-x'_i)1\otimes \xi_i\otimes \xi_i\right]=\\
\sum_i h_{k-3}\left(x_i,x'_i,x''_i,x'''_i\right) d(\xi_i\otimes \xi_i\otimes \xi_i)=d(\delta^{(3)}(u_k)).
\end{multline*}

Next, we check that the homotopy extends correctly to all monomials  \eqref{eq:coproduct extend}. Note that $\delta^{(3)}(du_k)=0$,
$$
\Delta_1\Delta(x_i)=\Delta_2\Delta(x_i)=x_i,\ \Delta_1\Delta(x'_i)=\Delta_2\Delta(x'_i)=x''_i,
$$
and
$$
\Delta_1\Delta(\xi_i)=\Delta_2\Delta(\xi_i)=\xi_i\otimes 1\otimes 1+1\otimes \xi_i\otimes 1+1\otimes 1\otimes \xi_i.
$$
In other words, all equations in \eqref{eq:coproduct extend} have the form
$$
\delta^{(3)}(ab)=\delta^{(3)}(a)\Delta_1\Delta(b)+\Delta_2\Delta(a)\delta^{(3)}(b),
$$
where we assume that $a$ and $b$ satisfy the statement of the lemma.
Therefore
$$
[d,\delta^{(3)}](ab)=d\delta^{(3)}(a)\Delta_1\Delta(b)+\delta^{(3)}(a)\Delta_1\Delta(db)+\Delta_2\Delta(da)\delta^{(3)}(b)+\Delta_2\Delta(a)\cdot d\delta^{(3)}(b)-$$
$$
\delta^{(3)}(da\cdot b)-\delta^{(3)}(a\cdot db)=
[d,\delta^{(3)}(a)]\Delta_1\Delta(b)+\Delta_2\Delta(a)[d,\delta^{(3)}](b)=
$$
$$
\left(\Delta_1\Delta(a)-\Delta_2\Delta(a)\right)\Delta_1\Delta(b)+\Delta_2\Delta(a)\left(\Delta_1\Delta(b)-\Delta_2\Delta(b)\right)=
$$
$$
\Delta_1\Delta(ab)-\Delta_2\Delta(ab).
$$
\end{proof}

Furthermore, we can define 
$$
\delta^{(s)}(u_k)=\begin{cases}
\sum_i h_{k-s}\left(x_i,x'_i,\ldots,x^{(s)}_i\right)\xi_i^{\otimes s} & \text{if}\ s\le k\\
0 & \text{otherwise}.
\end{cases}
$$

\begin{lemma}
We have the following $A_{\infty}$ relations for $\delta^{(s)}$:
\begin{equation}
\label{eq: A infty}
\sum_{s+t=m+1} \sum_{a=1}^{t}\pm \delta^{(s)}_{a}\delta^{(t)}(u_k) = d(\delta^{(m)}(u_k))
\end{equation}
Equivalently, if we define a differential $\delta=d+\sum_{s,a} \delta^{(s)}_a$ acting on $\bigoplus_{m=0}^{\infty} \CA^{\otimes m}$ then $\delta^2=0$.
\end{lemma}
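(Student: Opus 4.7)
The plan is to verify the equivalent statement $\delta^2 = 0$ on the tensor algebra $\bigoplus_{m \geq 1} \CA^{\otimes m}$, where $\delta = d + \sum_{s \geq 2,\, a \geq 1} \delta^{(s)}_a$. The component of $\delta^2$ of tensor-degree shift $m - 1$ is precisely \eqref{eq: A infty} for that $m$, so the two formulations are equivalent. Each $\delta^{(s)}$ for $s \geq 3$ is extended from generators to all of $\CA$ by an inductive Leibniz-type rule generalizing \eqref{eq:coproduct extend}; a direct argument paralleling Lemma \ref{lem: assoc} then reduces the verification of $\delta^2 = 0$ to the algebra generators $x_i,\, x_i',\, \xi_i,\, u_k$.

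The cases of $x_i$ and $x_i'$ are trivial since these are $d$-closed and each $\delta^{(s)}$, $s \geq 2$, acts on them only by shifting superscripts. The case of $\xi_i$ reduces to the chain-map property of $\Delta$, already established in Lemma \ref{lem:coproduct chain}, since $\delta^{(s)}(\xi_i) = 0$ for $s \geq 3$. The substantive case is $u_k$, which is exactly \eqref{eq: A infty}.

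Crucially, each $\delta^{(s)}$ for $s \geq 3$ is purely primitive (namely $\delta^{(s)}(u_k) = \sum_i h_{k-s}(x_i^{(0)}, \ldots, x_i^{(s)})\, \xi_i^{\otimes s}$ and vanishes on $x_i,\, x_i',\, \xi_i$), so the composition $\delta^{(s)}_a \delta^{(t)}(u_k)$ is nontrivial only when at least one of $s,\, t$ equals $2$. The sum in \eqref{eq: A infty} therefore splits into three kinds of contributions: $(s,t) = (2, m-1)$, where $\Delta$ acts on the primitive part $\sum_i h_{k-m+1}(\ldots)\, \xi_i^{\otimes m-1}$ of $\delta^{(m-1)}(u_k)$; $(s,t) = (m-1, 2)$, where $\delta^{(m-1)}$ acts on the group-like part $u_k \otimes 1 + 1 \otimes u_k$ of $\delta^{(2)}(u_k)$; and the diagonal $(s,t) = (2,2)$ when $m = 3$, already settled by Lemma \ref{lem: assoc}.

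The main obstacle is matching these contributions against $d\bigl(\delta^{(m)}(u_k)\bigr)$, which via the Leibniz rule for $d$ expands into a telescoping sum with one factor $(x_i^{(b-1)} - x_i^{(b)})$ inserted at each position $b$. The matching reduces to iterated application of the key identity
\[
h_n(y_0, \ldots, y_{p-1}, y_p) - h_n(y_0, \ldots, y_{p-1}, y_p') = (y_p - y_p')\, h_{n-1}(y_0, \ldots, y_{p-1}, y_p, y_p'),
\]
which follows from the generating-function identity $\sum_n h_n(y_0, \ldots, y_p)\, t^n = \prod_j (1 - y_j t)^{-1}$. Koszul sign bookkeeping uses $\deg_h \delta^{(s)} = s - 2$ and $\deg_h \xi_i = 1$; the signs align because commuting $\delta^{(s)}$ past $a-1$ odd tensor factors produces the same parity as differentiating $\xi_i^{\otimes m}$ at the corresponding position. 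The whole verification is structurally parallel to, and only notationally more involved than, the computation in the proof of Lemma \ref{lem: assoc}.
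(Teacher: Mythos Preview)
Your core verification on $u_k$ is correct and matches the paper's proof: both observe that $\delta^{(s)}_a\delta^{(t)}(u_k)$ vanishes unless $s=2$ or $t=2$, then match the surviving terms against $d(\delta^{(m)}(u_k))$ via the complete-symmetric-function identity $h_n(\ldots,y)-h_n(\ldots,y')=(y-y')h_{n-1}(\ldots,y,y')$, with the case $m=3$ already handled by Lemma~\ref{lem: assoc}. The paper's write-up is terser (it just identifies the terms $\xi_i^{\otimes a}\otimes 1\otimes \xi_i^{\otimes m-1-a}$ and their paired coefficients), but the mechanism is identical.

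One point of overreach: your opening paragraph asserts that each $\delta^{(s)}$ for $s\geq 3$ is extended to all of $\CA$ by a Leibniz-type rule generalizing \eqref{eq:coproduct extend}, and that the verification then reduces to generators. The paper does \emph{not} do this: it only defines $\delta^{(s)}(u_k)$ for $s\geq 3$, and the lemma is stated and proved solely for $u_k$. In fact, the Remark immediately following the lemma explicitly flags extending the $\delta^{(s)}$ to maps $\CA\to\CA^{\otimes s}$ satisfying \eqref{eq: A infty} as an interesting open problem (connected to $A_\infty$-bialgebra structures). So this part of your framing is unnecessary for the lemma and claims more than is actually established. Simply drop it: the statement is only about $u_k$, and your computation for $u_k$ already suffices.
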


\begin{proof}
The case $m=3$ follows from Lemma \ref{lem: assoc}, consider the case $m>3$.
By definition, $\delta^{(s)}(\xi_i)=0$ for $s>2$, so $\delta^{(s)}_{a}\delta^{(t)}=0$ unless $s=2$ or $t=2$.
Now it is easy to see that each term $\xi_i^{a}\otimes 1\otimes \xi_i^{m-1-a}$ appears twice in \eqref{eq: A infty},
with coefficients complete symmetric functions $h_{k-m+1}$ evaluated at $m$ arguments with $x_i^{(a)}$ and $x_i^{(a+1)}$ missing.
The difference of these functions is precisely $\left(x_i^{(a)}-x_i^{(a+1)}\right)h_{k-m}\left(x_i,\ldots,x_i^{(m)}\right)$, up to a sign, so overall sum agrees with $ d\left(\delta^{(m)}\right)$.
\end{proof}

\begin{remark}
All higher coproducts $\delta^{(s)}$ vanish on $u_2$, so the the coproduct on the subalgebra of $\CA$ generated by $u_2$ and $\xi_i$ is coassociative on the nose.
\end{remark}

\begin{remark}
Equations \eqref{eq:coproduct extend} can be interpreted as a part of a bigger structure expressing the homotopy $\delta^{(3)}$ between two homomorphisms of $A_{\infty}$ algebras, see e.g. \cite[Section 3.7]{Keller}. It would be very interesting to use similar ideas to extend all $\delta^{(s)}$
to a family of maps $\delta^{(s)}:\CA\to \CA^{\otimes s},  2\le s$ satisfying \eqref{eq: A infty}. It appears that this would endow $\CA$ with the stricture of  $A_{\infty}$ bialgebra in the sense of \cite{LOTdiag} and \cite{SU,U}.
\end{remark}


\subsection{Integral formulas}

The above constructions appear to be specific to power sums, but they can be generalized to other symmetric functions.

\begin{lemma}
Let $Q$ be a symmetric function. Define
\begin{equation}
\label{eq: canonical factorization}
a_i^{Q}(x,x')=\int_{0}^{1}\frac{\partial Q}{\partial x_i}\left(tx_i+(1-t)x'_i\right)dt
\end{equation}
then
$$
Q(x)-Q(x')=\sum a_i^Q(x,x')(x_i-x'_i).
$$
\end{lemma}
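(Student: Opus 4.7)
The plan is to apply the fundamental theorem of calculus along the straight-line path from $x'$ to $x$. Although the integrals are written in analytic form, everything makes formal sense because $Q$ is a polynomial: for any polynomial $P(t)$ the symbol $\int_0^1 P(t)\,dt$ is defined by linearity from $\int_0^1 t^k\,dt = 1/(k+1)$, so no analysis is actually needed.

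Concretely, I would introduce the single-variable polynomial
\[
\varphi(t) := Q(tx + (1-t)x') = Q(tx_1+(1-t)x'_1,\, \ldots,\, tx_n+(1-t)x'_n),
\]
and compute its derivative by the chain rule:
\[
\varphi'(t) = \sum_{i=1}^n (x_i - x'_i)\, \frac{\partial Q}{\partial x_i}(tx+(1-t)x').
\]
Integrating from $0$ to $1$, the left-hand side yields $\varphi(1) - \varphi(0) = Q(x) - Q(x')$, while the right-hand side yields $\sum_i (x_i - x'_i) a_i^Q(x,x')$ by the definition \eqref{eq: canonical factorization} of $a_i^Q$. This is the desired identity.

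There is really no obstacle: the argument is formal and takes one line once the chain rule has been written down. Two small remarks are worth making. First, the statement (and this proof) in fact hold for an arbitrary polynomial $Q$; the symmetry hypothesis is not used here but becomes relevant later when one wants $a_{\sigma(i)}^Q(\sigma\cdot x, \sigma\cdot x') = a_i^Q(x,x')$ for permutations $\sigma$. Second, the compatibility with the power-sum case $Q = p_k = \sum_j x_j^k$ gives $a_i^{p_k}(x,x') = \int_0^1 k(tx_i+(1-t)x'_i)^{k-1}dt = (x_i^k - x_i'^k)/(x_i - x'_i) = h_{k-1}(x_i,x'_i)$, recovering the coefficients appearing in Definition \ref{def:A}.
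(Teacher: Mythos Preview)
Your proof is correct and takes essentially the same approach as the paper: apply the fundamental theorem of calculus to $\varphi(t)=Q(tx+(1-t)x')$ and compute $\varphi'(t)$ by the chain rule. Your remark recovering $a_i^{p_k}=h_{k-1}(x_i,x_i')$ is exactly the Example that the paper gives immediately after this lemma.
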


\begin{proof}
We have
$$
Q(x)-Q(x')=Q\left(tx_i+(1-t)x'_i\right)|_{0}^{1}=\int_{0}^{1}\left[Q\left(tx_i+(1-t)x'_i\right)\right]'_{t} dt=
$$
$$
\sum_{i=1}^{i}\int_{0}^{1}(x_i-x'_i)\frac{\partial Q}{\partial x_i}\left(tx_i+(1-t)x'_i\right)dt.
$$
\end{proof}

\begin{example}
If $Q=p_k$ is the power sum then $\frac{\partial Q}{\partial x_i}=kx_i^{k-1}$, and
$$
a_i^{Q}(x,x')=\int_{0}^{1}k\left(tx_i+(1-t)x'_i\right)^{k-1}dt=\int_{0}^{1}\frac{dt}{x_i-x'_i}\frac{d}{dt}\left(tx_i+(1-t)x'_i\right)^k=
$$
$$
\frac{x_i^{k}-(x'_i)^k}{x_i-x'_i}=h_k\left(x_i,x'_i\right).
$$
\end{example}

As a consequence, for any symmetric function $Q$ the element $\sum a_i^Q(x,x')\xi_i$ is a cycle in the algebra $\CA$. More generally, we call
a collection of functions $a_i(x,x')$ a factorization of $Q$ if $\sum a_i(x,x')(x_i-x'_i)=Q(x)-Q(x')$. Let $I$ be the ideal generated  by $x_i-x'_i$.

\begin{lemma}
\label{lem: comparing factorizations}
Suppose that $\{a_i(x,x')\}$ is a factorization of $Q$. Then the following holds:

(a) There exist $C_{ij}$ such that $a_i(x,x')=a_i^{Q}(x,x')+\sum_{j\neq i}C_{ij}(x,x')(x_j-x_j')$

(b) If $a_i(x,x')-a_i^{Q}(x,x')\in I^p$ then $C_{ij}\in I^{p-1}$.
\end{lemma}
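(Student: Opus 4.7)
The plan is to view this as a standard Koszul-exactness statement. Setting $b_i := a_i - a_i^Q$, the fact that both collections factor the same element gives
$$\sum_{i=1}^n b_i(x,x')\,(x_i - x'_i) \;=\; (Q(x)-Q(x')) - (Q(x)-Q(x')) \;=\; 0,$$
so $(b_1,\ldots,b_n)$ is a degree-$1$ cycle of the Koszul complex $K_\bullet$ on the sequence $(x_1-x'_1,\ldots,x_n-x'_n)$ inside $R^e=\C[x,x']$. After the invertible linear change of variables $t_i := x_i-x'_i$ (keeping the $x_i$), we have $R^e = \C[x_1,\ldots,x_n,t_1,\ldots,t_n]$, and $(t_1,\ldots,t_n)$ is manifestly a regular sequence, so $K_\bullet$ is exact in positive homological degree. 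Hence there exist $B_{ij}\in R^e$ (for $i<j$) with
$$b_k \;=\; \sum_{i<k} B_{ik}(x_i-x'_i) \;-\; \sum_{j>k} B_{kj}(x_j-x'_j).$$
Setting $C_{ki} := B_{ik}$ for $i<k$ and $C_{kj} := -B_{kj}$ for $j>k$ then yields the decomposition claimed in (a).

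For (b), I would upgrade the Koszul argument to a graded one using the same coordinates $(x,t)$. In these variables the ideal $I=(x_1-x'_1,\ldots,x_n-x'_n)$ coincides with $(t_1,\ldots,t_n)$, and $I^p$ is the subspace of polynomials of total $t$-degree at least $p$. Grading the Koszul complex by declaring $\deg_t(e_i)=1$ and $\deg_t(t_i)=1$, the differential preserves $t$-degree, so the homogeneous $t$-subcomplexes are themselves exact in positive homological degree. If each $b_i$ lies in $I^p$, decompose $b_i = \sum_{q\geq p} b_i^{(q)}$ into its $t$-homogeneous pieces of degree $q$; each cycle $\sum_i b_i^{(q)} e_i$ has $t$-degree $q+1$ and, by graded exactness, admits a preimage $\sum_{i<j} B_{ij}^{(q)} e_i\wedge e_j$ of the same total $t$-degree $q+1$, forcing $B_{ij}^{(q)}$ to have $t$-degree $q-1 \geq p-1$. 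Summing over $q$ gives $B_{ij}\in I^{p-1}$, and therefore $C_{ij}\in I^{p-1}$.

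There is essentially no hard step: both parts are immediate consequences of the exactness of the Koszul complex on a regular sequence, applied in the ungraded case for (a) and in the evident internal $t$-grading for (b). The only thing to double-check is that the integral formula \eqref{eq: canonical factorization} really gives coefficients $a_i^Q$ in $R^e$ (rather than in some larger ring), which is clear because the integrand is polynomial in $t$ and in $(x,x')$, so integration in $t$ over $[0,1]$ produces an element of $R^e$ (with rational coefficients, which is harmless).
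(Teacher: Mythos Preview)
Your proof is correct and follows essentially the same approach as the paper: both arguments change variables to $t_i = x_i - x'_i$, recognize $\sum_i (a_i - a_i^Q)\xi_i$ (equivalently your $(b_1,\ldots,b_n)$) as a Koszul cycle on the regular sequence $(t_1,\ldots,t_n)$, invoke exactness of the Koszul complex for (a), and use the internal $t$-grading to control the degree of the preimage for (b). Your write-up is slightly more explicit about the graded decomposition and the sign bookkeeping, but the content is the same.
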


\begin{proof}
Let us change variables $t_i=x_i-x'_i$ and consider the Koszul complex with differential $d(\xi_i)=x_i-x'_i=t_i$. We have
$$
d\left(\sum\left (a_i(x,x')-a^{Q}_i(x,x')\right)\xi_i\right)=\sum \left(a_i(x,x')-a^{Q}_i(x,x')\right)\left(x_i-x'_i\right)=0,
$$
so $\sum (a_i(x,x')-a^{Q}_i(x,x'))\xi_i$ is a cycle. Since $t_i$ form a regular sequence, there exist $C_{ij}(x,x')$ such that
$$
\sum \left(a_i(x,x')-a^{Q}_i(x,x')\right)\xi_i=d\left(\sum_{i<j} C_{ij}(x,x')\xi_i\xi_j\right)=
$$
$$
\sum_{i}\left(\sum_{j<i}C_{ji}t_j-\sum_{j>i}C_{ij}t_j\right)\xi_i.
$$
This implies (a) after changing the signs of $C_{ij}$ appropriately. For (b) it is sufficient to note that the differential increases the $t$-degree by 1,
so each homogeneous summand in $a_i(x,x')-a^{Q}_i(x,x')$ of $t$-degree $p$ corresponds to $C_{ij}$ of degree $p-1$.
\end{proof}

Since $\CA$ is a resolution of $R$,  the cycle $\sum a_i^Q(x,x')\xi_i$ is a boundary, and the following result gives an explicit construction of bounding element.

\begin{lemma}\label{lem:corrections}
Consider the derivation $\sum u_k\frac{\partial}{\partial p_k}$ which sends a symmetric function $Q$ to an element
\[
U(Q)=\sum_{k=1}^n u_k\frac{\partial Q}{\partial p_k}\in \CA.
\]
Let $I\in B$ be the ideal generated by the differences $x_i-x_i'$.
For each symmetric function $Q$ there exist elements $C_{ij}^Q\in I$ such that
 we have
\[
d \left(U(Q) + \sum_{i,j} C_{ij}^Q \xi_i \xi_j \right) = \sum_i a_i^Q(x,x')\xi_i.
\]
\end{lemma}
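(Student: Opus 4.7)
The plan is to construct $C_{ij}^Q$ by comparing two different factorizations of $Q(x)-Q(x')$. First I would compute $d(U(Q))$ directly: since each $\partial Q/\partial p_k$ is a symmetric polynomial in homological degree $0$, hence annihilated by $d$, the Leibniz rule gives $d(U(Q)) = \sum_i b_i\, \xi_i$ where $b_i := \sum_k h_{k-1}(x_i,x_i')\, (\partial Q/\partial p_k)$. A direct computation shows that for antisymmetric $C_{ij}\in B$ one has $d\bigl(\sum_{i<j} C_{ij}\xi_i\xi_j\bigr) = -\sum_i \xi_i\sum_j C_{ij}(x_j-x_j')$. Thus the lemma reduces to finding antisymmetric $C_{ij}^Q\in I$ such that $\sum_j C_{ij}^Q(x_j-x_j') = b_i - a_i^Q$ in $B$.

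The key idea is to introduce an auxiliary factorization of $Q$ obtained by applying the integral formula \eqref{eq: canonical factorization} in the power-sum variables rather than the $x_i$. Writing $Q=\tilde Q(p_1,\ldots,p_n)$ and letting $\tilde a_k(P,P') := \int_0^1 (\partial \tilde Q/\partial P_k)(tP+(1-t)P')\,dt$, one has $\tilde Q(P)-\tilde Q(P')=\sum_k \tilde a_k(P,P')(P_k-P_k')$. Substituting $P_k=p_k(x)$, $P_k'=p_k(x')$ and using $p_k(x)-p_k(x')=\sum_i h_{k-1}(x_i,x_i')(x_i-x_i')$ produces a factorization $a_i' := \sum_k h_{k-1}(x_i,x_i')\, \tilde a_k(p(\xx),p(\xx'))$ of $Q$ in $\C[\xx,\xx']$. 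Crucially, in $B$ we have $p_k(x)=p_k(x')$, so $\tilde a_k$ collapses to $\partial \tilde Q/\partial p_k$ and therefore $a_i'=b_i$ inside $B$.

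Next I would show that $a_i'-a_i^Q\in I^2$ as polynomials in $\C[\xx,\xx']$. Both $a_i'$ and $a_i^Q$ restrict to $\partial_i Q(x)$ on the diagonal $\{\xx=\xx'\}$, so the difference already lies in $I$. A short computation of the normal derivatives $(\partial_{x_j}-\partial_{x_j'})a_i^Q$ and $(\partial_{x_j}-\partial_{x_j'})a_i'$ at the diagonal shows both vanish: for $a_i^Q$ this reduces to $\int_0^1 t\,dt = \int_0^1 (1-t)\,dt$, and for $a_i'$ it follows from the analogous integral identity together with the symmetry $\partial_{x_i}h_{k-1}(x_i,x_i')|_{\xx=\xx'} = \partial_{x_i'}h_{k-1}(x_i,x_i')|_{\xx=\xx'}$. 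Hence $a_i'-a_i^Q\in I^2$, and Lemma \ref{lem: comparing factorizations}(b) with $p=2$ produces antisymmetric coefficients $\tilde C_{ij}\in I$ in $\C[\xx,\xx']$ with $a_i'-a_i^Q=\sum_j \tilde C_{ij}(x_j-x_j')$.

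Setting $C_{ij}^Q$ to be the image of $\tilde C_{ij}$ in $B$ then gives antisymmetric elements of $I\subset B$, and the identity $a_i'=b_i$ in $B$ upgrades the polynomial relation to $\sum_j C_{ij}^Q(x_j-x_j')=b_i-a_i^Q$, finishing the proof. The main technical step is the second-order vanishing $a_i'-a_i^Q\in I^2$; it is precisely this that allows Lemma \ref{lem: comparing factorizations}(b) to furnish coefficients in $I$ rather than merely in the ambient polynomial ring.
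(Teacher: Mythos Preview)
Your proof is correct and takes a genuinely different route from the paper. The paper argues inductively on the multiplicative structure of the ring of symmetric functions: it starts from the generators $p_k$ (where no correction is needed) and, given corrections $C_{ij}^f, C_{ij}^g$ for $f$ and $g$, builds $C_{ij}^{fg}$ via an explicit double-integral formula $C_{ij}^{f,g}$, checking directly that it lies in $I$ and satisfies the required identity.

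You instead handle a general $Q$ in one shot. The key insight is the intermediate factorization $a_i'$ obtained by first applying the canonical integral factorization in power-sum coordinates and then expanding $p_k(x)-p_k(x')$; since power sums are identified in $B$, this $a_i'$ reduces to $b_i$ there. Your second-order vanishing $a_i'-a_i^Q\in I^2$ is in fact immediate from a symmetry observation: both $a_i^Q$ and $a_i'$ are invariant under $x\leftrightarrow x'$ (the substitution $t\mapsto 1-t$ shows this for each integral, and $h_{k-1}(x_i,x_i')$ is manifestly symmetric), and any swap-invariant element of $I$ lies in $I^2$ by $t$-degree parity. Then Lemma~\ref{lem: comparing factorizations}(b) supplies $C_{ij}\in I$, which survive the passage to $B$.

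The trade-off: the paper's approach yields an explicit recursive formula for $C_{ij}^Q$, which may be useful in computations (e.g.\ in the Markov move analysis where $u_{n+1}$ must be lifted concretely). Your approach is shorter and more conceptual but nonconstructive, relying on the existence result from the Koszul regularity of $x_i-x_i'$. Both are valid; yours makes clearer \emph{why} the corrections live in $I$ --- it is forced by the $x\leftrightarrow x'$ symmetry of the canonical factorization.
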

\begin{proof}
The derivation $Q\mapsto U(Q)$ is characterized by $U(p_k)=u_k$ and
\[
U(fg)=U(f) g + f U(g).
\]
Suppose the corrections $C_{ij}^f,C_{ij}^g\in I$ have been constructed for symmetric functions $f$, $g$.
Set
\[
C_{ij}^{fg} = f C_{ij}^g + g C_{ij}^f + C_{ij}^{f,g},
\]
where
\[
C_{ij}^{f,g} = \frac12 \left( \int_0^1 dt \int_0^t ds - \int_0^1 ds \int_0^s dt \right) \frac{\partial f}{\partial x_i} \left(s x + (1-s) x'\right) \frac{\partial g}{\partial x_j} \left(t x + (1-t) x'\right).
\]
Setting $x=x'$ makes the integrand independent of $s$, $t$. Hence the difference of the integrals vanishes and we have $C_{ij}^{f,g}\in I$. Hence $C_{ij}^{fg}\in I$. We have
\[
\sum_i C_{ij}^{f,g} d\xi_i = \frac12 \left( \int_0^1 dt \int_0^t ds - \int_0^1 ds \int_0^s dt \right) \frac{\partial f}{\partial s} \left(s x + (1-s) x'\right) \frac{\partial g}{\partial x_j} \left(t x + (1-t) x'\right)
\]
\[
=\frac12 \left( \int_0^1 dt \left(f\left(t x + (1-t) x'\right) - f(x')\right) - \int_0^1 dt \left(f(x) - f\left(t x + (1-t) x'\right)\right)\right) \frac{\partial g}{\partial x_j} \left(t x + (1-t) x'\right).
\]
\[
= \int_0^1 dt \left(f\left(t x + (1-t) x'\right) - f(x)\right)\frac{\partial g}{\partial x_j} \left(t x + (1-t) x'\right).
\]
Similarly, we have
\[
\sum_j C_{ij}^{f,g} d\xi_j = - \int_0^1 dt \left(g\left(t x + (1-t) x'\right) - g(x)\right)\frac{\partial f}{\partial x_i} \left(t x + (1-t) x'\right).
\]
Therefore we obtain
\[
\sum_i (C_{ij}^{f,g} - C_{ji}^{f,g}) d\xi_i = a_j^{fg} - f a_j^g - g a_j^f.
\]
Summing over all $j$ leads to
\[
d\left(\sum_{i,j} C_{ij}^{f,g} \xi_i \xi_j\right) = \sum_j\left(a_j^{fg} - f a_j^g - g a_j^f\right) \xi_j,
\]
which is precisely what is required to show that the correction $C_{ij}^{fg}$ satisfies the condition for the function $fg$. Now any symmetric function can be expressed as a polynomial of $p_1,\ldots,p_n$, for which the statement is clearly true, so the statement is true in general.
\end{proof}

Note that the formulas for the coproduct and its higher analogues for $u_Q$ can be written compactly using multivariable integrals:
$$
\delta^{(s)}(u_Q)=\int_{\Delta_s}Q(t_0x_i+t_1x'_i+\ldots+t_sx^{(s)}_i+\xi_i(dt_0-dt_1)+\xi'_i(dt_1-dt_2)+\ldots+\xi^{(s)}_i(dt_{s-1}-dt_{s})).
$$
For example, one can check that for $Q=p_k$ we get
$$
h_{k-s}\left(x_i,x'_i,\ldots,x^{(s)}_i\right)=\frac{(k-s)!}{k!}\int_{\Delta_{s}}\left(t_0x_i+t_1x'_i+\ldots+t_sx^{(s)}_i\right)^{k-s}d\Vol
$$
where $\Delta_{s}=\{(t_0,\ldots,t_s):t_i\ge 0, \sum t_i=1\}$ is the standard $s$-dimensional simplex.
This follows from the multinomial theorem and the multivariate beta integral
$$
\int_{\Delta_s}t_0^{a_0}\cdots t_s^{a_s}d\Vol=\frac{a_0!\cdots a_s!}{(a_0+\ldots+a_s+s)!}.
$$

\section{Group cohomology}
\label{sec: groups}

The constructions in this paper are motivated by the work of the third author on ``curious hard Lefshetz" property in cohomology of character varieties \cite{Mchar}. The Lefshetz operator in \cite{Mchar} corresponds to a tautological 2-form on the character variety.
In this appendix, we review the construction of tautological forms on groups and other varieties following  Bott, Shulman and Jeffrey \cite{Bott, BSS, Jeffrey}.

\subsection{Transgressions}

Let $G$ be a Lie group with Lie algebra $\fg$.  Given an invariant function $Q$ on $\fg$, we define a family of differential forms
$\Phi_n(Q)$ on $G^n$.

First we describe simplicial model for the universal bundle $EG\to BG$.
Consider the family of spaces $\eNG(n)=G^{n+1}$ with boundary maps $\varepsilon_j:\eNG(n)\to \eNG(n-1)$,
$\varepsilon_j(U_0,\ldots,U_n)=(U_0,\ldots,\widehat{U_j},\ldots,U_n)$. We also consider the spaces $\bNG(n)=G^n$ and the boundary maps $\varepsilon_j:\bNG(n)\to \bNG(n-1)$:
$$
\varepsilon_0:(g_1,\ldots,g_n)\to (g_2,\ldots,g_n),\
$$
$$
\varepsilon_i(g_1,\ldots,g_n)=(g_1,\ldots,g_ig_{i+1},\ldots,g_n),\ (1\le i\le n-1),\ \varepsilon_n(g_1,\ldots,g_n)=(g_2,\ldots,g_n).
$$
There are maps $q:\eNG(n)\to \bNG(n)$ and $\sigma: \bNG(n)\to \eNG(N)$ defined as follows:
$$
q(U_0,\ldots,U_n)=(U_0^{-1}U_1,\ldots, U_{n-1}^{-1}U_n),\ \sigma(g_1,\ldots,g_n)=(1,g_1,g_1g_2,\ldots,g_1\cdots g_n).
$$
It is easy to see that $q\circ \sigma=\Id_{\bNG(n)}$ and both $q$ and $\sigma$ commute with the action of $\varepsilon_j$ for all $j$. Note that the choice of $q$ and $\sigma$ is reverse from the one in \cite{Jeffrey}.

We get the following commutative diagram:
$$
\begin{tikzcd}
\eNG: & \ldots \arrow{r} & G\times G\times G \arrow{r}{\varepsilon_j} \arrow[bend left]{d}{q} & G\times G \arrow{r}{\varepsilon_j}  \arrow[bend left]{d}{q} & G \arrow[bend left]{d}{q} \\
\bNG: & \ldots \arrow{r}   & G\times G\arrow{r}{\varepsilon_j} \arrow[bend left]{u}{\sigma} &  G \arrow{r} \arrow[bend left]{u}{\sigma} & * \arrow[bend left]{u}{\sigma}\\
\end{tikzcd}
$$
We define the complex $(\bigoplus_n \Omega^{\bullet}(NG(n)),\delta)$ with the differential
$$
\delta:=\Omega^{\bullet}(NG(n))\to \Omega^{\bullet}(NG(n+1)),\ \delta=\sum (-1)^{i}\varepsilon^*_i.
$$

Let $\theta=g^{-1}dg$ be the left-invariant $\fg$-valued one-form on $G$. Consider the $n$-symplex $\Delta_n=\{(t_0,\ldots,t_n): 0\le t_i, \sum t_i=1\}$, we define the one-form $\theta(t)$ on  $\Delta_n\times G^{n+1}$
by the equation
$$
\theta(t)=\sum_{i=0}^{n}t_i\theta_i
$$
Consider the curvature
$$
F=d\theta(t)+[\theta(t),\theta(t)].
$$
Given a symmetric function $Q\in S(\fg^*)^{G}$ of degree $r$, we define
$$
\overline{\Phi_n}(Q)=\int_{\Delta}Q(F) \in \Omega^{2r-n}(G^{n+1}).
$$
Finally, define $\Phi_n(Q)=\sigma^*\overline{\Phi_n}(Q)$ where
$$
\sigma:G^n\to G^{n+1}, \sigma(g_1,\ldots,g_n)=(g_1\cdots g_n,g_2\cdots g_n,\ldots, g_n,1).
$$

It is easy to see that $\Phi_n(Q)=0$ for $n>r$, so there are finitely many forms for a given $Q$. The forms $\Phi_n(Q)$ are not closed, but it is known that
$$
(d\pm \delta)(\Phi_1(Q)+\ldots+\Phi_r(Q))=0,
$$
so $d\Phi_n(Q)=\pm \delta \Phi_{n-1}(Q)$.

\begin{example}
For $G=GL(n)$ we have $\theta=g^{-1}dg$, and for $Q=\Tr(g^2)$ we get
$$
\Phi_1(Q)=\Tr(\theta,[\theta,\theta])\in \Omega^3(G), \Phi_2(Q)=\Tr(f^{-1}df\wedge dg g^{-1})=(f|g)\in \Omega^2(G\times G)
$$
Then we have three equations
$$
d\Phi_1(Q)=0,\ d(\Phi_2(Q))=\delta(\Phi_1(Q)),\ \delta(\Phi_2(Q))=0.
$$
The latter equation can be written as $(f|g)+(fg|h)=(f|gh)+(g|h)$.
\end{example}

\begin{example}
We have
$$
(UV^{-1}|VW^{-1})=(UV^{-1}|V|W^{-1})-(V|W^{-1})=(U|V^{-1}|V|W^{-1})-(U|V^{-1})-(V|W^{-1})=
$$
$$
(U|1|W^{-1})-(U|V^{-1})-(V|W^{-1})=(U|W^{-1})-(U|V^{-1})-(V|W^{-1})
$$
since $(V|V^{-1})=0$.
\end{example}

\begin{example}
Suppose that $G=(\C^*)^n$ is the abelian group of diagonal matrices. Then $\theta=u^{-1}du$ and $d\theta=0$,
so
$$
\theta(t)=\sum t_i\theta_i,\ F=\sum \theta_i dt_i.
$$
Pick $Q=Tr(g^r)$, then
$$
\overline{\Phi_k}(Q)=\int_{\Delta_n}\Tr(\sum \theta_i dt_i)^k=\begin{cases}
\Tr(\theta_1\wedge \cdots \wedge \theta_r) & \text{if}\ k=r\\
0 & \text{otherwise}.\\
\end{cases}
$$
This is nothing but the degree 0 part of $\delta^{(r)}(u_r)$, where we identify $\theta = \diag(\xi_1,\ldots,\xi_n)$, and
$$
\Tr(\theta_1\wedge \cdots \wedge \theta_r)=\sum \xi_i^{\otimes r}.
$$
\end{example}

\subsection{Maps to the group}

We can use the classes $\Phi_n(Q)$ in the following construction. We say that a map $f:X\to G$ is $Q$-exact if $f^*\Phi_1(Q)=d\omega$ for some $(2r-2)$-form $\omega$. Given two $Q$-exact maps $f:X\to G$ and $g:Y\to G$ as the product
$fg:X\times Y\to G\times G\xrightarrow{m} G$ and the form
$$
\omega=\omega_1+\omega_2-(f\times g)^*\Phi_2(Q).
$$
Recall the equation
$$
d\Phi_2(Q)=\Phi_1(Q)\otimes 1-m^*\Phi_1(Q)+1\otimes \Phi_1(Q),
$$
then
$$
d\omega=d\omega_1+d\omega_2-(f\times g)^*\Phi_2(Q)=(f\times g)^*\left(\Phi_1(Q)\otimes 1+1\otimes \Phi_1(Q)-d\Phi_2(Q)\right)=
$$
$$
(f\times g)^*m^*\Phi_1(Q).
$$
Unfortunately, this operation is not associative: on $(X\times Y)\times Z$ we get the form
$$
\omega_1+\omega_2+\omega_2-(f\times g\times h)^*(\Phi_2\otimes 1+m_{12}^*\Phi_2)
$$
while  on $X\times (Y\times Z)$ we get the form
$$
\omega_1+\omega_2+\omega_2-(f\times g\times h)^*(1\otimes \Phi_2+m_{23}^*\Phi_2)
$$
Nevertheless, the equation
$$
d\Phi_3(Q)=1\otimes \Phi_2-m_{12}^{*}\Phi_2+m_{23}^*\Phi_2-\Phi_2\otimes 1
$$
means that the two choices of form on $X\times Y\times Z$ are different by $(f\times g\times h)^*d\Phi_3(Q)$.

\subsection{Equivariant transgression}

The above constructions extend to $H$-equivariant cohomology of $G$, see \cite{Jeffrey} for details. In particular, one can define forms
$$
\overline{\Phi^H_n}(Q)=\int_{\Delta}Q(F+\mu(\theta(t))) \in \Omega^{2r-n}_{H}(G^{n+1}).
$$
where $\mu$ is the moment map for the action of $H$ on $G$.

\begin{example}
Let $H=G=(\C^*)^n$, then as above $\theta(t)=\sum \theta_i t_i$, $F=\sum \theta_i dt_i$ and $\mu(\theta(t))=\sum t_i\mu_i$.
Then
$$
\Phi_1^H(Q)=\int_{0}^{1}Q(\theta_0dt-\theta_1 dt+t\mu_0+(1-t)\mu_1)
$$
We can expand
$$
Q(\theta_0dt-\theta_1 dt+t\mu_0+(1-t)\mu_1)=Q(t\mu_0+(1-t)\mu_1)+\sum_{i} (\theta_0-\theta_1)_i\frac{\partial Q}{\partial x_i}(t\mu_0+(1-t)\mu_1),
$$
so
$$
\Phi_1^H(Q)=\sum_{i}(\theta_0-\theta_1)_{i}\int_{0}^{1}\frac{\partial Q}{\partial x_i}(t\mu_0+(1-t)\mu_1)dt
$$
which is precisely the formula \eqref{eq: canonical factorization} when we write $\xi_i=(\theta_0-\theta_1)_i$, $\mu_0=(x_1,\ldots,x_n)$ and $\mu_1=(x'_1,\ldots,x'_n)$.
\end{example}

\end{appendix}

\bibliographystyle{amsalpha}
\bibliography{refs}

\end{document}